\newtheorem{proposition}{Proposition}[section]
\newtheorem{lemma}[proposition]{Lemma}
\newtheorem{theorem}{Theorem}[section]
\theoremstyle{definition}
\newtheorem{definition}{Definition}[section]
\newtheorem{remark}{Remark}[section]
\numberwithin{equation}{section}
\newcommand{\mydim}{D}
\newcommand{\TBoot}{T_{(Boot)}}
\newcommand{\Worstexp}{q}
\newcommand{\Blowupexp}{A}
\newcommand{\Room}{\upsigma}
\newcommand{\Pos}{P}
\newcommand{\mycong}{\simeq}
\newcommand{\mycongstar}{\overset{*}{\simeq}}
\newcommand{\leftexp}[2]{{\vphantom{#2}}^{#1}{#2}}
\newcommand{\KasnerMetric}{\widetilde{g}}
\newcommand{\KasnerSecondFund}{\widetilde{k}}
\newcommand{\SecondFund}{k}
\newcommand{\Riemann}{\mbox{\upshape Riem}}
\newcommand{\Ric}{\mbox{\upshape Ric}}
\newcommand{\Sc}{\mbox{\upshape R}}
\newcommand{\gfour}{\mathbf{g}}
\newcommand{\Chfour}{\mathbf{\Gamma}}
\newcommand{\Ricfour}{\mathbf{Ric}}
\newcommand{\Riemfour}{\mathbf{Riem}}
\newcommand{\Rfour}{\mathbf{R}}
\newcommand{\Tfour}{\mathbf{T}}
\newcommand{\Nml}{\hat{\mathbf{N}}}
\newcommand{\Dfour}{\mathbf{D}}
\newcommand{\MetricLownorm}{\mathbb{L}_{(g,\SecondFund)}}
\newcommand{\LapseLownorm}{\mathbb{L}_{(n)}}
\newcommand{\MetricHighnorm}{\mathbb{H}_{(g,\SecondFund)}}
\newcommand{\LapseHighnorm}{\mathbb{H}_{(n)}}
\newcommand{\supMetricLownorm}{\bar{\mathbb{L}}_{(Met)}}
\newcommand{\supMetricHighnorm}{\bar{\mathbb{H}}_{(Met)}}
\newcommand{\Euc}{E}
\begin{document}


\title{On the nature of Hawking's incompleteness for the Einstein-vacuum equations:
The regime of moderately spatially anisotropic initial data}
\author{Igor Rodnianski$^{*}$
\and 
Jared Speck$^{**}$}

\thanks{$^*$Princeton University, Department of Mathematics, Fine Hall, Washington Road, Princeton, NJ 08544-1000, USA. \texttt{irod@math.princeton.edu}}

\thanks{$^{**}$Massachusetts Institute of Technology, Department of Mathematics, 77 Massachusetts Ave, Room 2-265, Cambridge, MA 02139-4307, USA. \texttt{jspeck@math.mit.edu}}

\thanks{$^{*}$ IR gratefully acknowledges support from NSF grant \# DMS-1001500.}	

\thanks{$^{**}$ JS gratefully acknowledges support from NSF grant \# DMS-1162211 and
from a Solomon Buchsbaum grant administered by the Massachusetts Institute of Technology.
}	

\begin{abstract}
In the mathematical physics literature, 
there are heuristic arguments,
going back three decades,
suggesting that for an open set of initially smooth solutions 
to the Einstein-vacuum equations in high dimensions,
stable, approximately monotonic 
curvature singularities can dynamically form along a spacelike hypersurface.
In this article, we study the Cauchy problem
and give a rigorous proof of this phenomenon in sufficiently high dimensions,
thereby providing the first constructive proof of stable curvature blowup 
(without symmetry assumptions)
along a spacelike hypersurface 
as an effect of pure gravity.
Our proof applies to an open subset of regular initial data 
satisfying the assumptions of 
Hawking's celebrated ``singularity'' theorem,
which shows that the solution is geodesically incomplete 
but does not reveal the nature of the incompleteness.
Specifically, our main result is a proof of the dynamic stability 
of the Kasner curvature singularity
for a subset of Kasner solutions 
whose metrics exhibit only moderately (as opposed to severely) 
spatially anisotropic behavior.
Of independent interest is our method of proof, 
which is more robust than earlier approaches in that
\textbf{i)} it does not rely on approximate monotonicity identities
and \textbf{ii)} it accommodates the possibility that the solution develops
very singular high-order spatial derivatives,
whose blowup rates are allowed to be, 
within the scope of our bootstrap argument,
much worse than those of the base-level
quantities driving the fundamental blowup.
For these reasons, our approach could be used
to obtain similar blowup results for various Einstein-matter
systems in any number of spatial dimensions for solutions corresponding to 
an open set of moderately spatially anisotropic initial data,
thus going beyond the nearly spatially isotropic regime
treated in earlier works.
\bigskip

\noindent \textbf{Keywords}: 
Big Bang,
constant mean curvature,
curvature singularity,
geodesically incomplete,
Hawking's theorem,
Kasner solutions,
maximal globally hyperbolic development,
singularity theorem,
stable blowup, 
transported spatial coordinates 

\bigskip

\noindent \textbf{Mathematics Subject Classification (2010)} Primary: 83C75; Secondary: 35A20, 35Q76, 83C05, 83F05 

\end{abstract}

\maketitle

\centerline{\today}

\setcounter{tocdepth}{2}
\tableofcontents

\newpage

\section{Introduction} \label{S:INTRO}
Hawking's celebrated ``singularity'' theorem 
(see, for example, \cite{rW1984}*{Theorem~9.5.1}) 
shows that an interestingly large
set of initial data
for the Einstein-vacuum\footnote{Hawking's theorem also applies to any Einstein-matter system
whose energy-momentum tensor verifies the strong energy condition.} 
equations leads to geodesically incomplete solutions. The chief drawback of this result is that
the proof is by contradiction and therefore does not reveal the nature of the incompleteness;
see Subsect.\,\ref{SS:PREVIOUSWORKS} for further discussion.
In this article, for an open\footnote{By ``open,'' we mean relative to a suitable Sobolev norm topology.} 
subset of regular initial data in high spatial dimensions
that satisfy the assumptions of Hawking's theorem,
we show that the solution's incompleteness is due to the formation of a Big Bang,
that is, the formation of a curvature singularity along a spacelike hypersurface.
For more detailed statements of our main results,
readers can jump ahead to Theorem~\ref{T:MAINTHMROUGHVERSION} 
for a rough summary or to Theorem~\ref{T:MAINTHM} for the precise versions.

Before proceeding, we note that the Einstein-vacuum equations in 
$\mydim$ spatial dimensions are
\begin{align}  \label{E:EINSTEININTRO}
	\Ricfour_{\mu \nu} - \frac{1}{2}\Rfour \gfour_{\mu \nu} & = 0, && (\mu,\nu = 0,1,\cdots,\mydim),
\end{align}
where $\Ricfour$ is the Ricci curvature of the Lorentzian spacetime metric 
$\gfour$ (which has signature $(-,+,\cdots,+)$),
and $\Rfour = \Ricfour_{\ \alpha}^{\alpha}$ is\footnote{We summarize our index and summation conventions in Subsubsect.\,\ref{SSS:INDICES}.} 
its scalar curvature. Throughout, $\mathcal{M}$ denotes the
$1 + \mydim$-dimensional spacetime manifold, on which the system
\eqref{E:EINSTEININTRO} is posed. 
In this article, the spacetimes under study are \emph{cosmological}, 
i.e., they have compact spacelike Cauchy hypersurfaces.
In particular, the solutions that we study are such that
$\mathcal{M} = I \times \mathbb{T}^{\mydim}$, 
where $I$ is an interval of time and 
$\mydim$ is the standard $\mydim$-dimensional torus 
(i.e., $[0,1]^{\mydim}$ with the endpoints identified).

Our work provides the first constructive proof of stable curvature blowup along a spacelike hypersurface 
\underline{as an effect of pure gravity} (i.e., without the presence of matter)
for Einstein's equations in more than one spatial dimension 
without symmetry assumptions. The present work can be viewed as complementary to
our previous works \cites{iRjS2018,iRjS2014b,jS2017e},
in which we proved similar results in the presence of scalar field or stiff fluid\footnote{A stiff fluid 
is such that the speed of sound is equal to the speed of light, i.e., it 
obeys the equation of state $p = \rho$, where $p$ is the pressure and $\rho$ is the density. The stiff fluid is a 
generalization of the scalar field in the sense that it reduces to the scalar field matter model when the fluid's vorticity vanishes.}
matter. As we explain below, out of necessity, we had to devise a new and more robust analytic 
framework to treat the Einstein-vacuum equations, since many of the special structures
that we exploited in \cites{iRjS2018,iRjS2014b,jS2017e} are not available in the vacuum case.

For the singularities that we study here, 
the blowup is rather ``controlled'' in the sense that
the solutions exhibit approximately monotonic behavior 
as the singularity is approached. For example, relative to an appropriately constructed
time coordinate $t$, 
the Kretschmann scalar 
$\Riemfour_{\alpha \beta \gamma \delta} \Riemfour^{\alpha \beta \gamma \delta}$
blows up like $C t^{-4}$ as $t \downarrow 0$
(see Lemma~\ref{L:KRETSCHMANNSCALARESTIMATE} for the precise statement), 
where $\Riemfour$ is the Riemann curvature of $\gfour$.
A main theme of this paper is that the monotonic behavior is not just a curiosity, 
but rather it lies at the heart of our analysis.
Heuristic evidence for the existence of a large family of (non-spatially homogeneous) 
approximately monotonic spacelike-singularity-forming 
Einstein-vacuum solutions for large $\mydim$
goes back more than $30$ years to the work \cite{jDmHpS1985}, which was preceded by
other works of a similar vein that we review in Subsubsect.\,\ref{SSS:HEURISTICANDNUMERICAL}.
In \cite{jDmHpS1985} and in the present work as well, the largeness of $\mydim$
is used only to ensure the existence of background solutions with certain quantitative properties
(in our case Kasner solutions whose exponents verify the condition \eqref{E:KASNEREXPONENTSNOTTOOBIG});
for the Einstein-vacuum equations in low space dimensions,
the only obstacle to the existence of such solutions 
is the Hamiltonian constraint equation \eqref{E:GAUSSINTRO}.
Given such a background solution, 
the rest of our analysis is essentially dimensionally independent.\footnote{Aside from the number of derivatives that we need to close the estimates.}
Although our main theorem applies only when $\mydim \geq 38$,
our approach here is of interest in itself since 
it is more robust compared to prior works on stable blowup for various Einstein-matter systems, 
and since it has further applications, for example in three spatial dimensions; 
see the end of Subsect.\,\ref{SS:ADDITIONALCONTEXT}
for further discussion on this point.

\subsection{The evolution problem and the initial data}
\label{SS:EVOLUTIONPROBLEM}
Before further describing our results, we first provide some background material 
on the evolution problem for Einstein's equations. 
The foundational works \cites{CB1952,cBgR1969}
collectively imply that the Einstein-vacuum equations \eqref{E:EINSTEININTRO}
have an evolution problem formulation in which
all sufficiently regular initial data verifying the constraint equations \eqref{E:GAUSSINTRO}-\eqref{E:CODAZZIINTRO}
launch a unique\footnote{More precisely, 
the maximal globally hyperbolic development is unique up to isometry in the class of globally hyperbolic spacetimes.}
maximal classical solution $(\mathcal{M}_{(Max)},\gfour_{(Max)})$, known as the 
\emph{maximal globally hyperbolic development of the data}. 
Below we will discuss the evolution equations. For now, we recall that 
the initial data are $(\Sigma_1,\mathring{g},\mathring{\SecondFund})$,
where $\Sigma_1$ is as $\mydim$-dimensional manifold,
$\mathring{g}$ is a Riemannian metric on $\Sigma_1$, and $\mathring{\SecondFund}$ is a symmetric type $\binom{0}{2}$
$\Sigma_1$-tangent tensorfield. 
The subscript ``$1$'' on $\Sigma_1$ emphasizes that in our main theorem,
$\Sigma_1$ will be identified with a hypersurface of constant time $1$.
$\mathring{g}$ and $\mathring{\SecondFund}$ represent, 
respectively, 
the first and second fundamental
form of $\Sigma_1$,
viewed as a Riemannian submanifold of the spacetime
to be constructed. 
The constraints are the well-known \emph{Gauss} and \emph{Codazzi} equations
(which are often referred to, respectively, as the Hamiltonian and momentum constraint equations):
\begin{subequations}
\begin{align}
	\mathring{\Sc} 
	- 
	\mathring{\SecondFund}_{\ b}^a \mathring{\SecondFund}_{\ a}^b
	+ 
	(\mathring{\SecondFund}_{\ a}^a)^2 
	& = 0, 
	\label{E:GAUSSINTRO} \\
	\nabla_a \mathring{\SecondFund}_{\ j}^a - \nabla_j \mathring{\SecondFund}_{\ a}^a  
		& = 0. 
		\label{E:CODAZZIINTRO}
\end{align}
\end{subequations}
In \eqref{E:GAUSSINTRO}-\eqref{E:CODAZZIINTRO},
$\nabla$ denotes the Levi--Civita connection of $\mathring{g}$.

In analyzing solutions to \eqref{E:EINSTEININTRO}, we will use 
the well-known constant-mean-curvature-transported spatial coordinates
gauge. The corresponding PDE system involves hyperbolic evolution equations for the first
and second fundamental forms of the constant-time hypersurface $\Sigma_t$
coupled to an elliptic PDE for the lapse $n >0$, 
which is defined by $\gfour(\partial_t,\partial_t) = - n^2$.
See Subsubsect.\,\ref{SSS:BASICINGREDIENTS} for a review of this gauge.
Here we only note that 
\emph{the elliptic PDE for $n$ is essential for synchronizing the
singularity across space}. To employ this gauge in the context of our main theorem,
we assume that the initial data verify the constant-mean-curvature (CMC from now on) condition
\begin{align} \label{E:CMCDATA}
	\mathring{\SecondFund}_{\ a}^a = - 1.
\end{align}

\begin{remark}[\textbf{The CMC assumption is not a further restriction}]
\label{R:NONEEDFORCMC}
In the context of our main theorem, 
equation \eqref{E:CMCDATA} should not be viewed 
as a further restriction
on the initial data since for the near-Kasner solutions that we study,
we can always construct a CMC hypersurface
lying near the initial data hypersurface. This can be achieved by
making minor modifications to the arguments given in
\cite{iRjS2014b}, where we constructed a CMC hypersurface 
in a closely related context.
\end{remark}

\subsection{Some additional context and preliminary comments on the new ideas in the proof}
\label{SS:ADDITIONALCONTEXT}
The aforementioned results \cites{CB1952,cBgR1969}, while philosophically
of great importance, reveal very little information about the nature of the
maximal globally hyperbolic development. In our main theorem,
we derive sharp information about the
maximal globally hyperbolic development
of an open set of nearly spatially homogeneous initial data 
(i.e., initial data that vary only slightly in space at each fixed time) 
given along the $\mydim$-dimensional torus 
$\Sigma_1 = \mathbb{T}^{\mydim}$, where $\mydim \geq 38$. 
The largeness of $\mydim$ allows us to consider initial data that are
only moderately (as opposed to severely) spatially anisotropic, 
in the sense that the data are close to a Kasner solution \eqref{E:KASNERSOLUION}
whose Kasner exponents verify \eqref{E:KASNEREXPONENTSNOTTOOBIG}
(see below for further discussion).
Broadly speaking, the main analytic themes of our work can be summarized as follows
(see Subsect.\,\ref{SS:MAINIDEASINPROOF} for a more detailed outline of our proof):
\begin{quote}
	The amount of spatial anisotropy exhibited by the solutions under study
	is tied to the strength of various nonlinear error terms
	that depend on spatial derivatives.
	Below a certain threshold, the error terms become
	subcritical (in the sense of the strength of their singularity)
	with respect to the main terms. This
	allows us to give a perturbative proof of stable blowup through
	a bootstrap argument, where the blowup is driven by
	``ODE-type'' \underline{singular terms that are linear in time derivatives}, 
	and, at least at the low derivative levels,
	the nonlinear spatial derivative terms are strictly weaker than
	the singular linear time derivative terms.
	Put differently, at the low derivative levels, 
	the spatial derivative terms remain negligible throughout the evolution,
	a phenomenon that, in the general relativity literature, 
	is sometimes referred to 
	as \emph{asymptotically velocity term dominated}
	(AVTD) behavior.
	In contrast, at the high derivative levels, the spatial derivatives can be
	very singular; 
	bounding the maximum strength of this singular behavior and 
	showing that it does not
	propagate down into the low derivative levels together 
	constitute the main technical challenge of the proof.
\end{quote}

We stress that in the absence of matter,
spatially homogeneous and isotropic Big Bang solutions \emph{do not exist};
they are precluded by the Hamiltonian constraint equation \eqref{E:GAUSSINTRO}.
Hence, it is out of necessity that our main theorem concerns spatially anisotropic solutions.
In our previous works \cites{iRjS2018,iRjS2014b,jS2017e}, we proved related 
stable blowup results
for the Einstein-scalar field and Einstein-stiff fluid systems in three spatial dimensions.
In those works, the presence of matter allowed for the existence of 
spatially homogeneous, 
spatially isotropic Big-Bang-containing solutions,
specifically, the famous Friedmann--Lema\^{\i}tre--Robertson--Walker (FLRW) solutions,
whose perturbations we studied. The approximate spatial isotropy 
of the perturbed solutions lied at the heart of the analysis of
\cites{iRjS2018,iRjS2014b,jS2017e}, and we therefore had to develop a new approach
to handle the moderately spatially anisotropic solutions under study here.
We now quickly highlight the main new contributions 
of the present work; in Subsect.\,\ref{SS:MAINIDEASINPROOF}, 
we provide a more in depth overview of 
how they fit into our analytical framework.

\begin{itemize}
	\item In \cites{iRjS2018,iRjS2014b,jS2017e}, the \underline{presence} 
		and the precise structure of the matter model was used
		in several key places, leaving open the possibility that the blowup was
		essentially ``caused'' by the matter. In contrast, our present work
		shows that for suitable initial data (which exist for $\mydim \geq 38$),
		stable curvature blowup can occur in general relativity
		as an effect of pure gravity.
	\item Our previous works \cites{iRjS2018,iRjS2014b,jS2017e} relied on
		\emph{approximate monotonicity identities}, which were $L^2$-type integral
		identities in which the special structure of the matter models and the 
		spatially isotropic nature of the FLRW background solutions were exploited
		to exhibit miraculous cancellations. The cancellations were such that
		dangerous singular error integrals with large coefficients were replaced, up to small error terms,
		with \emph{coercive ones}. That is, the identities led to the availability of friction-type
		spacetime integrals that were used to control various error terms up to the singularity.
		The net effect was that we were able to prove that 
		the very high spatial derivatives of the solution are
		not much more singular than the low-order derivatives;
		this was a fundamental ingredient in the analytic
		framework that we used to control error terms.
		In contrast, in the present article, 
		\textbf{we completely avoid relying on approximate monotonicity identities},
		which might not even exist for the kinds of solutions that we study here.
		This leads, at the high spatial derivative levels, to very singular energy estimates near the Big Bang,
		which our proof is able to accommodate; see Subsect.\,\ref{SS:MAINIDEASINPROOF} for 
		an outline of the main ideas behind the estimates.
		For these reasons, 
		our approach here is significantly more robust compared to \cites{iRjS2018,iRjS2014b,jS2017e}
		and could be used, for example, to substantially enlarge
		the set of initial data for the Einstein-scalar field and Einstein-stiff fluid systems
		in three spatial dimensions
		that are guaranteed to lead to curvature blowup.
		More precisely, it could be used to 
		prove stable blowup for perturbations of generalized Kasner solutions
		(i.e., non-vacuum Kasner solutions)
		to these systems
		that exhibit moderate spatial anisotropy.
		We stress that it might not be possible to extend these stable blowup results
		to Einstein-matter systems that feature $\gfour$-timelike characteristics,
		such as the Euler-Einstein system under a general equation of state;
		in \cites{iRjS2018,iRjS2014b,jS2017e}, the analysis relied on the
		fact that for the matter models studied, the characteristics of the system
		are exactly the null hypersurfaces of $\gfour$,
		a feature that is tied to the following crucial structural property: 
		the absence of time-derivative-involving error-terms
		in various equations. 
		Readers can consult \cite{iRjS2018} for further discussion on this point.
\end{itemize}

\subsection{Kasner solutions and a rough summary of the main theorem}
\label{SS:KASNERINITIALDATA}
As we have mentioned, in our main theorem, we consider initial data given on 
\begin{align} \label{E:DATAHYPERSURFACE}
	\Sigma_1 
	& := \mathbb{T}^{\mydim},
\end{align}
where $\mydim$ is the standard $\mydim$-dimensional torus (i.e., $[0,1]^{\mydim}$ with the endpoints identified) 
and, relative to the coordinates that we use in proving our main results,
$\Sigma_1$ will be identified with a spacelike hypersurface of constant time $1$,
i.e., $\Sigma_1 = \lbrace 1 \rbrace \times \mathbb{T}^{\mydim}$.
Our assumption on the topology of $\Sigma_1$ is for convenience and is not of
fundamental importance; we expect that similar results hold for other spatial topologies,
much in the same way that the blowup results
of \cites{iRjS2018,iRjS2014b} in the case of $\mathbb{T}^3$
spatial topology were extended to the case of $\mathbb{S}^3$
spatial topology in \cite{jS2017e}
(see Subsubsect.\,\ref{SSS:CONSTRUCTIVERESULTSNOSYMMETRY} for further discussion of these works).
Our main theorem addresses the evolution of perturbations of the initial data (at time $1$)
of the \emph{Kasner solutions} \cite{eK1921} 
\begin{align} \label{E:KASNERSOLUION}
	\widetilde{\gfour}
	& := - dt \otimes dt 
	+ 
	\KasnerMetric,
	&& (t,x) \in (0,\infty) \times \mathbb{T}^{\mydim},
\end{align}
where 
\begin{align} \label{E:KASNERSPATIALMETRIC}
	\KasnerMetric
	& :=
		\sum_{i=1}^{\mydim} t^{2q_i} dx^i \otimes dx^i,
\end{align}
and the constants $q_i \in [-1,1]$, known as the \emph{Kasner exponents},
are constrained by
\begin{subequations}
\begin{align} \label{E:KASNEREXPONENTSSUMTOONE}
	\sum_{i=1}^{\mydim} q_i 
	& = 1,
		\\
	\sum_{i=1}^{\mydim} q_i^2
	& = 1.
	\label{E:KASNEREXPONENTSSQUARESSUMTOONE}
\end{align} 
In the rest of the paper, we will also use the notation
\begin{align} \label{E:KASNERSECONDFUND}
	\KasnerSecondFund
	& :=
		- t^{-1} \sum_{i=1}^{\mydim} q_i \partial_i \otimes dx^i
\end{align}
to denote the Kasner mixed second fundamental form, i.e.,
$\KasnerSecondFund_{\ j}^i = - \frac{1}{2} (\KasnerMetric^{-1})^{ia} \partial_t \KasnerMetric_{aj}$
relative to the coordinates featured in \eqref{E:KASNERSOLUION}-\eqref{E:KASNERSPATIALMETRIC}.
Aside from exceptional cases in which one of the $q_i$ are equal to $1$ and the rest equal to $0$, 
Kasner solutions have Big Bang singularities 
at $\lbrace t = 0 \rbrace$ where their Kretschmann scalar 
$\Riemfour_{\alpha \beta \gamma \delta} \Riemfour^{\alpha \beta \gamma \delta}$
blows up like $t^{-4}$ (see the proof of Lemma~\ref{L:KRETSCHMANNSCALARESTIMATE} for a proof of this fact).
We now briefly summarize our main results. 
See Theorem~\ref{T:MAINTHM} for the precise statements.

\begin{theorem}[\textbf{Rough summary of main results}]
\label{T:MAINTHMROUGHVERSION}
	Kasner solutions whose exponents satisfy the inequality
	\begin{align} \label{E:KASNEREXPONENTSNOTTOOBIG}
	\max_{i=1,\cdots,\mydim} |q_i|
	& < \frac{1}{6},
	\end{align}
	which is possible when $\mydim \geq 38$
	(see Subsect.\,\ref{SS:EXISTENCEOFKASNER}),
	are nonlinearly stable solutions to the Einstein-vacuum equations \eqref{E:EINSTEININTRO}
	near their Big Bang singularities. More precisely, 
	perturbations 
	(belonging to a suitable high-order Sobolev space)
	of the Kasner initial data along $\Sigma_1 = \lbrace t = 1 \rbrace$ 
	launch a perturbed solution that also has a Big Bang singularity in the past of $\Sigma_1$. 
	In particular, relative to a set of CMC-transported spatial coordinates normalized by
	$\SecondFund_{\ a}^a = - t^{-1}$, where $\SecondFund_{\ j}^i$ is the (mixed) second fundamental form
	of $\Sigma_t$, the perturbed solution's Kretschmann scalar blows up like $C t^{-4}$.
	Hence, the past of $\Sigma_1$ in the maximal (classical) globally hyperbolic development of the data
	is foliated by a family of spacelike CMC hypersurfaces $\Sigma_t$.
	Furthermore, every past-directed causal geodesic 
	that emanates from $\Sigma_1$ crashes into the singular hypersurface 
	$\Sigma_0$ in finite affine parameter time.
	That is, the perturbed solutions are geodesically incomplete to the past,
	and the incompleteness coincides with curvature blowup.
\end{theorem}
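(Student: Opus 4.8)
The plan is to set up a bootstrap (continuity) argument on the time interval $(t_{\mathrm{Boot}},1]$, working in CMC-transported spatial coordinates normalized by $\SecondFund_{\ a}^a = -t^{-1}$, and show that the solution extends all the way down to $t = 0$ with precise blowup rates matching the Kasner background. The first step is to recast the Einstein-vacuum equations in this gauge as a coupled system: hyperbolic evolution equations for the rescaled metric $g_{ij}$ and the rescaled mixed second fundamental form $\SecondFund_{\ j}^i$ (with the Kasner values $\KasnerSecondFund_{\ j}^i$ subtracted off, or with suitable $t$-weights inserted), together with the elliptic equation for the lapse $n$ coming from the CMC condition. The role of smallness of $\max_i |q_i| < 1/6$ is exactly to render the ``dangerous'' terms in these equations — those built from spatial derivatives of the metric, which scale like positive powers of $t^{-1}$ dictated by the anisotropy — subcritical relative to the linear-in-time-derivative ODE terms $\sim t^{-1}$ that drive the blowup. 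I would isolate this threshold explicitly: each spatial-derivative error term carries a weight $t^{-\Blowupexp}$ for some exponent governed by the $q_i$, and I need $\Blowupexp$ strictly below the ODE exponent at the low derivative levels.

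The second step is to define two families of norms: low-order norms $\MetricLownorm$, $\LapseLownorm$ (measuring $g$, $\SecondFund$, $n$ in $H^{\text{low}}$) that I claim stay uniformly bounded, and high-order norms $\MetricHighnorm$, $\LapseHighnorm$ (in $H^{\text{high}}$) that I allow to blow up at a controlled rate $t^{-\Blowupexp}$ with $\Blowupexp$ large but finite. The bootstrap assumptions posit these bounds on $(t_{\mathrm{Boot}},1]$ with generous constants; the heart of the argument is the improvement. For the \emph{low} norms I would commute the evolution equations with low-order spatial derivative operators, form the natural energies, differentiate in $t$, and exploit that (i) the linear singular terms have a favorable sign (friction) after the $\SecondFund_{\ a}^a=-t^{-1}$ normalization and (ii) every nonlinear spatial-derivative term is integrable in $t$ down to $0$ because of the $1/6$ threshold and the uniform low-norm bound — here the AVTD mechanism does the work, and Grönwall closes the estimate with a constant better than the bootstrap one. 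For the \emph{high} norms the spatial-derivative terms are genuinely singular, so I instead track the rate: the energy inequality reads $\frac{d}{dt}\mathbb{E}_{\text{high}} \gtrsim -C t^{-1}\mathbb{E}_{\text{high}} - (\text{source from low norms})$, and integrating from $1$ down gives growth no worse than $t^{-\Blowupexp}$ for a definite $\Blowupexp$ computed from the constants — the key point being that the low norms appear only as a \emph{bounded} source, so high-order singular behavior never feeds back into the low levels. The lapse estimates are obtained from elliptic regularity applied to the CMC equation, converting metric bounds into bounds for $n-1$ and its derivatives with the same hierarchy of rates.

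**The hard part will be** the high-order energy estimates: controlling the maximal rate of singular growth and, crucially, verifying the one-way coupling — that the very singular high derivatives do not contaminate the low-order quantities that govern the actual curvature blowup. This requires a careful, derivative-count-by-derivative-count accounting of which commutator terms are borderline, and a choice of $\Blowupexp$ and of the Sobolev exponents (low vs.\ high) that is internally consistent; it is precisely here that we dispense with the approximate monotonicity identities used in \cites{iRjS2018,iRjS2014b,jS2017e} and must instead absorb the singular integrals by brute-force Grönwall with sharp exponents. A secondary technical point is propagating pointwise control (via Sobolev embedding from the low norms) on $g$, $g^{-1}$, $n$, $n^{-1}$ so that the equations remain uniformly hyperbolic/elliptic on the whole bootstrap interval.

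Once the bootstrap closes, the solution exists classically on $(0,1] \times \mathbb{T}^{\mydim}$ with $g_{ij}$, $\SecondFund_{\ j}^i$, $n$ obeying the stated asymptotics. I would then invoke Lemma~\ref{L:KRETSCHMANNSCALARESTIMATE} to conclude $\Riemfour_{\alpha\beta\gamma\delta}\Riemfour^{\alpha\beta\gamma\delta} = C t^{-4}(1 + o(1))$, so $\lbrace t = 0\rbrace$ is a genuine curvature singularity and in particular the constructed spacetime is the maximal globally hyperbolic development of the data, foliated by the CMC hypersurfaces $\Sigma_t$ for $t \in (0,1]$. Finally, for the geodesic incompleteness statement, I would write the geodesic equations in these coordinates and estimate the affine parameter elapsed along any past-directed causal geodesic: using $\gfour(\partial_t,\partial_t) = -n^2$ with $n$ bounded above and below near $t=0$, together with the $t$-weights in $g_{ij}$, one shows the affine length of the portion of the geodesic with $t \in (0,\epsilon]$ is finite, so every such geodesic reaches $\lbrace t = 0\rbrace$ in finite affine time, i.e., the incompleteness is exactly the Big Bang curvature singularity.
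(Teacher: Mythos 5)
Your overall strategy coincides with the paper's: CMC-transported coordinates normalized by $\SecondFund_{\ a}^a=-t^{-1}$, a bootstrap with uniformly bounded low norms and $t^{-\Blowupexp}$-singular high norms, maximum-principle and elliptic estimates for the lapse, friction generated by $t$-weights absorbing the $N$-independent singular constants in the top-order energy identity, and then the Kretschmann and geodesic-length lemmas (Lemmas~\ref{L:KRETSCHMANNSCALARESTIMATE} and~\ref{L:LENGTHOFCAUSALGEODESICS}) for the blowup and incompleteness conclusions.

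There is, however, one step that would fail as written, and it happens to sit at the center of the paper's argument. You propose to obtain pointwise control of the low-order derivatives of $g$, $g^{-1}$, and $n$ ``via Sobolev embedding from the low norms.'' But the low norms that can actually be propagated uniformly to $t=0$ (Def.~\ref{D:LOWNORMS}) contain only the undifferentiated components of $g$ and $g^{-1}$ in $L^{\infty}$ together with two derivatives of $\SecondFund$; they do not contain the $1+\lfloor \mydim/2\rfloor$ derivatives of $g$ that an embedding would require, and one cannot simply enlarge the low norm to include them: closing a uniform bound on several derivatives of $g$ already requires pointwise control of $\partial g$, which is exactly what you are trying to produce, and indeed the paper's estimates for $\|g\|_{\dot{H}_{Frame}^{N-1}(\Sigma_t)}$ carry genuinely singular $t$-weights. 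The missing idea is Lemma~\ref{L:SOBOLEVBORROWALITTLEHIGHNORM}: interpolate between the order-zero $L^{\infty}$ bound and the very singular order-$N$ bound, which costs only a factor $t^{-\Blowupexp\updelta}$ with $\updelta\sim (5+\lfloor\mydim/2\rfloor)/N$. Because $\Blowupexp$ is fixed first (large enough to dominate the universal constants $C_*$, but independent of $N$) and $N$ is chosen afterwards, $\Blowupexp\updelta$ can be made smaller than the room $\Room$ in every exponent count. This order of quantifiers is what makes the ``one-way coupling'' you correctly identify as the hard part actually work. A second, related omission: interpolation alone does not control certain below-top-order error terms in the top-order energy identity, because converting between frame norms and $g$-norms costs powers of $t^{-\Worstexp}$; the paper is forced to supplement the scheme with derivative-losing, transport-type energy estimates at orders $N-1$ and $N-2$ carrying their own tailored weights (Prop.~\ref{P:INTEGRALINEQUALITIESFORJUSTBELOWTOPDERIVATIVESOFMETRICANDSECONDFUND}), a layer of the hierarchy your proposal does not anticipate.
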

\end{subequations}

\begin{remark}[\textbf{Additional information about the solution}]
	\label{R:ADDITONALINFORMATION}
	Theorems \ref{T:MAINTHMROUGHVERSION} (and~\ref{T:MAINTHM})
	reveal only the most fundamental aspects of the singularity formation.
	It is possible to derive substantial additional information about the solution
	using the estimates that we prove in this paper.
	For example, one could show that various time-rescaled
	variables, such as the time-rescaled second fundamental form components 
	$t \SecondFund_{\ j}^i(t,x)$,
	converge to regular functions of $x$ as $t \downarrow 0$,
	which is a manifestation of the AVTD behavior mentioned above.
	For brevity, we have chosen not to state or prove such additional results in this
	article since, thanks to the a priori estimates yielded by
	Prop.\,\ref{P:MAINAPRIORIESTIMATES}, their statements and proofs 
	are similar to the ones given in \cite{iRjS2014b}*{Theorem~2}.
\end{remark}

\begin{remark}[\textbf{On the bound of} $1/6$]
The value of $1/6$ on RHS~\eqref{E:KASNEREXPONENTSNOTTOOBIG}
is possibly not optimal. This bound for $\Worstexp$ 
emerges from considering the strength of various error terms;
see Subsect.\,\ref{SS:MAINIDEASINPROOF} for further discussion on this point.
\end{remark}

\subsection{Previous breakdown results for Einstein's equations}
\label{SS:PREVIOUSWORKS}
The study of the breakdown of solutions in general relativity was ignited by the
classic Hawking--Penrose ``singularity'' theorems 
(see e.g.\ \cites{sH1967,rP1965}, and the discussion in \cite{sHrP1970}),
which show
that for appropriate matter models and spatial topologies,
a compellingly large set 
of initial conditions 
(with non-empty interior relative to a suitable function space topology) 
leads to geodesically incomplete
solutions. 
In particular, Hawking's theorem (see \cite{rW1984}*{Theorem~9.5.1}) 
guarantees that under assumptions 
satisfied by the initial data featured in our main theorem,
all past-directed timelike geodesics are incomplete.
Although these theorems are robust with respect to the kinds of initial data and matter models
to which they apply, they are ``soft'' in that they do not
reveal the nature of the incompleteness, leaving open the possibilities
that \textbf{i)} it is tied the blowup of some invariant quantity, 
such as a spacetime curvature
scalar, or \textbf{ii)} it is due to some other more sinister phenomenon, such as the formation
of a Cauchy horizon (beyond which the solution cannot be classically extended in a unique sense 
due to lack of sufficient information).
In the wake of the Hawking--Penrose theorems, 
many authors studied the nature of the breakdown,
though, with only a few key exceptions,
the works produced were
heuristic, 
numerical,
concerned initial data with symmetry,
or yielded information only about ``special'' solutions
(as opposed to an open set of solutions corresponding
to regular initial data given along a spacelike Cauchy hypersurface).
In the next several subsubsections, we review some of 
these works.

\subsubsection{Heuristic and numerical work}
\label{SSS:HEURISTICANDNUMERICAL}
The famous-but-controversial
work \cite{vBiKeL1971}
of Belinskii--Khalatnikov--Lifshitz
gave heuristic arguments suggesting 
that for the Einstein-vacuum equations in three spatial dimensions,
the ``generic'' (in an unspecified sense) solution
that breaks down should 
\textbf{i)} be local near the breakdown, 
i.e., be well-modeled by a family of 
Bianchi IX\footnote{Readers can consult \cite{pCgGdP2010} for
an overview of the Bianchi IX class and other symmetry classes that we mention later.}  
ODE solutions\footnote{More precisely, the authors argued that they should be well-modeled
by the so-called ``Mixmaster'' \cite{cM1969} solutions, which 
are of the form $\gfour = - dt^2 + \sum_{i=1}^3 \ell_i^2(t) \omega^{(i)} \otimes \omega^{(i)}$,
where $\lbrace \omega^{(i)} \rbrace_{i=1,2,3}$ are one-forms on $\mathbb{S}^3$
verifying $d \omega^{(i)} = \frac{1}{2} [ijk] \omega^{(j)} \wedge \omega^{(k)}$,
and $[ijk]$ is the fully antisymmetric symbol normalized by $[123] = 1$.} 
that are parameterized by space;
\textbf{ii)} become highly oscillatory near the breakdown-points like the Bianchi IX solutions;
and \textbf{iii)} the breakdown points should collectively form a spacelike singularity.
We refer readers to the recent monograph \cite{vBmH2017}
for a detailed, modern account of
\cite{vBiKeL1971} and related works.
Although the work \cite{vBiKeL1971} has stimulated a lot of research activity,
as of the present, it is not clear to what extent 
the heuristic picture it painted holds true. The picture is almost certainly not
true in a generic sense, for the recent breakthrough work
\cite{mDjL2017}
shows, assuming only a widely believed (but not yet proven) 
quantitative version of the stability of the Kerr black hole family of solutions
to the Einstein-vacuum equations,
that the Cauchy horizon inside the black hole is dynamically stable.
Specifically, in \cite{mDjL2017}, the authors showed that the metric is $C^0$-extendible
past the Cauchy horizon, which is a null hypersurface. This in particular
contradicts the vision of spacelike singularities posited in \cite{vBiKeL1971}.
In the opposite direction (i.e., in accordance with \cite{vBiKeL1971}),
in three spatial dimensions, 
Ringstr\"{o}m \cite{hR2001} confirmed
the oscillatory picture\footnote{Ringstr\"{o}m also studied the stiff fluid case $c_s=1$
in Bianchi IX symmetry and proved monotonic-type singularity formation results
similar to the ones we obtained in \cites{iRjS2018,iRjS2014b,jS2017e} and in the present work.} 
of solutions near singularities
for solutions with Bianchi IX symmetry 
to the Einstein-vacuum equations
and
to the Euler--Einstein equations
under the equations of state
$p = c_s^2 \rho$, where $p$ is the pressure, $\rho$ is the density, and
the constant $0 < c_s < 1$ is the speed of sound.
However, outside of the class of spatially homogeneous solutions,
there are currently no known examples of Einstein-vacuum solutions 
that exhibit the oscillatory behavior suggested by \cite{vBiKeL1971}.

There are also heuristic results concerning the existence
non-oscillatory solutions to various Einstein-matter systems
that form a spacelike singularity.
Specifically, in \cite{vBiK1972},
Belinskii and Khalatnikov 
noted that if one considers the Einstein-scalar field system in three spatial dimensions,
then the heuristic arguments given in \cite{vBiKeL1971} concerning oscillations
no longer seem plausible. They argued that instead, the generic incomplete solution
should exhibit monotonic behavior near the breakdown-points,
which should still collectively form a spacelike singularity.
In \cite{jB1978}, Barrow gave a similar heuristic argument
for the Einstein-stiff fluid
system. Most relevant for our work here is the work
\cite{jDmHpS1985} that we mentioned at the beginning, 
in which the authors noted that for $\mydim \geq 10$
(i.e., at least $11$ spacetime dimensions),
there is a \emph{nontrivial regime} for the Einstein-vacuum equations
in which heuristic non-oscillatory arguments along the lines of
\cites{vBiK1972,jB1978} go through.
This suggests that indeed, 
in high spatial dimensions, stable blowup
results of the type that we prove in our main theorem should 
hold. Note that there is a significant gap between the
assumption $\mydim \geq 10$ 
needed for the heuristic arguments in \cite{jDmHpS1985}
and the assumption 
$\mydim \geq 38$ that we make in our 
main theorem. In Subsect.\,\ref{SS:POSSIBLEFUTUREDIRECTIONS},
we will further discuss this gap.

We close this subsubsection by noting that the above 
works and others like them
have stimulated a large number of numerical studies that have been designed
to probe the validity of the heuristic predictions.
We do not attempt to survey the vast literature here, but instead
refer to the works
\cites{bBdGjIvMmW1998,lA2006,dG2007,aR2005c},
which serve as useful starting points for exploring the subject of numerical
analysis in the context of singularities in general relativity.

\subsubsection{Construction (but not the stability of) of singular solutions}
\label{SSS:CONSTRUCTIONOFSINGULAR}
There are a variety of works showing the \emph{existence}, 
but not the stability,
of singularity-containing solutions to various Einstein-matter systems
that exhibit the same kind of AVTD behavior\footnote{That is, 
time-derivative dominated behavior, as we described in Subsect.\,\ref{SS:ADDITIONALCONTEXT}.}
near the singularity exhibited by the solutions in our main theorem.
Typically, the constructions rely on deriving/solving a Fuchsian PDE system.
Roughly speaking, a Fuchsian PDE  
is a system of the following form 
(where for convenience we restrict out attention to one spatial dimension):
\begin{align} \label{E:FUCHSIAN}
	A^0(t,x,u) t \partial_t u + A^1(t,x,u)t \partial_x u + B(t,x,u) u = f(t,x,u),
\end{align}
where $u$ is the array of unknowns, $A^{\alpha}$ and $B$ are matrices
(symmetric when the symmetric hyperbolic framework for energy estimates is invoked), 
and $f$ is an array, all of which must satisfy a collection of technical assumptions.
A typical Fuchsian analysis is based on splitting the solution
as $u = u_0 + w$, where $u_0$ is the leading order
part and $w$ is an error term that one would like to show is small compared to $u_0$ as $t \downarrow 0$.
The leading order part $u_0$ must be guessed/solved for using an appropriate ansatz.
Here we do not describe how this is typically accomplished;
readers can consult \cite{iRjS2018} for a further description of Fuchsian analysis in the context of 
singular solutions in general relativity.
Although the Fuchsian approach can sometimes be used to show the existence of 
singular solutions, it is inadequate for treating
the true stability problem, i.e., the problem of posing initial data
on a regular Cauchy hypersurface 
$\lbrace t = \mbox{\upshape const} \rbrace$ with $\mbox{\upshape const} > 0$
and then solving the equations all the way
down to the singular hypersurface $\lbrace t = 0 \rbrace$.

We now describe some of the Fuchsian-type existence results in more detail.
Notable among these is the work of
Andersson--Rendall \cite{lAaR2001},
in which they constructed a family of spatially analytic 
Big-Bang-containing
solutions to the Einstein-scalar field
and Einstein-stiff fluid systems in three spatial dimensions
that exhibit approximately monotonic behavior near the singularities.
The family of solutions that the authors constructed was large
in the sense that its number of degrees of freedom 
coincides with the number of free functions
in Einstein initial data for the standard Cauchy problem. 
However, 
the results of \cite{lAaR2001} 
do not show the stability of the blowup under Sobolev-class 
perturbations of initial data given along a spacelike hypersurface
near the expected singularity.
Another notable work in the spirit of \cite{lAaR2001} is \cite{tDmHrAmW2002},
in which the authors proved similar results for various Einstein-matter systems
in various spatial dimensions,
including for the Einstein-vacuum equations
in $10$ or more spatial dimension
(i.e., $\mydim \geq 10$ in the notation of the present article).
Note that the Einstein-vacuum result with $\mydim \geq 10$
supports the heuristic work \cite{jDmHpS1985} mentioned
in the previous subsubsection.

There are many additional works that yield the
construction of (but not the stability of)
singularity-containing solutions to select nonlinear Einstein-matter systems.
We do not attempt to exhaustively survey the literature here, but
we mention the following ones, which concern various symmetry reduced equations:
\cites{sKjI1999,gR1996,sKaR1998,kA2000a,yCBjIvM2004,fS2002,fBpL2010c,eAfBjIpL2013b}.

There are also results in which the authors constructed
singular solutions by essentially prescribing
``singular data'' on the singular hypersurface itself and then solving
to the future; see, for example,
\cites{kApT1999a,cCpN1998,rN1993a,rN1993b,pT1990,pT1991,pT2002}. 
One drawback of this approach is that
there are fewer degrees of freedom in the singular data compared to the initial data
corresponding to a standard Cauchy problem on a regular spacelike hypersurface. Thus, 
the solutions constructed in this
fashion are not generic; 
this is explained in more detail in \cite{aR2005b}*{Section~6.1}.

We close this subsubsection by mentioning that 
the Fuchsian techniques behind the above results
have applications outside of general relativity.
There are techniques for constructing solutions
to large classes of hyperbolic Fuchsian PDEs;
see, for example, \cite{eAfBjIpL2013}.
Readers can consult Rendall's work \cite{aR2000b} for a more detailed comparison 
of many of the results described above
as well as application of the Fuchsian framework to prove 
the existence of singular solutions to the Einstein-vacuum equations
with Gowdy symmetry.

\subsubsection{Constructive results under symmetry assumptions}
\label{SSS:CONSTRUCTIVERESULTSSYMMETRY}
There are a variety of works 
on symmetry reduced Einstein-matter systems
in which the authors give 
a constructive proof of stable singularity formation.
In the spatially homogeneous case, in which the equations reduce
to a system of ODEs, there are many results,
including the constructive work of Ringstr\"{o}m \cite{hR2001} mentioned earlier.
We do not attempt to survey the literature here; instead we direct
readers to \cites{aR2005b,jWgE1997} for an overview
of ODE blowup for solutions to Einstein's equations.

There are also constructive proofs of stable singularity formation
for various symmetry reduced Einstein-matter systems
in which the equations reduce to a
$1+1$-dimensional system of PDEs;
see, for example, \cites{pCjIvM1990,jIvM1990,hR2009c,hR2010}.
Chief among these are Christodoulou's remarkable works
\cites{dC1991,dC1999b} on the Einstein-scalar field system 
in three spatial dimensions in spherical symmetry
for $1$- or $2$-ended
asymptotically flat data.
In those works, he showed that the maximal globally hyperbolic
future developments of generic data are future-inextendible
as time-oriented Lorentzian manifolds with a $C^0$ metric;
i.e., the breakdown is severe, at the level of the metric itself.
See also the recent works of \cites{jLsO2017a,jLsO2017b}
on the spherically symmetric Einstein-Maxwell-(real) scalar field system
with asymptotically flat $2$-ended initial data, 
in which the authors proved that
the maximal globally hyperbolic
future developments of generic data are future-inextendible
as time-oriented Lorentzian manifolds with a $C^2$ metric.
This is an especially intriguing result in view of the fact that
Dafermos--Rodnianski \cites{mD2005,mDiR2005a} 
showed that the statement
is \emph{false} for this system if one replaces $C^2$ with $C^0$.

\subsubsection{Constructive results without symmetry assumptions}
\label{SSS:CONSTRUCTIVERESULTSNOSYMMETRY}
The only prior works exhibiting stable spacelike singularity formation 
for solutions to Einstein's equations
without symmetry assumptions are \cites{iRjS2014b,jS2017e},
which are closely related to the present work.
In \cites{iRjS2014b,jS2017e}, 
we showed that curvature blowup develops along a spacelike hypersurface
for open sets of solutions to two Einstein-matter systems: 
the Einstein-scalar field system and the Einstein-stiff fluid system;
see also our related work \cite{iRjS2018} concerning the linear analysis
and Ringstr\"{o}m's recent monograph \cite{hR2017},
which concerns estimates for solutions to a large family of linear
wave equations whose corresponding metrics 
model the behavior that can occur in solutions
to Einstein's equations near cosmological singularities.

Specifically, in \cite{iRjS2014b}, we showed that in three spatial dimensions 
with spatial topology $\mathbb{T}^3$, 
the FLRW solution is nonlinearly stable
in a neighborhood of its Big Bang singularity. In \cite{jS2017e}, 
we proved the same result in the case of $\mathbb{S}^3$ spatial topology, 
a key new feature being that the solutions are not approximately spatially flat
in the $\mathbb{S}^3$ case
(although they are nearly spatially homogeneous and isotropic).
As we stressed earlier, the analytic framework of
\cites{iRjS2018,jS2017e,iRjS2014b}
was quite different from that of the present work,
due to the availability of $L^2$-based approximate monotonicity
identities tied to the special structure of the matter models
and the spatially isotropic nature of the FLRW background solutions.

\subsection{An overview of the proof of the main theorem}
\label{SS:MAINIDEASINPROOF}
In this subsection, we outline the ideas behind the proof of Theorem~\ref{T:MAINTHM}.
As in our prior works \cites{iRjS2018,iRjS2014b,jS2017e},
we analyze solutions relative CMC-transported spatial coordinates, 
in which the spacetime metric is decomposed into the lapse $n$ and 
a Riemannian metric $g$ on the constant time hypersurfaces $\Sigma_t$ 
as follows:
\begin{align} \label{E:INTROSPACETIMEMETRICDECOMPOSEDINTOLAPSEANDFIRSTFUNDAMENTALFORM}
	\gfour 
	& = - n^2 dt \otimes dt 
	+ 
	g_{ab} dx^a \otimes dx^b.
\end{align}
In such coordinates, the Einstein-vacuum evolution problem consists of the Hamiltonian and momentum constraint equations,
hyperbolic evolution equations for the first fundamental form $g$ of $\Sigma_t$
and the second fundamental form $\SecondFund$ of $\Sigma_t$,
and an elliptic PDE for $n$ along $\Sigma_t$,
supplemented by initial data for $g$ and $\SecondFund$ given along $\Sigma_1$ that satisfy the constraints; 
see Prop.\,\ref{P:EINSTEINVACUUMEQUATIONSINCMCTRANSPORTEDSPATIALCOORDINATES} for 
the details. Here we only note that the mixed second fundamental form 
$\SecondFund_{\ j}^i$ 
verifies
$\partial_t g_{ij} = - 2n g_{ia} \SecondFund_{\ j}^a$
and that we normalize $t$ so that 
$\SecondFund_{\ a}^a = - t^{-1}$, i.e.,
$\Sigma_t$ is a hypersurface of constant mean curvature
$- \mydim^{-1} t^{-1}$.

The main part of the proof is showing that
the solution $(g,\SecondFund,n)$ exists classically for $(t,x) \in (0,1] \times \mathbb{T}^{\mydim}$, i.e., 
long enough to form a curvature singularity.
The proofs that the Kretschmann scalar blows up as $t \downarrow 0$
and that the spacetime is geodesically incomplete
follow as straightforward consequences of estimates that we use in 
proving existence on
$(0,1] \times \mathbb{T}^{\mydim}$; we refer readers
to Sect.\,\ref{S:CURVATUREANDTIMELIKECURVELENGTHESTIMATES}
for details on the nature of the breakdown, and we will not discuss them further here.

The main step in proving that the solution exists
classically for $(t,x) \in (0,1] \times \mathbb{T}^{\mydim}$
is to obtain suitable a priori estimates showing that various 
norms along $\Sigma_t$ do not blow up until $t=0$.
For this reason, in our discussion here, we describe only the a priori estimates.
At the heart of the proof lies the following task, whose importance we describe below:
\begin{quote} 
	Showing that for perturbed solutions,
	the $t$-rescaled type $\binom{1}{1}$ spatial Ricci components $t \Ric_{\ j}^i$ 
	are, at each fixed spatial point $x \in \mathbb{T}^{\mydim}$, 
	\underline{integrable in time} over $t \in (0,1]$.
	Here and throughout, $\Ric$ denotes the Ricci curvature of $g$.
\end{quote}
The above task is essentially a quantified version of the following idea,
which has its origins in the heuristic works discussed in Subsubsect.\,\ref{SSS:HEURISTICANDNUMERICAL}:
\begin{quote}
	For near-Kasner initial data, we can prove stable blowup in regimes where we can prove that
	time-derivative terms dominate spatial derivative terms in the equations,
	i.e., when we can prove that the AVTD behavior
	described in Subsubsect.\,\ref{SS:ADDITIONALCONTEXT} occurs.
\end{quote}
In practice, to prove the time-integrability of $t \Ric_{\ j}^i$
and the many other estimates that we need to close the proof, 
we rely on a bootstrap argument involving 
various norms that capture the following behavior:
the high-level derivatives are allowed to be substantially
more singular than the low-level
derivatives. Here we will not describe the logical flow of our bootstrap argument in detail,
but rather only describe how the various estimates fit together consistently.
Moreover, we will not focus on the ``smallness assumptions'' (i.e., near-Kasner assumptions) 
on the initial data that we need in our detailed proof, 
but rather only on the main feature of the analysis: the various powers of $t$ that arise; 
in this subsection,
we will simply denote all quantities that can be estimated in terms of the
near-Kasner initial data by ``$Data$''.
We stress already that our proof relies on commuting the evolution equations
with up to $N$ spatial derivatives, where $N$ has to be chosen sufficiently large
in a manner that we explain below.

We now recall that we are studying perturbations of a Kasner solution \eqref{E:KASNERSOLUION} 
whose exponents satisfy \eqref{E:KASNEREXPONENTSNOTTOOBIG}
(see Subsect.\,\ref{SS:EXISTENCEOFKASNER} for a proof that such Kasner solutions exist when $\mydim \geq 38$).
We also recall that for the Kasner solution
$(\KasnerMetric,\KasnerSecondFund,\widetilde{n})$, 
we have $\widetilde{n} \equiv 1$, while
$\KasnerMetric$ and $\KasnerSecondFund$
are respectively given by
\eqref{E:KASNERSPATIALMETRIC} and \eqref{E:KASNERSECONDFUND}.
Note also that the Kasner solution is spatially flat and thus 
the Ricci tensor of $\KasnerMetric$ vanishes.
For the perturbed solution,
to obtain the desired time integrability of the components $t \Ric_{\ j}^i$ described in the previous paragraph,
it clearly suffices to prove that 
\begin{align} \label{E:OURSPATIALRICCICONDITION}
	|\Ric_{\ j}^i| \lesssim t^{-p} \mbox{ for some constant } p < 2.
\end{align}
We stress that \eqref{E:OURSPATIALRICCICONDITION} 
is essentially the same as the heuristic 
estimates featured in the works
\cites{vBiK1972,vBiK1972,jB1978,jDmHpS1985}, 
and that the estimate was verified by the
solutions that we studied in \cites{iRjS2018,iRjS2014b,jS2017e}.
Before describing how we prove \eqref{E:OURSPATIALRICCICONDITION},
we first outline the two main consequences that it affords:
\begin{enumerate}
	\item $n - 1  \to 0$ as $t \downarrow 0$.
	\item The components $t \SecondFund_{\ j}^i$ remain bounded as $t \downarrow 0$.
\end{enumerate}
The proof of (1) follows from a simple application of the maximum principle to the elliptic PDE
$
t^2 g^{ab} \nabla_a \nabla_b n 
= 
(n - 1) 
+ 
t^2 n \Sc
$,
where $\Sc = \Ric_{\ a}^a$ is the scalar curvature of $g$;
see the proof of \eqref{E:LAPSELOWNORMELLIPTIC} for the details.
The proof of (2)
essentially follows from freezing the spatial point and integrating 
the following evolution
equation (see equation \eqref{E:REWRITTENPARTIALTKCMC})
from time $t$ to time $1$:
$
\partial_t (t \SecondFund_{\ j}^i) = t \Ric_{\ j}^i 
+ \cdots
$,
and from using a few additional estimates that allow one to show that,
like $t \Ric_{\ j}^i$,
the error terms denoted by $\cdots$ are integrable over $t \in (0,1]$;
see the proof of Prop.\,\ref{P:INTEGRALINEQUALITYFORLOWMETRICNORM} for the details.

We now return to the crucial issue of proving that $|\Ric_{\ j}^i| \lesssim t^{-p}$ for some constant $p < 2$, 
a bound that is tied to all aspects of the proof. To achieve this, we first,
in view of \eqref{E:KASNEREXPONENTSNOTTOOBIG}, fix a constant $\Worstexp$ such that
\[
\max_{i=1,\cdots,\mydim} |q_i| < \Worstexp < 1/6.
\]
To control $\Ric_{\ j}^i$, we rely on the following estimates, whose proof we will describe below:
\begin{align} \label{E:INTROMETRICBOUNDS}
	& \max_{i,j=1,\cdots,\mydim} |g_{ij}| \lesssim t^{-2 \Worstexp},
	&&
	\max_{i,j=1,\cdots,\mydim} |g^{ij}| \lesssim t^{-2 \Worstexp}.
\end{align}
Note that \underline{the bounds \eqref{E:INTROMETRICBOUNDS} represent an absolute worst-case scenario}, 
in which all components of $g$ and $g^{-1}$
are allowed to be slightly more singular than the most singular component of the background Kasner
spatial metric and its inverse. As will become clear, the bounds \eqref{E:INTROMETRICBOUNDS}
are the most fundamental ones in our analysis.
One might think of \eqref{E:INTROMETRICBOUNDS} as allowing for
the ``complete mixing'' of the Kasner exponents in the perturbed solutions; 
this is the most glaring spot in the proof that has potential for 
improvements in future studies. 
To control $\Ric_{\ j}^i$, we first express it in terms of the Christoffel symbols of the transported
spatial coordinates (see \eqref{E:RICCICURVATUREEXACT} for the precise expression).
For the solutions under study, the most singular term in the component $\Ric_{\ j}^i$ is not a top-order term,
but rather lower-order terms (i.e., the last two products on RHS~\eqref{E:RICCICURVATUREEXACT})
that have the following schematic form: 
$g^{-3} (\partial g)^2$, where $\partial$ denotes 
the spatial gradient with respect to the transported spatial coordinates.
An interpolation argument,
which heavily relies on \eqref{E:INTROMETRICBOUNDS} and which we explain below,
yields that \emph{for large $N$} 
(where we again stress that $N$ is the maximum number of times that we commute the equations with spatial derivatives),
the low-level spatial derivatives of the components of $g$, including $\partial g$, are
\underline{only slightly more singular than the components of $g$ itself}. Thus, in view of \eqref{E:INTROMETRICBOUNDS},
we see that $g^{-3} (\partial g)^2$ is only slightly more singular than $(t^{- 2 \Worstexp})^5 = t ^{- 10 \Worstexp}$.
Since $\Worstexp < 1/6$, we conclude that the product $g^{-3} (\partial g)^2$ 
is \emph{less singular than $t^{-2}$}, consistent with the desired bound \eqref{E:OURSPATIALRICCICONDITION}.

\begin{remark}
	Since $g^{-3} (\partial g)^2$ is fifth-order in $(g,\partial g)$, the discussion above suggests that the proof
	should close assuming only $\Worstexp < 1/5$. However, in the top-order energy estimates, we encounter some
	below-top-order error terms that seem to prevent the proof from closing unless we assume $\Worstexp < 1/6$;
	we encounter such error terms, for example, 
	in the proof of \eqref{E:TOPORDERDIFFERENTIATEDJUNKMOMENTUMCONSTRAINTINDEXUPERRORTERML2ESTIMATE}.
\end{remark}

Having outlined how to obtain the desired bound for
$\Ric_{\ j}^i$,
we can, as we described above,
show that as $t \downarrow 0$,
$n-1 \to 0$
and 
$t \SecondFund_{\ j}^i$ remains bounded.
Then, given these bounds for $n$ and $\SecondFund$, 
we can integrate the evolution equations
$\partial_t g_{ij} = - 2n g_{ia} \SecondFund_{\ j}^a$
and
$\partial_t g^{ij} = 2n g^{ia} \SecondFund_{\ a}^j$
and use a near-Kasner assumption on the initial data
to obtain, through standard arguments,
the desired estimates \eqref{E:INTROMETRICBOUNDS}
(see Prop. \ref{P:INTEGRALINEQUALITYFORLOWMETRICNORM} and its proof for the precise statements).

We now return to the issue of the interpolation argument mentioned above,
which we used to show, for example, that 
$\partial g$ is only slightly more singular than $g$. To appreciate the role of
interpolation, it is essential to already know that
the best energy estimates we are able to obtain allow for the 
following rather singular behavior:
\begin{quote}
The top-order derivatives of $g$ can be as large (in $L^2$) 
as $Data \times t^{- (\Blowupexp + 1)}$, where
$\Blowupexp > 0$ is a large universal constant, independent of the maximum number of times 
(i.e., $N$) that we
commute the equations with spatial derivatives. 
\end{quote}
That is, under appropriate bootstrap assumptions,
we can prove only that\footnote{The bound \eqref{E:INTROTOPORDERDRIVATIVESOFMETRICBOUND}
is slightly inaccurate in that it does not feature the precise norm
that we use in proving the main theorem;
see Subsect.\,\ref{SS:LEBESGUEANDSOBOLEVNORMS} for the precise definitions of the norms.}
\begin{align} \label{E:INTROTOPORDERDRIVATIVESOFMETRICBOUND}
	t^{\Blowupexp + 1} 
	\| \partial g \|_{\dot{H}^N(\Sigma_t)}
	\lesssim Data.
\end{align}
Then standard Sobolev interpolation
(see \eqref{E:LINFINITYSOBOLEVINTERPOLATION})
yields the following bound for the components of $\partial g$:
\begin{align} \label{E:INTROMODELINTERPOLATION}
\left\| 
	\partial g 
\right\|_{L^{\infty}(\Sigma_t)}
\lesssim
\| g \|_{L^{\infty}(\Sigma_t)}^{1 - (1 + \lfloor \mydim/2 \rfloor)/(N+1)}
\| \partial g \|_{\dot{H}^N(\Sigma_t)}^{(1 + \lfloor \mydim/2 \rfloor)/(N+1)}.
\end{align}
From \eqref{E:INTROMODELINTERPOLATION}, we see that 
even if the top-order homogeneous norm
$\| \partial g \|_{\dot{H}^N(\Sigma_t)}$
is as singular as $Data \times t^{- (\Blowupexp + 1)}$,
as long as we take $N$ to be sufficiently large (relative to $\Blowupexp$ and $\mydim$),
the singular nature of 
$\left\| 
	\partial g 
\right\|_{L^{\infty}(\Sigma_t)} 
$
will not be much worse than that of 
$
\left\| 
	g 
\right\|_{L^{\infty}(\Sigma_t)}
$,
i.e., no worse than $t^{-(2 \Worstexp + \upalpha)}$, where $\upalpha$ is small;
see the beginning of
Subsect.\,\ref{SS:SOBOLEVEMBEDDING} for further discussion of this issue.

It remains for us to discuss the top-order energy estimates
for $g$ and $\SecondFund$.
In reality, these must be complemented with top-order elliptic estimates for $n$,
but we will ignore this (relatively standard) issue in this subsection in order to condense
our summary of the proof.
Let $\vec{I}$ be a top-order spatial derivative multi-index, i.e., $|\vec{I}|=N$.
Using the evolution equations and integration by parts,
and using appropriate bootstrap assumptions to control error terms,
we are able to derive an estimate of the following form, valid for $t \in (0,1]$
(see Prop.\,\ref{P:MAINTOPORDERMETRICENERGYESTIMATE} for the details
and Subsect.\,\ref{SS:LEBESGUEANDSOBOLEVNORMS} for the definition of the norm
$\| \cdot \|_{L_g^2(\Sigma_t)}$):
\begin{align} \label{E:INTROIMPROVEMENTMAINTOPORDERMETRICENERGYESTIMATE}
		& 
		\left\|
			t^{\Blowupexp + 1} \partial_{\vec{I}} \SecondFund 
		\right\|_{L_g^2(\Sigma_t)}^2
		+
		\frac{1}{4}
		\left\|
			t^{\Blowupexp + 1} \partial \partial_{\vec{I}} g 
		\right\|_{L_g^2(\Sigma_t)}^2
			\\
		& \leq 
			Data
			-
			\left\lbrace
				2 \Blowupexp
				- 
				C_*
			\right\rbrace
			\int_{s=t}^1
				s^{-1}
				\left\lbrace
					\left\|
						s^{\Blowupexp + 1} \partial_{\vec{I}} \SecondFund 
					\right\|_{L_g^2(\Sigma_s)}^2
					+
					\frac{1}{4}
					\left\|
						s^{\Blowupexp + 1} \partial_{\vec{I}} \partial g 
					\right\|_{L_g^2(\Sigma_s)}^2
				\right\rbrace
			\, ds
			+
			\cdots,
			\notag
	\end{align}
	where $\cdots$ denotes error terms that, while they require some care to handle, are less singular.
	We stress the following points:
	\begin{itemize}
		\item The constant $C_*$ on RHS~\eqref{E:INTROIMPROVEMENTMAINTOPORDERMETRICENERGYESTIMATE} is \underline{universal},
			i.e., independent of $N$ and $\Blowupexp$. Roughly, $C_*$ is generated by the coefficients of the most singular \emph{linear} terms
			in the evolution equations and the elliptic PDE for $n$; the most singular terms
			are all tied, either directly or indirectly, to the top-order derivatives of $g$ and $\SecondFund$.
			In our prior works \cites{iRjS2018,iRjS2014b,jS2017e}, we were able to show that
			$C_*$ is small or vanishing, thanks to the approximate monotonicity identities that we mentioned earlier in the paper.
			For the solutions under study here, $C_*$ can be large
			(and we do not bother to carefully track the precise value of $C_*$).
		\item We inserted the time weights $t^{\Blowupexp + 1}$ ``by hand'' into the energies in equation
			\eqref{E:INTROIMPROVEMENTMAINTOPORDERMETRICENERGYESTIMATE}. 
			We note that when proving \eqref{E:INTROIMPROVEMENTMAINTOPORDERMETRICENERGYESTIMATE}
			(roughly, by taking the time derivative of the LHS and integrating by parts), 
			one encounters terms
			in which $\partial_t$ falls on the weights. Roughly, this leads 
			to the integrals preceded by the factor of
			$- 2 \Blowupexp$ on RHS~\eqref{E:INTROIMPROVEMENTMAINTOPORDERMETRICENERGYESTIMATE}
			(note that $t \in (0,1]$, which explains why the factors $- 2 \Blowupexp$ are on the RHS). 
	\end{itemize}
	The key point is that if we choose $\Blowupexp$ to be sufficiently large, then
	the factor 
	$
	-
	\left\lbrace
				2 \Blowupexp
				- 
				C_*
			\right\rbrace
$
on RHS~\eqref{E:INTROIMPROVEMENTMAINTOPORDERMETRICENERGYESTIMATE} is negative, 
and the corresponding integral has an overall ``friction'' sign. In particular,
it can be discarded, leaving only the less singular error terms 
$\cdots$ on RHS~\eqref{E:INTROIMPROVEMENTMAINTOPORDERMETRICENERGYESTIMATE}.
A careful analysis of $\cdots$ allows one to conclude, via Gronwall's inequality,
the top-order a priori energy estimate
\[
\left\|
			t^{\Blowupexp + 1} \partial_{\vec{I}} \SecondFund 
		\right\|_{L_g^2(\Sigma_t)}^2
		+
		\frac{1}{4}
		\left\|
			t^{\Blowupexp + 1} \partial \partial_{\vec{I}} g 
		\right\|_{L_g^2(\Sigma_t)}^2
\lesssim
Data
\]
as desired. 

Although the above discussion summarizes the main ideas,
in our detailed proof, we encounter several hurdles
that we overcome using additional ideas.
While conceptually simple, these ideas are somewhat technically involved.
We close this subsection by highlighting some of the features of this analysis.
\begin{itemize}
	\item To control error terms, we rely on a variety of norms.
		In the next point below, we will shed some light on how we use the different kinds of norms;
		see Subsects.\,\ref{SS:POINTWISENORMS} and~\ref{SS:LEBESGUEANDSOBOLEVNORMS} for the precise definitions.
		For example, when bounding $\Sigma_t$-tangent tensors, 
		we rely on pointwise norms $|\cdot|_{Frame}$ that measure the 
		size of components relative to the transported spatial coordinate
		frame as well as the more geometric norm
		$|\cdot|_g$, which measures the size of tensors
		using the dynamic spatial metric $g$. 
		Similarly, our analysis relies on
		Sobolev norms for the frame components,
		such as
		$\| \cdot \|_{H_{Frame}^N(\Sigma_t)}$,
		as well as more geometric Sobolev norms
		$\| \cdot \|_{H_g^N(\Sigma_t)}$.
	\item Although the interpolation estimates described above are sufficient
		for controlling various error terms at the low derivative levels,
		interpolation is not quite sufficient, in itself, for controlling
		some of the error terms in the high-order energy estimates.
		For example, our proof of the error term estimate
		\eqref{E:TOPORDERDIFFERENTIATEDJUNKMOMENTUMCONSTRAINTINDEXUPERRORTERML2ESTIMATE}
		relies not only on interpolation, 
		but also on the structure of Einstein's equations in our gauge.
		We now explain why this is the case.
		First, the top-order energy estimates
		can be obtained only in terms of geometric norms
		such as $\| \cdot \|_{\dot{H}_g^N(\Sigma_t)}$,
		since the basic energy identities involve these kinds of norms.
		In contrast, for $M \leq N$, 
		Sobolev interpolation estimates involving the ``background differential operators'' $\partial_i$
		are most naturally stated and derived in terms of
		the less geometric norms $\| \cdot \|_{\dot{H}_{Frame}^M(\Sigma_t)}$.
		For tensorfields, the discrepancy between the norms
		$\| \cdot \|_{\dot{H}_{Frame}^M(\Sigma_t)}$
		and
		$\| \cdot \|_{\dot{H}_g^M(\Sigma_t)}$
		is factors of $g$ and $g^{-1}$
		(where the number of factors depends on the order of the tensorfield),
		and by \eqref{E:INTROMETRICBOUNDS},
		the two norms can differ in strength by (singular) 
		powers of $t^{- \Worstexp}$; in some cases, 
		these powers of $t^{- \Worstexp}$ are strong enough
		so that in the energy estimates, 
		certain below-top-order error terms seem, at first glance, 
		to be more singular than the main top-order terms. If this were the
		case, then our bootstrap argument would not close.
		Fortunately, this is not the case, but
		to prove this,
		we use an argument that involves deriving energy estimates not only at the very highest level,
		but also at down-to-two derivatives below top. 
		In deriving these below-top-order energy estimates,
		we use arguments that lose one derivative, 
		by treating the evolution equations like
		transport equations along the integral curves of $\partial_t$
		with derivative-losing source terms.
		These transport-type estimates lead to better below-top-order estimates
		compared to the ones that pure interpolation would afford,
		and we stress that
		this approach is viable only because of the structure of Einstein’s equations in our gauge.
		To implement this procedure in a consistent fashion,
		we rely on a hierarchy of energies that features different $t$ weights at
		different orders
		and that involves 
		both
		geometric norms $\| \cdot \|_{\dot{H}_g^M(\Sigma_t)}$
		and coordinate frame norms
		$\| \cdot \|_{\dot{H}_{Frame}^M(\Sigma_t)}$.
		See Defs.\,\ref{D:LOWNORMS} and~\ref{D:HIGHNORMS} 
		for the precise
		definitions of the $t$-weighted norms that use to control the solution;
		we prove that the norms from Defs.\,\ref{D:LOWNORMS} and~\ref{D:HIGHNORMS}
		are uniformly bounded up to the singularity.
	\item In carrying out our bootstrap argument, we find it convenient
		to derive, as a preliminary step, estimates showing that the lapse
		$n$ can be controlled in terms of $g$ and $\SecondFund$.
		This is the content of Sect.\,\ref{S:CONTROLOFLAPSEINTERMSOFMETRICANDSECONDFUND}.
		We have downplayed these estimates in this subsection since they
		are based on deriving standard estimates for elliptic equations.
		One feature worth noting is that for most lapse estimates,
		we rely on the elliptic PDE
		$
t^2 g^{ab} \nabla_a \nabla_b n 
= 
(n - 1) 
+ 
t^2 n \Sc
$.
This is a ``good'' equation in the sense that it involves
source terms that depend only on spatial derivatives of $g$,
which are less singular than time derivatives of $g$ (as represented by $\SecondFund$).
However, to obtain the top-order lapse estimates,
we first use the Hamiltonian constraint
\eqref{E:HAMILTONIAN}
to algebraically replace, in the elliptic lapse PDE, 
$\Sc$ with terms that depend on $\SecondFund$;
see equation \eqref{E:COMMUTEDLAPSEHIGHORDER} and its proof.
This algebraic replacement leads to error terms that can be controlled within the scope of
our bootstrap approach, 
both from the point of view of regularity and from the point of view 
of the structure of the singular error terms that our framework 
can accommodate.
\end{itemize}

\subsection{Further comparisons with two related works}
\label{SS:POSSIBLEFUTUREDIRECTIONS}
In this subsection, we compare and contrast the
reasons behind the assumed minimum values
of $\mydim$ in the present work
and in the aforementioned works \cites{jDmHpS1985,tDmHrAmW2002}.
We start by recalling that,
as we described in Subsubsect.\,\ref{SSS:HEURISTICANDNUMERICAL},
the work \cite{jDmHpS1985} provided heuristic evidence 
for the existence of a large family of monotonic spacelike singularity-forming 
Einstein-vacuum solutions whenever $\mydim \geq 10$,
and that such solution families were constructed in \cite{tDmHrAmW2002}
(though stability in the sense of the present article was not proved there).
There is a substantial gap between the assumption $\mydim \geq 10$
and the assumption $\mydim \geq 38$ of the present article.
In the remainder of this subsection, we provide an overview of how the assumption
$\mydim \geq 10$ is used in \cites{jDmHpS1985,tDmHrAmW2002}, and shed some light
on how one might approach the problem of extending
the results of the present article to apply to a larger range of $\mydim$ values.
Actually, we will focus only on 
the heuristic work \cite{jDmHpS1985} since this allows for a simplified
presentation of the main ideas.

In \cites{jDmHpS1985,tDmHrAmW2002} and the present work, a crucial step is justifying that, in some sense,
the spatial derivative terms in the Einstein-vacuum equations are negligible compared
to the time derivative terms near the singularity.
For example, in the present work
(see in particular Subsect.\,\ref{SS:MAINIDEASINPROOF}),
this step is embodied by the estimate \eqref{E:OURSPATIALRICCICONDITION},
that is,
our proof that the components of the Ricci tensor of $g$ obey
the following bound:
$|\Ric_{\ j}^i| \lesssim t^{-p}$ for some constant $p < 2$.
Similarly, the heuristic arguments given in \cite{jDmHpS1985} were
designed exactly to yield a bound of this type.
From this perspective, a primary analytic 
difference between the present work and
the works \cites{jDmHpS1985,tDmHrAmW2002} is that the latter works
precisely accounted for, in a tensorial fashion, 
\underline{partial cancellations of singular powers of $t$} 
that can occur in the products featured in the coordinate expression 
of the Ricci tensor $\Ric_{\ j}^i$ of $g$
(i.e., the products on RHS~\eqref{E:RICCICURVATUREEXACT}),
at least for metrics that can be interpreted as having
``spatially dependent Kasner exponents''
(see below for more on this point).
In contrast, in the present article, we allow for the possibility that
all components of $g$ and $g^{-1}$
are as singular as $t^{-2 \Worstexp}$
(see, for example, the discussion surrounding \eqref{E:INTROMETRICBOUNDS}), 
and we have therefore ignored the possibility of exploiting such cancellations;
this is apparent from the proof outline that we gave in Subsect.\,\ref{SS:MAINIDEASINPROOF}.
We now further explain the connection between the work \cite{jDmHpS1985}
and the notion of a spatial metric having
``spatially dependent Kasner exponents.''

The starting point of \cite{jDmHpS1985} 
is the hope that there are singular solutions to the Einstein-vacuum equations
that are somehow well-described by a
spacetime metric having ``spatially dependent Kasner exponents'', that is, a metric
of the following form,
defined for\footnote{As in our main results, here we have assumed the spatial topology
$\mathbb{T}^{\mydim}$; in \cite{jDmHpS1985}, the spatial topology was not specified.} 
$(t,x) \in (0,1] \times \mathbb{T}^{\mydim}$:
\begin{align} \label{E:ASSUMEDASYMPTOTICFORMOFKASNERLIKEMETRICS}
	\gfour
	& = - dt^2
		+
		\sum_{i=1}^{\mydim}
		t^{2 q_i(x)}
		\omega^{(i)}(x)
		\otimes
		\omega^{(i)}(x),
\end{align}
where $\lbrace \omega^{(i)}(x) \rbrace_{i=1,\cdots,\mydim}$
are a linearly independent set of time-independent one-forms
on $\mathbb{T}^{\mydim}$ (in particular, $\omega^{(i)}(x) = \omega_a^{(i)}(x) dx^a$ relative to local coordinates), 
and $\lbrace q_i(x) \rbrace_{i=1,\cdots,\mydim}$ are ``$x$-dependent''
Kasner exponents, subject to the following ``spatially dependent Kasner constraints''
(i.e., $x$-dependent analogs of \eqref{E:KASNEREXPONENTSSUMTOONE}-\eqref{E:KASNEREXPONENTSSQUARESSUMTOONE}):
\begin{align} \label{E:XDEPENDENTKASNERCONSTRAINTS}
	\sum_{i=1}^{\mydim} q_i(x)
	& = 1
	=
	\sum_{i=1}^{\mydim} (q_i(x))^2.
\end{align}
Given the above assumptions, the 
authors of \cite{jDmHpS1985} then observe
(through straightforward but tedious computations)
that for metrics of the form \eqref{E:ASSUMEDASYMPTOTICFORMOFKASNERLIKEMETRICS},
the Ricci tensor components of the spatial metric verify 
\begin{align} \label{E:RICCICONDITIONINHEURISTICPAPER}
	\lim_{t \downarrow 0} |t^2 \Ric_{\ j}^i(t,x)| 
	& = 0,
	&&
	(i,j = 1,\cdots,\mydim),
\end{align}
as long as the following system of inequalities holds:
\begin{align} \label{E:KASNEREXPONENTINEQUALITY}
	2 q_i(x)
	+
	\sum_{l \neq i,j,k} q_l(x) 
	&
	> 0,
	\qquad \mbox{whenever } i \neq j, i \neq k,\mbox{and } j \neq k.
\end{align}
Note that \eqref{E:RICCICONDITIONINHEURISTICPAPER}
is essentially equivalent to the estimate 
\eqref{E:OURSPATIALRICCICONDITION} that we
rigorously obtain in proving our main results;
as we described in Subsect.\,\ref{SS:MAINIDEASINPROOF},
\eqref{E:OURSPATIALRICCICONDITION}
is a quantified version of the idea that spatial derivative terms should
be negligible. The main conclusions of \cite{jDmHpS1985} can be summarized as follows.
\begin{quote}
	In view of \eqref{E:XDEPENDENTKASNERCONSTRAINTS},
	it is not possible to simultaneously satisfy all inequalities \eqref{E:KASNEREXPONENTINEQUALITY}
	when $\mydim \leq 9$. However, there \emph{does} exist an open set of 
	$\lbrace q_i \rbrace_{i=1,\cdots,\mydim}$
	satisfying \eqref{E:KASNEREXPONENTINEQUALITY}
	whenever $\mydim \geq 10$.
\end{quote}

We now stress that
the metric $\gfour$ featured in \eqref{E:ASSUMEDASYMPTOTICFORMOFKASNERLIKEMETRICS}
does not generally solve the Einstein-vacuum equations.
However, it does solve a truncated version of the equations in which all spatial derivative terms,
including $\Ric_{\ j}^i$, are discarded; 
in the mathematical general relativity literature,
the truncated system is often referred to as the
Velocity Term Dominated (VTD) system.
For this reason, the condition \eqref{E:RICCICONDITIONINHEURISTICPAPER},
which is supposed to capture the negligibility of the spatial derivative terms,
suggests that one can think of the VTD solution $\gfour$ as
``asymptotically solving'' the Einstein-vacuum equations
of Prop.\,\ref{P:EINSTEINVACUUMEQUATIONSINCMCTRANSPORTEDSPATIALCOORDINATES}
as $t \downarrow 0$. That is, for metrics of the form
\eqref{E:ASSUMEDASYMPTOTICFORMOFKASNERLIKEMETRICS},
the $\SecondFund$-involving products in equation
\eqref{E:PARTIALTKCMC} are of size $t^{-2}$
while, by \eqref{E:RICCICONDITIONINHEURISTICPAPER},
the term $\Ric_{\ j}^i$ is less singular.
Put differently, the metrics
$\gfour$ featured in \eqref{E:ASSUMEDASYMPTOTICFORMOFKASNERLIKEMETRICS}
solve a PDE system obtained from the Einstein equations
by throwing away terms that can be shown to be small in the sense of \eqref{E:RICCICONDITIONINHEURISTICPAPER}.
The work \cite{jDmHpS1985} can therefore be viewed as providing a kind of ``consistency argument''
for metrics \eqref{E:ASSUMEDASYMPTOTICFORMOFKASNERLIKEMETRICS} that satisfy \eqref{E:KASNEREXPONENTINEQUALITY},
i.e., an argument based on ignoring terms in the evolution equations 
that, for the \underline{non-solution} $\gfour$,
are small compared to the main terms.
For this perspective, it is reasonable to speculate that, 
for metrics $\gfour$ of the form \eqref{E:ASSUMEDASYMPTOTICFORMOFKASNERLIKEMETRICS}
that satisfy \eqref{E:RICCICONDITIONINHEURISTICPAPER},
there might be true Einstein-vacuum solutions lying ``close'' to 
$\gfour$.

We now explain some further connections 
between the heuristic picture painted in \cite{jDmHpS1985}
and the results of the present article.
As we described in Subsect.\,\ref{SS:MAINIDEASINPROOF},
the estimate \eqref{E:OURSPATIALRICCICONDITION}
(which is almost the same as the condition \eqref{E:RICCICONDITIONINHEURISTICPAPER} from \cite{jDmHpS1985})
is a main ingredient needed to show that at each fixed $x$,
$\partial_t (t \SecondFund_{\ j}^i(t,x))$
is integrable over the time interval $(0,1]$.
From this integrability condition, one can easily show not only
that $t \SecondFund_{\ j}^i(t,x)$ 
remains bounded as $t \downarrow 0$
(which is what we prove in our main theorem), 
but also that the following stronger result holds:
$t \SecondFund_{\ j}^i(t,x)$ converges uniformly to a function $K_{\ j}^i(x)$ as 
$t \downarrow 0$; see \cites{iRjS2014b,iRjS2018}
for proofs of these kinds of convergent results in
a related context, and see also Remark~\ref{R:ADDITONALINFORMATION}. 
Combining these kinds of convergence estimates
with related ones (in particular for the lapse $n$),
\emph{one could rigorously prove that for the Einstein-vacuum solutions $\gfour$ under study in this article, 
$\gfour$ is asymptotic to a metric that behaves like the family of metrics 
that satisfy \eqref{E:ASSUMEDASYMPTOTICFORMOFKASNERLIKEMETRICS}-\eqref{E:KASNEREXPONENTINEQUALITY}};
again, readers can consult \cites{iRjS2014b,iRjS2018}
for proofs of these kinds of results in
a related context.

The situation can be summarized as follows:
the singular solutions that can be shown to be dynamically stable
under our framework are asymptotic (as the singularity is approached)
to a metric that is well-described by the family of metrics
satisfying \eqref{E:ASSUMEDASYMPTOTICFORMOFKASNERLIKEMETRICS}-\eqref{E:KASNEREXPONENTINEQUALITY}.
The glaring question, then, is whether or not, under the assumptions 
\eqref{E:XDEPENDENTKASNERCONSTRAINTS} and \eqref{E:RICCICONDITIONINHEURISTICPAPER}, 
the ``VTD solutions'' $\gfour$ \underline{defined} by \eqref{E:ASSUMEDASYMPTOTICFORMOFKASNERLIKEMETRICS}
are always the asymptotic end-state
of a true singularity-forming solution of the Einstein-vacuum equations. A natural starting point for 
addressing this question would be to try to extend the stable blowup results
of the present article to apply to perturbations of all Kasner solutions \eqref{E:KASNERSOLUION}
whose Kasner exponents $\lbrace q_i \rbrace_{i=1,\cdots,\mydim}$
satisfy \eqref{E:KASNEREXPONENTINEQUALITY}, which in particular would extend our results
to the cases $\mydim \geq 10$. If such a result is in fact true, then its proof would
almost certainly involve observing the kinds of tensorial cancellations that the authors of \cite{jDmHpS1985}
exploited to derive the bound \eqref{E:RICCICONDITIONINHEURISTICPAPER}, 
a feat that we did not attempt in the present work.
To allow for the observation of these kinds of tensorial cancellations, it is likely
that an important step in the proof would be anticipating, 
in some fashion going beyond our work here,
that the perturbed solution is converging towards a metric that 
is similar to ones of the form \eqref{E:ASSUMEDASYMPTOTICFORMOFKASNERLIKEMETRICS}.
This is a worthy avenue for future investigation, especially since it is intimately
tied to the fundamental question of which terms in Einstein's equations are the ones
driving the breakdown of solutions.

\subsection{Paper outline}
\label{SS:PAPEROUTLINE}
The remainder of the paper is organized as follows.
\begin{itemize}
	\item In Subsect.\,\ref{SS:NOTATION}, we summarize the notation and conventions
		that we use in the rest of the article.
	\item In Sect.\,\ref{S:SETUPOFANALYSIS}, we set up the ensuing analysis 
		by providing the Einstein-vacuum equations in CMC-transported spatial coordinates
		and showing that Kasner solutions verifying our assumptions exist when $\mydim \geq 38$.
	\item In Sect.\,\ref{S:NORMSANDBOOTSTRAPASSUMPTIONS}, we define the norms that we use
		to control solutions and formulate the bootstrap assumptions that we use in our analysis.
	\item In Sect.\,\ref{S:INTERPOLATION}, we provide some preliminary technical estimates,
		which are standard.
	\item In Sect.\,\ref{S:CONTROLOFLAPSEINTERMSOFMETRICANDSECONDFUND}, 
		we derive elliptic estimates showing that the lapse $n$ 
		can be controlled, in various norms, in terms of $g$ and $\SecondFund$.
	\item In Sect.\,\ref{S:ESTIMATESFORLOWDERIVATIVESOFMETRIC}, we derive
		preliminary estimates for $g$ and $\SecondFund$ at the low derivative levels.
	\item In Sect.\,\ref{S:TOPORDERESTIMATESMETRICANDSECONDFUND}, 
		we derive preliminary estimates for $g$ and $\SecondFund$ at the top derivative levels,
		i.e., our main top-order energy estimates.
	\item In Sect.\,\ref{S:ESTIMATESFORJUSTBELOWTOPORDERDERIVATIVESOFMETRICANDSECONDFUNDAMENTALFORM},
		we derive preliminary energy estimates for $g$ and $\SecondFund$ at the near-top-order derivative levels;
		as we explained near the end of Subsect.\,\ref{SS:MAINIDEASINPROOF}, 
		for technical reasons,
		we need these estimates to close our bootstrap argument.
	\item In Sect.\,\ref{S:APRIORIESTIMATES}, we combine the results of
		Sects.\,\ref{S:CONTROLOFLAPSEINTERMSOFMETRICANDSECONDFUND}-\ref{S:ESTIMATESFORJUSTBELOWTOPORDERDERIVATIVESOFMETRICANDSECONDFUNDAMENTALFORM}
		to obtain the main technical result of the article:
		a priori estimates showing that appropriately defined solution norms
		can be uniformly controlled by the initial data, all the way up to the singularity.
	\item In Sect.\,\ref{S:CURVATUREANDTIMELIKECURVELENGTHESTIMATES}, we derive some results
		that describe the breakdown as $t \downarrow 0$, i.e., the curvature blows up 
		and past-directed timelike geodesics terminate with a finite length.
	\item In Sect.\,\ref{S:MAINTHM}, we synthesize the results of the previous sections
		and give a relatively short proof of the main stable blowup theorem.
\end{itemize}

\subsection{Notation and conventions}
\label{SS:NOTATION}
For the reader's convenience, in this subsection,
we provide some notation and conventions that we use
throughout the article. Some of the concepts referred to here
are not defined until later.

\subsubsection{Indices} \label{SSS:INDICES}
Greek ``spacetime'' indices $\alpha, \beta, \cdots$ take on the values $0,1,\cdots,\mydim$, 
while Latin ``spatial'' indices $a,b,\cdots$ 
take on the values $1,2,\cdots,\mydim$. 
Repeated indices are summed over (from $0$ to $\mydim$ if they are Greek, and from $1$ to $\mydim$ if they are Latin). 
We use the same conventions for primed indices such as $a'$ as we do for their non-primed counterparts.
Spatial indices are lowered and raised with the Riemannian metric $g_{ij}$ and its inverse $g^{ij}$.

\subsubsection{Spacetime tensorfields and \texorpdfstring{$\Sigma_t$-}{constant-time hypersurface-}tangent tensorfields}
\label{SSS:TENSORFIELD}
We denote spacetime tensorfields $\Tfour_{\nu_1 \cdots \nu_m}^{\ \ \ \ \ \ \mu_1 \cdots \mu_l}$ in bold font. 
We denote $\Sigma_t$-tangent tensorfields $T_{b_1 \cdots b_m}^{\ \ \ \ \ a_1 \cdots a_l}$ in non-bold font.

\subsubsection{Coordinate systems and differential operators} \label{SSS:COORDINATES}
We often work in a fixed standard local coordinate system $(x^1,\cdots,x^{\mydim})$ on $\mathbb{T}^{\mydim}$. 
The vectorfields 
$\partial_j := \frac{\partial}{\partial x^{j}}$ are globally well-defined even though the coordinates themselves are not. 
Hence, in a slight abuse of notation, we use $\lbrace \partial_1, \cdots, \partial_{\mydim} \rbrace$ to denote the globally defined vectorfield frame. We denote the corresponding dual frame by $\lbrace dx^1, \cdots, dx^{\mydim} \rbrace$. 
In CMC-transported spatial coordinates,
the spatial coordinate functions are transported along the unit normal to $\Sigma_t$, 
thus producing a local coordinate system $(x^0,x^1,x^2,x^3)$ on manifolds-with-boundary of the form 
$(T,1] \times \mathbb{T}^{\mydim}$, and we often write $t$ instead of $x^0$. 
The corresponding vectorfield frame on $(T,1] \times \mathbb{T}^{\mydim}$ 
is $\lbrace \partial_0, \partial_1, \cdots, \partial_{\mydim} \rbrace$, and the corresponding dual frame is
$\lbrace dx^0, dx^1, \cdots, dx^{\mydim} \rbrace$.  
Relative to this frame, Kasner solutions are of the form 
\eqref{E:KASNERSOLUION}. 
The symbol $\partial_{\mu}$ denotes the frame derivative $\frac{\partial}{\partial x^{\mu}}$, and we often write $\partial_t$ instead of $\partial_0$ and $dt$ instead of $dx^0$. Many of our 
equations and estimates are stated relative to the frame 
$\big\lbrace \partial_{\mu} \big\rbrace_{\mu = 0,1,\cdots,\mydim}$ and dual frame 
$\big\lbrace dx^{\mu} \big\rbrace_{\mu = 0,1,\cdots,\mydim}$.

We use the notation $\partial f$ to denote the \emph{spatial coordinate} gradient of the function $f$ 
relative to the transported spatial coordinates.
Similarly, if $\omega$ is a $\Sigma_t$-tangent one-form, 
then $\partial \omega$ denotes the $\Sigma_t$-tangent type $\binom{0}{2}$
tensorfield with components $\partial_i \omega_j$ relative to the frame described above.

If $\vec{I} = (n_1,n_2,\cdots,n_{\mydim})$ is an array comprising 
$\mydim$ non-negative integers, 
then we define the \emph{spatial} multi-indexed 
differential operator $\partial_{\vec{I}}$ by 
$\partial_{\vec{I}} := \partial_1^{n_1} \partial_2^{n_2} \cdots \partial_{\mydim}^{n_{\mydim}}$. The notation 
$|\vec{I}| := n_1 + n_2 + \cdots + n_{\mydim}$ denotes the order of $\vec{I}$. 

Throughout, $\Dfour$ denotes the Levi--Civita connection of $\gfour$.  We write 
\begin{align} \label{E:SPACETIMECOV}
	\Dfour_{\nu} \Tfour_{\nu_1 \cdots \nu_n}^{\ \ \ \ \ \ \mu_1 \cdots \mu_m} = 
		\partial_{\nu} \Tfour_{\nu_1 \cdots \nu_n}^{\ \ \ \ \ \ \mu_1 \cdots \mu_m} + 
		\sum_{r=1}^m \Chfour_{\nu \ \alpha}^{\ \mu_r} \Tfour_{\nu_1 \cdots \nu_n}^{\ \ \ \ \ \ \mu_1 \cdots \mu_{r-1} \alpha \mu_{r+1} 
			\cdots \mu_m} 
		- 
		\sum_{r=1}^n \Chfour_{\nu \ \nu_{r}}^{\ \alpha} 
		\Tfour_{\nu_1 \cdots \nu_{r-1} \alpha \nu_{r+1} \cdots \nu_n}^{\ \ \ \ \ \ \ \ \ \ \ \ \ \ \ \ \ \mu_1 \cdots \mu_m}
\end{align} 
to denote a component of the covariant derivative of a tensorfield $\Tfour$ 
(with components $\Tfour_{\nu_1 \cdots \nu_n}^{\ \ \ \ \ \mu_1 \cdots \mu_m}$) defined on
$(T,1] \times \mathbb{T}^{\mydim}$. 
The Christoffel symbols of $\gfour$, which we denote by $\Chfour_{\mu \ \nu}^{\ \lambda}$, are defined by
\begin{align} \label{E:FOURCHRISTOFFEL}
\Chfour_{\mu \ \nu}^{\ \lambda} & := 
		\frac{1}{2} (\gfour^{-1})^{\lambda \sigma} 
		\left\lbrace
			\partial_{\mu} \gfour_{\sigma \nu} 
			+ \partial_{\nu} \gfour_{\mu \sigma} 
			- \partial_{\sigma} \gfour_{\mu \nu} 
		\right\rbrace.
\end{align}

We use similar notation to denote the covariant derivative of a $\Sigma_t$-tangent tensorfield $T$ 
(with components $T_{b_1 \cdots b_n}^{\ \ \ \ \ a_1 \cdots a_m}$) with respect to the Levi--Civita connection $\nabla$ 
of the Riemannian metric $g$, i.e., the first fundamental form of $\Sigma_t$. 
The Christoffel symbols of $g$, which we denote by $\Gamma_{j \ k}^{\ i}$,
are defined by 
\begin{align} \label{E:SPACECHRISTOFFEL}
	\Gamma_{j \ k}^{\  i} 
	:= \frac{1}{2} g^{ia} 	
			\left\lbrace
				\partial_j g_{ak} 
				+ \partial_k g_{ja} 
				- \partial_a g_{jk}
			\right\rbrace.
\end{align}

\subsubsection{Integrals and basic norms} 
\label{SSS:BASICNORMS}
Throughout this subsection, $f$ denotes a scalar function defined on the hypersurface 
$\Sigma_t = \lbrace (s,x) \in \mathbb{R} \times \mathbb{T}^{\mydim} \ | \ s = t \rbrace$.
We define
\begin{align} \label{E:SIGMATINTEGRALDEF}
	\int_{\Sigma_t}
		f
	\, dx
	:= \int_{\mathbb{T}^{\mydim}} 
				f(t,x^1,\cdots,x^{\mydim}) 
			\, dx.
\end{align}
Above, the notation ``$\int_{\mathbb{T}^{\mydim}} f \, dx$'' 
denotes the integral of $f$ over $\mathbb{T}^{\mydim}$ 
with respect to the measure corresponding to the volume form 
of the \emph{standard Euclidean metric $\Euc$} on $\mathbb{T}^{\mydim}$, which has the components
$\mbox{\upshape diag}(1,1,\cdots,1)$ relative to the coordinate frame described 
in Subsubsect.\,\ref{SSS:COORDINATES}.
Note that $dx$ is \emph{not the canonical integration measure associated to the Riemannian metric $g$}.

All of our Sobolev norms are built out of the (spatial) $L^2$ norms of scalar quantities 
(which may be the components of a tensorfield). 
We define the standard $L^2$ norm $\| \cdot \|_{L^2(\Sigma_t)}$ as follows:
\begin{align} \label{E:SOBOLEVNORMDF}
	\left\| f \right\|_{L^2(\Sigma_t)}
	&:=
	\left(
	\int_{\Sigma_t}
		f^2
	\, dx
	\right)^{1/2}.
\end{align}
For integers $M \geq 0$, we define the standard $H^M$ norm $\| \cdot \|_{H^M(\Sigma_t)}$ as follows:
\begin{align} \label{E:HIGHERSOBOLEVNORMDF}
	\left\| f \right\|_{H^M(\Sigma_t)}
	&:= 
		\left(
			\sum_{|\vec{I}| \leq M}
			\left\| 
				\partial_{\vec{I}} f 
			\right\|_{L^2(\Sigma_t)}^2
		\right)^{1/2}.
\end{align}
We also define the following standard homogeneous analog of \eqref{E:HIGHERSOBOLEVNORMDF}:
\begin{align} \label{E:HOMOGENEOUSHIGHERSOBOLEVNORMDF}
	\left\| f \right\|_{\dot{H}^M(\Sigma_t)}
	&:= 
		\left(
			\sum_{|\vec{I}| = M}
			\left\| 
				\partial_{\vec{I}} f 
			\right\|_{L^2(\Sigma_t)}^2
		\right)^{1/2}.
\end{align}
Finally, we define the Lebesgue norm $\| \cdot \|_{L^{\infty}(\Sigma_t)}$
of scalar functions $f$ in the usual way:
\begin{align} \label{E:LINFINITYNORM}
	\left\| f \right\|_{L^{\infty}(\Sigma_t)}
	&:= 
		\mbox{\upshape ess sup}_{x \in \mathbb{T}^{\mydim}} 
		|f(t,x)|.
\end{align}

In Subsects.\,\ref{SS:POINTWISENORMS} and~\ref{SS:LEBESGUEANDSOBOLEVNORMS},
we will introduce additional norms for tensorfields,
many of which are built out of the basic ones from this subsubsection.

\subsubsection{Parameters}
\label{SSS:PARAMETERS}
\begin{itemize}
	\item $\varepsilon$ is a small ``bootstrap parameter'' that, in our bootstrap argument,
		bounds the size of the solution norms; see \eqref{E:BOOTSTRAPASSUMPTIONS}.
		The smallness of $\varepsilon$ needed to close the estimates is allowed
		to depend on the numbers $N$, $\Blowupexp$, $\Worstexp$, $\Room$, and $\mydim$.
	\item $\Blowupexp \geq 1$ denotes an ``exponent parameter'' that is featured in 
		the high-order solution norms from Def.\,\ref{D:HIGHNORMS}.
		To close our estimates, we will choose $\Blowupexp$ to be large enough to overwhelm
		various universal constants $C_*$ (see Subsubsect.\,\ref{SSS:CONSTANTS}).
	\item $N$ denotes the maximum number of times that we commute the equations with spatial derivatives
		(e.g., $\SecondFund \in H^N(\Sigma_t)$ and $g,n \in  H^{N+1}(\Sigma_t)$).
		To close our estimates, we will choose $N$ to be sufficiently large 
		in a (non-explicit) manner that depends on $\Blowupexp$, $\Worstexp$, $\Room$, and $\mydim$.
	\item $0 < \Worstexp < 1/6$ is a constant, fixed throughout the proof, 
		that bounds the magnitude of the background Kasner exponents.
	\item $\Room > 0$ is a small constant, fixed throughout the argument, 
		that we use to simplify the proofs of various estimates
		that ``have room in them.''
	\item $\Worstexp$ and $\Room$ are constrained by \eqref{E:WORSTEXPANDROOMLOTSOFUSEFULINEQUALITIES}.
	\item $\updelta > 0$ is a small $(N,\mydim)$-dependent parameter that is allowed to vary from line to line
		and that is generated by the estimates of Lemma~\ref{L:SOBOLEVBORROWALITTLEHIGHNORM}.
		In particular, we use the convention that a sum of two $\updelta$'s is another $\updelta$.
		The only important feature of $\updelta$ that we exploit in the proof is the following:
		at fixed $\mydim$, we have $\lim_{N \to \infty} \updelta = 0$.
		In particular, if $\Blowupexp$ is also fixed, then
		$\lim_{N \to \infty} \Blowupexp \updelta = 0$.
\end{itemize}

\subsubsection{Constants}
\label{SSS:CONSTANTS}
\begin{itemize}
	\item $C$ denotes a positive constant that is free to vary from line to line.
		$C$ can depend on $N$, $\Blowupexp$, $\mydim$, $\Worstexp$, and $\Room$,
		but it can be chosen to independent of
		all $\varepsilon > 0$ that are sufficiently small
		in the manner described in Subsubsect.\,\ref{SSS:PARAMETERS}.
	\item $C_*$ denotes a positive constant that is free to vary from line to line
		and that can depend on $\mydim$.
		However, unlike $C$, $C_*$ can be chosen to be \textbf{independent} of
		$N$, $\Blowupexp$, and $\varepsilon$, as long as $\varepsilon$
		is sufficiently small in the manner described in Subsubsect.\,\ref{SSS:PARAMETERS}.
		For example, $1 + C N! \varepsilon = C_*$
		while $N! = C$ and $N!/\Room = C$,
		where $C$ and $C_*$ are as above.
	\item We write $v \lesssim w$ to indicate that $v \leq C w$, with $C$ as above.
	\item We write $v = \mathcal{O}(w)$ to indicate that $|v| \leq C |w|$, with $C$ as above.
\end{itemize}

\subsubsection{Schematic notation}
\label{SSS:SCHEMATIC}
\begin{itemize}
	\item We write
		$v = \prod_{r=1}^R v_r$ to indicate that
		the $\Sigma_t$-tangent tensorfield $v$ is a tensor product,
		possibly involving contractions,
		of the $\Sigma_t$-tangent tensorfields $v_r$.
		We use this notation only when the precise details of the tensor
		product are not important for our analysis.
		We sometimes display the indices of $v$ to indicate its order;
		for example, the expression
		$v_{\ j}^i = \prod_{r=1}^R v_r$
		emphasizes that $v$ is type $\binom{1}{1}$.
	\item We write $v \mycong \sum_{r=1}^R v_r$ to indicate that
	the $\Sigma_t$-tangent tensorfield $v$ is a linear combination
	of the $\Sigma_t$-tangent tensorfields $v_r$,
	where \textbf{the coefficients in the linear combination are constants $\pm C$},
	with $C$ as in Subsubsect.\,\ref{SSS:CONSTANTS}.
	As above, we sometimes display the indices of $v$ to indicate its order.
	For example, $v_{ijk} \mycong v_1 + v_2$ means that 
	$v$ is type $\binom{0}{3}$ and that
	$v = \pm C_1 v_1 \pm C_2 v_2$, where the
	$C_i$ can depend on $N$, $\Blowupexp$, $\mydim$, $\Worstexp$, and $\Room$.
	\item We write $v \mycongstar \sum_{r=1}^R v_r$ to indicate that
	the $\Sigma_t$-tangent tensorfield $v$ is a linear combination
	of the $\Sigma_t$-tangent tensorfields $v_r$,
	where \textbf{the coefficients in the linear combination are constants $\pm C_*$},
	with $C_*$ universal constants enjoying the properties described in Subsubsect.\,\ref{SSS:CONSTANTS}.
	As above, we sometimes display the indices of $v$ to indicate its order.
	For example, $v_{\ j}^i \mycongstar v_1 + v_2$ means that 
	$v$ is type $\binom{1}{1}$ and that
	$v = \pm C_{*,1} v_1 \pm C_{*,1} v_2$, 
	where the $C_{*,i}$ are \textbf{independent of} $N$ and $\Blowupexp$.
\end{itemize}

\section{Setting up the Analysis}
\label{S:SETUPOFANALYSIS}
In this section, we start by providing the Einstein-vacuum equations in the gauge that we use to prove our main theorem.
Next, we provide some standard expressions for the curvature tensors of the first fundamental form of $\Sigma_t$.
Finally, show that when $\mydim \geq 38$, 
there exist Kasner solutions that satisfy the Kasner exponent assumptions in our main theorem.

\subsection{The Einstein-vacuum equations in CMC-transported spatial coordinates}
\label{SS:EQUATIONSINCMCTRANSPORTEDSPATIALCOORDINATES}
In this subsection, we recall the Einstein-vacuum equations in CMC-transported spatial coordinates gauge; 
this is the gauge that we use throughout the article.

\subsubsection{Basic ingredients in the setup}
\label{SSS:BASICINGREDIENTS}
In CMC-transported spatial coordinates, 
the spacetime metric is decomposed into the lapse $n$ and 
a Riemannian metric $g$ on the constant time hypersurfaces $\Sigma_t$ 
(known as the first fundamental form of $\Sigma_t$) as follows:
\begin{align} \label{E:SPACETIMEMETRICDECOMPOSEDINTOLAPSEANDFIRSTFUNDAMENTALFORM}
	\gfour 
	& = - n^2 dt \otimes dt 
	+ 
	g_{ab} dx^a \otimes dx^b.
\end{align}
We use $g^{ab}$ to denote the components of the inverse Riemannian metric $g^{-1}$.
Above and throughout, $t$ is a time function on the spacetime manifold $\mathcal{M}$
that we will describe just below and
$\lbrace x^a \rbrace_{a=1,\cdots,\mydim}$
are standard (locally defined) ``spatial coordinates'' on the constant-time
hypersurfaces $\Sigma_t := \lbrace (s,x) \in \mathcal{M} \ | \ s = t \rbrace$,
which are diffeomorphic to $\mathbb{T}^{\mydim}$.
We sometimes use the alternate notation $x^0 = t$.
We use the notation $\partial_{\mu} = \frac{\partial}{\partial x^{\mu}}$
to denote the partial derivative vectorfields corresponding to the above coordinates,
and we often write $\partial_t$ in place of $\partial_0$.
Note that 
$\lbrace \partial_a \rbrace_{a=1,\cdots,\mydim}$ 
are a globally defined smooth frame on $\Sigma_t$,
even though the coordinate functions $\lbrace \partial_a \rbrace_{a=1,\cdots,\mydim}$ 
are only locally defined. Note also that the dual co-frame
$\lbrace dx^a \rbrace_{a=1,\cdots,\mydim}$ is also globally defined
and smooth. 
In view of \eqref{E:SPACETIMEMETRICDECOMPOSEDINTOLAPSEANDFIRSTFUNDAMENTALFORM},
we see that the future-directed unit normal $\Nml$ to $\Sigma_t$ can be expressed as
\begin{align} \label{E:FUTUREDIRECTEDUNITNORMAL}
	\Nml
	& = - n^{-1} \partial_t.
\end{align}
Note that $\Nml x^a = 0$ for $a=1,\cdots,\mydim$. Thus, 
in the gauge under consideration,
the spatial coordinates are transported along the flow lines of $\Nml$.

The second fundamental form $\SecondFund$ of $\Sigma_t$ is defined by requiring that the following relation 
holds for all vectorfields $X,Y$ tangent to $\Sigma_t$:
\begin{align} \label{E:SECONDFUNDDEF}
	\gfour(\Dfour_X \Nml, Y) = - \SecondFund(X,Y),
\end{align}
where $\Dfour$ is the Levi--Civita connection of $\gfour$.
It is a standard fact that $\SecondFund$ is symmetric:
\begin{align}
	 \SecondFund(X,Y) =  \SecondFund(Y,X).
\end{align}
For such $X,Y$, the action of the spacetime connection $\Dfour$ can be decomposed into
the action of the Levi--Civita connection $\nabla$ of $g$ and $\SecondFund$ as follows:
\begin{align} \label{E:DDECOMP}
	\Dfour_X Y = \nabla_X Y - \SecondFund(X,Y)\Nml.
\end{align}

\begin{remark} \label{R:ALWAYSMIXED}
	When analyzing the components of $\SecondFund$
	(and in particular when differentiating the components of $\SecondFund$),
	\textbf{we will always assume that it is written in mixed form
	(i.e., type $\binom{1}{1}$ form)
	as $\SecondFund_{\ j}^i$ with the first index upstairs and the second one downstairs.} This convention is absolutely essential for 
	some of our analysis; in the problem of interest to us, the evolution and constraint equations verified by the components 
	$\SecondFund_{\ j}^i$ have a more favorable structure than the corresponding equations verified by $\SecondFund_{ij}$.
\end{remark}

Throughout the vast majority of our analysis, we normalize the CMC hypersurfaces $\Sigma_t$ as follows:
\begin{align} \label{E:CMCCONDITION}
	\SecondFund_{\ a}^a & = - \frac{1}{t},&& t \in (0,1].
\end{align}
In order for \eqref{E:CMCCONDITION} to hold, 
the lapse has to verify the elliptic equation \eqref{E:LAPSECMC}.

We adopt the following sign convention for the Riemann curvature $\Riemfour$
of $\gfour$:
\begin{align} \label{E:RIEMANNSIGNCONVENTION}
	\Dfour_{\alpha} \Dfour_{\beta} \mathbf{X}_{\mu} 
		- 
		\Dfour_{\beta} \Dfour_{\alpha} \mathbf{X}_{\mu} 	
	& = \Riemfour_{\alpha \beta \mu \nu} \mathbf{X}^{\nu}.
\end{align}
Similarly, we adopt the following sign convention for the Riemann curvature $\Riemann$
of $g$:
\begin{align} \label{E:THREEMETRICRIEMANNSIGNCONVENTION}
	\nabla_a \nabla_b X_c 
		- 
	\nabla_b \nabla_a X_c	
	& = \Riemann_{abcd} X^d.
\end{align}

\subsubsection{Statement of the equations}
\label{SSS:STATEMENTOFEQUATIONSINCMCTRANSPORTEDSPATIALCOORDINATES}
\begin{proposition}[\textbf{The Einstein-vacuum equations in CMC-transported spatial coordinates}]
\label{P:EINSTEINVACUUMEQUATIONSINCMCTRANSPORTEDSPATIALCOORDINATES}
In CMC-transported spatial coordinates normalized by $\SecondFund_{\ a}^a = - t^{-1}$, 
the Einstein-vacuum equations \eqref{E:EINSTEININTRO} take the following form.

\medskip

$\bullet$ The \textbf{Hamiltonian and momentum constraint equations} verified by $g$ and $\SecondFund$ are
respectively:
\begin{subequations}
\begin{align}
		\Sc
		- 
		\SecondFund_{\ b}^a \SecondFund_{\ a}^b 
		+ 
		t^{-2} 
		& = 0, \label{E:HAMILTONIAN} \\
		\nabla_a \SecondFund_{\ i}^a 
		& = 0, \label{E:MOMENTUM}
\end{align}
\end{subequations}
where $\nabla$ denotes the Levi--Civita connection of $g$,
$\Sc = \Ric_{\ a}^a$ denotes the scalar curvature of $g$,
and $\Ric$ denotes the Ricci curvature of $g$
(a precise expression is given in \eqref{E:RICCICURVATUREEXACT}).

\medskip

$\bullet$ The \textbf{evolution equations} verified by $g$, $g^{-1}$, and $\SecondFund$ are:
\begin{subequations}
\begin{align}
	\partial_t g_{ij} 
	& = - 2 n g_{ia}\SecondFund_{\ j}^a, \label{E:PARTIALTGCMC} \\
	\partial_t g^{ij} 
	& = 2 n g^{ia}\SecondFund_{\ a}^j, \label{E:PARTIALTGINVERSECMC} \\
	\partial_t (\SecondFund_{\ j}^i) 
	& = - g^{ia} \nabla_a \nabla_j n
		+ n 
			\left(
				\Ric_{\ j}^i 
				-
				t^{-1} \SecondFund_{\ j}^i 
			\right).  \label{E:PARTIALTKCMC}
\end{align}
\end{subequations}

\medskip

$\bullet$ The \textbf{elliptic lapse equation} is
\begin{align}  \label{E:LAPSECMC}
	g^{ab} \nabla_a \nabla_b n 
		& = 
				t^{-2}
				(n - 1) 
				+ 
				n \Sc.
	\end{align}

\end{proposition}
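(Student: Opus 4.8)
The plan is to obtain the system as the specialization, to the Einstein-vacuum setting and to our two gauge normalizations (zero shift, i.e.\ transported spatial coordinates, and the CMC condition \eqref{E:CMCCONDITION}), of the classical Gauss--Codazzi--Mainardi relations governing the spacelike foliation $\{\Sigma_t\}$ of $(\mathcal{M},\gfour)$. I would freely use the orthogonal decomposition \eqref{E:DDECOMP} of $\Dfour$, the formula \eqref{E:FUTUREDIRECTEDUNITNORMAL} for the unit normal, the definition \eqref{E:SECONDFUNDDEF} of $\SecondFund$, and the Riemann sign conventions \eqref{E:RIEMANNSIGNCONVENTION}--\eqref{E:THREEMETRICRIEMANNSIGNCONVENTION}. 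I would also use at the outset that, in $1+\mydim$ dimensions with $\mydim \geq 2$, tracing \eqref{E:EINSTEININTRO} forces $\Rfour = 0$, so that \eqref{E:EINSTEININTRO} is equivalent to the Ricci-flat condition $\Ricfour_{\mu\nu} = 0$.

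First I would record the first-variation identity: differentiating \eqref{E:SPACETIMEMETRICDECOMPOSEDINTOLAPSEANDFIRSTFUNDAMENTALFORM} along $\partial_t$ and rewriting the result in terms of $\SecondFund$ via \eqref{E:SECONDFUNDDEF}, \eqref{E:DDECOMP}, and \eqref{E:FUTUREDIRECTEDUNITNORMAL}, one obtains \eqref{E:PARTIALTGCMC}; differentiating $g^{ia}g_{aj} = \delta_j^i$ in $t$ and substituting then yields \eqref{E:PARTIALTGINVERSECMC}. Next I would invoke the Gauss and Codazzi equations for the embedding $\Sigma_t \hookrightarrow \mathcal{M}$ and set $\Ricfour_{\mu\nu} = 0$: the twice-contracted Gauss equation collapses to the vacuum Hamiltonian constraint $\Sc - \SecondFund_{\ b}^a \SecondFund_{\ a}^b + (\SecondFund_{\ a}^a)^2 = 0$, and substituting the CMC normalization $\SecondFund_{\ a}^a = -t^{-1}$ gives \eqref{E:HAMILTONIAN}; the once-contracted Codazzi equation collapses to $\nabla_a \SecondFund_{\ j}^a = \nabla_j \SecondFund_{\ a}^a$, and since $\SecondFund_{\ a}^a = -t^{-1}$ is spatially constant, its right-hand side vanishes, giving \eqref{E:MOMENTUM}.

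The central computation is the evolution equation \eqref{E:PARTIALTKCMC}. I would begin from the standard second-variation (Mainardi/``Ricci'') equation for the covariant components, which in the vacuum, zero-shift setting takes the schematic form $\partial_t \SecondFund_{ij} = -\nabla_i \nabla_j n + n( \Ric_{ij} + (\SecondFund_{\ a}^a)\SecondFund_{ij} - 2\SecondFund_{\ i}^a \SecondFund_{aj} )$, and then pass to the mixed form $\SecondFund_{\ j}^i = g^{ia}\SecondFund_{aj}$ using the product rule and \eqref{E:PARTIALTGINVERSECMC}. The key algebraic point --- and the reason mixed indices are mandatory, cf.\ Remark~\ref{R:ALWAYSMIXED} --- is that the $+2n(\SecondFund^2)_{\ j}^i$ term produced by the $\partial_t g^{ia}$ factor exactly cancels the $-2n(\SecondFund^2)_{\ j}^i$ term inherited from the covariant equation, leaving $\partial_t \SecondFund_{\ j}^i = -g^{ia}\nabla_a \nabla_j n + n\Ric_{\ j}^i + n(\SecondFund_{\ a}^a)\SecondFund_{\ j}^i$; inserting $\SecondFund_{\ a}^a = -t^{-1}$ yields \eqref{E:PARTIALTKCMC}. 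Finally, \eqref{E:LAPSECMC} is the $i=j$ trace of \eqref{E:PARTIALTKCMC}: since the CMC normalization forces $\partial_t(\SecondFund_{\ a}^a) = \partial_t(-t^{-1}) = t^{-2}$ and $\Ric_{\ a}^a = \Sc$, the trace reads $t^{-2} = -g^{ab}\nabla_a \nabla_b n + n\Sc + nt^{-2}$, which rearranges to \eqref{E:LAPSECMC}. Equivalently, \eqref{E:LAPSECMC} is precisely the condition on $n$ that propagates \eqref{E:CMCCONDITION} forward in $t$.

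The main obstacle is bookkeeping rather than conceptual: all of the above is classical, but one must carefully track the sign conventions for $\SecondFund$ in \eqref{E:SECONDFUNDDEF}, for the normal in \eqref{E:FUTUREDIRECTEDUNITNORMAL}, and for the two Riemann tensors in \eqref{E:RIEMANNSIGNCONVENTION}--\eqref{E:THREEMETRICRIEMANNSIGNCONVENTION} through the Gauss, Codazzi, and Mainardi identities, and verify that the quadratic-in-$\SecondFund$ terms cancel with exactly the right coefficients upon passage to mixed indices. A convenient way to pin down every sign is to check each of \eqref{E:HAMILTONIAN}--\eqref{E:LAPSECMC} against the explicit Kasner background $(\KasnerMetric,\KasnerSecondFund,\widetilde{n} \equiv 1)$ of \eqref{E:KASNERSPATIALMETRIC}--\eqref{E:KASNERSECONDFUND}, which is spatially flat (so $\Ric \equiv 0$) and whose exponents satisfy $\sum_i q_i = \sum_i q_i^2 = 1$; each equation then reduces to an elementary identity in the $q_i$. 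In the write-up I would cite a standard reference for the Gauss--Codazzi--Mainardi system and present in detail only the vacuum/CMC/transported-coordinates specialization and the mixed-index cancellation.
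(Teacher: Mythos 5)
Your derivation is correct; the paper states this proposition without proof, treating it as the standard Gauss--Codazzi--Mainardi reduction of the vacuum equations in zero-shift/CMC gauge, and your argument is exactly that classical derivation (including the key mixed-index cancellation of the $\pm 2n(\SecondFund^2)_{\ j}^i$ terms and the reading of \eqref{E:LAPSECMC} as the trace of \eqref{E:PARTIALTKCMC} propagating \eqref{E:CMCCONDITION}). Your sign checks against the Kasner background are consistent with the paper's conventions, so nothing is missing.
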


\subsection{Standard expressions for curvature tensors of \texorpdfstring{$g$}{the first fundamental form}}
\label{SS:SPATIAL}
For future use, we note the following standard facts:
relative to an arbitrary coordinate system on $\Sigma_t$
(and in particular relative to the transported spatial coordinates that we use in our analysis),
the components of the type $\binom{0}{4}$ Riemann curvature $\Riemann$ of $g$ 
and the type $\binom{1}{1}$ Ricci curvature $\Ric$ of $g$
can be expressed, respectively, as
\begin{subequations}
\begin{align} \label{E:RIEMANNCURVATUREEXACT}
	\Riemann_{ijkl}
		& =
		\frac{1}{2}
		\left\lbrace
			\partial_j \partial_k g_{il}
			+ 
			\partial_i \partial_l g_{jk}
			-
			\partial_i \partial_k g_{jl}
			-
			\partial_j \partial_l g_{ik}
		\right\rbrace
			\\
	& \ \
		+
		g^{ab} \Gamma_{ial} \Gamma_{jbk}
		- 
		g^{ab} \Gamma_{iak} \Gamma_{jbl},
		\notag \\
	\Ric_{\ j}^i
		& =
		\frac{1}{2}
		g^{cd}
		g^{ie}
		\left\lbrace
			\partial_e \partial_c g_{dj}
			+
			\partial_c \partial_j g_{ed}
			-
			\partial_e \partial_j g_{cd}
			-
			\partial_c \partial_d g_{ej}
		\right\rbrace
			\label{E:RICCICURVATUREEXACT} \\
	& \ \
		+
		g^{ab} g^{cd} g^{ie} \Gamma_{eac} \Gamma_{jbd}
		- 
		g^{ab} g^{cd} g^{ie} \Gamma_{eaj} \Gamma_{cbd},
	\notag
\end{align}
\end{subequations}
where 
$\Gamma_{ijk} := g_{ja} \Gamma_{i \ k}^{\ a}$
and $\Gamma_{j \ k}^{\ i}$ 
are the Christoffel symbols of $g$
(see \eqref{E:SPACECHRISTOFFEL}).

\subsection{The existence of Kasner solutions verifying our exponent assumptions 
\texorpdfstring{$\mydim \geq 38$}{in 38 or more spatial dimensions}}
\label{SS:EXISTENCEOFKASNER}
Note that in view of \eqref{E:KASNEREXPONENTSSQUARESSUMTOONE},
if $\mydim \leq 36$, then there do not exist any Kasner solutions that satisfy the 
exponent assumption \eqref{E:KASNEREXPONENTSNOTTOOBIG}.
In this subsection, we show that for $\mydim \geq 38$, such Kasner solutions
\emph{do} exist; recall that this is equivalent to finding real numbers $\lbrace q_i \rbrace_{i=1,\cdots,\mydim}$
that satisfy
\eqref{E:KASNEREXPONENTSSUMTOONE}-\eqref{E:KASNEREXPONENTSSQUARESSUMTOONE}
and \eqref{E:KASNEREXPONENTSNOTTOOBIG}.

To start, we note that in the case $\mydim = 36$,
the following Kasner exponents
satisfy \eqref{E:KASNEREXPONENTSSUMTOONE}-\eqref{E:KASNEREXPONENTSSQUARESSUMTOONE}
but just barely fail to satisfy \eqref{E:KASNEREXPONENTSNOTTOOBIG}:
\begin{align} \label{E:EXPONENTSFORDIMENSION36}
	q_1 
	& 
	= q_2 
	= \cdots 
	= q_{15} 
	:= - \frac{1}{6},
	&&
	q_{16}
	= q_{17}
	= \cdots
	q_{36}
	= \frac{1}{6}.
\end{align}

Considering now the case $\mydim = 38$, 
we let $\epsilon > 0$ be a small parameter, and
we perturb the $36$ Kasner exponents
from \eqref{E:EXPONENTSFORDIMENSION36}
by $\epsilon$ so that they are bounded in magnitude by
$< 1/6$:
\begin{align} \label{E:EXPONENTSFORDIMENSION38}
	q_1 
	& 
	= q_2 
	= \cdots 
	= q_{15} 
	:= - \frac{1}{6} + \epsilon,
	&&
	q_{16}
	= q_{17}
	= \cdots
	q_{36}
	= \frac{1}{6} - \epsilon.
\end{align}
Notice that by \eqref{E:KASNEREXPONENTSSUMTOONE}-\eqref{E:KASNEREXPONENTSSQUARESSUMTOONE},
assuming \eqref{E:EXPONENTSFORDIMENSION38}, any solution 
$(q_{37},q_{38})$ to the following system
yields, when complemented with the exponents \eqref{E:EXPONENTSFORDIMENSION38}, 
a complete set of Kasner exponents:
\begin{subequations}
\begin{align} \label{E:FIRSTEQNFORQ37ANDQ38}
	q_{37} + q_{38}
	& = 6 \epsilon,
		\\
	q_{37}^2 + q_{38}^2
	& = 12 \epsilon
		-
		36 \epsilon^2.
		\label{E:SECONDEQNFORQ37ANDQ38}
\end{align}
\end{subequations}
Using \eqref{E:FIRSTEQNFORQ37ANDQ38} to solve for $q_{38}$ in terms of $q_{37}$ and then
substituting into \eqref{E:SECONDEQNFORQ37ANDQ38},
we obtain the equation
$q_{37}^2 
- 
6 \epsilon q_{37}
- 6 \epsilon
+ 36 \epsilon^2
= 0$,
which has the solutions
\begin{align} \label{E:Q37SOLUTIONS}
	q_{37}
	& = 3 \epsilon
	\pm \sqrt{6 \epsilon - 27 \epsilon^2}.
\end{align}
We now observe that for any $\epsilon > 0$ sufficiently small,
the corresponding solutions $q_{37}$ to \eqref{E:Q37SOLUTIONS}
are real and bounded in magnitude by
$\lesssim \sqrt{\epsilon}$. From \eqref{E:FIRSTEQNFORQ37ANDQ38},
we deduce that the same statement holds for the corresponding
exponent $q_{38}$.
In view of \eqref{E:EXPONENTSFORDIMENSION38},
we see that for $\epsilon > 0$ sufficiently small,
the exponents $\lbrace q_i \rbrace_{i=1,\cdots,38}$ constructed in this fashion
satisfy \eqref{E:KASNEREXPONENTSSUMTOONE}-\eqref{E:KASNEREXPONENTSSQUARESSUMTOONE}
and \eqref{E:KASNEREXPONENTSNOTTOOBIG}. Moreover, in the cases
$\mydim \geq 39$, we can complement these $38$ Kasner exponents with
others as follows: $q_i = 0$ for $39 \leq i \leq \mydim$. In total, we have constructed,
for any $\mydim \geq 38$, 
sets of Kasner exponents $\lbrace q_i \rbrace_{i=1,\cdots,\mydim}$
that satisfy\eqref{E:KASNEREXPONENTSSUMTOONE}-\eqref{E:KASNEREXPONENTSSQUARESSUMTOONE}
and \eqref{E:KASNEREXPONENTSNOTTOOBIG}.
That is, the Kasner solutions whose perturbations
we study in our main theorem exist when $\mydim \geq 38$.

\section{Norms and Bootstrap Assumptions}
\label{S:NORMSANDBOOTSTRAPASSUMPTIONS}
In this section, we define the various norms that we use to control the solution.
We also state bootstrap assumptions for the solution norms; the bootstrap
assumptions are convenient for our analysis in subsequent sections.

\subsection{Constants featured in the norms}
\label{SS:PARAMETERSINNORMS}
The norms that we define in Subsect.\,\ref{SS:NORMSTHATWEUSETOCONTROLSOLUTION} 
involve the positive numbers
$\Worstexp$ and $\Room$ featured in the following definition.

\begin{definition}[\textbf{The constants $\Worstexp$ and $\Room$}]
\label{D:PARAMETERSINNORMS}
Assuming that the Kasner exponents satisfy the condition \eqref{E:KASNEREXPONENTSNOTTOOBIG},
we fix positive numbers $\Worstexp$ and $\Room$ verifying
the following inequalities:
\begin{align} \label{E:WORSTEXPANDROOMLOTSOFUSEFULINEQUALITIES}
	0 
	< 
	\Room 
	< 
	\Room + \max_{i=1,\cdots,\mydim} |q_i| 
	< 
	\Worstexp 
	< 
	\Worstexp + 2 \Room 
	< \frac{1}{6}.
\end{align}

\end{definition}

\begin{remark}
	We consider $\Worstexp$ and $\Room$
	to be fixed for the remainder of the article.
\end{remark}

\subsection{Pointwise norms}
\label{SS:POINTWISENORMS}
To control $\Sigma_t$-tangent tensorfields,
we will rely on two kinds of pointwise norms:
one that refers to the transported spatial coordinate
frame, and the standard geometric norm that is based on the Riemannian metric $g$.

\begin{definition}[\textbf{Pointwise norms}]
	\label{D:POINTWISENORMS}
	For $\Sigma_t$-tangent type $\binom{l}{m}$ tensorfields
	$T$,
	we define
	\begin{subequations}
	\begin{align} \label{E:POINTWISEFRAMENORM}
		|T|_{Frame}
		&  := 
			\left\lbrace
				\sum_{a_1,\cdots,a_l,b_1,\cdots,b_m=1}^{\mydim}
				\left|
					T_{b_1 \cdots b_n}^{\ \ \ \ \ a_1 \cdots a_m}
				\right|^2
			\right\rbrace^{1/2},
				\\
		|T|_g
		&  := 
			\left\lbrace
				g_{a_1 a_1'} 
				\cdots
				g_{a_l a_l'}
				g^{b_1 b_1'}
				\cdots
				g^{b_m b_m'}
				T_{b_1 \cdots b_m}^{\ \ \ \ \ a_1 \cdots a_l}
				T_{b_1' \cdots b_m'}^{\ \ \ \ \ a_1' \cdots a_l'}
			\right\rbrace^{1/2}.
				\label{E:POINTWISEGNORM}
	\end{align}
	\end{subequations}
\end{definition}

\subsection{Lebesgue and Sobolev norms}
\label{SS:LEBESGUEANDSOBOLEVNORMS}
In this subsection, we define the Lebesgue and Sobolev norms
that we will use to control the solution.
We start by defining the $\partial_{\vec{I}}$-derivative
of a tensorfield.

\begin{definition}[\textbf{Derivative of a tensorfield}]
	If $T_{b_1 \cdots b_m}^{\ \ \ \ \ a_1 \cdots a_l}$ is a type $\binom{l}{m}$
	$\Sigma_t$-tangent tensorfield and $\vec{I}$ is a spatial multi-index,
	then we define $\partial_{\vec{I}} T$ to be the tensorfield whose components
	$(\partial_{\vec{I}} T)_{b_1 \cdots b_m}^{\ \ \ \ \ a_1 \cdots a_l}$
	relative to the CMC-transported spatial coordinate frame are the following:
	\begin{align} \label{E:DERIVATIVEOFTENSORFIELD}
		(\partial_{\vec{I}} T)_{b_1 \cdots b_m}^{\ \ \ \ \ a_1 \cdots a_l}
		& := 
		\partial_{\vec{I}} (T_{b_1 \cdots b_m}^{\ \ \ \ \ a_1 \cdots a_l}).
	\end{align}
\end{definition}	

\begin{remark}
	The operator $\partial_{\vec{I}}$, 
	when acting on $\Sigma_t$-tangent tensorfields,
	can be given the following invariant interpretation:
	it can be viewed as repeated Lie differentiation with respect to the 
	(globally defined) coordinate vectorfields 
	$\lbrace \partial_i \rbrace_{i=1,\cdots,\mydim}$.
\end{remark}

In what follows, 
$\| \cdot \|_{L^2(\Sigma_t)}$
and
$\| \cdot \|_{L^{\infty}(\Sigma_t)}$
denote the standard Lebesgue norms for scalar functions on $\Sigma_t$;
see Subsubsect.\,\ref{SSS:BASICNORMS}.
We now define additional Sobolev and Lebesgue norms that we will use in our analysis.

\begin{definition}[\textbf{Sobolev and Lebesgue norms}]
	\label{D:SOBOLEVANDLEBESGUENORMS}
	If $T$ is a type $\binom{l}{m}$
	$\Sigma_t$-tangent tensorfield,
	$p \in \lbrace 2, \infty \rbrace$, 
	and $M \geq 0$ is an integer,
	then we define
	\begin{subequations}
	\begin{align} \label{E:FRAMEL2NORM}
		\left\|
			T
		\right\|_{L_{Frame}^p(\Sigma_t)}
		& :=
		\left\|
			|T|_{Frame}
		\right\|_{L^p(\Sigma_t)},
		&
		\left\|
			T
		\right\|_{L_g^p(\Sigma_t)}
		& :=
		\left\|
			|T|_g
		\right\|_{L^p(\Sigma_t)},
			\\
		\left\|
			T
		\right\|_{W_{Frame}^{M,\infty}(\Sigma_t)}
		& :=
		\sum_{|\vec{I}| \leq M}
		\left\|
			|\partial_{\vec{I}} T|_{Frame}
		\right\|_{L^{\infty}(\Sigma_t)},
		&
		\left\|
			T
		\right\|_{W_g^{M,\infty}(\Sigma_t)}
		& :=
		\sum_{|\vec{I}| \leq M}
		\left\|
			|\partial_{\vec{I}} T|_g
		\right\|_{L^{\infty}(\Sigma_t)},
			\\
		\left\|
			T
		\right\|_{\dot{W}_{Frame}^{M,\infty}(\Sigma_t)}
		& :=
		\sum_{|\vec{I}| = M}
		\left\|
			|\partial_{\vec{I}} T|_{Frame}
		\right\|_{L^{\infty}(\Sigma_t)},
		&
		\left\|
			T
		\right\|_{\dot{W}_g^{M,\infty}(\Sigma_t)}
		& :=
		\sum_{|\vec{I}| = M}
		\left\|
			|\partial_{\vec{I}} T|_g
		\right\|_{L^{\infty}(\Sigma_t)},
			\\
		\left\|
			T
		\right\|_{H_{Frame}^M(\Sigma_t)}
		& :=
		\left\lbrace
			\sum_{|\vec{I}| \leq M}
			\left\|
				|\partial_{\vec{I}} T|_{Frame}
			\right\|_{L^2(\Sigma_t)}^2
		\right\rbrace^{1/2},
		&
		\left\|
			T
		\right\|_{H_g^M(\Sigma_t)}
		& :=
		\left\lbrace
			\sum_{|\vec{I}| \leq M}
			\left\|
				|\partial_{\vec{I}} T|_g
			\right\|_{L^2(\Sigma_t)}^2
		\right\rbrace^{1/2},
			\\
		\left\|
			T
		\right\|_{\dot{H}_{Frame}^M(\Sigma_t)}
		& :=
		\left\lbrace
			\sum_{|\vec{I}| = M}
			\left\|
				|\partial_{\vec{I}} T|_{Frame}
			\right\|_{L^2(\Sigma_t)}^2
		\right\rbrace^{1/2},
		&
		\left\|
			T
		\right\|_{\dot{H}_g^M(\Sigma_t)}
		& :=
		\left\lbrace
			\sum_{|\vec{I}| = M}
			\left\|
				|\partial_{\vec{I}} T|_g
			\right\|_{L^2(\Sigma_t)}^2
		\right\rbrace^{1/2}.
	\end{align}
	\end{subequations}
\end{definition}

\begin{remark}[\textbf{The omission of norm subscripts when appropriate}]
	\label{R:OMISSIONOFNORMSUBSCRIPTS}
	If $T$ is a scalar function, then we typically omit the subscripts ``$Frame$'' and ``$g$''
	in the norms since there is no danger of confusion over how to measure the norm of $T$.
	For example if $f$ is scalar function, then we write 
	$
	\| f \|_{L^{\infty}(\Sigma_t)}
	$
	instead of
	$
	\| f \|_{L_{Frame}^{\infty}(\Sigma_t)}
	$
	or
	$
	\| f \|_{L_g^{\infty}(\Sigma_t)}
	$.
\end{remark}	

\subsection{The specific norms that we use to control the solution}
\label{SS:NORMSTHATWEUSETOCONTROLSOLUTION}
Let $\Blowupexp \gg 1$ be a large parameter, to be chosen later.
We recall that $\Worstexp > 0$ and $\Room > 0$ are the real numbers fixed in
Subsect.\,\ref{SS:PARAMETERSINNORMS}.

To control the solution variables $(g,\SecondFund,n)$, we will rely on a combination of norms for the low-order
derivatives of the solution and norms for its high-order derivatives.
We now define the low-order norms.
\begin{definition}[\textbf{Low norms}]
	\label{D:LOWNORMS}
	Let $\KasnerMetric$ and $\KasnerSecondFund$ denote the background Kasner solution variables
	and let $\Worstexp$ and $\Room$ be the fixed constants 
	(which depend on the background Kasner solution $(\KasnerMetric,\KasnerSecondFund)$)
	satisfying \eqref{E:WORSTEXPANDROOMLOTSOFUSEFULINEQUALITIES}.
	We define
	\begin{subequations}
	\begin{align}
		\MetricLownorm(t)
			:=
			\max
			\Bigg\lbrace
				&
				t^{2 \Worstexp}
				\left\|
					g - \KasnerMetric
				\right\|_{L_{Frame}^{\infty}(\Sigma_t)},
					\,
				t^{2 \Worstexp}
				\left\|
					g^{-1} - \KasnerMetric^{-1}
				\right\|_{L_{Frame}^{\infty}(\Sigma_t)},
						\label{E:METRICLOWNORM} \\
			& 
				t
				\left\|
					\SecondFund - \KasnerSecondFund
				\right\|_{W_{Frame}^{2,\infty}(\Sigma_t)},
					\,
					\left\|
					\left|
						t \SecondFund 
					\right|_g
					-
					1
					\right\|_{L^{\infty}(\Sigma_t)}
				\Bigg\rbrace,
				 \notag	\\		
		\LapseLownorm(t)
		& := 
				 t^{-(2 - 10 \Worstexp - \Room)} 
				\| n-1 \|_{L^{\infty}(\Sigma_t)}.
				\label{E:LAPSELOWNORM}
		\end{align}
		\end{subequations}
\end{definition}

We will use the following norms to control the high-order
derivatives of the solution.
\begin{definition}[\textbf{High norms}]
	\label{D:HIGHNORMS}
	Let $\Worstexp$ and $\Room$ be the fixed constants (which depend on the background Kasner solution $(\KasnerMetric,\KasnerSecondFund)$)
	that satisfy \eqref{E:WORSTEXPANDROOMLOTSOFUSEFULINEQUALITIES}.
	Let $\Blowupexp \gg 1$ be a real parameter (to be chosen later)
	and let $N \gg 1$ be an integer-valued parameter (also to be chosen later).
	We define
	\begin{subequations}
	\begin{align}
		\MetricHighnorm(t)
		:=
		\max
		\Bigg\lbrace
		&
		t^{\Blowupexp + 1} 
		\left\|
			\SecondFund
		\right\|_{\dot{H}_g^N(\Sigma_t)},
			\,
		t^{\Blowupexp + 1}
		\left\|
			\partial g
		\right\|_{\dot{H}_g^N(\Sigma_t)},
		\label{E:METRICHIGHNORM} \\
		&
		t^{\Blowupexp + 3 \Worstexp + \Room} 
		\left\|
			\SecondFund
		\right\|_{\dot{H}_g^{N-1}(\Sigma_t)},
			\,
		t^{\Blowupexp + 3 \Worstexp + \Room} 
		\left\|
			\SecondFund
		\right\|_{\dot{H}_{Frame}^{N-1}(\Sigma_t)},
			\notag \\
	& 
		t^{\Blowupexp + \Worstexp + \Room} 
		\left\|
			g
		\right\|_{\dot{H}_g^N(\Sigma_t)},
			\,
		t^{\Blowupexp + \Worstexp + \Room} 
		\left\|
			g^{-1}
		\right\|_{\dot{H}_g^N(\Sigma_t)},
			\notag 
		 \\
		&
		t^{\Blowupexp + 2 \Worstexp + \Room}
		\left\|
			\partial g
		\right\|_{\dot{H}_g^{N-1}(\Sigma_t)},
		 \notag \\
		&
		t^{\Blowupexp + 2 \Worstexp + \Room}
		\left\|
			g
		\right\|_{\dot{H}_{Frame}^N(\Sigma_t)},
			\,
		t^{\Blowupexp + 2 \Worstexp + \Room}
		\left\|
			g^{-1}
		\right\|_{\dot{H}_{Frame}^N(\Sigma_t)},
			\notag 
		 \\
		&
		t^{\Blowupexp + 5 \Worstexp + 3 \Room - 1}
		\left\|
			g
		\right\|_{\dot{H}_{Frame}^{N-1}(\Sigma_t)},
			\,
		t^{\Blowupexp + 5 \Worstexp + 3 \Room - 1} 
		\left\|
			g^{-1}
		\right\|_{\dot{H}_{Frame}^{N-1}(\Sigma_t)}
			\notag 
	\Bigg\rbrace,
		\notag \\
	\LapseHighnorm(t)
	:=
	\max
	\Bigg\lbrace
	&
		t^{\Blowupexp + 1}
		\left\|
			\partial n
		\right\|_{\dot{H}_g^N(\Sigma_t)},
			\,
		t^{\Blowupexp}
		\left\|
			 n
		\right\|_{\dot{H}^N(\Sigma_t)},
			\,
		t^{\Blowupexp + \Worstexp - 1}
		\left\|
			n
		\right\|_{\dot{H}^{N-1}(\Sigma_t)}
		\Bigg\rbrace.
		\label{E:LAPSEHIGHNORM}
	\end{align}
	\end{subequations}
\end{definition}

\subsection{Bootstrap assumptions}
\label{SS:BOOTSTRAP}
To facilitate our analysis, we find it convenient to rely on bootstrap assumptions.
Let $\TBoot \in (0,1)$ be a ``bootstrap time''.
Until the proof of Theorem~\ref{T:MAINTHM}, we 
assume that the perturbed solution exists classically
for $(t,x) \in (\TBoot,1] \times \mathbb{T}^{\mydim}$
and that the following bootstrap assumptions hold
for the norms from Defs.\,\ref{D:LOWNORMS} and~\ref{D:HIGHNORMS}:
\begin{align} \label{E:BOOTSTRAPASSUMPTIONS}
	\MetricLownorm(t)
		+
	\MetricHighnorm(t)
		+
	\LapseLownorm(t)
		+
	\LapseHighnorm(t)
	& \leq \varepsilon,
	&&
	t \in (\TBoot,1],
\end{align}
where $\varepsilon > 0$ is a small bootstrap parameter.

\begin{remark}[\textbf{The required smallness of $\varepsilon$ depends on various parameters}]
	\label{R:SMALLNESSOFEPSILON}
	We will continually adjust 
	the required smallness of $\varepsilon$
	throughout our analysis.
	The required smallness of $\varepsilon$
	is allowed to depend on $N$, $\Blowupexp$, $\Worstexp$, $\Room$, and $\mydim$,
	but we will often avoid pointing this out.
\end{remark}

\section{Estimates for the Kasner Solution and Preliminary Technical Estimates}
\label{S:INTERPOLATION}
In this section, we derive some simple estimates for the background Kasner solution
and provide other basic estimates that we will use throughout the paper.

\subsection{Basic estimates for the Kasner solution}
\label{SS:BASICESTIMATEFORKASNER}
In controlling various error terms, we will rely on the following
simple estimates for the background Kasner solution.

\begin{lemma}[\textbf{Basic estimates for the Kasner solution}]
\label{L:BASICESTIMATEFORKASNER}
The following estimates hold for $t \in (0,1]$:
\begin{subequations}
\begin{align} \label{E:KASNERMETRICESTIMATES}
	\left\|
		\KasnerMetric
	\right\|_{L_{Frame}^{\infty}(\Sigma_t)}
	& \leq 
		t^{\Room - 2 \Worstexp},
	&&
	\left\|
		\KasnerMetric^{-1}
	\right\|_{L_{Frame}^{\infty}(\Sigma_t)}
	\leq 
		t^{\Room - 2 \Worstexp},
			\\
	\left\|
		\KasnerSecondFund
	\right\|_{L_{Frame}^{\infty}(\Sigma_t)}
	& \leq 
		t^{-1}.
	&&
	\label{E:KASNERSECONDFUNDESTIMATES}
\end{align}
\end{subequations}
\end{lemma}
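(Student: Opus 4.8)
The plan is to verify each of the three estimates in \eqref{E:KASNERMETRICESTIMATES}--\eqref{E:KASNERSECONDFUNDESTIMATES} by direct computation from the explicit component formulas \eqref{E:KASNERSPATIALMETRIC} and \eqref{E:KASNERSECONDFUND}, using only the Kasner constraint \eqref{E:KASNEREXPONENTSSQUARESSUMTOONE} and the bound \eqref{E:KASNEREXPONENTSNOTTOOBIG} (equivalently, the chain of inequalities \eqref{E:WORSTEXPANDROOMLOTSOFUSEFULINEQUALITIES} relating $\Worstexp$, $\Room$, and $\max_i|q_i|$). Since $\KasnerMetric$ is diagonal with entries $\KasnerMetric_{ii} = t^{2q_i}$ (no sum) and $(\KasnerMetric^{-1})^{ii} = t^{-2q_i}$, the frame norm $|\cdot|_{Frame}$ from \eqref{E:POINTWISEFRAMENORM} is controlled by the largest-in-magnitude component. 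First I would observe that for each $i$, since $|q_i| \leq \max_j |q_j| < \Worstexp - \Room$ by \eqref{E:WORSTEXPANDROOMLOTSOFUSEFULINEQUALITIES}, we have $2q_i > -2\Worstexp + 2\Room > \Room - 2\Worstexp$ and likewise $-2q_i > \Room - 2\Worstexp$; hence for $t \in (0,1]$ every entry $t^{\pm 2 q_i}$ satisfies $t^{\pm 2q_i} \le t^{\Room - 2\Worstexp}$ (since $t \le 1$ and the exponent is being lowered). Summing the squares of at most $\mydim$ such entries and taking the square root costs a factor of $\sqrt{\mydim}$, which I would absorb by noting there is additional room: in fact one can take $\Room$ slightly smaller so that $\sqrt{\mydim}\, t^{\Room' - 2\Worstexp} \le t^{\Room - 2\Worstexp}$ for $t \in (0,1]$, or — more simply — absorb the combinatorial constant by replacing $\Room$ with a value strictly between $\Room + \max_i|q_i|$ and $\Worstexp$ as permitted by \eqref{E:WORSTEXPANDROOMLOTSOFUSEFULINEQUALITIES}, which leaves a strictly positive gap $\eta := \Worstexp - \Room - \max_i|q_i| > 0$ to spend; then $\sqrt{\mydim}\, t^{-2q_i} \le \sqrt{\mydim}\, t^{2\max_i|q_i|} \le t^{-2\max_i|q_i|}$ fails for small $t$, so instead I would use $\sqrt{\mydim}\, t^{\Room - 2\Worstexp + 2\eta} \le t^{\Room - 2\Worstexp}$, valid for all $t \in (0,1]$ once $t$ is, say, below a threshold, and handle $t$ near $1$ trivially since all quantities are then $\mathcal{O}(1)$.

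For the second fundamental form, $\KasnerSecondFund_{\ j}^i = -t^{-1} q_i \delta_j^i$ (no sum) from \eqref{E:KASNERSECONDFUND}, so $|\KasnerSecondFund|_{Frame} = t^{-1}(\sum_i q_i^2)^{1/2} = t^{-1}$ exactly, by the Kasner constraint \eqref{E:KASNEREXPONENTSSQUARESSUMTOONE}. This gives \eqref{E:KASNERSECONDFUNDESTIMATES} with equality, hence certainly the stated inequality, and no combinatorial loss arises. Since these norms are spatially constant, the $L^\infty(\Sigma_t)$ norms equal the pointwise values, and the proof is immediate once the pointwise bounds are in hand.

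I do not expect any genuine obstacle here — this is a routine verification — but the one point requiring a little care is the bookkeeping of the $\sqrt{\mydim}$ factor from summing $\mydim$ diagonal entries in the $|\cdot|_{Frame}$ norm versus the sharp constant $1$ that the target exponent $t^{\Room - 2\Worstexp}$ would naively suggest. The clean way to dispatch this is to exploit that \eqref{E:WORSTEXPANDROOMLOTSOFUSEFULINEQUALITIES} leaves strict room: writing $-2q_i \le 2\max_j|q_j| < 2\Worstexp - 2\Room$, we get $t^{-2q_i} \le t^{2\max_j|q_j|}$, and since $2\max_j|q_j| - (\Room - 2\Worstexp) = 2\max_j|q_j| + 2\Worstexp - \Room > 0$ is bounded below by a fixed positive constant, we have $t^{-2q_i} = t^{\Room - 2\Worstexp} \cdot t^{2\max_j|q_j| + 2\Worstexp - \Room}$, and the second factor tends to $0$ as $t \downarrow 0$; thus $\sqrt{\mydim}\, t^{-2q_i} \le t^{\Room-2\Worstexp}$ for $t$ below a dimension-dependent threshold $t_0 \in (0,1]$, while for $t \in [t_0,1]$ both sides are comparable fixed constants and the inequality holds after, if necessary, noting $t^{\Room - 2\Worstexp} \ge 1$ there (since $\Room - 2\Worstexp < 0$ and $t \le 1$) whereas $\sqrt{\mydim}\, t^{-2q_i}$ is bounded — so in fact one can just check $\sqrt{\mydim} \le t_0^{-(2\max_j|q_j| + 2\Worstexp - \Room)}$ by choosing $t_0$ small enough, which is always possible. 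The identical argument applies to $\KasnerMetric_{ii} = t^{2q_i}$, completing the proof of \eqref{E:KASNERMETRICESTIMATES}.
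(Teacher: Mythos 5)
Your overall strategy --- read off the components from \eqref{E:KASNERSPATIALMETRIC} and \eqref{E:KASNERSECONDFUND} and invoke the exponent gap in \eqref{E:WORSTEXPANDROOMLOTSOFUSEFULINEQUALITIES} --- is exactly the paper's, whose entire proof is the remark that the estimates are ``straightforward consequences'' of those expressions. Your treatment of \eqref{E:KASNERSECONDFUNDESTIMATES} is correct and in fact sharp: $|\KasnerSecondFund|_{Frame} = t^{-1}\bigl(\sum_i q_i^2\bigr)^{1/2} = t^{-1}$ by \eqref{E:KASNEREXPONENTSSQUARESSUMTOONE}, with no dimensional loss.

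The gap is in your handling of the $\sqrt{\mydim}$ factor for \eqref{E:KASNERMETRICESTIMATES}. You correctly compute $|\KasnerMetric|_{Frame} = \bigl(\sum_i t^{4q_i}\bigr)^{1/2} \le \sqrt{\mydim}\, t^{-2\max_j|q_j|}$ and correctly note that the positive exponent gap $2\Worstexp - \Room - 2\max_j|q_j|$ beats $\sqrt{\mydim}$ once $t$ is below some threshold $t_0$. But your claim that the inequality also holds on $[t_0,1]$ because $t^{\Room-2\Worstexp}\ge 1$ there while $\sqrt{\mydim}\,t^{-2q_i}$ is ``bounded'' is not an argument: bounded is not the same as $\le 1$, and at $t=1$ the asserted bound reads $\sqrt{\mydim}\le 1$, which is false for $\mydim\ge 2$. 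Your final check, $\sqrt{\mydim}\le t_0^{-(2\max_j|q_j|+2\Worstexp-\Room)}$, only establishes the estimate for $t\le t_0$; it says nothing about $[t_0,1]$, and no choice of $t_0$ (nor any shrinking of $\Room$, since exponents are irrelevant at $t=1$) can repair this. The honest conclusion is that \eqref{E:KASNERMETRICESTIMATES} holds with a multiplicative constant $C_*=\sqrt{\mydim}$ rather than with constant $1$ --- which is all that is ever used downstream, since every invocation of the lemma sits inside a $\lesssim$ or $C_*$ estimate. Either prove the bound in the form $\le C_*\,t^{\Room-2\Worstexp}$ in one line, or note that the constant-$1$ version requires reading $|\cdot|_{Frame}$ as a maximum over components; do not manufacture an interval splitting that cannot close.
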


\begin{proof}
	Recall that the Kasner solution variables
	$\KasnerMetric$
	and
	$\KasnerSecondFund$
	are given, relative to the transported spatial coordinates,
	by the expressions
	\eqref{E:KASNERSPATIALMETRIC}	
	and
	\eqref{E:KASNERSECONDFUND}.
	All estimates stated in the lemma follow as straightforward consequences of
	these expressions and
	\eqref{E:WORSTEXPANDROOMLOTSOFUSEFULINEQUALITIES}.
\end{proof}

\subsection{Norm comparisons}
\label{SS:NORMCOMPARISONS}
In controlling various error terms,
we will compare the pointwise norms
$|\cdot|_{Frame}$
and
$|\cdot|_g$.
Our comparisons will often rely on the following lemma.

\begin{lemma}[\textbf{Norm comparisons}]
	\label{L:NORMCOMPARISONS}
	Let $T$ be a type $\binom{l}{m}$ $\Sigma_t$-tangent tensorfield.
	Under the bootstrap assumptions \eqref{E:BOOTSTRAPASSUMPTIONS},
	there exists a universal constant $C_* > 1$ \underline{independent of $N$ and $\Blowupexp$}
	(but depending on $m$ and $l$)
	such that if $\varepsilon \leq 1$, 
	then the following estimates hold for 
	the pointwise norms of Def.\,\ref{D:POINTWISENORMS}
	for $t \in (\TBoot,1]$:
	\begin{align}	 \label{E:POINTWISENORMCOMPARISON}
			C_*^{-1} t^{(l+m) \Worstexp} |T|_g
			& 
			\leq |T|_{Frame}
			\leq 
			C_* t^{-(l+m) \Worstexp} |T|_g.
	\end{align}
\end{lemma}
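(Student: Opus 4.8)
The statement is a purely pointwise, algebraic comparison: at each point of $\Sigma_t$, the two norms $|T|_{Frame}$ and $|T|_g$ are built from the same components $T_{b_1\cdots b_m}^{\ \ \ \ \ a_1\cdots a_l}$, but $|T|_g$ contracts against $m$ copies of $g^{-1}$ and $l$ copies of $g$, while $|T|_{Frame}$ uses the Euclidean inner product. So the plan is to bound the ``ratio'' of the two quadratic forms in terms of the extreme eigenvalues of $g$ (equivalently, of $g^{-1}$) relative to the Euclidean metric $\Euc$, and then to bound those eigenvalues using the bootstrap assumptions \eqref{E:BOOTSTRAPASSUMPTIONS} together with the Kasner estimates of Lemma~\ref{L:BASICESTIMATEFORKASNER}.

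First I would record the standard linear-algebra fact: if $\lambda_{\min}(t,x)$ and $\lambda_{\max}(t,x)$ denote the smallest and largest eigenvalues of the symmetric positive-definite matrix $g_{ij}(t,x)$ with respect to $\Euc$, then for any $\Sigma_t$-tangent type $\binom{l}{m}$ tensor $T$ one has
\begin{align} \label{E:EIGENVALUECOMPARISON}
	\lambda_{\min}^{l} \lambda_{\max}^{-m} |T|_{Frame}^2
	\leq
	|T|_g^2
	\leq
	\lambda_{\max}^{l} \lambda_{\min}^{-m} |T|_{Frame}^2,
\end{align}
which follows by diagonalizing $g$ and noting that each lowered index contributes a factor between $\lambda_{\min}$ and $\lambda_{\max}$, while each raised index contributes a factor between $\lambda_{\max}^{-1}$ and $\lambda_{\min}^{-1}$ (the eigenvalues of $g^{-1}$ being the reciprocals of those of $g$). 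Taking square roots, \eqref{E:POINTWISENORMCOMPARISON} will follow once I show that, under the bootstrap assumptions and for $\varepsilon \leq 1$, there is a universal constant $C_* > 1$, independent of $N$ and $\Blowupexp$, with
\begin{align} \label{E:EIGENVALUEBOUNDSGOAL}
	C_*^{-1} t^{2\Worstexp} \leq \lambda_{\min}(t,x), \qquad \lambda_{\max}(t,x) \leq C_* t^{-2\Worstexp},
	\qquad (t,x) \in (\TBoot,1] \times \mathbb{T}^{\mydim},
\end{align}
and the analogous bounds for $g^{-1}$ (which are just the reciprocals of \eqref{E:EIGENVALUEBOUNDSGOAL}).

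To prove \eqref{E:EIGENVALUEBOUNDSGOAL}, I would bound the $\Euc$-operator norm of $g$ and of $g^{-1}$ by their $|\cdot|_{Frame}$ sizes (up to a universal dimensional constant, since on a finite-dimensional space all matrix norms are comparable with $\mydim$-dependent constants). Then I would write $g = \KasnerMetric + (g - \KasnerMetric)$ and $g^{-1} = \KasnerMetric^{-1} + (g^{-1} - \KasnerMetric^{-1})$, and estimate each piece: the Kasner pieces are controlled by \eqref{E:KASNERMETRICESTIMATES}, which gives $\|\KasnerMetric\|_{L_{Frame}^\infty(\Sigma_t)}, \|\KasnerMetric^{-1}\|_{L_{Frame}^\infty(\Sigma_t)} \leq t^{\Room - 2\Worstexp} \leq t^{-2\Worstexp}$ (using $\Room > 0$ and $t \leq 1$); the difference pieces are controlled by the low norm $\MetricLownorm$, since \eqref{E:METRICLOWNORM} and \eqref{E:BOOTSTRAPASSUMPTIONS} give $\|g - \KasnerMetric\|_{L_{Frame}^\infty(\Sigma_t)}, \|g^{-1} - \KasnerMetric^{-1}\|_{L_{Frame}^\infty(\Sigma_t)} \leq \varepsilon t^{-2\Worstexp}$. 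Adding, $\|g\|_{L_{Frame}^\infty(\Sigma_t)} \leq (1+\varepsilon) t^{-2\Worstexp} \leq 2 t^{-2\Worstexp}$ when $\varepsilon \leq 1$, and likewise for $g^{-1}$; this yields the upper bound on $\lambda_{\max}$ and (since $\lambda_{\min}(g)^{-1} = \lambda_{\max}(g^{-1})$) the lower bound on $\lambda_{\min}$ in \eqref{E:EIGENVALUEBOUNDSGOAL}, with $C_*$ depending only on $\mydim$. Crucially, $\varepsilon \leq 1$ is all that is used here, so $C_*$ is indeed independent of $N$ and $\Blowupexp$. Feeding \eqref{E:EIGENVALUEBOUNDSGOAL} into \eqref{E:EIGENVALUECOMPARISON} and absorbing the $\mydim$- and $(l,m)$-dependent combinatorial factors into a single relabeled universal constant $C_*$ gives exactly \eqref{E:POINTWISENORMCOMPARISON}.

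The only mildly delicate point — and hence the ``main obstacle,'' though it is really just bookkeeping — is to make sure that the constant stays independent of $N$ and $\Blowupexp$: this forces me to extract the eigenvalue bounds \eqref{E:EIGENVALUEBOUNDSGOAL} from $\MetricLownorm$ alone (which carries no $\Blowupexp$-weights) rather than from the high norm $\MetricHighnorm$, and to invoke only $\varepsilon \leq 1$ rather than the full (parameter-dependent) smallness of $\varepsilon$. The rest is elementary linear algebra and the triangle inequality.
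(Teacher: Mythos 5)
Your proposal is correct and follows essentially the same route as the paper: both arguments reduce to the pointwise bounds $|g|_{Frame}, |g^{-1}|_{Frame} \lesssim t^{-2\Worstexp}$, obtained from the triangle inequality, the Kasner estimates \eqref{E:KASNERMETRICESTIMATES}, and the low-norm piece of the bootstrap assumptions with $\varepsilon \leq 1$, which is exactly what keeps $C_*$ independent of $N$ and $\Blowupexp$. The only cosmetic difference is that the paper implements the linear-algebra step by applying $g$-Cauchy--Schwarz to the schematic identities $|T|_{Frame} = |(\delta)^l(\delta^{-1})^m T^2|^{1/2}$ and $|T|_g = |(g)^l(g^{-1})^m T^2|^{1/2}$ (picking up factors $|\delta|_g^{l/2}|\delta^{-1}|_g^{m/2} \leq C_* |g^{-1}|_{Frame}^{l/2}|g|_{Frame}^{m/2}$ and vice versa), whereas you diagonalize $g$ relative to $\Euc$ and track extreme eigenvalues; the two are interchangeable here.
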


\begin{proof}
	Let $\delta$ denote the standard Euclidean metric on $\Sigma_t$,
	i.e., relative to the transported spatial coordinates,
	$\delta$ has components $\delta_{ij}$ equal to $\mbox{\upshape diag}(1,1,\cdots,1)$,
	and likewise for the inverse Euclidean metric $\delta^{-1}$.
	Then in view of the definition of the norm
	$|\cdot|_{Frame}$, we have, schematically,
	$|T|_{Frame} = |(\delta)^l (\delta^{-1})^m T^2|^{1/2}$,
	and by $g$-Cauchy--Schwarz, the RHS of the previous expression is
	\[
	\leq C_* |\delta|_g^{l/2} |\delta^{-1}|_g^{m/2} |T|_g
	= C_* 
		\left\lbrace
			g^{ac} g^{bd} \delta_{ab} \delta_{cd}
		\right\rbrace^{l/4}
		\left\lbrace
			g_{a'c'} g_{b'd'} (\delta^{-1})^{a'b'} (\delta^{-1})^{c'd'}
		\right\rbrace^{m/4}
		|T|_g.
	\]
	From Def.~\ref{D:LOWNORMS},
	the estimate \eqref{E:KASNERMETRICESTIMATES},
	and the bootstrap assumptions,
	we deduce that the RHS of the above expression 
	(which we consider from the perspective of the transported coordinate frame)
	is 
	$
	\leq C_* |g^{-1}|_{Frame}^{l/2} |g|_{Frame}^{m/2} |T|_g
	\leq C_* t^{-(m+l)\Worstexp} |T|_g$,
	which yields the second inequality in \eqref{E:POINTWISENORMCOMPARISON}.
	To obtain the first inequality in
	\eqref{E:POINTWISENORMCOMPARISON},
	we note that
	we have, schematically,
	$|T|_g = |(g)^l (g^{-1})^m T^2|^{1/2}$.
	Writing out the RHS of the previous expression in the
	transported coordinate frame, we see that
	it is $\leq C_* |g|_{Frame}^{l/2} |g^{-1}|_{Frame}^{m/2} |T|_{Frame}$.
	From Def.~\ref{D:LOWNORMS},
	the estimate \eqref{E:KASNERMETRICESTIMATES},
	and the bootstrap assumptions,
	we deduce that the RHS of the previous expression is
	$\leq C_* t^{-(m+l)\Worstexp} |T|_{Frame}$,
	which yields the first inequality in \eqref{E:POINTWISENORMCOMPARISON}.
	\end{proof}

\subsection{Sobolev interpolation and product inequalities}
\label{SS:INTERPOLATION}
In this subsection, we provide some Sobolev interpolation and product inequalities
that we will use to control various error terms. We start with the following
lemma, which provides basic interpolation estimates.

\begin{lemma}[\textbf{Basic interpolation estimates}]
	\label{L:STANDARDSOBOLEVINTERPOLATION}
	If $M_1$ and $M_2$ are non-negative integers
	with $M_1 \leq M_2$ and $v$ is a scalar function,
	then the following inequalities hold:
	\begin{align} \label{E:BASICINTERPOLATION}
		\| v \|_{\dot{H}^{M_1}(\Sigma_t)}
		& \lesssim 
		\| v \|_{L^{\infty}(\Sigma_t)}^{1 - M_1/M_2} \| v \|_{\dot{H}^{M_2}}^{M_1/M_2}
		\lesssim
		\| v \|_{L^{\infty}(\Sigma_t)}
		+
		\| v \|_{\dot{H}^{M_2}}.
	\end{align}
	
	If in addition $M_1 + 1 + \lfloor \mydim/2 \rfloor \leq M_2$,
	then the following inequalities hold:
	\begin{align} \label{E:LINFINITYSOBOLEVINTERPOLATION}
		\left\| 
			v 
		\right\|_{\dot{W}^{M_1,\infty}(\Sigma_t)}
		& \lesssim 
		\| v \|_{H^{M_1 + 1 + \lfloor \mydim/2 \rfloor}(\Sigma_t)}
		\lesssim 
		\| v \|_{L^{\infty}(\Sigma_t)}
		+
		\| v \|_{\dot{H}^{M_2}(\Sigma_t)}.
	\end{align}
	
	\end{lemma}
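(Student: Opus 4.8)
The plan is to treat this as a bundle of classical facts about functions on the torus $\mathbb{T}^{\mydim}$: a Gagliardo--Nirenberg-type interpolation inequality between $L^{\infty}$ and a high homogeneous Sobolev norm, the elementary bound $a^{\vartheta} b^{1-\vartheta} \leq a + b$ for $a, b \geq 0$ and $\vartheta \in [0,1]$, and the Sobolev embedding $H^s(\mathbb{T}^{\mydim}) \hookrightarrow L^{\infty}(\mathbb{T}^{\mydim})$, which holds for $s > \mydim/2$.

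First I would prove the leftmost inequality in \eqref{E:BASICINTERPOLATION}, namely $\| v \|_{\dot{H}^{M_1}(\Sigma_t)} \lesssim \| v \|_{L^{\infty}(\Sigma_t)}^{1 - M_1/M_2} \| v \|_{\dot{H}^{M_2}(\Sigma_t)}^{M_1/M_2}$, via a Littlewood--Paley decomposition $v = \sum_{j \geq 0} P_j v$ adapted to $\mathbb{T}^{\mydim}$, with $P_0$ projecting onto the zero Fourier mode. The case $M_1 = 0$ is immediate from $\|v\|_{L^2(\Sigma_t)} \lesssim \|v\|_{L^{\infty}(\Sigma_t)}$ (finiteness of the measure of the torus), so assume $M_1 \geq 1$, in which case $P_0 v$ contributes nothing. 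Using $\|P_j v\|_{\dot{H}^{M_1}(\Sigma_t)} \asymp 2^{j M_1} \|P_j v\|_{L^2(\Sigma_t)}$ together with the two a priori bounds $\|P_j v\|_{L^2(\Sigma_t)} \lesssim \|v\|_{L^2(\Sigma_t)} \lesssim \|v\|_{L^{\infty}(\Sigma_t)}$ and $\|P_j v\|_{L^2(\Sigma_t)} \lesssim 2^{-j M_2}\|v\|_{\dot{H}^{M_2}(\Sigma_t)}$, the triangle inequality yields $\| v \|_{\dot{H}^{M_1}(\Sigma_t)} \lesssim \sum_{j} 2^{j M_1} \min\bigl(\|v\|_{L^{\infty}(\Sigma_t)},\, 2^{-j M_2}\|v\|_{\dot{H}^{M_2}(\Sigma_t)}\bigr)$, and splitting this geometric sum at the frequency where the two arguments of the minimum balance produces exactly the asserted product. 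The second inequality in \eqref{E:BASICINTERPOLATION} then follows from $a^{\vartheta}b^{1-\vartheta} \leq \vartheta a + (1-\vartheta)b \leq a + b$ with $\vartheta = M_1/M_2$, $a = \|v\|_{L^{\infty}(\Sigma_t)}$, and $b = \|v\|_{\dot{H}^{M_2}(\Sigma_t)}$.

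Next I would handle \eqref{E:LINFINITYSOBOLEVINTERPOLATION}. For its first inequality, I apply the Sobolev embedding $\|w\|_{L^{\infty}(\Sigma_t)} \lesssim \|w\|_{H^{1 + \lfloor \mydim/2 \rfloor}(\Sigma_t)}$ --- legitimate since $1 + \lfloor \mydim/2 \rfloor > \mydim/2$, and proved on $\mathbb{T}^{\mydim}$ by Cauchy--Schwarz on the Fourier series of $w$ combined with the convergence of $\sum_{\xi \in \mathbb{Z}^{\mydim}} \langle \xi \rangle^{-2(1 + \lfloor \mydim/2 \rfloor)}$ --- with $w = \partial_{\vec{I}} v$ for each multi-index $\vec{I}$ satisfying $|\vec{I}| = M_1$, and then sum over the finitely many such $\vec{I}$; this bounds $\| v \|_{\dot{W}^{M_1,\infty}(\Sigma_t)}$ by a constant times $\| v \|_{H^{M_1 + 1 + \lfloor \mydim/2 \rfloor}(\Sigma_t)}$. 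For its second inequality, I write $\| v \|_{H^{M_1 + 1 + \lfloor \mydim/2 \rfloor}(\Sigma_t)} \lesssim \sum_{j=0}^{M_1 + 1 + \lfloor \mydim/2 \rfloor} \| v \|_{\dot{H}^{j}(\Sigma_t)}$; since the hypothesis $M_1 + 1 + \lfloor \mydim/2 \rfloor \leq M_2$ guarantees that every index $j$ appearing here satisfies $j \leq M_2$, I invoke \eqref{E:BASICINTERPOLATION} (with $j$ in the role of $M_1$ and the same $M_2$) to bound each summand by $\| v \|_{L^{\infty}(\Sigma_t)} + \| v \|_{\dot{H}^{M_2}(\Sigma_t)}$, the number of summands being controlled in terms of $M_1$ and $\mydim$.

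I do not anticipate any genuine obstacle, as these are standard estimates; the only point needing mild care is the role of the \emph{homogeneous} norms $\dot{H}^M$ on the compact manifold $\mathbb{T}^{\mydim}$. This is harmless: the zero Fourier mode is annihilated by every $\dot{H}^M$ with $M \geq 1$, the finiteness of the measure of $\mathbb{T}^{\mydim}$ supplies $\|\cdot\|_{L^2(\Sigma_t)} \lesssim \|\cdot\|_{L^{\infty}(\Sigma_t)}$ at no cost, and the spectral gap of $\mathbb{T}^{\mydim}$ only improves the convergence of the geometric series in the Littlewood--Paley step.
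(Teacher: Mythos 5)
Your proof is correct, but it takes a genuinely different route from the paper's. The paper disposes of the first inequality in \eqref{E:BASICINTERPOLATION} by citing Nirenberg's classical interpolation results \cite{lN1959} (noting that the $L^2$ norm on the right-hand side may be upgraded to $L^{\infty}$ because $\|v\|_{L^2(\Sigma_t)} \lesssim \|v\|_{L^{\infty}(\Sigma_t)}$ on the compact torus, and that Nirenberg's arguments on $\mathbb{R}^{\mydim}$ carry over to $\mathbb{T}^{\mydim}$), and it treats the first inequality in \eqref{E:LINFINITYSOBOLEVINTERPOLATION} as a standard Sobolev embedding; the remaining inequalities then follow, exactly as in your write-up, from Young's inequality and from chaining the two displays together. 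You instead give a self-contained Littlewood--Paley/Fourier-series proof of both ingredients. What your approach buys is independence from the literature and a transparent treatment of the homogeneous norms on the compact manifold (the zero mode, the spectral gap, the convergence of the dyadic sums); what the paper's approach buys is brevity. The logical skeleton --- interpolation product, then the weighted AM--GM bound, then embedding combined with the already-proved \eqref{E:BASICINTERPOLATION} --- is identical. One cosmetic slip: in applying $a^{\vartheta}b^{1-\vartheta} \leq \vartheta a + (1-\vartheta) b$ you take $\vartheta = M_1/M_2$ with $a = \|v\|_{L^{\infty}(\Sigma_t)}$ and $b = \|v\|_{\dot{H}^{M_2}(\Sigma_t)}$, which yields the product with the exponents swapped relative to \eqref{E:BASICINTERPOLATION}; for that assignment of $a$ and $b$ you want $\vartheta = 1 - M_1/M_2$. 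Since the conclusion $\leq a + b$ is symmetric in the exponents, nothing is affected.
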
	

	\begin{proof}
		The first inequality in \eqref{E:BASICINTERPOLATION} 
		follows as special case of Nirenberg's famous interpolation results \cite{lN1959},
		except that on the RHS, we have replaced the norm
		$\| \cdot \|_{L^2(\Sigma_t)}$ with $\| \cdot \|_{L^{\infty}(\Sigma_t)}$;
		the replacement is possible because 
		of the estimate
		$\| v \|_{L^2(\Sigma_t)} 
		\lesssim
		\| v \|_{L^{\infty}(\Sigma_t)}
		$
		for scalar functions $v$
		(which holds because $\mathbb{T}^{\mydim}$ is compact).
		Strictly speaking, Nirenberg stated his results for functions
		defined on $\mathbb{R}^{\mydim}$, 
		but the arguments given in his paper can be used to
		derive the same estimates for
		functions defined on $\mathbb{T}^{\mydim}$.
		The second inequality in \eqref{E:BASICINTERPOLATION} 
		follows from the first and Young's inequality.
		
		The first inequality in \eqref{E:LINFINITYSOBOLEVINTERPOLATION}
		is a standard Sobolev embedding result on $\mathbb{T}^{\mydim}$.
		The second inequality in \eqref{E:LINFINITYSOBOLEVINTERPOLATION}
		follows from the first and
		the second inequality in \eqref{E:BASICINTERPOLATION}.
	\end{proof}

	\begin{lemma}[\textbf{Sobolev product estimates for tensorfields}]
	\label{L:L2PRODUCTBOUNDINERMSOFLINFINITYANDHMDOT}
	Let $M$ be a non-negative integer,
	let $\lbrace v_r \rbrace_{r=1}^R$ be a finite collection of
	$\Sigma_t$-tangent tensorfields, 
	and let $v = \prod_{r=1}^R \partial_{\vec{I}_r} v_r$ be a (schematically denoted) tensor product,
	possibly involving contractions.
	Assume that the bootstrap assumptions \eqref{E:BOOTSTRAPASSUMPTIONS} hold
	for some $\varepsilon$ with $\varepsilon \leq 1$.
	Then the following estimate holds for $t \in (\TBoot,1]$:
	\begin{align} \label{E:FRAMENORML2PRODUCTBOUNDINERMSOFLINFINITYANDHMDOT}
		\max_{\sum_{r=1}^R |\vec{I}_r| = M}
		\left\| 
			\prod_{r=1}^R \partial_{\vec{I}_r} v_r 
		\right\|_{L_{Frame}^2(\Sigma_t)}
		& \lesssim 
		\sum_{r=1}^R \| v_r \|_{\dot{H}_{Frame}^M(\Sigma_t)} 
		\prod_{s \neq r} \| v_s \|_{L_{Frame}^{\infty}(\Sigma_t)}.
	\end{align}
	
	Moreover, 
	under the same assumptions as above, 
	and assuming that $v$ is type $\binom{l}{m}$,
	the following estimate holds for $t \in (\TBoot,1]$:
	\begin{align} \label{E:L2PRODUCTBOUNDINERMSOFLINFINITYANDHMDOT}
		\max_{\sum_{r=1}^R |\vec{I}_r| = M}
		\left\| 
			\prod_{r=1}^R \partial_{\vec{I}_r} v_r 
		\right\|_{L_g^2(\Sigma_t)}
		& \lesssim 
		t^{-(l+m)\Worstexp}
		\sum_{r=1}^R \| v_r \|_{\dot{H}_{Frame}^M(\Sigma_t)} 
		\prod_{s \neq r} \| v_s \|_{L_{Frame}^{\infty}(\Sigma_t)}.
	\end{align}
	\end{lemma}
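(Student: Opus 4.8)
The plan is to prove the Frame–norm estimate \eqref{E:FRAMENORML2PRODUCTBOUNDINERMSOFLINFINITYANDHMDOT} first, by reducing it to a scalar Gagliardo--Nirenberg--Moser product inequality, and then to deduce the $g$–norm estimate \eqref{E:L2PRODUCTBOUNDINERMSOFLINFINITYANDHMDOT} from it via the norm comparison Lemma~\ref{L:NORMCOMPARISONS}. Since $|\cdot|_{Frame}$ is, by Def.~\ref{D:POINTWISENORMS}, merely the $\ell^2$–norm of the coordinate–frame components, and since each component of the (schematically denoted) contracted tensor product $v = \prod_{r=1}^R \partial_{\vec{I}_r} v_r$ is a sum of at most $C = C(l,m,R)$ products of components of the factors $\partial_{\vec{I}_r} v_r$ --- using that $\partial_{\vec{I}_r}$ acts componentwise, see \eqref{E:DERIVATIVEOFTENSORFIELD} --- one has the pointwise bound $|v|_{Frame} \lesssim \sum \prod_{r=1}^R |\partial_{\vec{I}_r}(v_r)_{(\cdots)}|$, each $(v_r)_{(\cdots)}$ a fixed scalar component. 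Bounding each scalar component by the corresponding Frame–norm, it then suffices to prove the scalar product estimate
\[
	\left\| \prod_{r=1}^R \partial_{\vec{I}_r} w_r \right\|_{L^2(\Sigma_t)}
	\lesssim
	\sum_{r=1}^R \| w_r \|_{\dot{H}^M(\Sigma_t)} \prod_{s \neq r} \| w_s \|_{L^{\infty}(\Sigma_t)},
	\qquad \sum_{r=1}^R |\vec{I}_r| = M,
\]
for scalar functions $w_r$ on $\mathbb{T}^{\mydim}$, and to sum over the finitely many component-products.

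For the scalar estimate I would run the classical argument. First, discard every factor with $|\vec{I}_r| = 0$ into $L^{\infty}$, and apply H\"{o}lder's inequality on $\mathbb{T}^{\mydim}$ with the exponents $p_r = 2M/|\vec{I}_r|$ to the remaining factors; since $\sum_{r : |\vec{I}_r| > 0} |\vec{I}_r| = M$, these exponents satisfy $\sum_r 1/p_r = 1/2$, so $\| \prod_r \partial_{\vec{I}_r} w_r \|_{L^2} \leq \prod_{r : |\vec{I}_r|>0} \| \partial_{\vec{I}_r} w_r \|_{L^{p_r}} \cdot \prod_{r : |\vec{I}_r|=0} \| w_r \|_{L^{\infty}}$. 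Next, the Gagliardo--Nirenberg inequality on the torus (a special case of Nirenberg's results \cite{lN1959}, in exactly the same spirit as Lemma~\ref{L:STANDARDSOBOLEVINTERPOLATION}, with the usual remark that Nirenberg's proofs on $\mathbb{R}^{\mydim}$ transfer to $\mathbb{T}^{\mydim}$) yields $\| \partial_{\vec{I}_r} w_r \|_{L^{p_r}} \lesssim \| w_r \|_{\dot{H}^M}^{|\vec{I}_r|/M} \| w_r \|_{L^{\infty}}^{1 - |\vec{I}_r|/M}$. Multiplying over $r$ and using $\sum_r |\vec{I}_r|/M = 1$, the weighted AM--GM inequality $\prod_r x_r^{\theta_r} \leq \sum_r \theta_r x_r$ applied with $\theta_r = |\vec{I}_r|/M$ and $x_r = \| w_r \|_{\dot{H}^M}/\| w_r \|_{L^{\infty}}$ (the case of a vanishing $L^{\infty}$–norm being trivial, since then $w_r \equiv 0$ and the left side vanishes) converts $\prod_r \| w_r \|_{\dot{H}^M}^{\theta_r} \| w_r \|_{L^{\infty}}^{1-\theta_r}$ into $\lesssim \sum_r \| w_r \|_{\dot{H}^M} \prod_{s \neq r} \| w_s \|_{L^{\infty}}$, which is the desired bound.

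To obtain \eqref{E:L2PRODUCTBOUNDINERMSOFLINFINITYANDHMDOT} I would then invoke Lemma~\ref{L:NORMCOMPARISONS}: under the bootstrap assumptions \eqref{E:BOOTSTRAPASSUMPTIONS} with $\varepsilon \leq 1$, the type $\binom{l}{m}$ tensorfield $v$ satisfies the pointwise bound $|v|_g \leq C_* t^{-(l+m)\Worstexp} |v|_{Frame}$ for $t \in (\TBoot,1]$. Squaring and integrating over $\Sigma_t$ gives $\| v \|_{L_g^2(\Sigma_t)} \lesssim t^{-(l+m)\Worstexp} \| v \|_{L_{Frame}^2(\Sigma_t)}$, and substituting the already-proven estimate \eqref{E:FRAMENORML2PRODUCTBOUNDINERMSOFLINFINITYANDHMDOT} produces exactly \eqref{E:L2PRODUCTBOUNDINERMSOFLINFINITYANDHMDOT}.

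The only genuinely non-routine ingredient is the endpoint Gagliardo--Nirenberg inequality in the $L^{p_r}$–form used in the second step; but this is standard and can be imported verbatim from \cite{lN1959}, the same source already cited for Lemma~\ref{L:STANDARDSOBOLEVINTERPOLATION}. Everything else --- the componentwise reduction, H\"{o}lder, the AM--GM bookkeeping, and the final appeal to the norm comparison --- is elementary, so I do not anticipate a real obstacle; the proof is essentially a matter of organizing these pieces carefully.
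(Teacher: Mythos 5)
Your proposal is correct and follows essentially the same route as the paper: reduce the tensorial statement to the scalar product estimate by expanding in the transported coordinate frame, and then obtain the $L_g^2$ version from the $L_{Frame}^2$ version via the pointwise comparison \eqref{E:POINTWISENORMCOMPARISON}. The only difference is that the paper simply cites the scalar Moser-type product estimate (from \cite{hR2009b}*{Lemma~6.16}) whereas you supply its standard H\"{o}lder--Gagliardo--Nirenberg--AM-GM proof, which is a correct unpacking of that black box.
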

	
	\begin{proof}
		If the $\lbrace v_r \rbrace_{r=1}^R$ are all scalar functions
		(and hence $l=m=0$), then inequality
		\eqref{E:FRAMENORML2PRODUCTBOUNDINERMSOFLINFINITYANDHMDOT}
		is standard; 
		it is proved, for example, 
		as \cite{hR2009b}*{Lemma~6.16}.
		If one or more of the $v_r$ are not scalar functions,
		then the estimate \eqref{E:FRAMENORML2PRODUCTBOUNDINERMSOFLINFINITYANDHMDOT}
		follows a straightforward consequence of the
		estimate \eqref{E:FRAMENORML2PRODUCTBOUNDINERMSOFLINFINITYANDHMDOT}
		for scalar functions, essentially by writing out
		the definition of LHS~\eqref{E:FRAMENORML2PRODUCTBOUNDINERMSOFLINFINITYANDHMDOT}
		relative to the transported spatial coordinate frame and estimating the components of all tensorfields.
		
		To prove \eqref{E:L2PRODUCTBOUNDINERMSOFLINFINITYANDHMDOT},
		we first use \eqref{E:POINTWISENORMCOMPARISON}
		to deduce that
		\[
		\max_{\sum_{r=1}^R |\vec{I}_r| = M}
		\left\| 
			\prod_{r=1}^R \partial_{\vec{I}_r} v_r 
		\right\|_{L_g^2(\Sigma_t)}
		\lesssim
		t^{-(l+m)\Worstexp}
		\max_{\sum_{r=1}^R |\vec{I}_r| = M}
		\left\| 
			\prod_{r=1}^R \partial_{\vec{I}_r} v_r 
		\right\|_{L_{Frame}^2(\Sigma_t)}.
		\]
		We then use \eqref{E:FRAMENORML2PRODUCTBOUNDINERMSOFLINFINITYANDHMDOT}
		to conclude that the RHS of the previous expression is
		$\lesssim \mbox{RHS~\eqref{E:L2PRODUCTBOUNDINERMSOFLINFINITYANDHMDOT}}$ as desired.
	
	\end{proof}

\subsection{Sobolev embedding}
\label{SS:SOBOLEVEMBEDDING}
We will use the following lemma to obtain 
$L^{\infty}(\Sigma_t)$ control over some additional
low-order derivatives that are not directly controlled 
by the low-order norms from Def.\,\ref{D:LOWNORMS}.
To achieve the desired control, we borrow, via interpolation
with sufficiently large $N$,
a small amount of the high norm. 
Because the high norms are 
quite weak near $t=0$ (at least when $\Blowupexp$ is large), 
the interpolation introduces
singular behavior 
into the estimates of the lemma,
represented by the factors of $t^{- \Blowupexp \updelta}$ on the
RHSs of the estimates.
However, $\updelta \to 0$ as $N \to \infty$. 
Thus, at fixed $\Blowupexp$, if $N$ is large,
then the following basic principle, central to our approach,
applies:
\begin{quote}
	\emph{The singular contribution to the behavior of the low-order norms
	coming from the high norms is very small}.
\end{quote}

\begin{lemma}[\textbf{Sobolev embedding, borrowing only a small amount of high norm}]
	\label{L:SOBOLEVBORROWALITTLEHIGHNORM}
	There exists a parameter\footnote{Recall that, as we described in 
	Subsubsect.\,\ref{SSS:PARAMETERS}, we allow $\updelta$ to vary from line to line
	and use the convention that a sum of two $\updelta$'s is another $\updelta$.} 
	$\updelta > 0$, depending on $N$ and $\mydim$, 
	such that
	$\lim_{N \to \infty} \updelta = 0$ 
	and such that if $N \geq 5 + \lfloor \mydim/2 \rfloor$,
	then the following estimates hold for $t \in (\TBoot,1]$:
	\begin{subequations}
	\begin{align} \label{E:UPTOFOURDERIVATIVESOFGLINFINITYSOBOLEV}
		\| g - \KasnerMetric \|_{W_{Frame}^{4,\infty}(\Sigma_t)}
		& \lesssim t^{- 2 \Worstexp - \Blowupexp \updelta} 
			\left\lbrace
				\MetricLownorm(t) + \MetricHighnorm(t)
			\right\rbrace,
			\\
		\| g^{-1} - \KasnerMetric^{-1} \|_{W_{Frame}^{4,\infty}(\Sigma_t)}
		& \lesssim t^{- 2 \Worstexp - \Blowupexp \updelta} 
			\left\lbrace
				\MetricLownorm(t) + \MetricHighnorm(t)
			\right\rbrace,
			\label{E:UPTOFOURDERIVATIVESOFGINVERSELINFINITYSOBOLEV} \\
		\| n - 1 \|_{W^{4,\infty}(\Sigma_t)}
		& \lesssim
			t^{2 - 10 \Worstexp - \Room - \Blowupexp \updelta}
			\left\lbrace
				\LapseLownorm(t) + \LapseHighnorm(t)
			\right\rbrace.
			\label{E:UPTOFOURDERIVATIVESOFLAPSELINFINITYSOBOLEV}
	\end{align}
	\end{subequations}
\end{lemma}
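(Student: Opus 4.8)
The plan is to combine Sobolev embedding with Gagliardo--Nirenberg-type interpolation between the $L^\infty$-control furnished by the low norms of Def.~\ref{D:LOWNORMS} and the $\dot{H}^N$-control furnished by the high norms of Def.~\ref{D:HIGHNORMS}. The structural fact that makes this work cleanly is that the background Kasner quantities are independent of the spatial coordinates: relative to the transported coordinate frame, $\KasnerMetric$ has components $t^{2q_i}\delta_{ij}$, $\KasnerMetric^{-1}$ has components $t^{-2q_i}\delta^{ij}$, and $\widetilde{n} \equiv 1$. Hence $\partial_{\vec{I}}(g - \KasnerMetric) = \partial_{\vec{I}} g$, $\partial_{\vec{I}}(g^{-1} - \KasnerMetric^{-1}) = \partial_{\vec{I}} g^{-1}$, and $\partial_{\vec{I}}(n-1) = \partial_{\vec{I}} n$ whenever $|\vec{I}| \geq 1$, so the high-order homogeneous Sobolev norms of the differences are exactly the quantities directly weighted in $\MetricHighnorm$ and $\LapseHighnorm$.

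Concretely, I would first set $m_0 := 5 + \lfloor \mydim/2 \rfloor$ and, applying the first inequality in \eqref{E:LINFINITYSOBOLEVINTERPOLATION} componentwise (with $M_1 = 4$), reduce the three claimed bounds to estimating $\| g - \KasnerMetric \|_{H^{m_0}_{Frame}(\Sigma_t)}$, $\| g^{-1} - \KasnerMetric^{-1} \|_{H^{m_0}_{Frame}(\Sigma_t)}$, and $\| n - 1 \|_{H^{m_0}(\Sigma_t)}$; note $m_0 = 4 + 1 + \lfloor \mydim/2 \rfloor$ and $m_0 \leq N$ by hypothesis. I would then treat each derivative level $M = 0, 1, \dots, m_0$ separately. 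The level $M = 0$ is immediate from Def.~\ref{D:LOWNORMS}: $\|g - \KasnerMetric\|_{L^\infty_{Frame}(\Sigma_t)} \leq t^{-2\Worstexp}\MetricLownorm(t)$, likewise for $g^{-1} - \KasnerMetric^{-1}$, and $\|n-1\|_{L^\infty(\Sigma_t)} \leq t^{2 - 10\Worstexp - \Room}\LapseLownorm(t)$. For $1 \leq M \leq m_0$, I would apply the interpolation inequality from \eqref{E:BASICINTERPOLATION}, $\|v\|_{\dot{H}^M(\Sigma_t)} \lesssim \|v\|_{L^\infty(\Sigma_t)}^{1 - M/N}\|v\|_{\dot{H}^N(\Sigma_t)}^{M/N}$, to each coordinate-frame component $v$ of the relevant difference, and then feed in the $L^\infty$-bounds above together with the high-norm bounds $\|g\|_{\dot{H}^N_{Frame}(\Sigma_t)} \leq t^{-(\Blowupexp + 2\Worstexp + \Room)}\MetricHighnorm(t)$, $\|g^{-1}\|_{\dot{H}^N_{Frame}(\Sigma_t)} \leq t^{-(\Blowupexp + 2\Worstexp + \Room)}\MetricHighnorm(t)$, and $\|n\|_{\dot{H}^N(\Sigma_t)} \leq t^{-\Blowupexp}\LapseHighnorm(t)$ read off from Def.~\ref{D:HIGHNORMS}.

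The main step is then the bookkeeping of the powers of $t$. For the metric, the interpolated power of $t$ at level $M$ is $-2\Worstexp(1 - M/N) - (\Blowupexp + 2\Worstexp + \Room)(M/N) = -2\Worstexp - (\Blowupexp + \Room)(M/N)$; since $0 \leq M/N \leq m_0/N$, $\Room < 1 \leq \Blowupexp$, and $t \in (0,1]$, this power is bounded below by $-2\Worstexp - \Blowupexp \updelta$ upon taking $\updelta := 2 m_0/N$, which tends to $0$ as $N \to \infty$. For the lapse the analogous computation gives a power $(2 - 10\Worstexp - \Room)(1 - M/N) - \Blowupexp(M/N) \geq (2 - 10\Worstexp - \Room) - \Blowupexp \updelta$ after enlarging $\updelta$ to $3 m_0/N$. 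Using Young's inequality to pass from $\MetricLownorm^{1-M/N}\MetricHighnorm^{M/N}$ to $\MetricLownorm + \MetricHighnorm$ (and its lapse analog), summing the finitely many levels $M = 0, \dots, m_0$, and composing with the Sobolev embedding of the first step then yields \eqref{E:UPTOFOURDERIVATIVESOFGLINFINITYSOBOLEV}, \eqref{E:UPTOFOURDERIVATIVESOFGINVERSELINFINITYSOBOLEV}, and \eqref{E:UPTOFOURDERIVATIVESOFLAPSELINFINITYSOBOLEV}; the convention that sums of $\updelta$'s are again $\updelta$'s absorbs the finitely many adjustments.

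I do not expect a genuine obstacle here: the only mild points of care are that Lemma~\ref{L:STANDARDSOBOLEVINTERPOLATION} is stated for scalar functions, so it must be applied to the individual frame components and reassembled using that $|\cdot|_{Frame}$ is the Euclidean norm of the component array, and that one must use the $\dot{H}^N_{Frame}$ (rather than $\dot{H}^N_g$) entries of $\MetricHighnorm$, since the interpolation is carried out naturally for the background operators $\partial_{\vec{I}}$. No cancellation, approximate monotonicity, or structural feature of the Einstein-vacuum equations enters — the estimate follows purely from the definitions of the norms, the smallness of $m_0/N$, and standard Sobolev inequalities on $\mathbb{T}^{\mydim}$.
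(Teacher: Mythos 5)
Your proposal is correct and follows essentially the same route as the paper's proof: Sobolev embedding to reduce to control of $H^{5+\lfloor\mydim/2\rfloor}$, interpolation of the homogeneous norms between $L^{\infty}$ (controlled by the low norms) and $\dot{H}^N$ (controlled by the high norms) with $\updelta\sim(5+\lfloor\mydim/2\rfloor)/N$, the spatial constancy of the Kasner background, and Young's inequality, with the same bookkeeping of powers of $t$. The only cosmetic difference is that you interpolate at each intermediate level $M$ separately, whereas the paper performs a single interpolation at the top level $5+\lfloor\mydim/2\rfloor$ after invoking \eqref{E:LINFINITYSOBOLEVINTERPOLATION} and the second inequality in \eqref{E:BASICINTERPOLATION}.
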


\begin{proof}
	To prove \eqref{E:UPTOFOURDERIVATIVESOFLAPSELINFINITYSOBOLEV}, 
	we first use \eqref{E:LINFINITYSOBOLEVINTERPOLATION} 
	and the second inequality in \eqref{E:BASICINTERPOLATION}
	to deduce
	$
	\| n - 1 \|_{W^{4,\infty}(\Sigma_t)}
	\lesssim 
	\| n - 1 \|_{L^{\infty}(\Sigma_t)}
	+
	\| n - 1 \|_{\dot{H}^{5 + \lfloor \mydim/2 \rfloor}(\Sigma_t)}
	$.
	Then using the first inequality in \eqref{E:BASICINTERPOLATION},
	we deduce that for $N \geq 5 + \lfloor \mydim/2 \rfloor$,
	the RHS of the previous expression is
	$
	\lesssim
	\| n - 1 \|_{L^{\infty}(\Sigma_t)}
	+
	\| n - 1 \|_{L^{\infty}(\Sigma_t)}^{1 - \updelta}
	\| n \|_{\dot{H}^N(\Sigma_t)}^{\updelta}
	$,
	where $\updelta := (5 + \lfloor \mydim/2 \rfloor)/N$.
	Combining these estimates and appealing to definitions
	\eqref{E:LAPSELOWNORM}
	and
	\eqref{E:LAPSEHIGHNORM},
	we find that
	$
	\| n - 1 \|_{W^{4,\infty}(\Sigma_t)}
	\lesssim 
	t^{2 - 10 \Worstexp - \Room} \LapseLownorm(t)
	+
	\left\lbrace
		t^{2 - 10 \Worstexp - \Room} \LapseLownorm(t)
	\right\rbrace^{1 - \updelta}
	\left\lbrace
		t^{- \Blowupexp} \LapseHighnorm(t)
	\right\rbrace^{\updelta}
	$.
	Finally, we use Young's inequality to deduce 
	\begin{align} \label{E:MAINSTEPUPTOFOURDERIVATIVESOFLAPSELINFINITYSOBOLEV}
	&
	t^{2 - 10 \Worstexp - \Room} \LapseLownorm(t)
	+
	\left\lbrace
		t^{2 - 10 \Worstexp - \Room} \LapseLownorm(t)
	\right\rbrace^{1 - \updelta}
	\left\lbrace
		t^{- \Blowupexp} \LapseHighnorm(t)
	\right\rbrace^{\updelta}
		\\
	&
	=
	t^{2 - 10 \Worstexp - \Room} \LapseLownorm(t)
	+
	t^{(2 - 10 \Worstexp - \Room)(1 - \updelta) - \Blowupexp \updelta} 
	\left\lbrace
		\LapseLownorm(t)
	\right\rbrace^{1 - \updelta}
	\left\lbrace
		\LapseHighnorm(t)
	\right\rbrace^{\updelta}
		\notag \\
	&
	\lesssim
	t^{2 - 10 \Worstexp - \Room - \Blowupexp \updelta} 
	\left\lbrace
		\LapseLownorm(t)
			+
		\LapseHighnorm(t)
	\right\rbrace,
	\notag
	\end{align}
	where in obtaining the last inequality in \eqref{E:MAINSTEPUPTOFOURDERIVATIVESOFLAPSELINFINITYSOBOLEV},
	we have used \eqref{E:WORSTEXPANDROOMLOTSOFUSEFULINEQUALITIES},
	the assumption $\Blowupexp \geq 1$, 
	and our running convention 
	that $\updelta$ is free to vary from line to line, subject only to the restriction
	$\lim_{N \to \infty} \updelta = 0$.
	In total, we have derived the desired bound \eqref{E:UPTOFOURDERIVATIVESOFLAPSELINFINITYSOBOLEV}.
	
	The remaining estimates in the lemma can be proved using similar arguments that take into account
	Defs.\,\ref{D:LOWNORMS} and~\ref{D:HIGHNORMS}, and we omit the details.
\end{proof}

\section{Control of \texorpdfstring{$n$}{the Lapse} in Terms of \texorpdfstring{$g$ and $\SecondFund$}{the First and Second Fundamental Forms}}
\label{S:CONTROLOFLAPSEINTERMSOFMETRICANDSECONDFUND}
Our primary goal in this section is to prove the next proposition, 
in which we derive the main estimates for $n$. 
The proof of the proposition is located in
Subsect.\,\ref{SS:PROOFOFPROPOSITIONELLIPTICANDMAXIMUMPRINCIPLEESTIMATESFORTHELAPSE}.
In Subsects.\,\ref{SS:LAPSEEQUATIONS}-\ref{SS:CONTROLOFLAPSEERRORTERMS},
we derive the identities and estimates that we will use when proving the proposition.
The proposition shows
that $n$ is controlled, in various norms, by complementary norms of $g$ and $\SecondFund$.
Achieving such control is possible since $n$ solves an elliptic PDE with source terms that
depend on $\Sc$; here it makes sense to recall that,
as we explained at the end of Subsect.\,\ref{SS:MAINIDEASINPROOF}, 
to obtain suitable estimates for $n$ at the top derivative level,
we use the Hamiltonian constraint \eqref{E:HAMILTONIAN}
to algebraically replace, in the elliptic lapse PDE, 
$\Sc$ with terms that depend on $\SecondFund$.

\begin{proposition}[\textbf{Control of $n$ in terms of $g$ and $\SecondFund$}]
	\label{P:ELLIPTICANDMAXIMUMPRINCIPLEESTIMATESFORTHELAPSE}
	Recall that $\MetricLownorm(t)$, $\LapseLownorm(t)$, $\MetricHighnorm(t)$, and $\LapseHighnorm(t)$
	are the norms from Defs.\,\ref{D:LOWNORMS} and~\ref{D:HIGHNORMS},
	and assume that the bootstrap assumptions \eqref{E:BOOTSTRAPASSUMPTIONS} hold.
	There exists a universal constant $C_* > 0$ \underline{independent of $N$ and $\Blowupexp$}
	such that if $N$ is sufficiently large in a manner that depends on $\Blowupexp$
	and if $\varepsilon$ is sufficiently small,
	then the following estimates hold for $t \in (\TBoot,1]$
	(where, as we described in Subsect.\,\ref{SSS:CONSTANTS}, constants ``$C$'' are allowed to depend on $N$ and other quantities).
	
	\medskip
	
	\noindent \underline{\textbf{Estimates at the lowest order}}.
	The following estimate holds:
	\begin{align} \label{E:LAPSELOWNORMELLIPTIC}
		\LapseLownorm(t)
		& \leq C
				\left\lbrace
					\MetricLownorm(t) + \MetricHighnorm(t)
				\right\rbrace.
	\end{align}
	
	\medskip
	\noindent \underline{\textbf{Estimate for} $\partial_t n$}.
	The following estimate holds:
	\begin{align} 	\label{E:LAPSETIMEDERIVATIVEESTIMATE}	
		\left\| 
			\partial_t n
		\right \|_{L^{\infty}(\Sigma_t)}
		& \leq C
				t^{1 - 10 \Worstexp - \Room}
				\left\lbrace
					\MetricLownorm(t) + \MetricHighnorm(t)
				\right\rbrace.
	\end{align}
	
		\medskip
		\noindent \underline{\textbf{Top-order estimates}}.
		If $|\vec{I}|=N$, then the following estimates hold:
		\begin{align}
		t^{\Blowupexp + 1} 
		\left\|
			\partial \partial_{\vec{I}} n
		\right\|_{L_g^2(\Sigma_t)}
		+
		t^{\Blowupexp}
		\left\|
			\partial_{\vec{I}} n
		\right\|_{L^2(\Sigma_t)}
		& \leq 
			C_* 
			t^{\Blowupexp + 1} 
			\left\|
				\partial_{\vec{I}} \SecondFund
			\right\|_{L_g^2(\Sigma_t)}
				+
				C 
				t^{\Room}
				\left\lbrace
					\MetricLownorm(t) + \MetricHighnorm(t)
				\right\rbrace
					\label{E:LAPSETOPORDERELLIPTIC}
					\\
			& \leq 
				C_* 
				\MetricHighnorm(t)
				+
				C 
				t^{\Room}
				\left\lbrace
					\MetricLownorm(t) + \MetricHighnorm(t)
				\right\rbrace.
				\notag
		\end{align}
		
		\medskip
		\noindent \underline{\textbf{Near-top-order estimates}}.
		The following estimate holds:
		\begin{align}
			t^{\Blowupexp + \Worstexp - 1}
			\left\|
				 n
			\right\|_{\dot{H}^{N-1}(\Sigma_t)}
			&
			\leq
				C 
				\left\lbrace
					\MetricLownorm(t) + \MetricHighnorm(t)
				\right\rbrace.
					\label{E:LAPSEJUSTBELOWTOPORDERELLIPTIC}
		\end{align}
\end{proposition}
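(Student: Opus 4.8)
The plan is to derive all four estimates from elliptic analysis of the lapse equation \eqref{E:LAPSECMC}, used in two complementary forms. For the low-order bounds \eqref{E:LAPSELOWNORMELLIPTIC} and \eqref{E:LAPSETIMEDERIVATIVEESTIMATE} I would work with the ``good'' form $t^2 g^{ab} \nabla_a \nabla_b (n-1) - (n-1) = t^2 n \Sc$, in which the zeroth-order term $-(n-1)$ carries the sign favorable for the maximum principle and the source depends only on spatial derivatives of $g$ through $\Sc = \Ric_{\ a}^a$. For the high-order bounds \eqref{E:LAPSETOPORDERELLIPTIC} and \eqref{E:LAPSEJUSTBELOWTOPORDERELLIPTIC} I would instead use the Hamiltonian constraint \eqref{E:HAMILTONIAN} to replace $\Sc$ by $\SecondFund_{\ b}^a \SecondFund_{\ a}^b - t^{-2}$, turning \eqref{E:LAPSECMC} into $g^{ab} \nabla_a \nabla_b n = n \, \SecondFund_{\ b}^a \SecondFund_{\ a}^b - t^{-2}$; the two advantages of this substitution are that commuting with $N$ spatial derivatives then calls only on $\| \SecondFund \|_{\dot{H}_g^N(\Sigma_t)}$, controlled by $\MetricHighnorm(t)$, rather than on the two-derivative-higher quantity $\partial^{N+2} g$ that $\Sc$ would introduce, and that the positive term $n \, \SecondFund_{\ b}^a \SecondFund_{\ a}^b = n |\SecondFund|_g^2 \gtrsim t^{-2}$ (using \eqref{E:HAMILTONIAN} and Def.\,\ref{D:LOWNORMS}) supplies a zeroth-order coercive term in the energy identity.

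For \eqref{E:LAPSELOWNORMELLIPTIC} I would apply the maximum principle to the good form: at a point of $\Sigma_t$ where $|n-1|$ is largest the Laplacian has the unfavorable sign, so $\| n-1 \|_{L^\infty(\Sigma_t)} \leq t^2 \| n \|_{L^\infty(\Sigma_t)} \| \Sc \|_{L^\infty(\Sigma_t)}$, while the bootstrap assumption \eqref{E:BOOTSTRAPASSUMPTIONS} gives $\| n \|_{L^\infty(\Sigma_t)} \leq 2$. To bound $\| \Sc \|_{L^\infty(\Sigma_t)}$ I would insert the coordinate expression \eqref{E:RICCICURVATUREEXACT}; since $\KasnerMetric$ has $x$-independent components every derivative factor is a derivative of $g - \KasnerMetric$, controlled in $W_{Frame}^{4,\infty}(\Sigma_t)$ by Lemma \ref{L:SOBOLEVBORROWALITTLEHIGHNORM}, while the undifferentiated $g^{-1}$ factors are $\mathcal{O}(t^{-2\Worstexp})$ by \eqref{E:KASNERMETRICESTIMATES} and Def.\,\ref{D:LOWNORMS}, so that (absorbing terms quadratic in the solution norms via the bootstrap) $\| \Sc \|_{L^\infty(\Sigma_t)} \lesssim t^{-10\Worstexp - \Blowupexp \updelta} \{ \MetricLownorm(t) + \MetricHighnorm(t) \}$; choosing $N$ large enough (given $\Blowupexp$ and $\Room$) that $\Blowupexp \updelta \leq \Room$ then yields \eqref{E:LAPSELOWNORMELLIPTIC}. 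For \eqref{E:LAPSETIMEDERIVATIVEESTIMATE} I would differentiate the good form in $t$; using $2t g^{ab}\nabla_a\nabla_b n = 2t^{-1}(n-1) + 2tn\Sc$ from the undifferentiated equation the $2tn\Sc$ contributions cancel, and $\partial_t n$ is seen to satisfy an elliptic equation whose zeroth-order coefficient is $-(1+t^2\Sc) = -|t\SecondFund|_g^2 \leq -(1-\varepsilon)^2 < 0$ and whose source is $-2t^{-1}(n-1) - t^2(\partial_t g^{ab})\nabla_a\nabla_b n + t^2 g^{ab}(\partial_t \Gamma_{a\ b}^{\ c})\partial_c n + t^2 n \, \partial_t \Sc$. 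The maximum principle gives $\| \partial_t n \|_{L^\infty(\Sigma_t)} \lesssim \| \mathrm{source} \|_{L^\infty(\Sigma_t)}$, and bounding each piece using \eqref{E:LAPSELOWNORMELLIPTIC}, the evolution equation \eqref{E:PARTIALTGINVERSECMC}, Lemma \ref{L:SOBOLEVBORROWALITTLEHIGHNORM}, the Kasner/low-norm bounds, and the control of $\LapseHighnorm$ obtained from \eqref{E:LAPSETOPORDERELLIPTIC}--\eqref{E:LAPSEJUSTBELOWTOPORDERELLIPTIC}, one finds that the least favorable term is $2t^{-1}\|n-1\|_{L^\infty(\Sigma_t)}$, which fixes the exponent $1 - 10\Worstexp - \Room$; hence this step is carried out last.

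For \eqref{E:LAPSETOPORDERELLIPTIC} I would commute the Hamiltonian-substituted equation with $\partial_{\vec{I}}$, $|\vec{I}| = N$; since $t^{-2}$ is $x$-independent this gives $g^{ab}\partial_a\partial_b \partial_{\vec{I}} n = \partial_{\vec{I}}(n \, \SecondFund_{\ b}^a \SecondFund_{\ a}^b) + \mathrm{Comm}$, where each term of $\mathrm{Comm}$ is of order $\leq N$ in $n$ or carries at least one derivative on $g^{-1}$ (hence a small gain $\lesssim \varepsilon t^{-2\Worstexp - \Blowupexp \updelta}$ by Lemma \ref{L:SOBOLEVBORROWALITTLEHIGHNORM}). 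Multiplying by $\partial_{\vec{I}} n$, integrating over $\Sigma_t$, and integrating by parts, the left side produces $\| \partial \partial_{\vec{I}} n \|_{L_g^2(\Sigma_t)}^2$ and the piece of $\partial_{\vec{I}}(n \SecondFund^2)$ with all derivatives on $n$ produces the coercive term $\int_{\Sigma_t} n |\SecondFund|_g^2 (\partial_{\vec{I}} n)^2 \, dx \gtrsim t^{-2} \| \partial_{\vec{I}} n \|_{L^2(\Sigma_t)}^2$. The only borderline contribution is the pairing of $\partial_{\vec{I}} n$ with the two terms carrying all derivatives on one $\SecondFund$ factor; using $|A_{\ b}^a B_{\ a}^b| \leq |A|_g |B|_g$ for type $\binom{1}{1}$ tensors together with $\|n\|_{L^\infty(\Sigma_t)} \lesssim 1$ and $\| |\SecondFund|_g \|_{L^\infty(\Sigma_t)} \lesssim t^{-1}$, this pairing is $\lesssim t^{-1} \| \partial_{\vec{I}} \SecondFund \|_{L_g^2(\Sigma_t)} \| \partial_{\vec{I}} n \|_{L^2(\Sigma_t)}$, which Young's inequality absorbs into the coercive term up to $C_* \| \partial_{\vec{I}} \SecondFund \|_{L_g^2(\Sigma_t)}^2$ with $C_*$ \emph{universal} (the Leibniz factor and the $\mathcal{O}(1)$ bounds being independent of $N$ and $\Blowupexp$). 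Every remaining contribution — the lower-order pieces of $\partial_{\vec{I}}(n \SecondFund^2)$, the commutator $\mathrm{Comm}$, and the integration-by-parts errors — is strictly less singular and is controlled by Lemmas \ref{L:STANDARDSOBOLEVINTERPOLATION}--\ref{L:L2PRODUCTBOUNDINERMSOFLINFINITYANDHMDOT}, the norm comparison \eqref{E:POINTWISENORMCOMPARISON}, and Lemma \ref{L:SOBOLEVBORROWALITTLEHIGHNORM}; after multiplying by $t^{\Blowupexp+1}$ these are $\leq C t^\Room \{ \MetricLownorm(t) + \MetricHighnorm(t) \}$ once $N$ is large enough that the $\Blowupexp \updelta$-losses stay below $\Room$. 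Taking square roots and multiplying by $t^{\Blowupexp+1}$ gives the first line of \eqref{E:LAPSETOPORDERELLIPTIC}, and the second line follows from $t^{\Blowupexp+1} \| \SecondFund \|_{\dot{H}_g^N(\Sigma_t)} \leq \MetricHighnorm(t)$. I would obtain the near-top-order estimate \eqref{E:LAPSEJUSTBELOWTOPORDERELLIPTIC} by the same scheme with $|\vec{I}| = N-1$, now exploiting the more generous $t$-weights assigned by Def.\,\ref{D:HIGHNORMS} to the $\dot{H}^{N-1}$-level norms and supplementing with interpolation against the lower-order norms; here the quadratic source is itself below top order, so no universal constant is needed and everything lands inside $C\{\MetricLownorm(t) + \MetricHighnorm(t)\}$.

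The hard part is \eqref{E:LAPSETOPORDERELLIPTIC}, specifically the requirement of isolating exactly one borderline contribution with a \emph{universal} constant $C_*$ while showing every other term is genuinely subcritical; this is what forces the Hamiltonian substitution, which brings the top-order source to only one derivative below the controlled energy $\| \partial_{\vec{I}} \SecondFund \|_{L_g^2(\Sigma_t)}$ so that it can be absorbed into the $n|\SecondFund|_g^2$-coercivity. Two bookkeeping subtleties accompany this: the energy identity produces geometric $L_g^2$-norms whereas the interpolation and product inequalities of Sect.\,\ref{S:INTERPOLATION} are stated in frame norms, and the mismatch costs singular factors $t^{-\Worstexp}$ per tensor index (Lemma \ref{L:NORMCOMPARISONS}) that must be checked never to exceed the $t^\Room$ of room built into the weights of Defs.\,\ref{D:LOWNORMS} and~\ref{D:HIGHNORMS}; and $N$ must be taken large only after $\Blowupexp$ is fixed, so that the $\Blowupexp\updelta$-losses incurred by interpolating through the weak high norms (Lemma \ref{L:SOBOLEVBORROWALITTLEHIGHNORM}) remain $\leq \Room$. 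Since the near-top-order estimate \eqref{E:LAPSEJUSTBELOWTOPORDERELLIPTIC} is precisely what keeps this bookkeeping consistent one level below top, \eqref{E:LAPSETOPORDERELLIPTIC} and \eqref{E:LAPSEJUSTBELOWTOPORDERELLIPTIC} must be established together.
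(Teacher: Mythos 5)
Your treatment of \eqref{E:LAPSELOWNORMELLIPTIC}, \eqref{E:LAPSETIMEDERIVATIVEESTIMATE}, and \eqref{E:LAPSETOPORDERELLIPTIC} is essentially the paper's proof: the maximum principle applied to the $\Sc$-form of the lapse equation for the first two (with the $\partial_t$-commuted estimate carried out last, as it requires $\LapseHighnorm \lesssim \MetricLownorm + \MetricHighnorm$ from the other bounds), and, at top order, the Hamiltonian substitution followed by an $L^2$ energy identity in which the single borderline term $\SecondFund \cdot \partial_{\vec{I}}\SecondFund$ is absorbed via Young's inequality into the $t^{-2}$-coercive zeroth-order term at the cost of a universal $C_*$. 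Your routing of the coercivity through $n|\SecondFund|_g^2$ rather than through the explicit $t^{-2}(n-1)$ term plus an $\mathcal{O}(\varepsilon)$ perturbation is an equivalent bookkeeping.

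The gap is in \eqref{E:LAPSEJUSTBELOWTOPORDERELLIPTIC}. If you run ``the same scheme'' --- the Hamiltonian-substituted equation --- at order $N-1$, the borderline source is $n\,\SecondFund_{\ b}^a\,\partial_{\vec{I}}\SecondFund_{\ a}^b$ with $|\vec{I}|=N-1$, and after pairing with the weighted multiplier and applying Young's inequality you are left with $t^{\Blowupexp + \Worstexp}\left\|\partial_{\vec{I}}\SecondFund\right\|_{L_g^2(\Sigma_t)}$ on the right-hand side. But Def.\,\ref{D:HIGHNORMS} controls only $t^{\Blowupexp + 3\Worstexp + \Room}\left\|\SecondFund\right\|_{\dot{H}_g^{N-1}(\Sigma_t)}$, so this term is bounded merely by $t^{-2\Worstexp - \Room}\,\MetricHighnorm(t)$, which is singular as $t \downarrow 0$ and does not land inside $C\lbrace\MetricLownorm(t)+\MetricHighnorm(t)\rbrace$; interpolating $\left\|\SecondFund\right\|_{\dot{H}^{N-1}}$ between $\left\|\SecondFund\right\|_{L^{\infty}}$ and $\left\|\SecondFund\right\|_{\dot{H}^N}$ only worsens the exponent. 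The paper's resolution (equation \eqref{E:COMMUTEDLAPSELOWERORDER} of Lemma~\ref{L:COMMUTEDLAPSEEQUATIONS}) is to \emph{not} substitute the Hamiltonian constraint at order $N-1$: one commutes the $\Sc$-form \eqref{E:REWRITTENLAPSELOWERORDER} instead, whose principal source $(g^{-1})^2\,\partial^2\partial_{\vec{I}}g$ involves $N+1$ derivatives of $g$ and is therefore controlled by the \emph{top-order} metric energy via $t^{\Blowupexp + 1 + \Worstexp}\left\|g^{-1}\right\|_{L_{Frame}^{\infty}(\Sigma_t)}^{1/2}\left\|\partial g\right\|_{\dot{H}_g^N(\Sigma_t)} \lesssim \MetricHighnorm(t)$, with no loss. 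The structural point you missed is that the Hamiltonian substitution is needed only at the very top order, where $\partial_{\vec{I}}\Sc$ would cost $N+2$ derivatives of $g$; one order below top, the un-substituted equation is both admissible (no derivative loss) and necessary (correct $t$-weights).
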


\subsection{The equations}
\label{SS:LAPSEEQUATIONS}
In this subsection, we derive the equations 
that we use to control $n$ in terms of $g$ and $\SecondFund$.

\subsubsection{The equation at the lowest order}
\label{SSS:LAPSEEQUATIONATLOWESTORDER}
We start with the following lemma, which
we will use to derive estimates for $\| n-1 \|_{L^{\infty}(\Sigma_t)}$.
   
\begin{lemma}[\textbf{A rewriting of the lapse equation}]
\label{L:EQUATIONFORLAPSEATLOWESTORDER}
The quantity $n-1$ verifies the following elliptic PDE on $\Sigma_t$:
\begin{align} \label{E:REWRITTENLAPSELOWERORDER}
g^{ab} \nabla_a \nabla_b (n-1)
	-
				(n - 1) 
				\left( 
					t^{-2}
					+
					\Sc
				\right)
	& = 
		 \Sc.
\end{align}

\end{lemma}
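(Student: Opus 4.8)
The plan is to obtain \eqref{E:REWRITTENLAPSELOWERORDER} as a purely algebraic rearrangement of the elliptic lapse equation \eqref{E:LAPSECMC} from Prop.\,\ref{P:EINSTEINVACUUMEQUATIONSINCMCTRANSPORTEDSPATIALCOORDINATES}; no genuine computation is involved. First I would note that the constant function $1$ is annihilated by the Levi--Civita connection $\nabla$ of $g$, so that $\nabla_a \nabla_b (n-1) = \nabla_a \nabla_b n$ and hence $g^{ab} \nabla_a \nabla_b (n-1) = g^{ab} \nabla_a \nabla_b n$. Substituting \eqref{E:LAPSECMC} for this quantity, the left-hand side of the desired identity becomes
\[
	t^{-2}(n - 1) + n \Sc - (n - 1)\left( t^{-2} + \Sc \right).
\]

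Next I would simplify: the two occurrences of $t^{-2}(n-1)$ cancel, leaving $n \Sc - (n-1)\Sc = \bigl( n - (n-1) \bigr) \Sc = \Sc$, which is precisely the right-hand side of \eqref{E:REWRITTENLAPSELOWERORDER}. This completes the proof. The only point requiring any attention is the sign bookkeeping in moving the term $(n-1)(t^{-2} + \Sc)$ to the left and checking that the $(n-1)$-proportional pieces collapse as claimed, but this is immediate.

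Thus the substance of the lemma lies not in any estimate but in the \emph{choice} of rearrangement: by collecting all of the zeroth-order dependence on $n-1$ on the left-hand side, multiplied by the coefficient $-(t^{-2} + \Sc)$, one arrives at an equation of the schematic shape ``$g^{ab} \nabla_a \nabla_b (n-1) - (\text{coefficient}) \cdot (n-1) = \text{source}$'' that is amenable to the maximum principle, provided the coefficient of $n-1$ has a favorable sign and the source $\Sc$ can be controlled. Establishing those properties under the bootstrap assumptions \eqref{E:BOOTSTRAPASSUMPTIONS} is what is actually carried out later in the proof of \eqref{E:LAPSELOWNORMELLIPTIC}; at the level of the present lemma there is no obstacle beyond the elementary manipulation described above.
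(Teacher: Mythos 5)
Your proposal is correct and matches the paper's proof, which simply notes that the lemma is an immediate algebraic consequence of equation \eqref{E:LAPSECMC}; your explicit verification that the $t^{-2}(n-1)$ terms cancel and that $n\Sc - (n-1)\Sc = \Sc$ is exactly the computation being invoked. Nothing further is needed.
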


\begin{proof}
	The lemma is a simple consequence of equation
\eqref{E:LAPSECMC}.
\end{proof}

\subsubsection{The equations verified by the time derivative}
\label{SSS:LAPSEEQUATIONTIMEDERIVATIVE}
The next lemma is an analog of Lemma~\ref{L:EQUATIONFORLAPSEATLOWESTORDER} for $\partial_t n$,
and we will use it to derive estimates for $\| \partial_t n \|_{L^{\infty}(\Sigma_t)}$.
We remark that we need estimates for $\| \partial_t n \|_{L^{\infty}(\Sigma_t)}$ only in
Subsect.\,\ref{SS:LENGTHOFCAUSALGEODESICS}, when we bound the length of 
past-directed causal geodesic segments.

\begin{lemma}[\textbf{The $\partial_t$-commuted lapse equation}]
\label{L:EQUATIONFORLAPSETIMEDERIVATIVE}
The quantity $\partial_t n$ verifies the following elliptic PDE on $\Sigma_t$:
\begin{align} \label{E:REWRITTENLAPSELOWERORDERTIMEDERIVATIVECOMMUTED}
g^{ab} \nabla_a \nabla_b \partial_t n
	-
				(\partial_t n)
				\left( 
					t^{-2}
					+
					\Sc
				\right)
	& = 
		 \mathfrak{N}',
\end{align}
where (see Remark~\ref{R:ALWAYSMIXED})
\begin{align} \label{E:ERRORTERMSTIMEDERIVATIVEOFLAPSE}
	\mathfrak{N}'
	& \mycong	
		\sum_{i_1 + i_2 + i_3 = 2} 
		\sum_{p=0}^1
			(n-1)^p
			(g^{-1})^2 
			(\partial^{i_1} n)
			(\partial^{i_2} g)
			\partial^{i_3} \SecondFund
				\\
	& \ \
		+
		\sum_{i_1 + i_2 + i_3 = 1} 
		\sum_{p=0}^1
			(n-1)^p
			(g^{-1})^3 
			(\partial g)
			(\partial^{i_1} n)
			(\partial^{i_2} g)
			\partial^{i_3} \SecondFund
			+
			t^{-3} (n-1)
			\notag
				\\
		& \ \
			+ 
			n g^{-1} \SecondFund \partial^2 n
			+
			n (g^{-1})^2 \SecondFund (\partial g) \partial n
			+
			\sum_{i_1 + i_2 + i_3 = 1} 
			(g^{-1})^2
			(\partial n)
			(\partial^{i_1} n)
			(\partial^{i_2} g)
			\partial^{i_3} \SecondFund.
			\notag
\end{align}

\end{lemma}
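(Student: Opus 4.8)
The plan is to produce \eqref{E:REWRITTENLAPSELOWERORDERTIMEDERIVATIVECOMMUTED} by applying $\partial_t$ to the elliptic lapse equation \eqref{E:LAPSECMC} and then commuting $\partial_t$ through the second-order operator $g^{ab}\nabla_a\nabla_b$. Because $\partial_t$ commutes with the coordinate vectorfields $\partial_1,\dots,\partial_{\mydim}$, the only obstruction to this commutation is the $t$-dependence of the inverse metric $g^{ab}$ and of the Christoffel symbols $\Gamma_{a\ b}^{\ c}$ of $g$: writing $g^{ab}\nabla_a\nabla_b n = g^{ab}\partial_a\partial_b n - g^{ab}\Gamma_{a\ b}^{\ c}\partial_c n$ and differentiating, the terms in which $\partial_t$ lands on $n$ reassemble into $g^{ab}\nabla_a\nabla_b(\partial_t n)$, leaving the two commutator terms $(\partial_t g^{ab})\nabla_a\nabla_b n$ and $-g^{ab}(\partial_t\Gamma_{a\ b}^{\ c})\partial_c n$. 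On the right-hand side of \eqref{E:LAPSECMC} I compute $\partial_t[t^{-2}(n-1)+n\Sc] = -2t^{-3}(n-1) + t^{-2}\partial_t n + (\partial_t n)\Sc + n\partial_t\Sc$.

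Next I transpose the two terms proportional to $\partial_t n$ --- namely $t^{-2}\partial_t n$ and $(\partial_t n)\Sc$ --- to the left side; this reproduces precisely the operator $g^{ab}\nabla_a\nabla_b(\partial_t n) - (\partial_t n)(t^{-2}+\Sc)$ of \eqref{E:REWRITTENLAPSELOWERORDERTIMEDERIVATIVECOMMUTED} and leaves
\[
	\mathfrak{N}'
	=
	-2t^{-3}(n-1)
	+ n\partial_t\Sc
	- (\partial_t g^{ab})\nabla_a\nabla_b n
	+ g^{ab}(\partial_t\Gamma_{a\ b}^{\ c})\partial_c n .
\]
It remains to rewrite $\mathfrak{N}'$, after eliminating every time derivative of $(g,\SecondFund)$ in favor of spatial quantities, in the schematic form \eqref{E:ERRORTERMSTIMEDERIVATIVEOFLAPSE}. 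For this I would substitute the evolution equations \eqref{E:PARTIALTGCMC}--\eqref{E:PARTIALTGINVERSECMC}, which (keeping $\SecondFund$ in mixed form, cf.\ Remark~\ref{R:ALWAYSMIXED}) read schematically $\partial_t g \mycong n g \SecondFund$ and $\partial_t g^{-1}\mycong n g^{-1}\SecondFund$; use $\Gamma \mycong g^{-1}\partial g$, hence $\nabla\nabla n \mycong \partial^2 n + g^{-1}(\partial g)(\partial n)$ and $\partial_t\Gamma \mycong (\partial_t g^{-1})(\partial g) + g^{-1}\partial(\partial_t g)$; and differentiate the traced coordinate expression for the Ricci tensor coming from \eqref{E:RICCICURVATUREEXACT}, which gives $\Sc \mycong (g^{-1})^2\partial^2 g + (g^{-1})^3(\partial g)^2$, so that $\partial_t\Sc$ is handled by Leibniz --- each $\partial_t$ landing on a metric factor being converted via \eqref{E:PARTIALTGCMC}--\eqref{E:PARTIALTGINVERSECMC} into a factor $\sim n\SecondFund$ together with a redistribution of at most two spatial derivatives among $n$, $g$, $\SecondFund$ (equivalently, one may invoke the standard variation formulas for $\Gamma$, $\Ric$, and $\Sc$). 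In the commutator term $g^{ab}(\partial_t\Gamma_{a\ b}^{\ c})\partial_c n$ one meets a contraction of the form $\nabla_a\SecondFund_{\ b}^a$, which I would replace using the momentum constraint \eqref{E:MOMENTUM}, and a $\SecondFund$-trace, which I would handle using the CMC normalization \eqref{E:CMCCONDITION}. Collecting: the $-2t^{-3}(n-1)$ survives as the $t^{-3}(n-1)$ term; the piece $-(\partial_t g^{ab})\nabla_a\nabla_b n$, after expanding $\nabla\nabla n$, produces $n g^{-1}\SecondFund\partial^2 n$ and $n(g^{-1})^2\SecondFund(\partial g)\partial n$; the commutator piece $g^{ab}(\partial_t\Gamma_{a\ b}^{\ c})\partial_c n$ produces the final displayed sum (with the undifferentiated $g$-factor there understood as a ``virtual'' contraction against an inverse-metric factor); and $n\partial_t\Sc$ produces the first two displayed sums, whose numbers of $g^{-1}$ factors ($2$ and $3$) and derivative counts ($2$ and $1$) are exactly what one gets from differentiating the two pieces of $\Sc$ once in $t$ and reducing $g^{-1}g$ pairs to Kronecker deltas wherever the derivative structure allows.

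I expect the single real obstacle to be the last step: verifying, product by product, that the finite collection of terms generated by differentiating $\Sc$ and $\Gamma_{a\ b}^{\ c}$ and substituting the metric evolution equations --- after all index contractions are performed --- really does fall into one of the families enumerated in \eqref{E:ERRORTERMSTIMEDERIVATIVEOFLAPSE}, i.e.\ that no term ends up carrying more inverse-metric factors, more spatial derivatives on $(n,g,\SecondFund)$, or more factors of $\SecondFund$ than permitted there. There is no analysis in this lemma --- only the organization of a list of algebraic identities --- so once one is confident the enumeration is exhaustive, the lemma is immediate.
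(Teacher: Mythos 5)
Your proposal is correct and follows essentially the same route as the paper: write the lapse equation in coordinates, commute with $\partial_t$, and use \eqref{E:PARTIALTGCMC}--\eqref{E:PARTIALTGINVERSECMC} to eliminate $\partial_t g$ and $\partial_t g^{-1}$, after which every resulting product falls into the schematic families of \eqref{E:ERRORTERMSTIMEDERIVATIVEOFLAPSE} (with the outer $n$ split as $1+(n-1)$ to produce the $p$-sums). Your invocation of the momentum constraint \eqref{E:MOMENTUM} and the CMC condition \eqref{E:CMCCONDITION} is unnecessary --- the schematic form already accommodates undifferentiated $\SecondFund$ and $\partial\SecondFund$ contractions, and the paper does not use either --- but it is harmless here.
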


\begin{proof}
	Relative to CMC-transported spatial coordinates, 
	the elliptic PDE \eqref{E:REWRITTENLAPSELOWERORDER} can be expressed as
	$
	g^{ab} \partial_a \partial_b n
	-
	g^{ab} \Gamma_{a \ b}^{\ c} \partial_c n
	-
				(n - 1) 
				\left( 
					t^{-2}
					+
					\Sc
				\right)
	= \Sc
	$,
	where, schematically,
	$
		\Sc
		\mycong 
		(g^{-1})^2 \partial^2 g
		+
		(g^{-1})^3 (\partial g)^2
	$
	(see \eqref{E:RICCICURVATUREEXACT} and recall that $\Sc = \Ric_{\ a}^a$)
	and
	$
	g^{ab}
	\Gamma_{a \ b}^{\ c}
	\mycong
	g^{-2} \partial g
	$.
	Commuting the elliptic PDE with
	$\partial_t$, using
	equations \eqref{E:PARTIALTGCMC}
	and \eqref{E:PARTIALTGINVERSECMC}
	to algebraically substitute for $\partial_t g$ and $\partial_t g^{-1}$,
	and carrying out straightforward computations,
	we conclude the desired equation \eqref{E:REWRITTENLAPSELOWERORDERTIMEDERIVATIVECOMMUTED}.
\end{proof}

\subsubsection{The equations verified by the high-order derivatives}
\label{SSS:LAPSEEQUATIONHIGHORDERS}
We will use the equations in the next lemma to control 
the $L^2(\Sigma_t)$ norms of high-order derivatives of $n$.

\begin{remark}[\textbf{Borderline error terms vs.\ junk error terms}]
	In the rest of the paper, we will denote difficult ``borderline''
	error terms by decorating them with
	``$(Border)$'', e.g., $\leftexp{(Border;\vec{I})}{\mathfrak{N}}$.
	These error terms must treated with care since at the top derivative level, 
	there is no room (in the sense of powers of $t$)
	in our estimates for such terms.
	In contrast, the error terms that we decorate with
	``$(Junk)$'', such as $\leftexp{(Junk;\vec{I})}{\mathfrak{N}}$,
	are easier to treat in the sense that there is some room in our
	estimates.
\end{remark}	

\begin{lemma}[\textbf{The $\vec{I}$-commuted lapse equation}]
\label{L:COMMUTEDLAPSEEQUATIONS}
For each spatial multi-index $\vec{I}$ with $|\vec{I}|=N$, 
$\partial_{\vec{I}} n$ verifies the following equation:
\begin{align}  \label{E:COMMUTEDLAPSEHIGHORDER}
	t^{\Blowupexp + 2} g^{ab} \partial_a \partial_b \partial_{\vec{I}} n
	-
	t^{\Blowupexp} \partial_{\vec{I}} n
	& = 
		\leftexp{(Border;\vec{I})}{\mathfrak{N}}
		+
		\leftexp{(Junk;\vec{I})}{\mathfrak{N}},
	\end{align}
	where (see Subsubsect.\,\ref{SSS:SCHEMATIC} regarding our use of the notation $\mycongstar$ and $\mycong$, and see Remark~\ref{R:ALWAYSMIXED})
	\begin{subequations}
	\begin{align}
		\leftexp{(Border;\vec{I})}{\mathfrak{N}}
		& \mycongstar
			(t^{\Blowupexp} \partial_{\vec{I}} n) 
			\left\lbrace
				(t \SecondFund) \cdot (t \SecondFund)
				-
				1
			\right\rbrace
			+
			(t \SecondFund) \cdot (t^{\Blowupexp + 1} \partial_{\vec{I}} \SecondFund),
			 \label{E:LAPSETOPORDERBORDERLINERRORTERM} \\
		\leftexp{(Junk;\vec{I})}{\mathfrak{N}}
		& \mycong 
			\mathop{\sum_{\vec{I}_1 + \vec{I}_2 + \vec{I}_3 = \vec{I}}}_{|\vec{I}_1| \leq |\vec{I}|-1}
			t^{\Blowupexp + 2} 
			\left\lbrace
				\partial_{\vec{I}_1} (n-1)
			\right\rbrace
			(\partial_{\vec{I}_2} \SecondFund)
			\partial_{\vec{I}_3} \SecondFund
			+
			\mathop{\sum_{\vec{I}_1 + \vec{I}_2 = \vec{I}}}_{|\vec{I}_1|, |\vec{I}_2| \leq |\vec{I}|-1}
			t^{\Blowupexp + 2} 
			(\partial_{\vec{I}_1} \SecondFund)
			\partial_{\vec{I}_2} \SecondFund
				\label{E:LAPSETOPORDERJUNKRRORTERM} \\
		& \ \
		+
		\sum_{\vec{I}_1 + \vec{I}_2 + \vec{I}_3 + \vec{I}_4 = \vec{I}}
			t^{\Blowupexp + 2} 
			(\partial_{\vec{I}_1} g^{-1})
			(\partial_{\vec{I}_2} g^{-1})
			(\partial \partial_{\vec{I}_3} g)
			\partial \partial_{\vec{I}_4} n
			+
			\mathop{\sum_{\vec{I}_1 + \vec{I}_2 = \vec{I}}}_{|\vec{I}_2| \leq |\vec{I}|-1}
			t^{\Blowupexp + 2} 
			(\partial_{\vec{I}_1} g^{-1})
			\partial^2 \partial_{\vec{I}_2} n.
			\notag
	\end{align}
\end{subequations}

	Furthermore, for each spatial multi-index $\vec{I}$ with $|\vec{I}|=N-1$,
	$\partial_{\vec{I}} n$ verifies the following equation:
	\begin{align} \label{E:COMMUTEDLAPSELOWERORDER}
	t^{\Blowupexp + 1 + \Worstexp}
	g^{ab} \partial_a \partial_b \partial_{\vec{I}} n
	-
	t^{\Blowupexp + \Worstexp - 1}
	\left( 
	1
	+
	t^2 
	\Sc
	\right)
	\partial_{\vec{I}} n
	& 
	= 
	\leftexp{\vec{I}}{\widetilde{\mathfrak{N}}},
\end{align}
where
\begin{align} \label{E:LAPSEJUSTBELOWTOPORDERERRORTERM}
		\leftexp{\vec{I}}{\widetilde{\mathfrak{N}}}
		& \mycong 
			\sum_{\vec{I}_1 + \vec{I}_2 + \vec{I}_3 = \vec{I}}
		  t^{\Blowupexp + 1 + \Worstexp}
			(\partial_{\vec{I}_1} g^{-1})
			(\partial_{\vec{I}_2} g^{-1})
			\partial^2 \partial_{\vec{I}_3} g
				 \\
			& \ \
			+
			\sum_{\vec{I}_1 + \vec{I}_2 + \cdots + \vec{I}_5 = \vec{I}}
			t^{\Blowupexp + 1 + \Worstexp}
			(\partial_{\vec{I}_1} g^{-1})
			(\partial_{\vec{I}_2} g^{-1})
			(\partial_{\vec{I}_3} g^{-1})
			(\partial \partial_{\vec{I}_4} g)
			\partial \partial_{\vec{I}_5} g
				\notag \\
			& \ \
			+
			\mathop{\sum_{\vec{I}_1 + \vec{I}_2 + \vec{I}_3 + \vec{I}_4 = \vec{I}}}_{|\vec{I}_1| \leq |\vec{I}|-1}
			t^{\Blowupexp + 1 + \Worstexp}
			\left\lbrace
				\partial_{\vec{I}_1} (n-1)
			\right\rbrace
			(\partial_{\vec{I}_2} g^{-1})
			(\partial_{\vec{I}_3} g^{-1})
			\partial^2 \partial_{\vec{I}_4} g
			\notag
				\\
			& \ \
			+
			\mathop{\sum_{\vec{I}_1 + \vec{I}_2 + \cdots + \vec{I}_6 = \vec{I}}}_{|\vec{I}_1| \leq |\vec{I}|-1}
			t^{\Blowupexp + 1 + \Worstexp}
			\left\lbrace
				\partial_{\vec{I}_1} (n-1)
			\right\rbrace
			(\partial_{\vec{I}_2} g^{-1})
			(\partial_{\vec{I}_3} g^{-1})
			(\partial_{\vec{I}_4} g^{-1})
			(\partial \partial_{\vec{I}_5} g)
			\partial \partial_{\vec{I}_6} g
				\notag \\		
			& \ \
			+
			\sum_{\vec{I}_1 + \vec{I}_2 + \vec{I}_3 + \vec{I}_4 = \vec{I}}
			t^{\Blowupexp + 1 + \Worstexp}
			(\partial_{\vec{I}_1} g^{-1})
			(\partial_{\vec{I}_2} g^{-1})
			(\partial \partial_{\vec{I}_3} g)
			\partial \partial_{\vec{I}_4} n
			\notag
				\\
		& \ \
			+
			\mathop{\sum_{\vec{I}_1 + \vec{I}_2 = \vec{I}}}_{|\vec{I}_2| \leq |\vec{I}|-1}
			t^{\Blowupexp + 1 + \Worstexp}
			(\partial_{\vec{I}_1} g^{-1})
			\partial^2 \partial_{\vec{I}_2} n.
			\notag
	\end{align}

\end{lemma}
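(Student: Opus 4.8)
The plan is to derive both commuted equations directly from the elliptic lapse PDE \eqref{E:LAPSECMC}, rewriting the covariant Laplacian in the transported spatial coordinates as $g^{ab}\nabla_a\nabla_b n = g^{ab}\partial_a\partial_b n - g^{ab}\Gamma_{a\ b}^{\ c}\partial_c n$ with $g^{ab}\Gamma_{a\ b}^{\ c} \mycong (g^{-1})^2 \partial g$, and then commuting with $\partial_{\vec{I}}$. The two cases differ only in how the source term $\Sc$ is handled \emph{before} commuting. Throughout, I write $\SecondFund\cdot\SecondFund$ for the contraction $\SecondFund_{\ b}^a\SecondFund_{\ a}^b$, and I use the schematic expression $\Sc \mycong (g^{-1})^2\partial^2 g + (g^{-1})^3 (\partial g)^2$ following from \eqref{E:RICCICURVATUREEXACT}.

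For the top-order equation \eqref{E:COMMUTEDLAPSEHIGHORDER}, I would first use the Hamiltonian constraint \eqref{E:HAMILTONIAN} to substitute $\Sc = \SecondFund\cdot\SecondFund - t^{-2}$ into \eqref{E:LAPSECMC}; the $t^{-2}(n-1)$ and $-n t^{-2}$ contributions cancel, leaving $g^{ab}\nabla_a\nabla_b n = n\SecondFund\cdot\SecondFund - t^{-2}$. I then multiply by $t^{\Blowupexp+2}$ and commute with $\partial_{\vec{I}}$, $|\vec{I}|=N$; note that $\partial_{\vec{I}}$, being purely spatial, annihilates $t^{-2}$. Applying the Leibniz rule to $t^{\Blowupexp+2}\partial_{\vec{I}}(n\SecondFund\cdot\SecondFund)$, I extract the two genuinely dangerous pieces: the term in which all derivatives land on a single $\SecondFund$ factor, which after writing $n = 1 + (n-1)$ contributes $(t\SecondFund)\cdot(t^{\Blowupexp+1}\partial_{\vec{I}}\SecondFund)$ to $\leftexp{(Border;\vec{I})}{\mathfrak{N}}$ (the $(n-1)$-correction going into the junk term), and the term $t^{\Blowupexp+2}(\partial_{\vec{I}}n)\SecondFund\cdot\SecondFund$, which I rewrite as $t^{\Blowupexp}\partial_{\vec{I}}n + (t^{\Blowupexp}\partial_{\vec{I}}n)\{(t\SecondFund)\cdot(t\SecondFund) - 1\}$. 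Moving the first summand to the left-hand side produces exactly the zeroth-order term $-t^{\Blowupexp}\partial_{\vec{I}}n$, while the second summand is the remaining contribution to $\leftexp{(Border;\vec{I})}{\mathfrak{N}}$. Everything else — the commutator $[\partial_{\vec{I}}, g^{ab}\partial_a\partial_b]n$ (which produces $(\partial_{\vec{I}_1}g^{-1})\partial^2\partial_{\vec{I}_2}n$ with $|\vec{I}_2| \le |\vec{I}|-1$), the Leibniz expansion of $t^{\Blowupexp+2}\partial_{\vec{I}}(g^{ab}\Gamma_{a\ b}^{\ c}\partial_c n)$, and every remaining Leibniz term in $\partial_{\vec{I}}(n\SecondFund\cdot\SecondFund)$ (which is either below top order in $\SecondFund$ or carries $\ge 1$ derivative on $n-1$) — is collected into $\leftexp{(Junk;\vec{I})}{\mathfrak{N}}$, and a term-by-term match against \eqref{E:LAPSETOPORDERJUNKRRORTERM} confirms the schematic form.

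For the near-top-order equation \eqref{E:COMMUTEDLAPSELOWERORDER}, I would \emph{not} invoke the Hamiltonian constraint; instead I keep $\Sc$ and regroup the source of \eqref{E:LAPSECMC} using $n\Sc = (n-1)\Sc + \Sc$, obtaining $t^{-2}(n-1) + n\Sc = t^{-2}(1 + t^2\Sc)(n-1) + \Sc$. Multiplying by $t^{\Blowupexp+1+\Worstexp}$ and commuting with $\partial_{\vec{I}}$ for $|\vec{I}| = N-1$ (so $\partial_{\vec{I}}(n-1) = \partial_{\vec{I}}n$ since $|\vec{I}| \ge 1$), the top-order piece $t^{\Blowupexp+\Worstexp-1}(1+t^2\Sc)\partial_{\vec{I}}n$ coming from $\partial_{\vec{I}}[t^{-2}(1+t^2\Sc)(n-1)]$ moves to the left-hand side, producing the stated operator. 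The right-hand side then consists of $t^{\Blowupexp+1+\Worstexp}\partial_{\vec{I}}\Sc$, the Laplacian commutator, the Leibniz expansion of the $\Gamma$-term, and the remaining Leibniz terms in which at least one derivative hits $n-1$; substituting $\Sc \mycong (g^{-1})^2\partial^2 g + (g^{-1})^3(\partial g)^2$ and expanding each product term by term matches each of the six schematic sums in \eqref{E:LAPSEJUSTBELOWTOPORDERERRORTERM}.

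The content is essentially bookkeeping — no analytic estimates are needed, only algebraic rewriting and the Leibniz rule — and no largeness of $N$ is used beyond $N \ge 2$. The one step requiring genuine care, and the crux of the argument, is the separation of the top-order output of $\partial_{\vec{I}}(n\SecondFund\cdot\SecondFund)$ into the two border terms versus the junk: one must verify that after pulling out $-t^{\Blowupexp}\partial_{\vec{I}}n$ and the single-$\SecondFund$ top-order term, \emph{every} leftover term genuinely carries either a below-top-order derivative on $\SecondFund$ or at least one derivative on $n-1$, so that it fits the template \eqref{E:LAPSETOPORDERJUNKRRORTERM} (for which there is room in the later estimates), and that the coefficients of the two border terms are universal, i.e.\ independent of $N$ and $\Blowupexp$, which is what justifies writing $\leftexp{(Border;\vec{I})}{\mathfrak{N}}$ with $\mycongstar$ rather than $\mycong$.
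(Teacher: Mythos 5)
Your proposal is correct and follows essentially the same route as the paper: the top-order equation is obtained by substituting the Hamiltonian constraint for $\Sc$, multiplying by $t^{\Blowupexp+2}$, and commuting (the paper merely performs the regrouping that isolates $-t^{\Blowupexp}(n-1)$ and the two borderline products \emph{before} applying $\partial_{\vec{I}}$, which is algebraically identical to your commute-then-regroup order), while the near-top-order equation comes from multiplying the rewritten lapse PDE \eqref{E:REWRITTENLAPSELOWERORDER} by $t^{\Blowupexp+1+\Worstexp}$ and commuting, exactly as you describe. Your identification of the two universal-coefficient border terms and the term-by-term matching of the junk sums agree with the paper's proof.
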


\begin{proof}
	First, we use equation \eqref{E:HAMILTONIAN}
	to substitute for $\Sc$ in equation
	\eqref{E:LAPSECMC},
	use that
	$g^{ab} \nabla_a \nabla_b n = g^{ab} \partial_a \partial_b n - g^{ab} \Gamma_{a \ b}^{\ c} \partial_c n$,
	and multiply by $t^{\Blowupexp + 2}$,
	thereby deriving the equation
	\begin{align}  \label{E:LAPSEHIGHORDER}
		t^{\Blowupexp + 2} g^{ab} \partial_a \partial_b n
		-
		t^{\Blowupexp}
		(n-1)
		& = 
				t^{\Blowupexp}
				(n - 1) 
				\left\lbrace
					(t \SecondFund_{\ b}^a)(t \SecondFund_{\ a}^b)
					-
				1
				\right\rbrace
				-
				t^{\Blowupexp}
				+ 
				t^{\Blowupexp + 2} \SecondFund_{\ b}^a \SecondFund_{\ a}^b
				+
				t^{\Blowupexp + 2} g^{ab} \Gamma_{a \ b}^{\ c} \partial_c n.
	\end{align}
	Commuting equation \eqref{E:LAPSEHIGHORDER} with $\partial_{\vec{I}}$
	and using the schematic identity
	$g^{ab} \Gamma_{a \ b}^{\ c} \partial_c n \mycong g^{-2} (\partial g) \partial n$,
	we easily deduce \eqref{E:COMMUTEDLAPSEHIGHORDER}.
	We clarify that the terms on RHS~\eqref{E:LAPSETOPORDERBORDERLINERRORTERM}
	arise, respectively, when all $|\vec{I}|$ derivatives fall on the factor $n-1$ in the first
	product on RHS~\eqref{E:LAPSEHIGHORDER} or on one of the factors of $\SecondFund$
	in the next-to-last product $t^{\Blowupexp + 2} \SecondFund_{\ b}^a \SecondFund_{\ a}^b$
	on RHS~\eqref{E:LAPSEHIGHORDER}. It is for this reason that the coefficients
	in the corresponding products do not depend on $N$, as is indicated by the symbol
	$\mycongstar$ in equation \eqref{E:LAPSETOPORDERBORDERLINERRORTERM}.
	
	Equation \eqref{E:COMMUTEDLAPSELOWERORDER} follows similarly from multiplying equation \eqref{E:REWRITTENLAPSELOWERORDER}
	by $t^{\Blowupexp + 1 + \Worstexp}$
	and using the schematic identity
	$
	\Sc
	\mycong 
		(g^{-1})^2 \partial^2 g
		+
		(g^{-1})^3 (\partial g)^2
	$,
	which follows from
	\eqref{E:RICCICURVATUREEXACT}
	and the fact that 
	$\Sc = \Ric_{\ a}^a$.

\end{proof}

\subsection{Control of error the terms in the lapse estimates}
\label{SS:CONTROLOFLAPSEERRORTERMS}
In this subsection, we derive estimates for the error terms in the equations of
Lemmas \ref{L:EQUATIONFORLAPSETIMEDERIVATIVE} and~\ref{L:COMMUTEDLAPSEEQUATIONS}.

\begin{lemma}[\textbf{Control of the error terms in the elliptic estimates}] 
	Assume that the bootstrap assumptions \eqref{E:BOOTSTRAPASSUMPTIONS} hold.
	There exists a universal constant $C_* > 0$ \underline{independent of $N$ and $\Blowupexp$}
	such that if $N$ is sufficiently large in a manner that depends on $\Blowupexp$
	and if $\varepsilon$ is sufficiently small, 
	then the following estimates hold for $t \in (\TBoot,1]$
	(where, as we described in Subsect.\,\ref{SSS:CONSTANTS}, constants ``$C$'' are allowed to depend on $N$ and other quantities).
	
	\medskip
	\noindent \underline{\textbf{Control of the error term in the equation verified by} $\partial_t n$}.
	The following estimate holds for the error term $\mathfrak{N}'$ 
	defined in \eqref{E:ERRORTERMSTIMEDERIVATIVEOFLAPSE}:
	\begin{align} \label{E:LINFINTYERRORTERMSTIMEDERIVATIVEOFLAPSE}
		\left\|
			\mathfrak{N}'
		\right\|_{L^{\infty}(\Sigma_t)}
		& \leq C 
			t^{-1 - 10 \Worstexp - \Room}
			\left\lbrace
				\MetricLownorm(t)
				+
				\MetricHighnorm(t)
				+
				\LapseLownorm(t)
				+
				\LapseHighnorm(t)
			\right\rbrace.
	\end{align}
	\medskip
	
	\noindent \underline{\textbf{Borderline top-order error term estimates}}.
	If $|\vec{I}|=N$, then the following estimate holds for the error
	term from \eqref{E:LAPSETOPORDERBORDERLINERRORTERM}:
	\begin{align} \label{E:LAPSETOPORDERBORDERERRORTERMBOUNDINTERMSOFMETRIC}
		\left\|
			\leftexp{(Border;\vec{I})}{\mathfrak{N}}
		\right\|_{L^2(\Sigma_t)}
		& \leq 
			C_* \varepsilon
			t^{\Blowupexp}
			\left\|
				\partial_{\vec{I}} n
			\right\|_{L^2(\Sigma_t)}
			+
			C_* 
			t^{\Blowupexp + 1} 
			\left\|
				\partial_{\vec{I}} \SecondFund
			\right\|_{L_g^2(\Sigma_t)}.
	\end{align}
	
	\medskip
	
	\noindent \underline{\textbf{Non-borderline top-order error term estimates}}.
	The following estimate holds for the error
	term from \eqref{E:LAPSETOPORDERJUNKRRORTERM}:
	\begin{align}
		\max_{|\vec{I}|=N}
		\left\|
			\leftexp{(Junk;\vec{I})}{\mathfrak{N}}
		\right\|_{L^2(\Sigma_t)}
		& 
		\leq
		C
		t^{\Room}
		\left\lbrace
			\MetricLownorm(t) + \MetricHighnorm(t)
		\right\rbrace.
		\label{E:LAPSETOPORDERJUNKERRORTERMBOUNDINTERMSOFMETRIC}
	\end{align}

	\noindent \underline{\textbf{Just-below-top-order error term estimates}}.
	The following estimate holds for the error
	term from \eqref{E:LAPSEJUSTBELOWTOPORDERERRORTERM}:
	\begin{align} \label{E:LAPSEJUSTBELOWTOPORDERBORDERERRORTERMBOUNDINTERMSOFMETRIC}
		\max_{|\vec{I}|=N-1}
		\left\|
			\leftexp{\vec{I}}{\widetilde{\mathfrak{N}}}
		\right\|_{L^2(\Sigma_t)}
		& 
		\leq
		C
		\left\lbrace
			\MetricLownorm(t) + \MetricHighnorm(t)
		\right\rbrace.
	\end{align}
\end{lemma}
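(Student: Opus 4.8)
I would prove all four estimates by a single bookkeeping recipe, so let me describe the recipe first and then the three places where it needs a little extra input. For a given schematically-denoted error term, expand it into its constituent products of $\partial$-derivatives of $(g,g^{-1},n,\SecondFund)$; in each product single out the factor carrying the most spatial derivatives, bound that factor in an appropriate $L^2$ norm using the high norms of Definition~\ref{D:HIGHNORMS}, and bound every remaining factor in $L^\infty(\Sigma_t)$ using the low norms of Definition~\ref{D:LOWNORMS} supplemented, whenever up to four derivatives are needed, by the Sobolev embedding estimates of Lemma~\ref{L:SOBOLEVBORROWALITTLEHIGHNORM}. Then multiply, invoking the Sobolev product inequality (Lemma~\ref{L:L2PRODUCTBOUNDINERMSOFLINFINITYANDHMDOT}) to distribute derivatives, and the norm comparison (Lemma~\ref{L:NORMCOMPARISONS}) to pass between the frame norms $|\cdot|_{Frame}$ and the geometric norms $|\cdot|_g$ that appear in the high norms. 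The upshot in each case is a power of $t$ times the relevant solution norms, where one uses the heuristic that every factor of $g$ or $g^{-1}$ costs $t^{-2\Worstexp}$, every factor of $\SecondFund$ costs $t^{-1}$, and every occurrence of $n-1$ or its derivatives gains $t^{2-10\Worstexp-\Room}$ (up to the harmless loss $t^{-\Blowupexp\updelta}$ from interpolation, which is absorbed by taking $N$ large since $\lim_{N\to\infty}\Blowupexp\updelta=0$). It then remains to check, using the chain \eqref{E:WORSTEXPANDROOMLOTSOFUSEFULINEQUALITIES} and $\Worstexp+2\Room<1/6$, that the resulting exponent dominates the one claimed.

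For $\mathfrak{N}'$, every factor carries at most two derivatives, so everything is controlled directly in $L^\infty$ by $\MetricLownorm$, $\LapseLownorm$, and Lemma~\ref{L:SOBOLEVBORROWALITTLEHIGHNORM}; a glance at \eqref{E:ERRORTERMSTIMEDERIVATIVEOFLAPSE} shows that each product is $\lesssim t^{-1-10\Worstexp-\Room}$ times $\{\MetricLownorm+\MetricHighnorm+\LapseLownorm+\LapseHighnorm\}$, the extremal contributions being the term $t^{-3}(n-1)$, which saturates the bound exactly, and the products with a factor of $(g^{-1})^3$. This gives \eqref{E:LINFINTYERRORTERMSTIMEDERIVATIVEOFLAPSE}.

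The borderline top-order term \eqref{E:LAPSETOPORDERBORDERLINERRORTERM} is the one place where the recipe is not pure bookkeeping, and I expect it to be the main obstacle. For its first piece $(t^{\Blowupexp}\partial_{\vec I}n)\{(t\SecondFund)\cdot(t\SecondFund)-1\}$ one puts $t^{\Blowupexp}\partial_{\vec I}n$ in $L^2$ and the scalar factor in $L^\infty$; the crucial point is that, because $(t\KasnerSecondFund)\cdot(t\KasnerSecondFund)=\sum_i q_i^2=1$ by the Kasner constraint \eqref{E:KASNEREXPONENTSSQUARESSUMTOONE}, one has $(t\SecondFund)\cdot(t\SecondFund)-1= t(\SecondFund-\KasnerSecondFund)\cdot t(\SecondFund+\KasnerSecondFund)$ (schematically), which is $\le C_* \varepsilon$ in $L^\infty$ using the factors $t\|\SecondFund-\KasnerSecondFund\|_{W_{Frame}^{2,\infty}(\Sigma_t)}\le\varepsilon$ and $\|t\KasnerSecondFund\|_{L_{Frame}^\infty(\Sigma_t)}\le 1$ entering $\MetricLownorm$ and \eqref{E:KASNERSECONDFUNDESTIMATES}. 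This produces the $C_*\varepsilon\, t^{\Blowupexp}\|\partial_{\vec I}n\|_{L^2(\Sigma_t)}$ term, and the constant is manifestly independent of $N$ and $\Blowupexp$ because it comes only from the zeroth-order structure of the nonlinearity. For the second piece $(t\SecondFund)\cdot(t^{\Blowupexp+1}\partial_{\vec I}\SecondFund)$ one uses $|t\SecondFund|_g\le 1+\varepsilon$ pointwise (again from $\MetricLownorm$) to get $C_* t^{\Blowupexp+1}\|\partial_{\vec I}\SecondFund\|_{L_g^2(\Sigma_t)}$. Together these give \eqref{E:LAPSETOPORDERBORDERERRORTERMBOUNDINTERMSOFMETRIC}.

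Finally, the junk term \eqref{E:LAPSETOPORDERJUNKRRORTERM} and the just-below-top term \eqref{E:LAPSEJUSTBELOWTOPORDERERRORTERM} are treated by the same recipe: in every product at least one factor carries strictly fewer than the maximal number of derivatives, so the highest-order factor is controlled by one of the high norms in Definition~\ref{D:HIGHNORMS} while the low-order factors each come with extra powers of $t$, leaving at least a $t^{\Room}$ (resp.\ $t^{\Worstexp}$) of room; converting frame norms to $g$-norms via Lemma~\ref{L:NORMCOMPARISONS} costs only bounded powers of $t^{-\Worstexp}$, which the room absorbs once \eqref{E:WORSTEXPANDROOMLOTSOFUSEFULINEQUALITIES} is invoked. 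The only products deserving special attention are those carrying $N+1$ spatial derivatives of $n$ — the terms built from $g^{ab}\Gamma_{a\ b}^{\ c}\partial_c\partial_{\vec I}n$, i.e.\ $(g^{-1})^2(\partial g)\partial\partial_{\vec I}n$ and $g^{-1}\partial^2\partial_{\vec I_2}n$ with $|\vec I_2|=N-1$: these are morally corrections to the coordinate Laplacian $g^{ab}\partial_a\partial_b\partial_{\vec I}n$ appearing on the left of \eqref{E:COMMUTEDLAPSEHIGHORDER} and \eqref{E:COMMUTEDLAPSELOWERORDER}, so they are regrouped with the principal operator (to be absorbed on the left when the elliptic estimates of Proposition~\ref{P:ELLIPTICANDMAXIMUMPRINCIPLEESTIMATESFORTHELAPSE} are run, using that $12\Worstexp<2$), and the genuine remainder is bounded as above. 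This yields \eqref{E:LAPSETOPORDERJUNKERRORTERMBOUNDINTERMSOFMETRIC} and \eqref{E:LAPSEJUSTBELOWTOPORDERBORDERERRORTERMBOUNDINTERMSOFMETRIC}, completing the proof.
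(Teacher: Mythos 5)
Your treatment of $\mathfrak{N}'$ and of the borderline term $\leftexp{(Border;\vec{I})}{\mathfrak{N}}$ is sound and essentially matches the paper's proof. For the borderline term, your factorization $(t\SecondFund)\cdot(t\SecondFund)-1 = t(\SecondFund-\KasnerSecondFund)\cdot t(\SecondFund+\KasnerSecondFund)$ via $\sum_i q_i^2=1$ is a legitimate variant of the paper's more direct use of the entry $\left\| \left|t\SecondFund\right|_g - 1\right\|_{L^{\infty}(\Sigma_t)}$ of $\MetricLownorm$, and both routes yield the required $(N,\Blowupexp)$-independent constant $C_*$.

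The gap is in the junk and just-below-top estimates, and it is twofold. First, the heuristic ``every factor of $g$ or $g^{-1}$ costs $t^{-2\Worstexp}$'' is too lossy for the genuinely borderline terms. The sharpest example is $t^{\Blowupexp+1+\Worstexp}(g^{-1})^2\partial^2\partial_{\vec{I}_3}g$ with $|\vec{I}_3|=N-1$ in $\leftexp{\vec{I}}{\widetilde{\mathfrak{N}}}$: the only control on $N+1$ derivatives of $g$ is $t^{\Blowupexp+1}\|\partial g\|_{\dot{H}_g^N(\Sigma_t)}$ from $\MetricHighnorm$, and the target \eqref{E:LAPSEJUSTBELOWTOPORDERBORDERERRORTERMBOUNDINTERMSOFMETRIC} has \emph{zero} room in powers of $t$. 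The paper's bound closes only because the two contracted factors of $g^{-1}$ are absorbed into the $g$-norm at no cost (since $|g^{-1}|_g\lesssim 1$) and because passing from $\|\partial^2 g\|_{\dot{H}_g^{N-1}(\Sigma_t)}$ to $\|\partial g\|_{\dot{H}_g^N(\Sigma_t)}$ costs only $\|g^{-1}\|_{L_{Frame}^{\infty}(\Sigma_t)}^{1/2}\lesssim t^{-\Worstexp}$, which is exactly cancelled by the weight $t^{\Worstexp}$. Your accounting charges $t^{-4\Worstexp}$ for the two $g^{-1}$'s and $t^{-3\Worstexp}$ for the frame-to-$g$ conversion of the type $\binom{0}{3}$ tensor $\partial g$, producing $t^{-6\Worstexp}\MetricHighnorm$, which diverges; and you do not flag this term at all. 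The same defect (with slightly more room) afflicts $t^{\Blowupexp+2}(g^{-1})^2(\partial g)\,\partial\partial_{\vec{I}}n$ with all $N$ derivatives on $n$: the crude count gives $t^{1-7\Worstexp-\Blowupexp\updelta}$, which fails to dominate $t^{\Room}$ once $\Worstexp$ is near its allowed maximum $1/6$, whereas the $g$-norm count gives $t^{1-5\Worstexp-\Blowupexp\updelta}$, which suffices. Second, your proposed fix for the $N+1$-derivatives-of-$n$ terms --- regrouping them with the elliptic principal part to be absorbed on the left --- does not prove the lemma as stated, which asserts an $L^2$ bound on the full $\leftexp{(Junk;\vec{I})}{\mathfrak{N}}$; and no analogous absorption is available for the $N+1$-derivatives-of-$g$ term above, which is not a perturbation of the lapse operator. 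The paper bounds all of these terms directly in $L^2$, and the direct bound does go through, but only with the sharper accounting: measure fully contracted quantities with $|\cdot|_g$ so that explicit $g^{-1}$'s are free, use the bootstrap bound $\LapseHighnorm(t)\le\varepsilon\le 1$ to dispose of top-order lapse factors, and reserve the frame norms for the factors handled by interpolation.
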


\begin{proof}
Throughout this proof, we will assume that $\Blowupexp \updelta$ is sufficiently small
(and in particular that $\Blowupexp \updelta < \Room$);
in view of the discussion in Subsect.\,\ref{SS:SOBOLEVEMBEDDING}, we see that
at fixed $\Blowupexp$, this can be achieved by choosing $N$ to be sufficiently large.

\medskip
\noindent \textbf{Proof of \eqref{E:LINFINTYERRORTERMSTIMEDERIVATIVEOFLAPSE}}:
This estimate follows in a straightforward fashion from using
\eqref{E:WORSTEXPANDROOMLOTSOFUSEFULINEQUALITIES},
Def.~\ref{D:LOWNORMS},
Lemma~\ref{L:BASICESTIMATEFORKASNER},
and Lemma~\ref{L:SOBOLEVBORROWALITTLEHIGHNORM}
to control the products on RHS~\eqref{E:ERRORTERMSTIMEDERIVATIVEOFLAPSE} 
in the norm $\| \cdot \|_{L^{\infty}(\Sigma_t)}$,
where we bound each factor in the products in the norm
$\| \cdot \|_{L_{Frame}^{\infty}(\Sigma_t)}$.
We remark that the most singular (in the sense of powers of $t$) 
products on RHS~\eqref{E:ERRORTERMSTIMEDERIVATIVEOFLAPSE} are
$
t^{-3} (n-1)
$
and the terms in the second sum with $p=i_1=0$.
	
\medskip

\noindent \textbf{Proof of \eqref{E:LAPSETOPORDERBORDERERRORTERMBOUNDINTERMSOFMETRIC}}:
From Def.~\ref{D:LOWNORMS}
and the $g$-Cauchy-Schwarz inequality, 
we deduce that the magnitude of RHS~\eqref{E:LAPSETOPORDERBORDERLINERRORTERM} is 
$
\leq
C_* 
\left\lbrace
	1 + \MetricLownorm(t)
\right\rbrace
\left\lbrace
	\MetricLownorm(t)
	|t^{\Blowupexp} \partial_{\vec{I}} n|
	+
	\left|
		t^{\Blowupexp + 1} \partial_{\vec{I}} \SecondFund
	\right|_g
\right\rbrace
$.
From this bound
and the bootstrap assumption bound
$\MetricLownorm(t) \leq \varepsilon$,
we conclude \eqref{E:LAPSETOPORDERBORDERERRORTERMBOUNDINTERMSOFMETRIC}.

\medskip

\noindent \textbf{Proof of \eqref{E:LAPSETOPORDERJUNKERRORTERMBOUNDINTERMSOFMETRIC}}:
Let $\vec{I}$ be a spatial multi-index with $|\vec{I}|=N$.
To bound the first sum on RHS~\eqref{E:LAPSETOPORDERJUNKRRORTERM},
we first consider the top-order case in which $|\vec{I}_2| = N$ or $|\vec{I}_3| = N$.
Then using $g$-Cauchy--Schwarz, we see that
the terms under consideration are bounded in the norm
$\| \cdot \|_{L^2(\Sigma_t)}$
by 
$\lesssim 
t^{\Blowupexp+2}
\| n-1 \|_{L^{\infty}(\Sigma_t)}
\left\|
	\SecondFund
\right\|_{L_g^{\infty}(\Sigma_t)}
\left\|
	\SecondFund
\right\|_{\dot{H}_g^N(\Sigma_t)}
$.
From Defs.\,\ref{D:LOWNORMS} and~\ref{D:HIGHNORMS}
and the bootstrap assumptions,
we see that RHS of the previous expression is
$\lesssim
t^{2 - 10 \Worstexp - \Room} \MetricHighnorm(t)
$.
In view of \eqref{E:WORSTEXPANDROOMLOTSOFUSEFULINEQUALITIES},
we see that RHS of the previous expression is
$\lesssim \mbox{RHS~\eqref{E:LAPSETOPORDERJUNKERRORTERMBOUNDINTERMSOFMETRIC}}$
as desired.
We now consider the remaining cases, 
in which $|\vec{I}_1|, |\vec{I}_2|, |\vec{I}_3| \leq N-1$.
Using 
\eqref{E:BASICINTERPOLATION}
and
\eqref{E:FRAMENORML2PRODUCTBOUNDINERMSOFLINFINITYANDHMDOT},
we bound 
(using that $|\vec{I}|=N$)
the terms under consideration as follows:
\begin{align} \label{E:FIRSTSTEPLAPSETOPORDERJUNKRRORTERMTERM1}
	&
	\mathop{\sum_{\vec{I}_1 + \vec{I}_2 + \vec{I}_3 = \vec{I}}}_{|\vec{I}_1|, |\vec{I}_2|, |\vec{I}_3| \leq N-1}
		t^{\Blowupexp + 2}
		\left\|
			 \left\lbrace
			\partial_{\vec{I}_1} (n-1)
			\right\rbrace
			(\partial_{\vec{I}_2} \SecondFund)
			\partial_{\vec{I}_3} \SecondFund
		\right\|_{L^2(\Sigma_t)}
			\\
	& \lesssim
		t^{\Blowupexp + 2} 
		\left\|
			n-1
		\right\|_{W^{1,\infty}(\Sigma_t)}
		\left\|
			\SecondFund
		\right\|_{W_{Frame}^{1,\infty}(\Sigma_t)}
		\left\|
			\SecondFund
		\right\|_{\dot{H}_{Frame}^{N-1}(\Sigma_t)}
			\notag \\
	& \ \
		+
		t^{\Blowupexp + 2} 
		\left\|
			\SecondFund
		\right\|_{W_{Frame}^{1,\infty}(\Sigma_t)}
		\left\|
			\SecondFund
		\right\|_{\dot{W}_{Frame}^{1,\infty}(\Sigma_t)}
		\left\|
			n
		\right\|_{\dot{H}^{N-1}(\Sigma_t)}
			\notag \\
	& \ \
		+
		t^{\Blowupexp + 2} 
		\left\|
			n-1
		\right\|_{W^{1,\infty}(\Sigma_t)}
		\left\|
			\SecondFund
		\right\|_{W_{Frame}^{1,\infty}(\Sigma_t)}
		\left\|
			\SecondFund
		\right\|_{\dot{W}_{Frame}^{1,\infty}(\Sigma_t)}.
		\notag 
	\end{align}
From Defs.\,\ref{D:LOWNORMS} and~\ref{D:HIGHNORMS},
the estimates \eqref{E:KASNERSECONDFUNDESTIMATES} and \eqref{E:UPTOFOURDERIVATIVESOFLAPSELINFINITYSOBOLEV},
and the bootstrap assumptions, we deduce that
$\mbox{RHS~\eqref{E:FIRSTSTEPLAPSETOPORDERJUNKRRORTERMTERM1}}
\lesssim 
t^{3 - 13 \Worstexp - 2 \Room - \Blowupexp \updelta}
\MetricHighnorm(t)
+
t^{1 - \Worstexp}
\MetricLownorm(t)
$.
In view of \eqref{E:WORSTEXPANDROOMLOTSOFUSEFULINEQUALITIES},
we see that the RHS of the previous expression is
$\lesssim \mbox{RHS~\eqref{E:LAPSETOPORDERJUNKERRORTERMBOUNDINTERMSOFMETRIC}}$
as desired.

To bound the second sum on RHS~\eqref{E:LAPSETOPORDERJUNKRRORTERM},
we first use \eqref{E:FRAMENORML2PRODUCTBOUNDINERMSOFLINFINITYANDHMDOT}
to deduce that the terms under consideration are bounded in the norm
$\| \cdot \|_{L^2(\Sigma_t)}$ by
$\lesssim 
t^{\Blowupexp + 2}
\left\|
	\SecondFund
\right\|_{\dot{W}_{Frame}^{1,\infty}(\Sigma_t)}
\left\|
	\SecondFund
\right\|_{\dot{H}_{Frame}^{N-1}(\Sigma_t)}
$.
From Defs.\,\ref{D:LOWNORMS} and~\ref{D:HIGHNORMS}
and the bootstrap assumptions,
we see that the RHS of the previous expression is 
$\lesssim 
t^{1 - 3 \Worstexp - \Room}
\MetricHighnorm(t)
$,
which, in view of \eqref{E:WORSTEXPANDROOMLOTSOFUSEFULINEQUALITIES},
is $\lesssim \mbox{RHS~\eqref{E:LAPSETOPORDERJUNKERRORTERMBOUNDINTERMSOFMETRIC}}$
as desired.

To bound the third sum on RHS~\eqref{E:LAPSETOPORDERJUNKRRORTERM},
we first consider the top-order case in which $|\vec{I}_3| = N$.
Using $g$-Cauchy--Schwarz and the fact that $|g^{-1}|_g \lesssim 1$, we see that 
the terms under consideration are bounded in the norm
$\| \cdot \|_{L^2(\Sigma_t)}$
by 
$\lesssim 
t^{\Blowupexp + 2}
\left\|
	\partial n
\right\|_{L_g^{\infty}(\Sigma_t)}
\left\|
	\partial g
\right\|_{\dot{H}_g^N(\Sigma_t)}
$.
Applying \eqref{E:POINTWISENORMCOMPARISON}
(with $l=0$ and $m=1$)
to 
$
\left\|
	\partial n
\right\|_{L_g^{\infty}(\Sigma_t)}
$,
we see that the RHS of the previous expression is 
$\lesssim 
t^{\Blowupexp + 2 - \Worstexp}
\| n \|_{\dot{W}^{1,\infty}(\Sigma_t)}
\left\|
	\partial g
\right\|_{\dot{H}_g^N(\Sigma_t)}
$.
From Defs.\,\ref{D:LOWNORMS} and~\ref{D:HIGHNORMS},
the estimate \eqref{E:UPTOFOURDERIVATIVESOFLAPSELINFINITYSOBOLEV},
and the bootstrap assumptions,
we see that the RHS of the previous expression is
$
\lesssim
t^{3 - 11 \Worstexp - \Room - \Blowupexp \updelta}  
\MetricHighnorm(t)
$,
which, in view of \eqref{E:WORSTEXPANDROOMLOTSOFUSEFULINEQUALITIES},
is $\lesssim \mbox{RHS~\eqref{E:LAPSETOPORDERJUNKERRORTERMBOUNDINTERMSOFMETRIC}}$
as desired.
We now consider the top-order case in which $|\vec{I}_4| = N$.
Arguing as above, we deduce that 
the terms under consideration are bounded in the norm
$\| \cdot \|_{L^2(\Sigma_t)}$
by 
$\lesssim 
t^{\Blowupexp + 2}
\left\|
	\partial g
\right\|_{L_g^{\infty}(\Sigma_t)}
\left\|
	\partial n
\right\|_{\dot{H}_g^N(\Sigma_t)}
\lesssim 
t^{\Blowupexp + 2 - 3 \Worstexp}
\| g \|_{\dot{W}_{Frame}^{1,\infty}(\Sigma_t)}
\left\|
	\partial n
\right\|_{\dot{H}_g^N(\Sigma_t)}
$.
From Defs.\,\ref{D:LOWNORMS} and~\ref{D:HIGHNORMS},
the estimate \eqref{E:UPTOFOURDERIVATIVESOFGLINFINITYSOBOLEV},
and the bootstrap assumptions,
we see that the RHS of the previous expression is
$
\lesssim
t^{1 - 5 \Worstexp - \Blowupexp \updelta} 
\MetricLownorm(t)
\LapseHighnorm(t)
\lesssim
t^{1 - 5 \Worstexp - \Blowupexp \updelta} 
\MetricLownorm(t)
$,
which, in view of \eqref{E:WORSTEXPANDROOMLOTSOFUSEFULINEQUALITIES},
is $\lesssim \mbox{RHS~\eqref{E:LAPSETOPORDERJUNKERRORTERMBOUNDINTERMSOFMETRIC}}$
as desired.
It remains for us to handle the cases in which $|\vec{I}_3|, |\vec{I}_4| \leq N-1$.
Using 
\eqref{E:BASICINTERPOLATION}
and
\eqref{E:FRAMENORML2PRODUCTBOUNDINERMSOFLINFINITYANDHMDOT},
we bound 
(using that $|\vec{I}|=N$)
the terms under consideration as follows:
\begin{align} \label{E:FIRSTSTEPLAPSETOPORDERJUNKRRORTERMTERM2}
	&
	\mathop{\sum_{\vec{I}_1 + \vec{I}_2 + \vec{I}_3 + \vec{I}_4 = \vec{I}}}_{|\vec{I}_3|, |\vec{I}_4| \leq N-1}
		t^{\Blowupexp + 2}
		\left\|
			 (\partial_{\vec{I}_1} g^{-1})
			(\partial_{\vec{I}_2} g^{-1})
			(\partial \partial_{\vec{I}_3} g)
			\partial \partial_{\vec{I}_4} n
		\right\|_{L^2(\Sigma_t)}
			\\
	& \lesssim
		t^{\Blowupexp + 2} 
		\left\|
			n-1
		\right\|_{W^{2,\infty}(\Sigma_t)}
		\left\|
			g^{-1}
		\right\|_{W_{Frame}^{1,\infty}(\Sigma_t)}^2
		\left\|
			g
		\right\|_{\dot{H}_{Frame}^N(\Sigma_t)}
			\notag \\
	& \ \
		+
		t^{\Blowupexp + 2} 
		\left\|
			g - \KasnerMetric
		\right\|_{W^{2,\infty}(\Sigma_t)}
		\left\|
			g^{-1}
		\right\|_{W_{Frame}^{1,\infty}(\Sigma_t)}^2
		\left\|
			n
		\right\|_{\dot{H}^N(\Sigma_t)}
		\notag 
			\\
	& \ \
		+
		t^{\Blowupexp + 2} 
		\left\|
			n-1
		\right\|_{W^{2,\infty}(\Sigma_t)}
		\left\|
			g^{-1}
		\right\|_{W_{Frame}^{1,\infty}(\Sigma_t)}
		\left\|
			g - \KasnerMetric
		\right\|_{W_{Frame}^{2,\infty}(\Sigma_t)}
		\left\|
			g^{-1}
		\right\|_{\dot{H}_{Frame}^N(\Sigma_t)}
			\notag \\		
	& \ \
		+
		t^{\Blowupexp + 2} 
		\left\|
			n-1
		\right\|_{W^{2,\infty}(\Sigma_t)}
		\left\|
			g^{-1}
		\right\|_{W_{Frame}^{1,\infty}(\Sigma_t)}^2
		\left\|
			g - \KasnerMetric
		\right\|_{W_{Frame}^{2,\infty}(\Sigma_t)}.
			\notag 
\end{align}
From Defs.\,\ref{D:LOWNORMS} and~\ref{D:HIGHNORMS},
the estimates \eqref{E:KASNERMETRICESTIMATES} and 
\eqref{E:UPTOFOURDERIVATIVESOFGLINFINITYSOBOLEV}-\eqref{E:UPTOFOURDERIVATIVESOFLAPSELINFINITYSOBOLEV},
and the bootstrap assumptions, we deduce that
$\mbox{RHS~\eqref{E:FIRSTSTEPLAPSETOPORDERJUNKRRORTERMTERM2}}
\lesssim 
t^{4 - 16 \Worstexp - 2 \Room - \Blowupexp \updelta}
\left\lbrace
	\MetricLownorm(t) + \MetricHighnorm(t)
\right\rbrace
+
t^{2 - 6 \Worstexp - \Blowupexp \updelta}
\left\lbrace
	\MetricLownorm(t) + \MetricHighnorm(t)
\right\rbrace
$.
In view of \eqref{E:WORSTEXPANDROOMLOTSOFUSEFULINEQUALITIES},
we see that the RHS of the previous expression is
$\lesssim \mbox{RHS~\eqref{E:LAPSETOPORDERJUNKERRORTERMBOUNDINTERMSOFMETRIC}}$
as desired.

To bound the last sum on RHS~\eqref{E:LAPSETOPORDERJUNKRRORTERM},
we first consider the case in which $|\vec{I}_2| = N-1$.
Using \eqref{E:POINTWISENORMCOMPARISON}, we bound
the terms under consideration in the norm
$\| \cdot \|_{L^2(\Sigma_t)}$
by 
$\lesssim 
t^{\Blowupexp + 2}
\left\|
	g^{-1}
\right\|_{\dot{W}_{Frame}^{1,\infty}(\Sigma_t)}
\left\|
	\partial n
\right\|_{\dot{H}_{Frame}^N(\Sigma_t)}
\lesssim
t^{\Blowupexp + 2 - \Worstexp}
\left\|
	g^{-1}
\right\|_{\dot{W}_{Frame}^{1,\infty}(\Sigma_t)}
\left\|
	\partial n
\right\|_{\dot{H}_g^N(\Sigma_t)}
$.
From Defs.\,\ref{D:LOWNORMS} and~\ref{D:HIGHNORMS},
the estimate \eqref{E:UPTOFOURDERIVATIVESOFGINVERSELINFINITYSOBOLEV},
and the bootstrap assumptions,
we see that the RHS of the previous expression is
$
\lesssim
t^{1 - 3 \Worstexp - \Blowupexp \updelta}
\left\lbrace
	\MetricLownorm(t) + \MetricHighnorm(t)
\right\rbrace
\LapseHighnorm(t)
\lesssim
t^{1 - 3 \Worstexp - \Blowupexp \updelta}
\left\lbrace
	\MetricLownorm(t) + \MetricHighnorm(t)
\right\rbrace
$,
which, in view of \eqref{E:WORSTEXPANDROOMLOTSOFUSEFULINEQUALITIES},
is $\lesssim \mbox{RHS~\eqref{E:LAPSETOPORDERJUNKERRORTERMBOUNDINTERMSOFMETRIC}}$
as desired.
We now consider the remaining cases, 
in which $|\vec{I}_2| \leq N-2$.
Using 
\eqref{E:FRAMENORML2PRODUCTBOUNDINERMSOFLINFINITYANDHMDOT},
we bound 
(using that $|\vec{I}|=N$)
the terms under consideration as follows:
\begin{align} \label{E:FIRSTSTEPLAPSETOPORDERJUNKRRORTERMTERMLAST}
	\mathop{\sum_{\vec{I}_1 + \vec{I}_2 = \vec{I}}}_{|\vec{I}_2| \leq N-2}
		t^{\Blowupexp + 2} 
		\left\|
			(\partial_{\vec{I}_1} g^{-1})
			\partial^2 \partial_{\vec{I}_2} n
		\right\|_{L^2(\Sigma_t)}
		& \lesssim
		t^{\Blowupexp + 2} 
		\left\|
			g^{-1}
		\right\|_{\dot{W}_{Frame}^{2,\infty}(\Sigma_t)}
		\left\|
			n
		\right\|_{\dot{H}^N(\Sigma_t)}
			\\
	& \ \
		+
		t^{\Blowupexp + 2} 
		\left\|
			n
		\right\|_{\dot{W}^{2,\infty}(\Sigma_t)}
		\left\|
			g^{-1}
		\right\|_{\dot{H}_{Frame}^N(\Sigma_t)}.
		\notag 
\end{align}
From Def.~\ref{D:HIGHNORMS},
the estimates \eqref{E:UPTOFOURDERIVATIVESOFGINVERSELINFINITYSOBOLEV} 
and \eqref{E:UPTOFOURDERIVATIVESOFLAPSELINFINITYSOBOLEV},
and the bootstrap assumptions, we deduce that
$\mbox{RHS~\eqref{E:FIRSTSTEPLAPSETOPORDERJUNKRRORTERMTERMLAST}}
\lesssim 
t^{2 - 2 \Worstexp - \Blowupexp \updelta} 
\MetricLownorm(t)
+
t^{4 - 12 \Worstexp - 2 \Room - \Blowupexp \updelta}
\MetricHighnorm(t)
$.
In view of \eqref{E:WORSTEXPANDROOMLOTSOFUSEFULINEQUALITIES},
we see that the RHS of the previous expression is
$\lesssim \mbox{RHS~\eqref{E:LAPSETOPORDERJUNKERRORTERMBOUNDINTERMSOFMETRIC}}$
as desired.

\medskip

\noindent \textbf{Proof of \eqref{E:LAPSEJUSTBELOWTOPORDERBORDERERRORTERMBOUNDINTERMSOFMETRIC}}:
Let $\vec{I}$ be a spatial multi-index with $|\vec{I}|=N-1$.
To bound the first sum on RHS~\eqref{E:LAPSEJUSTBELOWTOPORDERERRORTERM},
we first consider the case in which $|\vec{I}_3|=N-1$.
Using $g$-Cauchy--Schwarz and the fact that $|g^{-1}|_g \lesssim 1$, we see that 
the terms under consideration are bounded in the norm
$\| \cdot \|_{L^2(\Sigma_t)}$
by 
$\lesssim 
t^{\Blowupexp + 1 + \Worstexp}
\left\|
	\partial^2 g
\right\|_{\dot{H}_g^{N-1}(\Sigma_t)}
$.
From the definitions of
$\|
	\cdot
\|_{\dot{H}_g^M(\Sigma_t)}
$
and
$\|
	\cdot
\|_{L_{Frame}^{\infty}}
$,
we deduce that the RHS of the previous
estimate is 
$
\lesssim 
t^{\Blowupexp + 1 + \Worstexp}
\|g^{-1}\|_{L_{Frame}^{\infty}}^{1/2}
\left\|
	\partial g
\right\|_{\dot{H}_g^N(\Sigma_t)}
$.
From Defs.\,\ref{D:LOWNORMS} and~\ref{D:HIGHNORMS},
the estimate \eqref{E:KASNERMETRICESTIMATES},
and the bootstrap assumptions,
we see that the RHS of the previous expression is
$
\lesssim
\MetricHighnorm(t)
$,
which is $\lesssim \mbox{RHS~\eqref{E:LAPSEJUSTBELOWTOPORDERBORDERERRORTERMBOUNDINTERMSOFMETRIC}}$
as desired. It remains for us to consider the remaining cases, in which
$|\vec{I}_3| \leq N-2$.
Using 
\eqref{E:BASICINTERPOLATION}
and
\eqref{E:FRAMENORML2PRODUCTBOUNDINERMSOFLINFINITYANDHMDOT},
we bound 
(using that $|\vec{I}|=N-1$)
the terms under consideration as follows:
\begin{align} \label{E:FIRSTSTEPLAPSEJUSTBELOWTOPORDERERRORTERM1}
	&
	\mathop{\sum_{\vec{I}_1 + \vec{I}_2 + \vec{I}_3 = \vec{I}}}_{|\vec{I}_3| \leq N-2}
		t^{\Blowupexp + 1 + \Worstexp}
		\left\|
			(\partial_{\vec{I}_1} g^{-1})
			(\partial_{\vec{I}_2} g^{-1})
			\partial^2 \partial_{\vec{I}_3} g			
		\right\|_{L^2(\Sigma_t)}
			\\
	& \lesssim
	t^{\Blowupexp + 1 + \Worstexp}
	\left\|
		g^{-1}
	\right\|_{W_{Frame}^{1,\infty}(\Sigma_t)}^2
	\left\|
		g
	\right\|_{\dot{H}_{Frame}^N(\Sigma_t)}
		\notag \\
	& \ \
		+
	t^{\Blowupexp + 1 + \Worstexp}
	\left\|
		g^{-1}
	\right\|_{W_{Frame}^{1,\infty}(\Sigma_t)}
	\left\|
		g
	\right\|_{\dot{W}_{Frame}^{2,\infty}(\Sigma_t)}
	\left\|
		g^{-1}
	\right\|_{\dot{H}_{Frame}^N(\Sigma_t)}
	\notag
		\\
	& \ \
		+
	t^{\Blowupexp + 1 + \Worstexp}
	\left\|
		g^{-1}
	\right\|_{W_{Frame}^{1,\infty}(\Sigma_t)}^2
	\left\|
		g
	\right\|_{\dot{W}_{Frame}^{2,\infty}(\Sigma_t)}.
	\notag
\end{align}
From Defs.\,\ref{D:LOWNORMS} and~\ref{D:HIGHNORMS},
the estimates 
\eqref{E:KASNERMETRICESTIMATES},
\eqref{E:UPTOFOURDERIVATIVESOFGLINFINITYSOBOLEV},
and
\eqref{E:UPTOFOURDERIVATIVESOFGINVERSELINFINITYSOBOLEV},
and the bootstrap assumptions,
we see that 
$
\mbox{RHS~\eqref{E:FIRSTSTEPLAPSEJUSTBELOWTOPORDERERRORTERM1}}
\lesssim
t^{1 - 5 \Worstexp - \Room - \Blowupexp \updelta}
\left\lbrace
	\MetricLownorm(t) + \MetricHighnorm(t)
\right\rbrace
$,
which, in view of \eqref{E:WORSTEXPANDROOMLOTSOFUSEFULINEQUALITIES},
is $\lesssim \mbox{RHS~\eqref{E:LAPSEJUSTBELOWTOPORDERBORDERERRORTERMBOUNDINTERMSOFMETRIC}}$
as desired.

To bound the second sum on RHS~\eqref{E:LAPSEJUSTBELOWTOPORDERERRORTERM},
we first consider the cases in which $|\vec{I}_4|=N-1$ or $|\vec{I}_5|=N-1$.
Using $g$-Cauchy--Schwarz and the fact that $|g^{-1}|_g \lesssim 1$, we see that 
the terms under consideration are bounded in the norm
$\| \cdot \|_{L^2(\Sigma_t)}$
by 
$\lesssim 
t^{\Blowupexp + 1 + \Worstexp}
\left\|
	\partial g
\right\|_{L_g^{\infty}(\Sigma_t)}
\left\|
	\partial g
\right\|_{\dot{H}_g^{N-1}(\Sigma_t)}
$.
Applying \eqref{E:POINTWISENORMCOMPARISON}
to 
$
\left\|
	\partial g
\right\|_{L_g^{\infty}(\Sigma_t)}
$
(with $l=0$ and $m=3$),
we deduce that the RHS of the previous expression is 
$
\lesssim 
t^{\Blowupexp + 1 - 2 \Worstexp}
\left\|
	g
\right\|_{\dot{W}_{Frame}^{1,\infty}(\Sigma_t)}
\left\|
	\partial g
\right\|_{\dot{H}_g^{N-1}(\Sigma_t)}
$.
From Defs.\,\ref{D:LOWNORMS} and~\ref{D:HIGHNORMS},
the estimate \eqref{E:UPTOFOURDERIVATIVESOFGLINFINITYSOBOLEV},
and the bootstrap assumptions,
we see that the RHS of the previous expression is
$
\lesssim
t^{1 - 6 \Worstexp - \Room - \Blowupexp \updelta}
\MetricHighnorm(t)
$,
which, in view of \eqref{E:WORSTEXPANDROOMLOTSOFUSEFULINEQUALITIES},
is $\lesssim \mbox{RHS~\eqref{E:LAPSEJUSTBELOWTOPORDERBORDERERRORTERMBOUNDINTERMSOFMETRIC}}$
as desired. It remains for us to consider the remaining cases, in which
$|\vec{I}_4|, |\vec{I}_5| \leq N-2$.
Using 
\eqref{E:BASICINTERPOLATION}
and
\eqref{E:FRAMENORML2PRODUCTBOUNDINERMSOFLINFINITYANDHMDOT},
we bound 
(using that $|\vec{I}|=N-1$)
the terms under consideration as follows:
\begin{align} \label{E:FIRSTSTEPLAPSEJUSTBELOWTOPORDERERRORTERM2}
	&
	\mathop{\sum_{\vec{I}_1 + \vec{I}_2 + \cdots + \vec{I}_5 = \vec{I}}}_{|\vec{I}_4|,|\vec{I}_5| \leq N-2}
		t^{\Blowupexp + 1 + \Worstexp}
		\left\|
			(\partial_{\vec{I}_1} g^{-1})
			(\partial_{\vec{I}_2} g^{-1})
			(\partial_{\vec{I}_3} g^{-1})
			(\partial \partial_{\vec{I}_4} g)
			\partial \partial_{\vec{I}_5} g
		\right\|_{L^2(\Sigma_t)}
			\\
		& \lesssim
		t^{\Blowupexp + 1 + \Worstexp}
		\left\|
			g^{-1}
		\right\|_{W_{Frame}^{1,\infty}(\Sigma_t)}^3
		\left\|
			g - \KasnerMetric
		\right\|_{W_{Frame}^{2,\infty}(\Sigma_t)}
		\left\|
			g
		\right\|_{\dot{H}_{Frame}^{N-1}(\Sigma_t)}
			\notag \\
	& \ \
		+
		t^{\Blowupexp + 1 + \Worstexp}
		\left\|
			g^{-1}
		\right\|_{W_{Frame}^{1,\infty}(\Sigma_t)}^2
		\left\|
			g - \KasnerMetric
		\right\|_{W_{Frame}^{2,\infty}(\Sigma_t)}^2
		\left\|
			g^{-1}
		\right\|_{\dot{H}_{Frame}^{N-1}(\Sigma_t)}
		\notag 
			\\
	& \ \
		+
		t^{\Blowupexp + 1 + \Worstexp}
		\left\|
			g^{-1}
		\right\|_{W_{Frame}^{1,\infty}(\Sigma_t)}^3
		\left\|
			g - \KasnerMetric
		\right\|_{W_{Frame}^{2,\infty}(\Sigma_t)}^2.
		\notag
\end{align}
From Defs.\,\ref{D:LOWNORMS} and~\ref{D:HIGHNORMS},
the estimates 
\eqref{E:KASNERMETRICESTIMATES},
\eqref{E:UPTOFOURDERIVATIVESOFGLINFINITYSOBOLEV},
and
\eqref{E:UPTOFOURDERIVATIVESOFGINVERSELINFINITYSOBOLEV},
and the bootstrap assumptions,
we see that 
$
\mbox{RHS~\eqref{E:FIRSTSTEPLAPSEJUSTBELOWTOPORDERERRORTERM2}}
\lesssim
t^{2 - 12 \Worstexp - 3 \Room - \Blowupexp \updelta}
\left\lbrace
	\MetricLownorm(t) + \MetricHighnorm(t)
\right\rbrace
$,
which, in view of \eqref{E:WORSTEXPANDROOMLOTSOFUSEFULINEQUALITIES},
is $\lesssim \mbox{RHS~\eqref{E:LAPSEJUSTBELOWTOPORDERBORDERERRORTERMBOUNDINTERMSOFMETRIC}}$
as desired.

To bound the third sum on RHS~\eqref{E:LAPSEJUSTBELOWTOPORDERERRORTERM},
we first consider the case $|\vec{I}_4|=N-1$.
Using $g$-Cauchy--Schwarz and the fact that $|g^{-1}|_g \lesssim 1$, 
we see that the terms under consideration are bounded in the norm
$\| \cdot \|_{L^2(\Sigma_t)}$
by 
$
\lesssim 
t^{\Blowupexp + 1 + \Worstexp}
\left\|
	n - 1
\right\|_{L^{\infty}(\Sigma_t)}
\left\|
	\partial^2 g
\right\|_{\dot{H}_g^{N-1}(\Sigma_t)}
$.
Next, considering the definitions 
of
$\left\|
	\cdot
\right\|_{\dot{H}_g^M(\Sigma_t)}
$
and
$\left\|
	\cdot
\right\|_{L_{Frame}^{\infty}(\Sigma_t)}
$,
we deduce that the RHS of the previous expression is 
$
\lesssim 
t^{\Blowupexp + 1 + \Worstexp}
\left\|
	n - 1
\right\|_{L^{\infty}(\Sigma_t)}
\left\|
	g^{-1}
\right\|_{L_{Frame}^{\infty}(\Sigma_t)}^{1/2}
\left\|
	\partial g
\right\|_{\dot{H}_g^N(\Sigma_t)}
$.
From Defs.\,\ref{D:LOWNORMS} and~\ref{D:HIGHNORMS},
the estimate
\eqref{E:KASNERMETRICESTIMATES},
and the bootstrap assumptions,
we see that the RHS of the previous expression is
$
\lesssim
t^{2 - 10 \Worstexp - \Room}
\MetricHighnorm(t)
$,
which, in view of \eqref{E:WORSTEXPANDROOMLOTSOFUSEFULINEQUALITIES},
is $\lesssim \mbox{RHS~\eqref{E:LAPSEJUSTBELOWTOPORDERBORDERERRORTERMBOUNDINTERMSOFMETRIC}}$
as desired. It remains for us to consider the case $|\vec{I}_4| \leq N-2$.
Using 
\eqref{E:BASICINTERPOLATION}
and
\eqref{E:FRAMENORML2PRODUCTBOUNDINERMSOFLINFINITYANDHMDOT},
we bound 
(using that $|\vec{I}|=N-1$)
the terms under consideration as follows:	
\begin{align} \label{E:THIRDSTEPLAPSEJUSTBELOWTOPORDERERRORTERM3}
	&
	\mathop{\sum_{\vec{I}_1 + \vec{I}_2 + \vec{I}_3 + \vec{I}_4 = \vec{I}}}_{|\vec{I}_1|, |\vec{I}_4| \leq N-2}
		t^{\Blowupexp + 1 + \Worstexp}
		\left\|
			\left\lbrace
				\partial_{\vec{I}_1} (n-1)
			\right\rbrace
			(\partial_{\vec{I}_2} g^{-1})
			(\partial_{\vec{I}_3} g^{-1})
			\partial^2 \partial_{\vec{I}_4} g	
		\right\|_{L^2(\Sigma_t)}
			\\
		& \lesssim
		t^{\Blowupexp + 1 + \Worstexp}
		\left\|
			n-1
		\right\|_{W^{1,\infty}(\Sigma_t)}
		\left\|
			g^{-1}
		\right\|_{W_{Frame}^{1,\infty}(\Sigma_t)}^2
		\left\|
			g
		\right\|_{\dot{H}_{Frame}^N(\Sigma_t)}
			\notag \\
	& \ \
		+
		t^{\Blowupexp + 1 + \Worstexp}
		\left\|
			g^{-1}
		\right\|_{W_{Frame}^{1,\infty}(\Sigma_t)}^2
		\left\|
			g - \KasnerMetric
		\right\|_{W_{Frame}^{3,\infty}(\Sigma_t)}
		\left\|
			n
		\right\|_{\dot{H}^{N-1}(\Sigma_t)}
		\notag 
			\\
	& \ \
		+
		t^{\Blowupexp + 1 + \Worstexp}
		\left\|
			n-1
		\right\|_{W^{1,\infty}(\Sigma_t)}
		\left\|
			g^{-1}
		\right\|_{W_{Frame}^{1,\infty}(\Sigma_t)}
		\left\|
			g - \KasnerMetric
		\right\|_{W_{Frame}^{3,\infty}(\Sigma_t)}
		\left\|
			g^{-1}
		\right\|_{\dot{H}_{Frame}^N(\Sigma_t)}
			\notag 
			\\
	& \ \
		+
		t^{\Blowupexp + 1 + \Worstexp}
		\left\|
			n-1
		\right\|_{W^{1,\infty}(\Sigma_t)}
		\left\|
			g^{-1}
		\right\|_{W_{Frame}^{1,\infty}(\Sigma_t)}^2
		\left\|
			g - \KasnerMetric
		\right\|_{W_{Frame}^{3,\infty}(\Sigma_t)}.
		\notag
\end{align}
From Defs.\,\ref{D:LOWNORMS} and~\ref{D:HIGHNORMS},
the estimates 
\eqref{E:KASNERMETRICESTIMATES}
and \eqref{E:UPTOFOURDERIVATIVESOFGLINFINITYSOBOLEV}-\eqref{E:UPTOFOURDERIVATIVESOFLAPSELINFINITYSOBOLEV},
and the bootstrap assumptions,
we see that 
$
\mbox{RHS~\eqref{E:THIRDSTEPLAPSEJUSTBELOWTOPORDERERRORTERM3}}
\lesssim
t^{3 - 15 \Worstexp - 2 \Room - \Blowupexp \updelta}
\left\lbrace
	\MetricLownorm(t) + \MetricHighnorm(t)
\right\rbrace
+
t^{2 - 6 \Worstexp - \Blowupexp \updelta}
\left\lbrace
	\MetricLownorm(t) + \MetricHighnorm(t)
\right\rbrace
$,
which, in view of \eqref{E:WORSTEXPANDROOMLOTSOFUSEFULINEQUALITIES},
is $\lesssim \mbox{RHS~\eqref{E:LAPSEJUSTBELOWTOPORDERBORDERERRORTERMBOUNDINTERMSOFMETRIC}}$
as desired.

To bound the fourth sum on RHS~\eqref{E:LAPSEJUSTBELOWTOPORDERERRORTERM},
we first consider the cases in which $|\vec{I}_5|=N-1$ or 
$|\vec{I}_6| = N-1$. Using $g$-Cauchy--Schwarz,
and the fact that $|g^{-1}|_g \lesssim 1$
we see that the terms under consideration are bounded in the norm
$\| \cdot \|_{L^2(\Sigma_t)}$
by
$
\lesssim
t^{\Blowupexp + 1 + \Worstexp}
\left\|
	n-1
\right\|_{L^{\infty}(\Sigma_t)}
\left\|
	\partial g
\right\|_{L_g^{\infty}(\Sigma_t)}
\left\|
	\partial g
\right\|_{\dot{H}_g^{N-1}(\Sigma_t)}
$.
Applying \eqref{E:POINTWISENORMCOMPARISON}
to 
$
\left\|
	\partial g
\right\|_{L_g^{\infty}(\Sigma_t)}
$
(with $l=0$ and $m=3$),
we deduce that the RHS of the previous expression is
$
\lesssim
t^{\Blowupexp + 1 - 2 \Worstexp}
\left\|
	n-1
\right\|_{L^{\infty}(\Sigma_t)}
\left\|
	g
\right\|_{\dot{W}_{Frame}^{1,\infty}(\Sigma_t)}
\left\|
	\partial g
\right\|_{\dot{H}_g^{N-1}(\Sigma_t)}
$.
From Defs.\,\ref{D:LOWNORMS} and~\ref{D:HIGHNORMS},
the estimates 
\eqref{E:KASNERMETRICESTIMATES} and
\eqref{E:UPTOFOURDERIVATIVESOFGLINFINITYSOBOLEV},
and the bootstrap assumptions,
we deduce that the RHS of the previous expression 
is
$
t^{3 - 16 \Worstexp - 2 \Room - \Blowupexp \updelta}
\left\lbrace
	\MetricLownorm(t) + \MetricHighnorm(t)
\right\rbrace
$,
which, in view of \eqref{E:WORSTEXPANDROOMLOTSOFUSEFULINEQUALITIES},
is $\lesssim \mbox{RHS~\eqref{E:LAPSEJUSTBELOWTOPORDERBORDERERRORTERMBOUNDINTERMSOFMETRIC}}$
as desired.
It remains for us to consider the remaining cases, in which
$|\vec{I}_1|, |\vec{I}_5|, |\vec{I}_6| \leq N-2$.
Using 
\eqref{E:BASICINTERPOLATION}
and
\eqref{E:FRAMENORML2PRODUCTBOUNDINERMSOFLINFINITYANDHMDOT},
we bound 
(using that $|\vec{I}|=N-1$)
the terms under consideration as follows:	
\begin{align} \label{E:FIRSTSTEPLAPSEJUSTBELOWTOPORDERERRORTERM4}
	&
	\mathop{\sum_{\vec{I}_1 + \vec{I}_2 + \cdots + \vec{I}_6 = \vec{I}}}_{|\vec{I}_1|, |\vec{I}_5|, |\vec{I}_6| \leq N-2}
		t^{\Blowupexp + 1 + \Worstexp}
		\left\|
			\left\lbrace
				\partial_{\vec{I}_1} (n-1)
			\right\rbrace
			(\partial_{\vec{I}_2} g^{-1})
			(\partial_{\vec{I}_3} g^{-1})
			(\partial_{\vec{I}_4} g^{-1})
			(\partial \partial_{\vec{I}_5} g)
			\partial \partial_{\vec{I}_6} g	
		\right\|_{L^2(\Sigma_t)}
			\\
		& \lesssim
		t^{\Blowupexp + 1 + \Worstexp}
		\left\|
			n-1
		\right\|_{W^{1,\infty}(\Sigma_t)}
		\left\|
			g^{-1}
		\right\|_{W_{Frame}^{1,\infty}(\Sigma_t)}^3
		\left\|
			g - \KasnerMetric
		\right\|_{W_{Frame}^{2,\infty}(\Sigma_t)}
		\left\|
			g
		\right\|_{\dot{H}_{Frame}^{N-1}(\Sigma_t)}
			\notag \\
	& \ \
		+
		t^{\Blowupexp + 1 + \Worstexp}
		\left\|
			g^{-1}
		\right\|_{W_{Frame}^{1,\infty}(\Sigma_t)}^3
		\left\|
			g - \KasnerMetric
		\right\|_{W_{Frame}^{2,\infty}(\Sigma_t)}^2
		\left\|
			n
		\right\|_{\dot{H}^{N-1}(\Sigma_t)}
		\notag
			\\
	& \ \
		+
		t^{\Blowupexp + 1 + \Worstexp}
		\left\|
			n-1
		\right\|_{W^{1,\infty}(\Sigma_t)}
		\left\|
			g^{-1}
		\right\|_{W_{Frame}^{1,\infty}(\Sigma_t)}^2
		\left\|
			g - \KasnerMetric
		\right\|_{W_{Frame}^{2,\infty}(\Sigma_t)}^2
		\left\|
			g^{-1}
		\right\|_{\dot{H}_{Frame}^{N-1}(\Sigma_t)}
		\notag \\
& \ \
		+
		t^{\Blowupexp + 1 + \Worstexp}
		\left\|
			n-1
		\right\|_{W^{1,\infty}(\Sigma_t)}
		\left\|
			g^{-1}
		\right\|_{W_{Frame}^{1,\infty}(\Sigma_t)}^3
		\left\|
			g - \KasnerMetric
		\right\|_{W_{Frame}^{2,\infty}(\Sigma_t)}^2.
		\notag
\end{align}
From Defs.\,\ref{D:LOWNORMS} and~\ref{D:HIGHNORMS},
the estimates 
\eqref{E:KASNERMETRICESTIMATES}
and \eqref{E:UPTOFOURDERIVATIVESOFGLINFINITYSOBOLEV}-\eqref{E:UPTOFOURDERIVATIVESOFLAPSELINFINITYSOBOLEV},
and the bootstrap assumptions,
we see that 
$
\mbox{RHS~\eqref{E:FIRSTSTEPLAPSEJUSTBELOWTOPORDERERRORTERM4}}
\lesssim
t^{4 - 22 \Worstexp - 4 \Room - \Blowupexp \updelta}
\left\lbrace
	\MetricLownorm(t) + \MetricHighnorm(t)
\right\rbrace
+
t^{2 - 10 \Worstexp - \Blowupexp \updelta}
\left\lbrace
	\MetricLownorm(t) + \MetricHighnorm(t)
\right\rbrace
$,
which, in view of \eqref{E:WORSTEXPANDROOMLOTSOFUSEFULINEQUALITIES},
is $\lesssim \mbox{RHS~\eqref{E:LAPSEJUSTBELOWTOPORDERBORDERERRORTERMBOUNDINTERMSOFMETRIC}}$
as desired.

To bound the fifth sum on RHS~\eqref{E:LAPSEJUSTBELOWTOPORDERERRORTERM},
we first use 
\eqref{E:BASICINTERPOLATION},
\eqref{E:FRAMENORML2PRODUCTBOUNDINERMSOFLINFINITYANDHMDOT},
and the fact that $|\vec{I}|=N-1$
to deduce that the terms under consideration are bounded 
as follows:
\begin{align} \label{E:FIRSTSTEPLAPSEJUSTBELOWTOPORDERERRORTERM5}
	&
	\sum_{\vec{I}_1 + \vec{I}_2 + \vec{I}_3 + \vec{I}_4 = \vec{I}}
		t^{\Blowupexp + 1 + \Worstexp}
		\left\|
			(\partial_{\vec{I}_1} g^{-1})
			(\partial_{\vec{I}_2} g^{-1})
			(\partial \partial_{\vec{I}_3} g)
			(\partial \partial_{\vec{I}_4} n)			
		\right\|_{L^2(\Sigma_t)}
			\\
		& \lesssim
		t^{\Blowupexp + 1 + \Worstexp}
		\left\|
			g^{-1}
		\right\|_{L_{Frame}^{\infty}(\Sigma_t)}^2
		\left\|
			g
		\right\|_{\dot{W}_{Frame}^{1,\infty}(\Sigma_t)}
		\left\|
			n
		\right\|_{\dot{H}^N(\Sigma_t)}
			\notag \\
	& \ \	
		+
		t^{\Blowupexp + 1 + \Worstexp}
		\left\|
			g^{-1}
		\right\|_{L_{Frame}^{\infty}(\Sigma_t)}^2
		\left\|
			n
		\right\|_{\dot{W}^{1,\infty}(\Sigma_t)}
		\left\|
			g
		\right\|_{\dot{H}_{Frame}^N(\Sigma_t)}
			\notag \\
	& \ \
		+
		t^{\Blowupexp + 1 + \Worstexp}
		\left\|
			g^{-1}
		\right\|_{L_{Frame}^{\infty}(\Sigma_t)}
		\left\|
			g
		\right\|_{\dot{W}_{Frame}^{1,\infty}(\Sigma_t)}
		\left\|
			n
		\right\|_{\dot{W}^{1,\infty}(\Sigma_t)}
		\left\|
			g^{-1}
		\right\|_{\dot{H}_{Frame}^N(\Sigma_t)}
		\notag 
			\\
	& \ \
		+
		t^{\Blowupexp + 1 + \Worstexp}
		\left\|
			g^{-1}
		\right\|_{L_{Frame}^{\infty}(\Sigma_t)}^2
		\left\|
			g
		\right\|_{\dot{W}_{Frame}^{1,\infty}(\Sigma_t)}
		\left\|
			n
		\right\|_{\dot{W}^{1,\infty}(\Sigma_t)}.
		\notag
\end{align}
From Defs.\,\ref{D:LOWNORMS} and~\ref{D:HIGHNORMS},
the estimates 
\eqref{E:KASNERMETRICESTIMATES}
and \eqref{E:UPTOFOURDERIVATIVESOFGLINFINITYSOBOLEV}-\eqref{E:UPTOFOURDERIVATIVESOFLAPSELINFINITYSOBOLEV},
and the bootstrap assumptions,
we see that 
$
\mbox{RHS~\eqref{E:FIRSTSTEPLAPSEJUSTBELOWTOPORDERERRORTERM5}}
\lesssim
t^{1 - 5 \Worstexp - \Blowupexp \updelta}
\left\lbrace
	\MetricLownorm(t) + \MetricHighnorm(t)
\right\rbrace
+
t^{3 - 15 \Worstexp - 2 \Room - \Blowupexp \updelta}
\left\lbrace
	\MetricLownorm(t) + \MetricHighnorm(t)
\right\rbrace
$,
which, in view of \eqref{E:WORSTEXPANDROOMLOTSOFUSEFULINEQUALITIES},
is $\lesssim \mbox{RHS~\eqref{E:LAPSEJUSTBELOWTOPORDERBORDERERRORTERMBOUNDINTERMSOFMETRIC}}$
as desired. 

To bound the last sum on RHS~\eqref{E:LAPSEJUSTBELOWTOPORDERERRORTERM},
we first use 
\eqref{E:FRAMENORML2PRODUCTBOUNDINERMSOFLINFINITYANDHMDOT}
and the fact that $|\vec{I}|=N-1$
to deduce that the terms under consideration are bounded 
as follows:
\begin{align} \label{E:FIRSTSTEPLAPSEJUSTBELOWTOPORDERERRORTERMLAST}
	&
	\mathop{\sum_{\vec{I}_1 + \vec{I}_2 = \vec{I}}}_{|\vec{I}_2| \leq N-2}
		\left\|
			t^{\Blowupexp + 1 + \Worstexp}
			(\partial_{\vec{I}_1} g^{-1})
			\partial^2 \partial_{\vec{I}_2} n		
		\right\|_{L^2(\Sigma_t)}
			\\
		& \lesssim
		t^{\Blowupexp + 1 + \Worstexp}
		\left\|
			g^{-1}
		\right\|_{\dot{W}_{Frame}^{1,\infty}(\Sigma_t)}
		\left\|
			n
		\right\|_{\dot{H}^N(\Sigma_t)}
			\notag \\
	& \ \	
		+
		t^{\Blowupexp + 1 + \Worstexp}
		\left\|
			n
		\right\|_{\dot{W}^{2,\infty}(\Sigma_t)}
		\left\|
			g^{-1}
		\right\|_{\dot{H}_{Frame}^{N-1}(\Sigma_t)}.
			\notag
\end{align}
From Defs.\,\ref{D:LOWNORMS} and~\ref{D:HIGHNORMS},
the estimates 
\eqref{E:UPTOFOURDERIVATIVESOFGINVERSELINFINITYSOBOLEV}
and
\eqref{E:UPTOFOURDERIVATIVESOFLAPSELINFINITYSOBOLEV},
and the bootstrap assumptions,
we see that 
$
\mbox{RHS~\eqref{E:FIRSTSTEPLAPSEJUSTBELOWTOPORDERERRORTERMLAST}}
\lesssim
t^{1 - \Worstexp - \Blowupexp \updelta}
\left\lbrace
	\MetricLownorm(t) + \MetricHighnorm(t)
\right\rbrace
+
t^{4 - 14 \Worstexp - 4 \Room - \Blowupexp \updelta}
\left\lbrace
	\MetricLownorm(t) + \MetricHighnorm(t)
\right\rbrace
$,
which, in view of \eqref{E:WORSTEXPANDROOMLOTSOFUSEFULINEQUALITIES},
is $\lesssim \mbox{RHS~\eqref{E:LAPSEJUSTBELOWTOPORDERBORDERERRORTERMBOUNDINTERMSOFMETRIC}}$
as desired. This completes the proof of \eqref{E:LAPSEJUSTBELOWTOPORDERBORDERERRORTERMBOUNDINTERMSOFMETRIC}
and finishes the proof of the lemma.

\end{proof}

\subsection{Proof of Prop.\,\ref{P:ELLIPTICANDMAXIMUMPRINCIPLEESTIMATESFORTHELAPSE}}
\label{SS:PROOFOFPROPOSITIONELLIPTICANDMAXIMUMPRINCIPLEESTIMATESFORTHELAPSE}
In this subsection, we prove Prop.\,\ref{P:ELLIPTICANDMAXIMUMPRINCIPLEESTIMATESFORTHELAPSE}.
Throughout this proof, we will assume that $\Blowupexp \updelta$ is sufficiently small
(and in particular that $\Blowupexp \updelta < \Room$);
in view of the discussion in Subsect.\,\ref{SS:SOBOLEVEMBEDDING}, we see that
at fixed $\Blowupexp$, this can be achieved by choosing $N$ to be sufficiently large.
	
	\medskip
	
	\noindent \underline{\textbf{Proof of \eqref{E:LAPSELOWNORMELLIPTIC}}}:
	From \eqref{E:RICCICURVATUREEXACT},
	we deduce that
	$
	\Sc
	\mycong 
		(g^{-1})^2 \partial^2 g
		+
		(g^{-1})^3 (\partial g)^2
	$.
Hence, bounding these products by bounding each factor in the norm
$\| \cdot \|_{L_{Frame}^{\infty}}$
with the help of
the estimates \eqref{E:KASNERMETRICESTIMATES}
and \eqref{E:UPTOFOURDERIVATIVESOFGLINFINITYSOBOLEV}-\eqref{E:UPTOFOURDERIVATIVESOFGINVERSELINFINITYSOBOLEV}
and the bootstrap assumptions, 
we deduce that
\begin{align} \label{E:SIMPLESCALARCURVATURELINFITYBOUND}
|\Sc| 
\lesssim 
t^{- 10 \Worstexp - \Blowupexp \updelta} 
\left\lbrace
	\MetricLownorm(t) + \MetricHighnorm(t)
\right\rbrace
\lesssim
\varepsilon t^{- 10 \Worstexp - \Blowupexp \updelta}.
\end{align}
From 
\eqref{E:WORSTEXPANDROOMLOTSOFUSEFULINEQUALITIES},
both inequalities in \eqref{E:SIMPLESCALARCURVATURELINFITYBOUND},
and equation \eqref{E:REWRITTENLAPSELOWERORDER} (multiplied by $t^2$), 
we deduce that if $\Blowupexp \updelta < \Room$,
then
\begin{align} \label{E:LAPSELOWNORMELLIPTICALMOSTPROVED}
	\left|
	t^2 g^{ab} \nabla_a \nabla_b n
	-
	(n - 1) 
	\left\lbrace
		1 + \mathcal{O}(\varepsilon)
	\right\rbrace
	\right|
	& \lesssim
	t^{2 - 10 \Worstexp - \Blowupexp \updelta} 
	\left\lbrace
		\MetricLownorm(t) + \MetricHighnorm(t)
	\right\rbrace
		\\
& \lesssim
	t^{2 - 10 \Worstexp - \Room} 
	\left\lbrace
		\MetricLownorm(t) + \MetricHighnorm(t)
	\right\rbrace.
	\notag
\end{align}
At any point $p_{(Max)} \in \Sigma_t$ at which $n-1$ achieves its maximum value, we have that
$g^{ab} \nabla_a \nabla_b n \leq 0$. From this fact and the estimate \eqref{E:LAPSELOWNORMELLIPTICALMOSTPROVED},
it follows that 
$(n-1)|_{p_{(Max})} 
\lesssim 
t^{2 - 10 \Worstexp - \Room} 
	\left\lbrace
		\MetricLownorm(t) + \MetricHighnorm(t)
	\right\rbrace
$. 
Using similar reasoning, we deduce that at any point $p_{(Min)} \in \Sigma_t$ 
at which $n-1$ achieves its minimum value, 
we have the estimate
$(n-1)|_{p_{(Min)}} 
\gtrsim 
	-
	t^{2 - 10 \Worstexp - \Room} 
	\left\lbrace
		\MetricLownorm(t) + \MetricHighnorm(t)
	\right\rbrace
$.
Combining these two estimates, we  
find that 
$
\| n-1 \|_{L^{\infty}(\Sigma_t)}
\lesssim
t^{2 - 10 \Worstexp - \Room} 
	\left\lbrace
		\MetricLownorm(t) + \MetricHighnorm(t)
	\right\rbrace
$,
which, in view of definition \eqref{E:LAPSELOWNORM}, 
yields \eqref{E:LAPSELOWNORMELLIPTIC}.

\medskip

\noindent \textbf{Proof of \eqref{E:LAPSETOPORDERELLIPTIC} and \eqref{E:LAPSEJUSTBELOWTOPORDERELLIPTIC}}:
In view of Def.\,\ref{D:HIGHNORMS},
we see that to obtain \eqref{E:LAPSETOPORDERELLIPTIC} and \eqref{E:LAPSEJUSTBELOWTOPORDERELLIPTIC},
it suffices to prove the following estimates:
\begin{align} \label{E:LAPSETOPBOUND}
	&
	\left[
	\int_{\Sigma_t}
		\left|
			t^{\Blowupexp + 1} \partial \partial_{\vec{I}} n
		\right|_g^2
		+
		\left|
			t^{\Blowupexp} \partial_{\vec{I}} n
		\right|^2
	\, dx
	\right]^{1/2}
	&& \\
	& \leq C_* 
				\left\|
					t^{\Blowupexp + 1} \partial_{\vec{I}} \SecondFund
				\right\|_{L_g^2(\Sigma_t)}
				+
				C 
				t^{\Room}
				\left\lbrace
					\MetricLownorm(t) + \MetricHighnorm(t)
				\right\rbrace,
	&& (|\vec{I}|=N),
		\notag \\
	&
	\left[
	\int_{\Sigma_t}
		\left|
			t^{\Blowupexp + \Worstexp} \partial \partial_{\vec{I}} n
		\right|_g^2
		+
		\left|
			t^{\Blowupexp + \Worstexp - 1} \partial_{\vec{I}} n
		\right|^2
	\, dx
	\right]^{1/2}
		\label{E:LAPSEJUSTBELOWTOPBOUND} \\
	& \leq 
				C 
				\left\lbrace
					\MetricLownorm(t) + \MetricHighnorm(t)
				\right\rbrace,
	&& (|\vec{I}|=N-1).
	\notag
\end{align}

To prove \eqref{E:LAPSETOPBOUND}, we let $\vec{I}$ be any spatial derivative multi-index with $|\vec{I}| = N$.
Multiplying equation \eqref{E:COMMUTEDLAPSEHIGHORDER} by $t^{\Blowupexp} \partial_{\vec{I}} n$
and integrating by parts over $\Sigma_t$,
we deduce
\begin{align} \label{E:HIGHELLIPTICESTIMATEFIRSTSTEP}
	\int_{\Sigma_t}
		\left|
			t^{\Blowupexp + 1} \partial \partial_{\vec{I}} n
		\right|_g^2
		+
		(t^{\Blowupexp} \partial_{\vec{I}} n)^2
	\, dx
	&
	\leq 
	\int_{\Sigma_t}
	\left|
			(t \partial_b g^{ab})
			(t^{\Blowupexp + 1} \partial_a \partial \partial_{\vec{I}} n)
			(t^{\Blowupexp} \partial_{\vec{I}} n)
		\right|
	\, dx
		\\
& \ \
	+
	\int_{\Sigma_t}
		\left|
			\leftexp{(Border;\vec{I})}{\mathfrak{N}}
		\right|
		\left|
			t^{\Blowupexp} \partial_{\vec{I}} n
		\right|
	\, dx
	+
	\int_{\Sigma_t}
		\left|
			\leftexp{(Junk;\vec{I})}{\mathfrak{N}}
		\right|
		\left|
			t^{\Blowupexp} \partial_{\vec{I}} n
		\right|
	\, dx.
	\notag
\end{align}
Next, we use
\eqref{E:WORSTEXPANDROOMLOTSOFUSEFULINEQUALITIES},
\eqref{E:KASNERMETRICESTIMATES},
\eqref{E:UPTOFOURDERIVATIVESOFGINVERSELINFINITYSOBOLEV},
the bootstrap assumptions, 
and $g$-Cauchy--Schwarz
to deduce	that
\begin{align} \label{E:SIMPLECAUCHSCHWARZPOINTWISEESTIMATE}
	\left|
		t \partial_b g^{ab}
	\right|
	\left|
		t^{\Blowupexp + 1} \partial_a \partial_{\vec{I}} n
	\right|
	& 
	\lesssim 
	\left|
		g_{ac} (t \partial_b g^{ab}) (t \partial_d g^{cd})
	\right|^{1/2}
	\left|
		t^{\Blowupexp + 1} \partial \partial_{\vec{I}} n
	\right|_g
		\\
& \lesssim
	t
	\| g \|_{L_{Frame}^{\infty}(\Sigma_t)}^{1/2}
	\| g^{-1} \|_{\dot{W}_{Frame}^{1,\infty}(\Sigma_t)}
	\left|
		t^{\Blowupexp + 1} \partial \partial_{\vec{I}} n
	\right|_g
	\lesssim
	\varepsilon
	\left|
		t^{\Blowupexp + 1} \partial \partial_{\vec{I}} n
	\right|_g.
		\notag
\end{align}
From 
\eqref{E:HIGHELLIPTICESTIMATEFIRSTSTEP},
\eqref{E:SIMPLECAUCHSCHWARZPOINTWISEESTIMATE},
and Young's inequality,
we deduce that if $\varepsilon$ is sufficiently small, 
then
\begin{align} \label{E:HIGHELLIPTICESTIMATESECONDSTEP}
	\int_{\Sigma_t}
		\left|
			t^{\Blowupexp + 1} \partial \partial_{\vec{I}} n
		\right|_g^2
		+
		\left|
			t^{\Blowupexp} \partial_{\vec{I}} n
		\right|^2
	\, dx
	&
	\leq 
	C \varepsilon
	\int_{\Sigma_t}
		\left|
			t^{\Blowupexp + 1} \partial \partial_{\vec{I}} n
		\right|_g^2
	\, dx
	+
	\frac{1}{2}
	\int_{\Sigma_t}
		\left|
			t^{\Blowupexp} \partial \partial_{\vec{I}} n
		\right|^2
	\, dx
		\\
& \ \
	+
	4
	\int_{\Sigma_t}
		\left|
			\leftexp{(Border;\vec{I})}{\mathfrak{N}}
		\right|^2
	\, dx
	+
	4
	\int_{\Sigma_t}
		\left|
			\leftexp{(Junk;\vec{I})}{\mathfrak{N}}
		\right|^2
	\, dx.
	\notag
\end{align}	
Using
\eqref{E:LAPSETOPORDERBORDERERRORTERMBOUNDINTERMSOFMETRIC} and \eqref{E:LAPSETOPORDERJUNKERRORTERMBOUNDINTERMSOFMETRIC}
to bound the last two integrals on
RHS~\eqref{E:HIGHELLIPTICESTIMATESECONDSTEP},
and soaking 
(assuming $\varepsilon$ is sufficiently small)
the first two terms on RHS~\eqref{E:HIGHELLIPTICESTIMATESECONDSTEP} 
and the first term on RHS~\eqref{E:LAPSETOPORDERBORDERERRORTERMBOUNDINTERMSOFMETRIC}
back into LHS~\eqref{E:HIGHELLIPTICESTIMATESECONDSTEP},
we arrive at \eqref{E:LAPSETOPBOUND}.

Similarly, to prove \eqref{E:LAPSEJUSTBELOWTOPBOUND}, 
we let $\vec{I}$ be any spatial derivative multi-index with $|\vec{I}| = N-1$.
We multiply equation \eqref{E:COMMUTEDLAPSELOWERORDER} by 
$t^{\Blowupexp + \Worstexp - 1} \partial_{\vec{I}} n$,
use \eqref{E:SIMPLESCALARCURVATURELINFITYBOUND},
use the bound
$
\left|
	(t \partial_b g^{ab})
	(t^{\Blowupexp + \Worstexp} \partial_a \partial \partial_{\vec{I}} n)
	(t^{\Blowupexp + \Worstexp - 1} \partial_{\vec{I}} n)
\right|
\lesssim 
\varepsilon
|t^{\Blowupexp + \Worstexp} \partial \partial_{\vec{I}} n|_g
|t^{\Blowupexp + \Worstexp - 1} \partial_{\vec{I}} n|
$
(which follows from essentially the same reasoning we used to prove \eqref{E:SIMPLECAUCHSCHWARZPOINTWISEESTIMATE}),
and argue as in the previous paragraph
to deduce the following analog of \eqref{E:HIGHELLIPTICESTIMATESECONDSTEP}:
\begin{align} \label{E:JUSTBELOWTOPELLIPTICSEVERALSTEPS}
	&
	\int_{\Sigma_t}
		\left|
			t^{\Blowupexp + \Worstexp}
			\partial \partial_{\vec{I}} n
		\right|_g^2
		+
		\left\lbrace
			1 + \mathcal{O}(\varepsilon)
		\right\rbrace
		\left|
			t^{\Blowupexp + \Worstexp - 1} \partial_{\vec{I}} n
		\right|^2
	\, dx
		\\
	&
	\leq 
	C \varepsilon
	\int_{\Sigma_t}
		\left|
			t^{\Blowupexp + \Worstexp} \partial \partial_{\vec{I}} n
		\right|_g^2
	\, dx
	+
	\frac{1}{2}
	\int_{\Sigma_t}
		\left|
			t^{\Blowupexp + \Worstexp - 1} \partial \partial_{\vec{I}} n
		\right|^2
	\, dx
	 \notag	\\
& \ \
	+
	4
	\int_{\Sigma_t}
		\left|
			\leftexp{\vec{I}}{\widetilde{\mathfrak{N}}}
		\right|^2
	\, dx.
	\notag
\end{align}
Soaking the first two terms on RHS~\eqref{E:JUSTBELOWTOPELLIPTICSEVERALSTEPS} back into LHS
and using \eqref{E:LAPSEJUSTBELOWTOPORDERBORDERERRORTERMBOUNDINTERMSOFMETRIC} to bound the last integral on
RHS~\eqref{E:JUSTBELOWTOPELLIPTICSEVERALSTEPS},
we arrive at \eqref{E:LAPSEJUSTBELOWTOPBOUND}.

\medskip

\noindent \underline{\textbf{Proof of \eqref{E:LAPSETIMEDERIVATIVEESTIMATE}}}:
We argue as in the proof of \eqref{E:LAPSELOWNORMELLIPTIC},
but using the estimate \eqref{E:LINFINTYERRORTERMSTIMEDERIVATIVEOFLAPSE}
to control RHS~\eqref{E:REWRITTENLAPSELOWERORDERTIMEDERIVATIVECOMMUTED}.
This leads to the bound 
$
\left\| 
	\partial_t n
\right \|_{L^{\infty}(\Sigma_t)}
\leq C
			t^{1 - 10 \Worstexp - \Room}
			\left\lbrace
				\MetricLownorm(t)
				+
				\MetricHighnorm(t)
				+
				\LapseLownorm(t)
				+
				\LapseHighnorm(t)
			\right\rbrace
$.
Using 
Def.\,\ref{D:HIGHNORMS} and 
the estimates \eqref{E:LAPSELOWNORMELLIPTIC},
\eqref{E:LAPSETOPORDERELLIPTIC},
and \eqref{E:LAPSEJUSTBELOWTOPORDERELLIPTIC},
we have that
$
\LapseLownorm(t)
	+
\LapseHighnorm(t)
\leq
C
\left\lbrace
	\MetricLownorm(t)
	+
	\MetricHighnorm(t)
\right\rbrace
$,
which, when combined with the previous estimate,
yields the desired bound \eqref{E:LAPSETIMEDERIVATIVEESTIMATE}.

\hfill $\qed$

\section{Estimates for the Low-Order Derivatives of \texorpdfstring{$g$ and $\SecondFund$}{the First and Second Fundamental Forms}}
\label{S:ESTIMATESFORLOWDERIVATIVESOFMETRIC}
Our main goal in this section is to prove the following proposition,
which provides the integral inequality that we use 
to control the low norm $\MetricLownorm(t)$.
The proof of the proposition
is located in Subsect.\,\ref{SS:INTEGRALINEQUALITYFORTHELOWMETRICNORM}.
In Subsect.\,\ref{SS:LOWORDERDERIVATIVESOFTHEMETRICANDSECONDFUNDAMENTALFORM}, 
we derive the equations that we will use in proving it.

\begin{proposition}[\textbf{Integral inequality for $\MetricLownorm(t)$}]
\label{P:INTEGRALINEQUALITYFORLOWMETRICNORM}
	Recall that $\MetricLownorm(t)$ is the low-order norm from Def.\,\ref{D:LOWNORMS},
	and assume that the bootstrap assumptions \eqref{E:BOOTSTRAPASSUMPTIONS} hold.
	There exists a constant $C > 0$
	such that if $N$ is sufficiently large in a manner that depends on $\Blowupexp$
	and if $\varepsilon$ is sufficiently small, 
	then the following estimate holds for $t \in (\TBoot,1]$:
\begin{align} \label{E:INTEGRALINEQUALITYFORLOWMETRICNORM}
	\MetricLownorm(t)
	&
	\leq
	\MetricLownorm(1)
	+
	C
	\int_{s=t}^1
		s^{\Room - 1} 
		\left\lbrace
			\MetricLownorm(s) + \MetricHighnorm(s)
		\right\rbrace
	\, ds.
\end{align}

\end{proposition}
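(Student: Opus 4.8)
The plan is to control each of the four quantities in the maximum defining $\MetricLownorm(t)$ (Def.~\ref{D:LOWNORMS}) by deriving the evolution equation — or, in one case, an algebraic identity — satisfied by the appropriately $t$-weighted quantity, and then integrating backwards from $s=1$ to $s=t$. Throughout, the lapse is eliminated in favor of $g$ and $\SecondFund$ via Prop.~\ref{P:ELLIPTICANDMAXIMUMPRINCIPLEESTIMATESFORTHELAPSE}, low-order spatial derivatives are traded for norm factors by Lemma~\ref{L:SOBOLEVBORROWALITTLEHIGHNORM} (which costs only the harmless factor $t^{-\Blowupexp\updelta}$, with $\updelta\to 0$ as $N\to\infty$), and the background Kasner solution is estimated by Lemma~\ref{L:BASICESTIMATEFORKASNER}; one then repeatedly invokes \eqref{E:WORSTEXPANDROOMLOTSOFUSEFULINEQUALITIES} to check that every source term, once its prescribed $t$-weight is inserted, is no more singular than $s^{\Room-1}\lbrace \MetricLownorm(s)+\MetricHighnorm(s)\rbrace$.

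For the metric and its inverse I would subtract the Kasner evolution equations from \eqref{E:PARTIALTGCMC} and \eqref{E:PARTIALTGINVERSECMC}. Writing $\delta g=g-\KasnerMetric$ and $\delta k=\SecondFund-\KasnerSecondFund$ and expanding the products, the difference equation for $\delta g_{ij}$ (and, analogously, for $(g^{-1}-\KasnerMetric^{-1})^{ij}$) contains three types of terms: \textbf{(i)} a linear term arising from contracting $\delta g$ against $\KasnerSecondFund$, which, after the weight $t^{2\Worstexp}$ is inserted and the $\partial_t$-derivative of that weight is accounted for, becomes a linear term with coefficient $2(\Worstexp+nq_j)t^{-1}$ (respectively $2(\Worstexp-nq_j)t^{-1}$ for the inverse metric); since $\max_i|q_i|<\Worstexp-\Room$ by \eqref{E:WORSTEXPANDROOMLOTSOFUSEFULINEQUALITIES}, this coefficient is bounded below by $\Room t^{-1}>0$, so it is a ``friction'' term with the favorable sign for backwards integration and may simply be discarded; \textbf{(ii)} the quadratic term $\delta g\cdot\delta k$, which after weighting is again linear in $t^{2\Worstexp}\delta g$ but with an $\mathcal{O}(\varepsilon t^{-1})$ coefficient, hence merely a small perturbation of the friction (its symmetrized contribution in the free index stays nonnegative for $\varepsilon$ small, by diagonal dominance); and \textbf{(iii)} genuine source terms, the most singular of which is $n\,\KasnerMetric\cdot\delta k$, whose $t^{2\Worstexp}$-weighted size is $\lesssim s^{\Room-1}\MetricLownorm(s)$ — precisely the borderline behavior that dictates the weight in \eqref{E:INTEGRALINEQUALITYFORLOWMETRICNORM} — while the $(n-1)$-involving sources carry a positive power of $s$ to spare. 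Integrating the resulting differential inequality for $t^{2\Worstexp}|\delta g|_{Frame}$ (and its analogue for $g^{-1}$) from $s=t$ to $s=1$ produces the two corresponding contributions to \eqref{E:INTEGRALINEQUALITYFORLOWMETRICNORM}.

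For the second fundamental form I would combine \eqref{E:PARTIALTKCMC} with the Kasner relation $\partial_t\KasnerSecondFund=-t^{-1}\KasnerSecondFund$ to obtain $\partial_t\bigl(t(\SecondFund-\KasnerSecondFund)^i_{\ j}\bigr)=(1-n)\SecondFund^i_{\ j}-t\,g^{ia}\nabla_a\nabla_j n+t\,n\,\Ric^i_{\ j}$, commute it with up to two spatial derivatives (which commute with $\partial_t$ in these coordinates), and integrate. The crucial input is the bound on $\Ric^i_{\ j}$: using the schematic expression $\Ric^i_{\ j}\mycong (g^{-1})^2\partial^2 g+(g^{-1})^3(\partial g)^2$ from \eqref{E:RICCICURVATUREEXACT}, the fact that $\KasnerMetric$ depends only on $t$ (see \eqref{E:KASNERSPATIALMETRIC}), so all spatial derivatives of $g$ are spatial derivatives of $\delta g$, together with Lemma~\ref{L:SOBOLEVBORROWALITTLEHIGHNORM} and Lemma~\ref{L:BASICESTIMATEFORKASNER}, one finds $|\Ric^i_{\ j}|\lesssim t^{-10\Worstexp-\Blowupexp\updelta}\lbrace\MetricLownorm(t)+\MetricHighnorm(t)\rbrace$ — the quantitative form of \eqref{E:OURSPATIALRICCICONDITION} — and since $10\Worstexp<5/3$, choosing $N$ large enough that $\Blowupexp\updelta<\Room/2$ makes $\|t\Ric\|_{L^{\infty}(\Sigma_t)}\lesssim t^{1-10\Worstexp-\Blowupexp\updelta}\lbrace\cdots\rbrace\lesssim t^{\Room-1}\lbrace\cdots\rbrace$. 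The remaining terms $(1-n)\SecondFund$ and $t\,g^{-1}\nabla^2 n$, together with their commuted versions, are controlled via Prop.~\ref{P:ELLIPTICANDMAXIMUMPRINCIPLEESTIMATESFORTHELAPSE} and Lemma~\ref{L:SOBOLEVBORROWALITTLEHIGHNORM} and again carry extra positive powers of $s$; integrating from $s=t$ to $s=1$ handles the $W_{Frame}^{2,\infty}$ piece of $\MetricLownorm$. Finally, for $\bigl\||t\SecondFund|_g-1\bigr\|_{L^{\infty}(\Sigma_t)}$ I would use the algebraic consequence of the Hamiltonian constraint \eqref{E:HAMILTONIAN}, namely $|t\SecondFund|_g^2-1=t^2\Sc$, which by \eqref{E:SIMPLESCALARCURVATURELINFITYBOUND} gives $\bigl\||t\SecondFund|_g-1\bigr\|_{L^{\infty}(\Sigma_t)}\lesssim t^2\|\Sc\|_{L^{\infty}(\Sigma_t)}\lesssim t^{2-10\Worstexp-\Blowupexp\updelta}\lbrace\MetricLownorm(t)+\MetricHighnorm(t)\rbrace$ (alternatively one differentiates this identity in time, obtaining a friction term plus the same class of sources, and integrates); in either case the contribution is subsumed by the right-hand side of \eqref{E:INTEGRALINEQUALITYFORLOWMETRICNORM}.

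The main obstacle is not conceptual but one of bookkeeping: for each of the numerous source terms one must verify that the net power of $s$ remaining after applying the prescribed weight, the Kasner estimates of Lemma~\ref{L:BASICESTIMATEFORKASNER}, the embedding losses of Lemma~\ref{L:SOBOLEVBORROWALITTLEHIGHNORM}, and the lapse bounds of Prop.~\ref{P:ELLIPTICANDMAXIMUMPRINCIPLEESTIMATESFORTHELAPSE} is at least $\Room-1$; in every case this reduces to an inequality of the shape $a\Worstexp+b\Room+\Blowupexp\updelta\le c$ that follows from \eqref{E:WORSTEXPANDROOMLOTSOFUSEFULINEQUALITIES} (in particular from $\Worstexp+2\Room<1/6$) once $N$ is large and $\varepsilon$ is small. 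The single genuinely structural point — and the reason the argument closes rather than producing a logarithmically divergent $s^{-1}$ contribution — is the favorable sign of the friction terms proportional to the Kasner exponents in the metric and inverse-metric evolution equations.
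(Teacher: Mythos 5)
Your overall strategy is the paper's: integrate the (differences of the) evolution equations backwards from $s=1$, the crux being that $t\,\Ric^i_{\ j}$, the lapse terms, and the Kasner-times-perturbation sources are all $\lesssim s^{\Room-1}\lbrace\MetricLownorm+\MetricHighnorm\rbrace$ once the prescribed weights are inserted; the $W_{Frame}^{2,\infty}$ piece for $\SecondFund-\KasnerSecondFund$ is treated exactly as in the paper. Your handling of the metric pieces is organized differently but is sound: you keep the uniform weight $t^{2\Worstexp}$ and exploit the strictly positive coefficient $2(\Worstexp+n q_j)t^{-1}\geq \Room\, t^{-1}$ of the residual linear term as friction for backwards integration (absorbing the $\mathcal{O}(\varepsilon t^{-1})$ quadratic term into it by diagonal dominance), whereas the paper (Lemma~\ref{L:EQUATIONSFORLOWORDERDERIVATIVESOFMETRICANDSECONDFUNDAMENTALFORM}) uses the component-dependent weight $t^{-2q_j}$ to cancel that linear term exactly, and then absorbs the quadratic contribution after integration via the convergence of $t^{2\Worstexp+2q_j}\int_t^1 s^{-1-2\Worstexp-2q_j}\,ds$ together with a sup-function Gronwall argument. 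Both mechanisms rest on the same inequality $\Worstexp>\max_i|q_i|+\Room$, which you correctly identify as the structural reason no logarithmically divergent $s^{-1}$ source appears.

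The one genuine problem is your treatment of $\bigl\||t\SecondFund|_g-1\bigr\|_{L^{\infty}(\Sigma_t)}$. The Hamiltonian constraint and the symmetry of $\SecondFund$ do give $|t\SecondFund|_g^2=1+t^2\Sc$, hence a bound of the form $C t^{2-10\Worstexp-\Blowupexp\updelta}\lbrace\MetricLownorm(t)+\MetricHighnorm(t)\rbrace$ — but this is a pointwise bound at time $t$ in terms of the norms \emph{at time $t$} (in particular in terms of $\MetricHighnorm(t)$, through the $\partial^2 g$ factors in $\Sc$), not a bound by $\MetricLownorm(1)+C\int_t^1 s^{\Room-1}\lbrace\cdots\rbrace\,ds$. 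Since this quantity is one of the entries in the maximum defining $\MetricLownorm(t)$, a bound of the shape $C t^{\sigma}\lbrace\MetricLownorm(t)+\MetricHighnorm(t)\rbrace$ with $C$ large is circular for the inequality being proved, and it would also break the Gronwall synthesis in Prop.~\ref{P:MAINAPRIORIESTIMATES} unless that argument is restructured; your fallback of differentiating the identity in time does not obviously help, since it reintroduces $\partial_t\Sc$. The fix is the paper's: use the symmetry and $(t\KasnerSecondFund^a_{\ b})(t\KasnerSecondFund^b_{\ a})=1$ to write $|t\SecondFund|_g^2=1+2(t\KasnerSecondFund^a_{\ b})\lbrace t\SecondFund^b_{\ a}-t\KasnerSecondFund^b_{\ a}\rbrace+\lbrace t\SecondFund^a_{\ b}-t\KasnerSecondFund^a_{\ b}\rbrace\lbrace t\SecondFund^b_{\ a}-t\KasnerSecondFund^b_{\ a}\rbrace$, Taylor-expand the square root, and invoke the integral inequality for $\|t\SecondFund-t\KasnerSecondFund\|_{W_{Frame}^{2,\infty}(\Sigma_t)}$ that you have already established; this yields the claimed form directly.
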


\subsection{The equations}
\label{SS:LOWORDERDERIVATIVESOFTHEMETRICANDSECONDFUNDAMENTALFORM}
In this subsection, we derive the equations that we use to control
$g$ and $\SecondFund$ at the lowest derivative levels.

\begin{lemma}[\textbf{A rewriting of the equations verified by $g$ and $\SecondFund$}]
\label{L:EQUATIONSFORLOWORDERDERIVATIVESOFMETRICANDSECONDFUNDAMENTALFORM}
Let $\KasnerMetric$ and $\KasnerSecondFund$ denote the background Kasner solution variables
with corresponding Kasner exponents $\lbrace q_i \rbrace_{i=1,\cdots,\mydim}$.
Then the following evolution equations hold, 
where $i \leq j$ in \eqref{E:REWRITTENPARTIALTGCMC}-\eqref{E:REWRITTENPARTIALTGINVERSECMC}
and
there is \underline{no summation over $j$ in} \eqref{E:REWRITTENPARTIALTGCMC}-\eqref{E:REWRITTENPARTIALTGINVERSECMC}
(and note that 
$t^{-2 q_j} \KasnerMetric_{ij}
=
t^{2 q_j} (\KasnerMetric^{-1})^{ij}
=
\mbox{\upshape diag}(1,1,\cdots,1)
$):
\begin{subequations}
\begin{align}
		\partial_t 
		\left\lbrace
			t^{-2 q_j} g_{ij} - t^{-2 q_j} \KasnerMetric_{ij}
		\right\rbrace
		& = 
				- 
				2 t^{- 2 q_j} 
				\left\lbrace
					g_{ia} - \KasnerMetric_{ia}
				\right\rbrace
				\left\lbrace
					\SecondFund_{\ j}^a
					-
					\KasnerSecondFund_{\ j}^a
				\right\rbrace
				\label{E:REWRITTENPARTIALTGCMC} \\
		& \ \
				-
				2 
				t^{2 q_i - 2 q_j}
				\left\lbrace
					\SecondFund_{\ j}^i
					-
					\KasnerSecondFund_{\ j}^i
				\right\rbrace
				- 
				2 t^{- 2 q_j} 
				(n-1)
				g_{ia}
				\SecondFund_{\ j}^a,
				\notag
				\\
		\partial_t 
		\left\lbrace
			t^{2 q_j} g^{ij} - t^{2 q_j} (\KasnerMetric^{-1})^{ij}
		\right\rbrace
		& = 
				2 t^{2 q_j} 
				\left\lbrace
					g^{ia} - (\KasnerMetric^{-1})^{ia}
				\right\rbrace
				\left\lbrace
					\SecondFund_{\ a}^j
					-
					\KasnerSecondFund_{\ a}^j
				\right\rbrace
				\label{E:REWRITTENPARTIALTGINVERSECMC} \\
		& \ \
				+
				2 t^{- 2 q_i + 2 q_j} 
			  \left\lbrace
					\SecondFund_{\ i}^j
					-
					\KasnerSecondFund_{\ i}^j
				\right\rbrace
				+ 
				2 t^{2 q_j} 
				(n-1)
				g^{ia}
				\SecondFund_{\ a}^j.
				\notag
		\notag 
\end{align}
\end{subequations}

Moreover, the following evolution equation holds
(and note that $t \KasnerSecondFund_{\ j}^i = - \mbox{\upshape diag}(q_1,\cdots,q_{\mydim})$):
\begin{align}  \label{E:REWRITTENPARTIALTKCMC} 
	\partial_t 
	\left\lbrace
		t \SecondFund_{\ j}^i - t \KasnerSecondFund_{\ j}^i
	\right\rbrace
	& = 
		(1 - n) \SecondFund_{\ j}^i
		-
		t g^{ia} \partial_a \partial_j n
		+ 
		t g^{ia} \Gamma_{a \ j}^{\ b} \partial_b n
		+
		t n \Ric_{\ j}^i.
\end{align}
\end{lemma}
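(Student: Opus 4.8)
The plan is to derive the three stated evolution equations directly from the equations of Proposition~\ref{P:EINSTEINVACUUMEQUATIONSINCMCTRANSPORTEDSPATIALCOORDINATES} by a straightforward algebraic manipulation, isolating the background Kasner contribution in each case. The guiding principle is that the ``difference variables'' such as $t^{-2q_j}g_{ij} - t^{-2q_j}\KasnerMetric_{ij}$ are designed so that the explicitly singular ``linear in $\SecondFund$'' driving terms acting on the \emph{background} cancel, leaving only error terms that are quadratic in differences or that carry an extra factor of $n-1$.

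First I would handle \eqref{E:REWRITTENPARTIALTKCMC}, which is the simplest. Starting from \eqref{E:PARTIALTKCMC}, namely $\partial_t \SecondFund_{\ j}^i = - g^{ia}\nabla_a\nabla_j n + n(\Ric_{\ j}^i - t^{-1}\SecondFund_{\ j}^i)$, I multiply by $t$ and use the Leibniz rule to write $\partial_t(t\SecondFund_{\ j}^i) = t\partial_t \SecondFund_{\ j}^i + \SecondFund_{\ j}^i$. Since $t\KasnerSecondFund_{\ j}^i = -\mathrm{diag}(q_1,\dots,q_\mydim)$ is time-independent, $\partial_t(t\SecondFund_{\ j}^i - t\KasnerSecondFund_{\ j}^i) = \partial_t(t\SecondFund_{\ j}^i)$. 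Then I expand $\nabla_a\nabla_j n = \partial_a\partial_j n - \Gamma_{a\ j}^{\ b}\partial_b n$ using \eqref{E:SPACECHRISTOFFEL}, collect the $\SecondFund_{\ j}^i$ terms as $\SecondFund_{\ j}^i - n\SecondFund_{\ j}^i = (1-n)\SecondFund_{\ j}^i$, and arrive at \eqref{E:REWRITTENPARTIALTKCMC}.

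Next I would treat \eqref{E:REWRITTENPARTIALTGCMC}. Multiplying \eqref{E:PARTIALTGCMC}, $\partial_t g_{ij} = -2ng_{ia}\SecondFund_{\ j}^a$, by the time-dependent weight $t^{-2q_j}$ (no sum in $j$) and using $\partial_t(t^{-2q_j}g_{ij}) = t^{-2q_j}\partial_t g_{ij} - 2q_j t^{-1}t^{-2q_j}g_{ij}$, I subtract the corresponding identity for the Kasner background $\KasnerMetric$ (for which $\partial_t \KasnerMetric_{ij} = -2\KasnerMetric_{ia}\KasnerSecondFund_{\ j}^a$ and $\widetilde n \equiv 1$); the weighted Kasner metric $t^{-2q_j}\KasnerMetric_{ij}$ is constant, so its $\partial_t$ vanishes. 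The key computation is the telescoping decomposition of $-2t^{-2q_j}ng_{ia}\SecondFund_{\ j}^a - (-2q_j t^{-1}t^{-2q_j}g_{ij})$: writing $g = \KasnerMetric + (g-\KasnerMetric)$, $\SecondFund = \KasnerSecondFund + (\SecondFund - \KasnerSecondFund)$, $n = 1 + (n-1)$, and using that $-2\KasnerMetric_{ia}\KasnerSecondFund_{\ j}^a = \partial_t\KasnerMetric_{ij} + $ (background terms involving $q_j$) cancels against the $-2q_j t^{-1}$ contribution. The surviving terms are exactly the three products on RHS~\eqref{E:REWRITTENPARTIALTGCMC}: the difference-quadratic term $-2t^{-2q_j}(g_{ia}-\KasnerMetric_{ia})(\SecondFund_{\ j}^a - \KasnerSecondFund_{\ j}^a)$, the linear term $-2t^{2q_i - 2q_j}(\SecondFund_{\ j}^i - \KasnerSecondFund_{\ j}^i)$ (which comes from $\KasnerMetric_{ia}(\SecondFund_{\ j}^a - \KasnerSecondFund_{\ j}^a)$ using $\KasnerMetric_{ia} = t^{2q_i}\delta_{ia}$), and the lapse term $-2t^{-2q_j}(n-1)g_{ia}\SecondFund_{\ j}^a$. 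Equation \eqref{E:REWRITTENPARTIALTGINVERSECMC} follows by the identical procedure applied to \eqref{E:PARTIALTGINVERSECMC} with the weight $t^{2q_j}$, using $\partial_t(\KasnerMetric^{-1})^{ij} = 2(\KasnerMetric^{-1})^{ia}\KasnerSecondFund_{\ a}^j$ and the analogous sign flips.

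The computations here are entirely routine; the only ``obstacle'' is bookkeeping care in matching the background cancellation, in particular verifying that the $q_j t^{-1}$ term produced by differentiating the time-weight correctly annihilates the Kasner part of the $\SecondFund$-linear term so that only the stated differences remain. Since the verification amounts to substituting $\KasnerMetric_{ij} = t^{2q_i}\delta_{ij}$, $\KasnerSecondFund_{\ j}^i = -t^{-1}q_i\delta_j^i$ (no sum) and checking the identity $-2(-t^{-1}q_i\delta^a_i\cdot t^{2q_i}\delta_{ia}) = 2q_j t^{-1}t^{2q_j}\delta_{ij}$ when $i=j$ and trivial otherwise, this is immediate, and I would simply state that the equations follow from Prop.~\ref{P:EINSTEINVACUUMEQUATIONSINCMCTRANSPORTEDSPATIALCOORDINATES} via these manipulations.
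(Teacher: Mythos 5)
Your proposal is correct and follows essentially the same route as the paper's proof: multiply \eqref{E:PARTIALTKCMC} by $t$ and expand $\nabla_a\nabla_j n$ for \eqref{E:REWRITTENPARTIALTKCMC}, and for \eqref{E:REWRITTENPARTIALTGCMC}--\eqref{E:REWRITTENPARTIALTGINVERSECMC} weight by $t^{\mp 2q_j}$, use that the weighted Kasner quantities are constant, and telescope $g$, $\SecondFund$, and $n$ about their background values so that the $q_j t^{-1}$ term from the weight is absorbed into $-2t^{-2q_j}g_{ia}\{\SecondFund_{\ j}^a-\KasnerSecondFund_{\ j}^a\}$ before splitting off $\KasnerMetric_{ia}=t^{2q_i}\delta_{ia}$. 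No gaps; the argument is the same routine bookkeeping the paper carries out.
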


\begin{proof}
	Throughout this proof, there is no Einstein summation over $j$.
	To derive equation \eqref{E:REWRITTENPARTIALTGCMC}, 
	we first use equation \eqref{E:PARTIALTGCMC}
	and the fact that $t^{-2 q_j} \KasnerMetric_{ij} = \delta_{ij}$
	(where $\delta_{ij}$ is the standard Kronecker delta)
	to deduce
	\begin{align} \label{E:FIRSTREWRITINGPARTIALTGCMC}
		\partial_t 
		\left\lbrace
			t^{-2 q_j} g_{ij} - t^{-2 q_j} \KasnerMetric_{ij}
		\right\rbrace
		& =
		- 2 t^{-2 q_j} g_{ia} \SecondFund_{\ j}^a
		- 2 q_j t^{-2 q_j - 1} g_{ij}
		- 2 t^{-2 q_j} (n-1) g_{ia}\SecondFund_{\ j}^a.
	\end{align}
	Since $\KasnerSecondFund = - t^{-1} \mbox{\upshape diag}(q_1,\cdots,q_{\mydim})$,
	we can express the first two products on RHS~\eqref{E:FIRSTREWRITINGPARTIALTGCMC} as follows:
	$
	- 2 t^{-2 q_j} g_{ia} \SecondFund_{\ j}^a
	- 2 q_j t^{-2 q_j - 1} g_{ij}
	=
	- 2 
	t^{-2 q_j} g_{ia} 
	\left\lbrace
		\SecondFund_{\ j}^a
		-
		\KasnerSecondFund_{\ j}^a
	\right\rbrace
	$.
	Next, using that 
	$\KasnerMetric = \mbox{\upshape diag}(t^{2q_1},\cdots,t^{2q_{\mydim}})$
	and $\KasnerSecondFund = - t^{-1} \mbox{\upshape diag}(q_1,\cdots,q_{\mydim})$,
	we express the RHS of the previous expression as follows:
	$
	- 
	2 
	t^{-2 q_j} g_{ia} 
	\left\lbrace
		\SecondFund_{\ j}^a
		-
		\KasnerSecondFund_{\ j}^a
	\right\rbrace
	=
	- 2 
	t^{-2 q_j} 
	\left\lbrace
		g_{ia} 
		-
	\KasnerMetric_{ia}
	\right\rbrace
	\left\lbrace
		\SecondFund_{\ j}^a
		-
		\KasnerSecondFund_{\ j}^a
	\right\rbrace
	- 
	2 
	t^{2 q_i - 2 q_j}
	\left\lbrace
		\SecondFund_{\ j}^i
		-
		\KasnerSecondFund_{\ j}^i
	\right\rbrace
	$.
	Combining these calculations, we arrive at \eqref{E:REWRITTENPARTIALTGCMC}.
	
	Equation \eqref{E:REWRITTENPARTIALTGINVERSECMC} can be derived by applying similar
	arguments to equation \eqref{E:PARTIALTGINVERSECMC}, and we omit the details.
	
	Equation \eqref{E:REWRITTENPARTIALTKCMC} 
	follows from multiplying both sides of equation \eqref{E:PARTIALTKCMC}
	by $t$ and using that
	$t \KasnerSecondFund = - \mbox{\upshape diag}(q_1,\cdots,q_{\mydim})$.
\end{proof}

\subsection{Proof of Prop.\,\ref{P:INTEGRALINEQUALITYFORLOWMETRICNORM}}
\label{SS:INTEGRALINEQUALITYFORTHELOWMETRICNORM}
In this subsection, we prove Prop.\,\ref{P:INTEGRALINEQUALITYFORLOWMETRICNORM}.
In this proof, we will assume that $\Blowupexp \updelta$ is sufficiently small
(and in particular that $\Blowupexp \updelta < \Room$);
in view of the discussion in Subsect.\,\ref{SS:SOBOLEVEMBEDDING}, we see that
at fixed $\Blowupexp$, this can be achieved by choosing $N$ to be sufficiently large.

	First, we note that to obtain \eqref{E:INTEGRALINEQUALITYFORLOWMETRICNORM}, 
	it suffices to show that the following bounds hold:
	\begin{align}
		\left\|
			t^{2 \Worstexp} g - t^{2 \Worstexp} \KasnerMetric
		\right\|_{L_{Frame}^{\infty}(\Sigma_t)}
		& \leq
		C \MetricLownorm(1) 
	  +
		C
		\int_{s=t}^1
		s^{\Room - 1} 
		\left\lbrace
			\MetricLownorm(s) + \MetricHighnorm(s)
		\right\rbrace
	\, ds,
			\label{E:METRICSUBTRACTEDFROMKASNERMETRICDESIREDINTEGRALINEQUALITY} \\
	\left\|
			t^{2 \Worstexp} g^{-1} - t^{2 \Worstexp} \KasnerMetric^{-1}
		\right\|_{L_{Frame}^{\infty}(\Sigma_t)}
		& \leq
		C \MetricLownorm(1) 
	  +
		C
		\int_{s=t}^1
		s^{\Room - 1} 
		\left\lbrace
			\MetricLownorm(s) + \MetricHighnorm(s)
		\right\rbrace
	\, ds,
			\label{E:METRICINVERSESUBTRACTEDFROMKASNERMETRICINVERSEDESIREDINTEGRALINEQUALITY} \\
	\left\|
		t \SecondFund - t \KasnerSecondFund
	\right\|_{W_{Frame}^{2,\infty}(\Sigma_t)}
	& \leq 
		C \MetricLownorm(1) 
	  +
		C
		\int_{s=t}^1
		s^{\Room - 1} 
		\left\lbrace
			\MetricLownorm(s) + \MetricHighnorm(s)
		\right\rbrace
	\, ds.
	\label{E:SECONDFUNDSUBTRACTEDFROMKASNERSECONFUNDDESIREDINTEGRALINEQUALITY}
	\end{align}
	For we can then use the symmetry property $\SecondFund_{ab} = \SecondFund_{ba}$
	and the fact that $(t \KasnerSecondFund_{\ b}^a) (t \KasnerSecondFund_{\ a}^b) = 1$
	to derive the identity
	$		
	\left|
		t \SecondFund 
	\right|_g
	-
	1
	=
	\sqrt{1 
		+ 
		2 (t \KasnerSecondFund_{\ b}^a)
		\left\lbrace
			t \SecondFund_{\ a}^b - t \KasnerSecondFund_{\ a}^b
		\right\rbrace
		+
		\left\lbrace
			t \SecondFund_{\ b}^a - t \KasnerSecondFund_{\ b}^a
		\right\rbrace
		\left\lbrace
			t \SecondFund_{\ a}^b - t \KasnerSecondFund_{\ a}^b
		\right\rbrace
		}
		-
		1
	$,
	which, in conjunction with
	the bootstrap assumptions \eqref{E:BOOTSTRAPASSUMPTIONS}
	and the estimates 
	\eqref{E:KASNERSECONDFUNDESTIMATES}
	and \eqref{E:SECONDFUNDSUBTRACTEDFROMKASNERSECONFUNDDESIREDINTEGRALINEQUALITY},
	also yields 
	(upon Taylor expanding the square root)
	the estimate
	\begin{align}  \label{E:ORDER0GEOMETRICSECONDFUNDINTEGRALINEQUALITY}
	\left\|
		\left|
			t \SecondFund 
		\right|_g
		-
		1
	\right\|_{L^{\infty}(\Sigma_t)}
	& \leq 
		C \MetricLownorm(1) 
	  +
		\int_{s=t}^1
		s^{\Room - 1} 
		\left\lbrace
			\MetricLownorm(s) + \MetricHighnorm(s)
		\right\rbrace
	\, ds.
\end{align}
In view of definition \eqref{E:METRICLOWNORM},
from \eqref{E:METRICSUBTRACTEDFROMKASNERMETRICDESIREDINTEGRALINEQUALITY}-\eqref{E:ORDER0GEOMETRICSECONDFUNDINTEGRALINEQUALITY},
we conclude the desired estimate \eqref{E:INTEGRALINEQUALITYFORLOWMETRICNORM}.

It remains for us to prove 
\eqref{E:METRICSUBTRACTEDFROMKASNERMETRICDESIREDINTEGRALINEQUALITY}-\eqref{E:SECONDFUNDSUBTRACTEDFROMKASNERSECONFUNDDESIREDINTEGRALINEQUALITY}.
We first prove \eqref{E:METRICSUBTRACTEDFROMKASNERMETRICDESIREDINTEGRALINEQUALITY} by analyzing equation \eqref{E:REWRITTENPARTIALTGCMC}.
From Def.~\ref{D:LOWNORMS},
the estimates \eqref{E:KASNERMETRICESTIMATES},
\eqref{E:KASNERSECONDFUNDESTIMATES}, 
and \eqref{E:LAPSELOWNORMELLIPTIC},
and the bootstrap assumptions, 
we deduce,
by bounding each factor on RHS~\eqref{E:REWRITTENPARTIALTGCMC} in the norm
$\| \cdot \|_{L_{Frame}^{\infty}(\Sigma_t)}$,
that the following estimate holds:
\begin{align} \label{E:POINTWISEBOUNDFORREWRITTENPARTIALTGCMC}
|\mbox{RHS~\eqref{E:REWRITTENPARTIALTGCMC}}| 
&
\leq 
C 
\varepsilon 
t^{-1 - 2 \Worstexp - 2 q_j}
\left\|
	t^{2 \Worstexp} g
	-
  t^{2 \Worstexp} \KasnerMetric
\right\|_{L_{Frame}^{\infty}(\Sigma_t)}
	\\
& \ \
+
C t^{-1 + 2 q_i - 2 q_j}
\left\lbrace
	\MetricLownorm(t) + \MetricHighnorm(t)
\right\rbrace
+
C t^{1 - 12 \Worstexp - \Room - 2 q_j}
\left\lbrace
	\MetricLownorm(t) + \MetricHighnorm(t)
\right\rbrace.
\notag
\end{align}
From \eqref{E:WORSTEXPANDROOMLOTSOFUSEFULINEQUALITIES},
we deduce that the last two products on
RHS~\eqref{E:POINTWISEBOUNDFORREWRITTENPARTIALTGCMC}
are 
$\leq 
C 
t^{\Room -1 - 2 \Worstexp - 2 q_j}
\left\lbrace
	\MetricLownorm(t) + \MetricHighnorm(t)
\right\rbrace
$
for $t \in (\TBoot,1]$.
Hence, with the help of these bounds, 
we can integrate equation
\eqref{E:REWRITTENPARTIALTGCMC}
from time $t$ to time $1$
and use the initial data bound
$
\left\|
	g
	-
	\KasnerMetric
\right\|_{L_{Frame}^{\infty}(\Sigma_1)}
\leq 
C \MetricLownorm(1)$
to deduce the following estimate for components:
\begin{align} \label{E:OTHINTEGRATEDSTEPMETRICSUBTRACTEDFROMKASNERMETRICDESIREDINTEGRALINEQUALITY}
		t^{-2 q_j}
		\left|
			g_{ij}
			-
			\KasnerMetric_{ij}
		\right|
	& \leq 
		C \MetricLownorm(1)
		+
		C \varepsilon
		\int_{s=t}^1
			s^{-1 - 2 \Worstexp - 2 q_j}
			\left\|
				s^{2 \Worstexp} g
				-
				s^{2 \Worstexp} \KasnerMetric
			\right\|_{L_{Frame}^{\infty}(\Sigma_s)}
		\, ds
			\\
	& \ \
		+
		C
		\int_{s=t}^1
			s^{\Room -1 - 2 \Worstexp - 2 q_j}
			\left\lbrace
				\MetricLownorm(s) 
				+
				\MetricHighnorm(s) 
		\right\rbrace
		\, ds.
		\notag
\end{align}
Multiplying both sides of \eqref{E:OTHINTEGRATEDSTEPMETRICSUBTRACTEDFROMKASNERMETRICDESIREDINTEGRALINEQUALITY}
by $t^{2 \Worstexp + 2 q_j}$
and using \eqref{E:WORSTEXPANDROOMLOTSOFUSEFULINEQUALITIES},
we deduce
\begin{align} \label{E:FIRSTINTEGRATEDSTEPMETRICSUBTRACTEDFROMKASNERMETRICDESIREDINTEGRALINEQUALITY}
		\left|
				t^{2 \Worstexp} g_{ij}
				-
				t^{2 \Worstexp} \KasnerMetric_{ij}
		\right|
	& \leq 
		C \MetricLownorm(1)
		+
		C \varepsilon
		t^{2 \Worstexp + 2 q_j}
		\int_{s=t}^1
			s^{-1 - 2 \Worstexp - 2 q_j}
			\left\|
				s^{2 \Worstexp} g
				-
				s^{2 \Worstexp} \KasnerMetric
			\right\|_{L_{Frame}^{\infty}(\Sigma_s)}
		\, ds
			\\
	& \ \
		+
		C
		\int_{s=t}^1
			s^{\Room - 1}
			\left\lbrace
				\MetricLownorm(s) 
				+
				\MetricHighnorm(s) 
		\right\rbrace
		\, ds.
			\notag 
\end{align}
We now define
$G(t) := \sup_{s \in [t,1]} 
		\left\|
			s^{2 \Worstexp} g
			-
			s^{2 \Worstexp} \KasnerMetric
		\right\|_{L_{Frame}^{\infty}(\Sigma_s)}
$.
Next,
with the help of \eqref{E:WORSTEXPANDROOMLOTSOFUSEFULINEQUALITIES}, 
we deduce the following estimate for the first integral
on RHS~\eqref{E:FIRSTINTEGRATEDSTEPMETRICSUBTRACTEDFROMKASNERMETRICDESIREDINTEGRALINEQUALITY}:
\begin{align} \label{E:ESTIMATEFORGRONWALLABLEMETRICPRODUCT}
C \varepsilon
		t^{2 \Worstexp + 2 q_j}
		\int_{s=t}^1
			s^{-1 - 2 \Worstexp - 2 q_j}
			\left\|
				s^{2 \Worstexp} g
				-
				s^{2 \Worstexp} \KasnerMetric
			\right\|_{L_{Frame}^{\infty}(\Sigma_s)}
		\, ds
& \leq
C \varepsilon
		G(t)
		t^{2 \Worstexp + 2 q_j}
		\int_{s=t}^1
			s^{-1 - 2 \Worstexp - 2 q_j}
		\, ds
			\\
& \leq C \varepsilon G(t).
\notag
\end{align}
From \eqref{E:FIRSTINTEGRATEDSTEPMETRICSUBTRACTEDFROMKASNERMETRICDESIREDINTEGRALINEQUALITY}
and \eqref{E:ESTIMATEFORGRONWALLABLEMETRICPRODUCT},
we deduce that for $(t,x) \in (\TBoot,1] \times \mathbb{T}^{\mydim}$, we have
\begin{align} \label{E:SECONDINTEGRATEDSTEPMETRICSUBTRACTEDFROMKASNERMETRICDESIREDINTEGRALINEQUALITY}
		\left|
				t^{2 \Worstexp} g_{ij}
				-
				t^{2 \Worstexp} \KasnerMetric_{ij}
		\right|
	& \leq 
		C \MetricLownorm(1)
		+
		C \varepsilon
		G(t)
		+
		C
		\int_{s=t}^1
			s^{\Room - 1}
			\left\lbrace
				\MetricLownorm(s) 
				+
				\MetricHighnorm(s) 
		\right\rbrace
		\, ds.
\end{align}
From \eqref{E:SECONDINTEGRATEDSTEPMETRICSUBTRACTEDFROMKASNERMETRICDESIREDINTEGRALINEQUALITY},
it follows that
\begin{align} \label{E:EFFECTIVELYBOUNDEDANNOYINGORDERZEROMETRICESTIMATE}
		G(t)
	& \leq 
		C \MetricLownorm(1)
		+
		C \varepsilon
		G(t)
		+
		C
		\int_{s=t}^1
			s^{\Room - 1}
			\left\lbrace
				\MetricLownorm(s) 
				+
				\MetricHighnorm(s) 
		\right\rbrace
		\, ds.
\end{align}
For $\varepsilon$ sufficiently small, 
we can absorb the product 
$
C \varepsilon G(t)
$ 
on RHS~\eqref{E:EFFECTIVELYBOUNDEDANNOYINGORDERZEROMETRICESTIMATE} 
back into the LHS 
(at the minor expense of increasing the constants $C$ on the RHS).
From these arguments, we conclude
the bound
$
G(t)
\leq 
		C \MetricLownorm(1)
		+
		C
		\int_{s=t}^1
			s^{\Room - 1}
			\left\lbrace
				\MetricLownorm(s) 
				+
				\MetricHighnorm(s) 
		\right\rbrace
		\, ds
$,
which in particular implies the desired bound
\eqref{E:METRICSUBTRACTEDFROMKASNERMETRICDESIREDINTEGRALINEQUALITY}.

The estimate \eqref{E:METRICINVERSESUBTRACTEDFROMKASNERMETRICINVERSEDESIREDINTEGRALINEQUALITY} can
be proved using a similar argument based on the evolution equation \eqref{E:REWRITTENPARTIALTGINVERSECMC}, 
and we omit these details.

To prove \eqref{E:SECONDFUNDSUBTRACTEDFROMKASNERSECONFUNDDESIREDINTEGRALINEQUALITY},
we note (with the help of \eqref{E:RICCICURVATUREEXACT}) 
that RHS~\eqref{E:REWRITTENPARTIALTKCMC} is of the schematic form
$
(1-n) \SecondFund 
+
t g^{-1} \partial^2 n
+ 
t g^{-2} (\partial g) \partial n 
+
t n g^{-2} \partial^2 g
+
t n g^{-3} (\partial g) \partial g
$.
Hence, commuting \eqref{E:REWRITTENPARTIALTKCMC} with up to two spatial derivatives
and bounding all of these products by bounding each factor in the norm
$\| \cdot \|_{L_{Frame}^{\infty}}$ with the help of
Def.~\ref{D:LOWNORMS},
Lemma~\ref{L:BASICESTIMATEFORKASNER},
Lemma~\ref{L:SOBOLEVBORROWALITTLEHIGHNORM}, 
\eqref{E:LAPSELOWNORMELLIPTIC},
\eqref{E:LAPSETOPORDERELLIPTIC},
\eqref{E:LAPSEJUSTBELOWTOPORDERELLIPTIC},
and the bootstrap assumptions,
and also using Young's inequality,
we find,
in view of \eqref{E:WORSTEXPANDROOMLOTSOFUSEFULINEQUALITIES},
that if $\Blowupexp \updelta$ is sufficiently small, then
\begin{align} \label{E:SECONDFUNDORDER0TIMEDERIVATIVEESTIMATE}
\left\|
	\partial_t 
	\left\lbrace
		t \SecondFund - t \KasnerSecondFund
	\right\rbrace
\right\|_{W_{Frame}^{2,\infty}(\Sigma_t)}
&
\leq
C
t^{1 - 10 \Worstexp - \Room - \Blowupexp \updelta}
\left\lbrace
	\MetricLownorm(t) + \MetricHighnorm(t)
\right\rbrace
+
C
t^{3 - 16 \Worstexp - \Room - \Blowupexp \updelta}
\left\lbrace
	\MetricLownorm(t) + \MetricHighnorm(t)
\right\rbrace
		\\
&
\leq
C
t^{\Room - 1}
\left\lbrace
	\MetricLownorm(t) + \MetricHighnorm(t)
\right\rbrace.
\notag
\end{align}
Integrating \eqref{E:SECONDFUNDORDER0TIMEDERIVATIVEESTIMATE} in time
and using the initial data bound
$
\left\|
	\SecondFund - \KasnerSecondFund
\right\|_{W_{Frame}^{2,\infty}(\Sigma_1)}
\leq
\MetricLownorm(1)
$,
we conclude \eqref{E:SECONDFUNDSUBTRACTEDFROMKASNERSECONFUNDDESIREDINTEGRALINEQUALITY}.
We have therefore proved the proposition.

\hfill $\qed$

\section{Estimates for the Top-Order Derivatives of \texorpdfstring{$g$ and $\SecondFund$}{the First and Second Fundamental Forms}}
\label{S:TOPORDERESTIMATESMETRICANDSECONDFUND}
Our main goal in this section is to prove the following proposition,
which provides the main integral inequality for the top-order derivatives of 
$g$ and $\SecondFund$.
The proof is located in Subsect.\,\ref{SS:PROOFOFMAINTOPORDERMETRICENERGYESTIMATE}.
In Subsects.\,\ref{SS:EQUATIONSFORTOPDERIVATIVESOFMETRIC}-\ref{SS:TOPORDERMETRICENERGYESTIMATESCONTROLOFERRORTERMS},
we derive the identities and estimates that we will use when proving the proposition.

\begin{proposition}[\textbf{Integral inequality for the top-order derivatives of $g$ and $\SecondFund$}]
	\label{P:MAINTOPORDERMETRICENERGYESTIMATE}
	Let $\vec{I}$ be top-order spatial multi-index, that is, a multi-index with $|\vec{I}|=N$.
	Assume that the bootstrap assumptions \eqref{E:BOOTSTRAPASSUMPTIONS} hold.
	There exists a universal constant $C_* > 0$ \underline{independent of $N$ and $\Blowupexp$}
	such that if $N$ is sufficiently large in a manner that depends on $\Blowupexp$
	and if $\varepsilon$ is sufficiently small, 
	then the following integral inequality holds for $t \in (\TBoot,1]$
	(where, as we described in Subsect.\,\ref{SSS:CONSTANTS}, constants ``$C$'' are allowed to depend on $N$ and other quantities):
	\begin{align} \label{E:MAINTOPORDERMETRICENERGYESTIMATE}
		\left\|
			t^{\Blowupexp + 1} \partial_{\vec{I}} \SecondFund 
		\right\|_{L_g^2(\Sigma_t)}^2
		+
		\frac{1}{4}
		\left\|
			t^{\Blowupexp + 1} \partial \partial_{\vec{I}} g 
		\right\|_{L_g^2(\Sigma_t)}^2
		& \leq 
			C \MetricHighnorm^2(1)
				 \\
		& \ \
			-
			\left\lbrace
				2 \Blowupexp
				- 
				C_*
			\right\rbrace
			\int_{s=t}^1
				s^{-1}
				\left\lbrace
					\left\|
						s^{\Blowupexp + 1} \partial_{\vec{I}} \SecondFund 
					\right\|_{L_g^2(\Sigma_s)}^2
					+
					\frac{1}{4}
					\left\|
						s^{\Blowupexp + 1} \partial \partial_{\vec{I}} g 
					\right\|_{L_g^2(\Sigma_s)}^2
				\right\rbrace
			\, ds
			\notag	\\
		& \ \
				+
				C
				\int_{s=t}^1
					s^{\Room - 1}
					\left\lbrace
						\MetricLownorm^2(s) + \MetricHighnorm^2(s)
					\right\rbrace
			\, ds.
			\notag
	\end{align}
\end{proposition}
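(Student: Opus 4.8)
The plan is to derive a differential inequality for the top-order energy
$E_{\vec I}(t) := \left\| t^{\Blowupexp + 1} \partial_{\vec I}\SecondFund \right\|_{L_g^2(\Sigma_t)}^2 + \frac14 \left\| t^{\Blowupexp + 1}\partial\partial_{\vec I} g\right\|_{L_g^2(\Sigma_t)}^2$, and then integrate it from $t$ up to $1$. First I would commute the evolution equations \eqref{E:PARTIALTGCMC} and \eqref{E:PARTIALTKCMC} with $\partial_{\vec I}$ (a step whose algebraic form should be recorded in the omitted Subsect.\,\ref{SS:EQUATIONSFORTOPDERIVATIVESOFMETRIC}), obtaining schematically $\partial_t \partial_{\vec I}\SecondFund_{\ j}^i = - g^{ia}\partial_a\partial_j\partial_{\vec I} n + n \partial_{\vec I}\Ric_{\ j}^i - t^{-1}\partial_{\vec I}\SecondFund_{\ j}^i + (\text{error})$ and $\partial_t \partial\partial_{\vec I} g_{ij} \mycong - 2 g \partial_{\vec I}\SecondFund + (\text{error})$. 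The key algebraic structure to exploit is that when one differentiates $\Ric_{\ j}^i$ given by \eqref{E:RICCICURVATUREEXACT} at top order, the most dangerous piece is the term $\frac12 g^{cd} g^{ie}\partial_e\partial_c\partial_{\vec I} g_{dj}$ and its index permutations, i.e.\ genuine second spatial derivatives of $\partial_{\vec I} g$; after an integration by parts against $t^{2\Blowupexp+2} g_{ia}g^{jb}\partial_{\vec I}\SecondFund_{\ b}^a$ these pair up with $\partial\partial_{\vec I} g$ to produce, modulo the friction term, a perfect cancellation with the contribution from $\partial_t(\partial\partial_{\vec I} g)$. This is the standard "wave-equation" energy identity underneath the hyperbolic system for $(g,\SecondFund)$.

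\textbf{Key steps in order.}
First, compute $\frac{d}{dt}E_{\vec I}(t)$ using the commuted equations; the differentiation of the $t$-weights $t^{\Blowupexp+1}$ produces the term $-(2\Blowupexp+2) t^{-1} E_{\vec I}(t)$ plus a contribution $+(2\Blowupexp+2)t^{-1}(\ldots)$ after integrating back (recall $t\in(0,1]$, so integrating from $t$ to $1$ flips the sign), while the differentiation of the $g$-dependent metric factors in $|\cdot|_g$ produces $\mathcal O(t\SecondFund) = \mathcal O(1)$ times the energy density, hence a universal-coefficient term $C_* t^{-1} E_{\vec I}$; these together give the $-\{2\Blowupexp - C_*\}$ coefficient on RHS~\eqref{E:MAINTOPORDERMETRICENERGYESTIMATE} (here I absorb the "$+2$" into the freedom in $C_*$). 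The $\int g^{ab}\partial_a\partial_b$ coming from the lapse term $-g^{ia}\nabla_a\nabla_j\partial_{\vec I} n$ is handled by integration by parts and then controlled via the top-order lapse estimate \eqref{E:LAPSETOPORDERELLIPTIC} of Prop.\,\ref{P:ELLIPTICANDMAXIMUMPRINCIPLEESTIMATESFORTHELAPSE}, whose borderline piece $C_* t^{\Blowupexp+1}\|\partial_{\vec I}\SecondFund\|_{L_g^2}$ again contributes only to $C_*$ and whose remainder is an admissible $t^{\Room}\{\MetricLownorm+\MetricHighnorm\}$ term. Second, all remaining error terms — the below-top-order pieces of $\partial_{\vec I}\Ric$, the terms where derivatives hit $n$, $g^{-1}$, or Christoffel symbols, and the commutator between $\partial_{\vec I}$ and the equations — are estimated in $L^2_g(\Sigma_t)$ using the Sobolev product inequality \eqref{E:L2PRODUCTBOUNDINERMSOFLINFINITYANDHMDOT}, the interpolation/embedding Lemma~\ref{L:SOBOLEVBORROWALITTLEHIGHNORM}, the norm comparison \eqref{E:POINTWISENORMCOMPARISON}, Lemma~\ref{L:BASICESTIMATEFORKASNER}, and the bootstrap assumptions; the crucial accounting, exactly as in Subsect.\,\ref{SS:MAINIDEASINPROOF}, is that because $\Worstexp < 1/6$ and because of the extra powers of $t^{\Room}$ built into \eqref{E:WORSTEXPANDROOMLOTSOFUSEFULINEQUALITIES} and into the high norm Def.\,\ref{D:HIGHNORMS}, every such error contributes at worst $C s^{\Room-1}\{\MetricLownorm^2 + \MetricHighnorm^2\}$ after applying Young's inequality to convert products into sums of squares. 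Third, integrate the resulting differential inequality $\frac{d}{dt}E_{\vec I} \geq -(2\Blowupexp - C_*) t^{-1} E_{\vec I} - C t^{\Room-1}\{\MetricLownorm^2+\MetricHighnorm^2\}$ (note the sign: near $t=0$, $E_{\vec I}$ is \emph{decreasing} in $t$ going forward, hence the stated inequality) from $t$ to $1$, using the initial data bound $E_{\vec I}(1) \leq C\MetricHighnorm^2(1)$, to arrive at \eqref{E:MAINTOPORDERMETRICENERGYESTIMATE}.

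\textbf{Main obstacle.}
The hard part will be the bookkeeping in Step two, specifically showing that the below-top-order error terms in $n\partial_{\vec I}\Ric_{\ j}^i$ and in the $\partial_{\vec I}$-commutator are no more singular than $s^{\Room-1}\{\MetricLownorm^2+\MetricHighnorm^2\}$. The subtlety, flagged in the Remark after \eqref{E:INTROMODELINTERPOLATION} and again near the end of Subsect.\,\ref{SS:MAINIDEASINPROOF}, is that the energy is naturally measured in the geometric norm $\|\cdot\|_{\dot H_g^N}$ while interpolation is sharpest in the frame norm $\|\cdot\|_{\dot H_{Frame}^M}$, and converting between them costs powers of $t^{-\Worstexp}$ that can, at first sight, make a two-below-top term (e.g.\ the momentum-constraint-type product whose estimate \eqref{E:TOPORDERDIFFERENTIATEDJUNKMOMENTUMCONSTRAINTINDEXUPERRORTERML2ESTIMATE} is referenced) look worse than the main term. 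Overcoming this requires using the finer hierarchy of norms in Def.\,\ref{D:HIGHNORMS} — in particular the separate $\dot H_g^{N-1}$ and $\dot H_{Frame}^{N-1}$ slots for $\SecondFund$ with their own $t$-weights $t^{\Blowupexp + 3\Worstexp+\Room}$ — together with the below-top-order transport-type estimates from Sect.\,\ref{S:ESTIMATESFORJUSTBELOWTOPORDERDERIVATIVESOFMETRICANDSECONDFUNDAMENTALFORM}, rather than pure interpolation; it is precisely here that the structure of Einstein's equations in CMC-transported gauge, and the constraint $\Worstexp + 2\Room < 1/6$, are essential. I expect this to be the only genuinely delicate point; the friction mechanism itself (choosing $\Blowupexp \gg C_*$ so the $-\{2\Blowupexp - C_*\}$ integral can be discarded with a favorable sign) is then immediate and is what makes the subsequent Gronwall argument in Sect.\,\ref{S:APRIORIESTIMATES} close.
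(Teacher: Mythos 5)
Your overall architecture --- an energy identity obtained by differentiating the weighted energy in time and integrating by parts over $\Sigma_t$, friction generated by the $t^{\Blowupexp+1}$ weights, a universal constant $C_*$ produced by the borderline linear terms (including the $\partial_t$ of the $g$-dependent coefficients in $|\cdot|_g$, substituted via \eqref{E:PARTIALTGCMC}--\eqref{E:PARTIALTGINVERSECMC}), absorption of $C_*$ by taking $\Blowupexp$ large, and careful $L^2$ bookkeeping of the remaining errors using the norm hierarchy of Def.\,\ref{D:HIGHNORMS} --- is exactly the paper's; the paper merely packages the integration by parts in the energy current of Def.\,\ref{D:ENERGYCURRENT} and Lemma~\ref{L:DIVIDFORENERGYCURRENT}. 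There is, however, one genuine gap: you never explain how to avoid losing a derivative on $\SecondFund$. You must integrate by parts in the lapse term $\int (\partial_{\vec{I}} \SecondFund_{\ b}^a)\, g_{ia} g^{jb} g^{ic}\, \partial_c \partial_j \partial_{\vec{I}} n \, dx$, since the elliptic estimate \eqref{E:LAPSETOPORDERELLIPTIC} controls only $N+1$, not $N+2$, derivatives of $n$; but then the spatial derivative lands on $\partial_{\vec{I}} \SecondFund$ and produces $g^{ab}\partial_a \partial_{\vec{I}} \SecondFund_{\ b}^j$, i.e.\ $N+1$ derivatives of $\SecondFund$, which neither the energy nor the bootstrap assumptions control. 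The same issue arises for part of the principal second-derivative block of $\partial_{\vec{I}}\Ric_{\ j}^i$: only a portion of it cancels against the $\partial_t(\partial\partial_{\vec{I}} g)$ contribution in the ``wave-equation'' identity you describe, while the remainder again shows up as a divergence $\partial_a \partial_{\vec{I}}\SecondFund_{\ i}^a$ paired with $\partial\partial_{\vec{I}} g$. The paper's resolution is to substitute for every such divergence using the $\partial_{\vec{I}}$-commuted momentum constraint \eqref{E:TOPORDERDIFFERENTIATEDBORDERMOMENTUMCONSTRAINTINDEXDOWN}--\eqref{E:TOPORDERDIFFERENTIATEDBORDERMOMENTUMCONSTRAINTINDEXUP}, which converts it into the borderline term $\mycongstar\, t^{\Blowupexp+1} g^{-1}(\partial\partial_{\vec{I}} g)\SecondFund$ (feeding into $C_*$ and controlled by the $\partial\partial_{\vec{I}} g$ piece of the energy) plus junk. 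Your proposal mentions the momentum-constraint estimates only as one more below-top-order error to bound; in fact the constraint is the device that restores the derivative count, and without it the integration by parts does not close.

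A secondary point: the differential inequality you propose to integrate, $\frac{d}{dt}E_{\vec{I}} \geq -(2\Blowupexp - C_*)\, t^{-1} E_{\vec{I}} - C t^{\Room-1}\{\MetricLownorm^2 + \MetricHighnorm^2\}$, carries the wrong sign on the friction term. The divergence identity produces $+\frac{2\Blowupexp}{t}\left|t^{\Blowupexp+1}\partial_{\vec{I}}\SecondFund\right|_g^2 + \frac{\Blowupexp+1}{2t}\left|t^{\Blowupexp+1}\partial\partial_{\vec{I}} g\right|_g^2$ on the right-hand side, so the correct inequality is $\frac{d}{dt}E_{\vec{I}} \geq +(2\Blowupexp - C_*)\, t^{-1} E_{\vec{I}} - \cdots$; integrating \emph{that} from $t$ to $1$ is what places the factor $-\{2\Blowupexp - C_*\}$ in front of the time integral in \eqref{E:MAINTOPORDERMETRICENERGYESTIMATE} and renders it discardable. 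As written, your inequality integrates to a bound with $+\{2\Blowupexp - C_*\}$ in front of that integral, which is useless. Since you state the correct conclusion elsewhere, this is clearly a slip rather than a conceptual error, but it should be corrected.
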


\subsection{The equations}
\label{SS:EQUATIONSFORTOPDERIVATIVESOFMETRIC}
In this subsection, we derive the equations that we will use when deriving estimates for
$g$ and $\SecondFund$ at the high derivative levels.

\begin{lemma}[\textbf{The equations verified by the high-order derivatives of the metric and second fundamental form}]
\label{L:EQUATIONSFORTOOPORDERDERIVATIVESOFMETRICANDSECONDFUND}
Let $\vec{I}$ be top-order spatial multi-index, that is, a multi-index with $|\vec{I}|=N$.
Then the following \textbf{commuted momentum constraint equations} hold:
\begin{subequations}
\begin{align}
		t^{\Blowupexp + 1} \partial_a \partial_{\vec{I}} \SecondFund_{\ i}^a
		& 	= 
				\leftexp{(Border;\vec{I})}{\mathfrak{M}}_i
				+
				\leftexp{(Junk;\vec{I})}{\mathfrak{M}}_i,
					\label{E:TOPORDERDIFFERENTIATEDBORDERMOMENTUMCONSTRAINTINDEXDOWN} \\
		t^{\Blowupexp + 1} g^{ab} \partial_a \partial_{\vec{I}} \SecondFund_{\ b}^i
		& 	= 
				\leftexp{(Border;\vec{I})}{\widetilde{\mathfrak{M}}}^i
				+
				\leftexp{(Junk;\vec{I})}{\widetilde{\mathfrak{M}}}^i,
				\label{E:TOPORDERDIFFERENTIATEDBORDERMOMENTUMCONSTRAINTINDEXUP}
\end{align}
\end{subequations}
where (see Subsubsect.\,\ref{SSS:SCHEMATIC} regarding our use of the notation $\mycongstar$ and $\mycong$, and see Remark~\ref{R:ALWAYSMIXED})
\begin{subequations}
\begin{align}
	\leftexp{(Border;\vec{I})}{\mathfrak{M}}_i
	& \mycongstar t^{\Blowupexp + 1} g^{-1} (\partial \partial_{\vec{I}} g) \SecondFund,
		\label{E:TOPORDERDIFFERENTIATEDBORDERMOMENTUMCONSTRAINTINDEXDOWNERRORTERM} \\
	\leftexp{(Junk;\vec{I})}{\mathfrak{M}}_i
	& \mycong 
	\mathop{\sum_{\vec{I}_1 + \vec{I}_2 + \vec{I}_3 = \vec{I}}}_{|\vec{I}_2| \leq N-1}
	t^{\Blowupexp + 1}	
	(\partial_{\vec{I}_1} g^{-1}) (\partial \partial_{\vec{I}_2} g) \partial_{\vec{I}_3} \SecondFund,
		\label{E:TOPORDERDIFFERENTIATEDJUNKMOMENTUMCONSTRAINTINDEXDOWNERRORTERM} \\
\leftexp{(Border;\vec{I})}{\widetilde{\mathfrak{M}}}^i
& \mycongstar t^{\Blowupexp + 1} (g^{-1})^2 (\partial \partial_{\vec{I}} g) \SecondFund,
	\label{E:TOPORDERDIFFERENTIATEDBORDERMOMENTUMCONSTRAINTINDEXUPERRORTERM} \\
\leftexp{(Junk;\vec{I})}{\widetilde{\mathfrak{M}}}^i
& \mycong 
	\mathop{\sum_{\vec{I}_1 + \vec{I}_2 + \vec{I}_3 + \vec{I}_4 = \vec{I}}}_{|\vec{I}_3| \leq N-1}
	t^{\Blowupexp + 1}
	(\partial_{\vec{I}_1} g^{-1}) 
	(\partial_{\vec{I}_2} g^{-1})
	(\partial \partial_{\vec{I}_3} g) 
	\partial_{\vec{I}_4} \SecondFund.
	\label{E:TOPORDERDIFFERENTIATEDJUNKMOMENTUMCONSTRAINTINDEXUPERRORTERM}
\end{align}	
\end{subequations}

Moreover, for any constant $\Pos \geq 0$
and for any spatial multi-index $\vec{I}$,
the following \textbf{commuted evolution equations} hold:
\begin{subequations}
\begin{align}
\partial_t (t^{\Blowupexp + \Pos} \partial_e \partial_{\vec{I}} g_{ij})
	& = 
	\frac{1}{t}
	\left\lbrace
		(\Blowupexp + \Pos)
		\delta_{\ j}^a
		- 
		2 t \SecondFund_{\ j}^a
	\right\rbrace
	(t^{\Blowupexp + \Pos} \partial_e \partial_{\vec{I}} g_{ia})
	- 
	2 t^{\Blowupexp + \Pos} n g_{ia} \partial_e \partial_{\vec{I}} \SecondFund_{\ j}^a
		\label{E:COMMUTEDPARTIALTONEDERIVATIVEOFMETRICCMC} \\
& \ \ 
	+ 
	\leftexp{(Border;\Pos;\vec{I})}{\mathfrak{H}}_{eij}
	+ 
	\leftexp{(Junk;\Pos;\vec{I})}{\mathfrak{H}}_{eij},
		\notag \\
	\partial_t (t^{\Blowupexp + \Pos} \partial_{\vec{I}} \SecondFund_{\ j}^i) 
	& = 
			(\Blowupexp + \Pos - 1)
			t^{\Blowupexp + \Pos - 1} 
			\partial_{\vec{I}} \SecondFund_{\ j}^i
			- 
			t^{\Blowupexp + \Pos} 
			g^{ia} 
			\partial_a \partial_j \partial_{\vec{I}} n
			\label{E:COMMUTEDPARTIALTKCMC} \\
	& \ \
		+ 
		\frac{1}{2}
		t^{\Blowupexp + \Pos} 
		n 
		g^{ic} 
		g^{ab}
		\left\lbrace
			 \partial_a \partial_c \partial_{\vec{I}} g_{bj} 
				+ 
				\partial_a \partial_j \partial_{\vec{I}} g_{bc} 
				- 
				\partial_a \partial_b \partial_{\vec{I}} g_{cj}
				-
				\partial_c \partial_j \partial_{\vec{I}} g_{ab} 
		\right\rbrace
			\notag \\
	& \ \
		+
		\leftexp{(Border;\Pos;\vec{I})}{\mathfrak{K}}_{\ j}^i
		+
		\leftexp{(Junk;\Pos;\vec{I})}{\mathfrak{K}}_{\ j}^i,
		\notag
\end{align}
\end{subequations}
where $\delta_{\ j}^i$ is the standard Kronecker delta,
\begin{subequations}
	\begin{align}
	\leftexp{(Border;\Pos;\vec{I})}{\mathfrak{H}}_{eij}
	& \mycongstar
		t^{\Blowupexp + \Pos} (\partial \partial_{\vec{I}} n) g \SecondFund,
		\label{E:TOPORDERDIFFERENTIATEDBORDERONCEDIFFERENTIATEDMETRICERRORTERM} \\
\leftexp{(Junk;\Pos;\vec{I})}{\mathfrak{H}}_{eij}
& \mycong
		t^{\Blowupexp + \Pos} (n-1) (\partial \partial_{\vec{I}} g) \SecondFund
		+
		\mathop{\sum_{\vec{I}_1 + \vec{I}_2 + \vec{I}_3 = \vec{I}}}_{|\vec{I}_1| \leq |\vec{I}|-1}
		t^{\Blowupexp + \Pos}
		(\partial \partial_{\vec{I}_1} n) 
		(\partial_{\vec{I}_2} g) 
		\partial_{\vec{I}_3} \SecondFund
		\label{E:TOPORDERDIFFERENTIATEDJUNKONCEDIFFERENTIATEDMETRICERRORTERM} \\
&  \ \
		+
		\mathop{\sum_{\vec{I}_1 + \vec{I}_2 + \vec{I}_3 = \vec{I}}}_{|\vec{I}_2| \leq |\vec{I}|-1}
		t^{\Blowupexp + \Pos} 
		(\partial_{\vec{I}_1} n) 
		(\partial \partial_{\vec{I}_2} g) 
		\partial_{\vec{I}_3} \SecondFund
		+
		\mathop{\sum_{\vec{I}_1 + \vec{I}_2 + \vec{I}_3 = \vec{I}}}_{|\vec{I}_3| \leq |\vec{I}|-1}
		t^{\Blowupexp + \Pos}
		(\partial_{\vec{I}_1} n) 
		(\partial_{\vec{I}_2} g) 
		\partial \partial_{\vec{I}_3} \SecondFund,
		\notag
\end{align}
\end{subequations}

\begin{subequations}
\begin{align}
\leftexp{(Border;\Pos;\vec{I})}{\mathfrak{K}}_{\ j}^i		
	& \mycongstar
		t^{\Blowupexp + \Pos - 1} (\partial_{\vec{I}} n) \SecondFund,
		\label{E:SECONDFUNDCOMMUTEDBORDERLINETERM} \\
\leftexp{(Junk;\Pos;\vec{I})}{\mathfrak{K}}_{\ j}^i		
& \mycong 	
	\mathop{\sum_{\vec{I}_1 + \vec{I}_2 = \vec{I}}}_{|\vec{I}_1| \leq |\vec{I}|-1}
	t^{\Blowupexp + \Pos - 1}
	\left\lbrace
		\partial_{\vec{I}_1} (n-1) 
	\right\rbrace	
	\partial_{\vec{I}_2} \SecondFund
		\label{E:SECONDFUNDCOMMUTEDJUNKTERM} \\
& \ \
	+
	\sum_{\vec{I}_1 + \vec{I}_2 + \cdots + \vec{I}_6 = \vec{I}}
	t^{\Blowupexp + \Pos} 
	(\partial_{\vec{I}_1} n)
	(\partial_{\vec{I}_2} g^{-1})
	(\partial_{\vec{I}_3} g^{-1})
	(\partial_{\vec{I}_4} g^{-1})
	(\partial \partial_{\vec{I}_5} g)
	\partial \partial_{\vec{I}_6} g
		\notag \\
	& \ \
	+
	\mathop{\sum_{\vec{I}_1 + \vec{I}_2 + \vec{I}_3 + \vec{I}_4 = \vec{I}}}_{|\vec{I}_4| \leq |\vec{I}|-1}
		t^{\Blowupexp + \Pos}
		(\partial_{\vec{I}_1} n)
		(\partial_{\vec{I}_2} g^{-1})
		(\partial_{\vec{I}_3} g^{-1})
		\partial^2 \partial_{\vec{I}_4} g
			\notag \\
	& \ \
	+
	\sum_{\vec{I}_1 + \vec{I}_2 + \vec{I}_3 + \vec{I}_4 = \vec{I}}
		t^{\Blowupexp + \Pos} 
		(\partial_{\vec{I}_1} g^{-1})
		(\partial_{\vec{I}_2} g^{-1})
		(\partial \partial_{\vec{I}_3} g)
		\partial \partial_{\vec{I}_4} n.
	\notag
\end{align}
\end{subequations}

\end{lemma}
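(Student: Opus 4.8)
The plan is to obtain all of the stated identities by commuting the unnormalized equations of Prop.\,\ref{P:EINSTEINVACUUMEQUATIONSINCMCTRANSPORTEDSPATIALCOORDINATES} with the coordinate differential operators $\partial_{e}\partial_{\vec{I}}$ (for the metric) and $\partial_{\vec{I}}$ (for $\SecondFund$ and for the momentum constraint), after first multiplying by the indicated powers of $t$. The key structural simplification is that $\partial_{t}$, $\partial_{e}$, and the operators $\partial_{\vec{I}}$ are all coordinate vectorfields and hence mutually commute, so no connection-commutator terms are generated; the only work is to distribute derivatives with the Leibniz rule and then organize the resulting finite collection of products. Two bookkeeping devices will be used throughout: I would split $n = 1 + (n-1)$, so that the ``$n\equiv 1$'' piece contributes to the explicitly written main terms while the $(n-1)$ piece is relegated to the junk; and I would invoke the CMC normalization $\SecondFund_{\ a}^{a} = - t^{-1}$ from \eqref{E:CMCCONDITION} so that the derivative of the time weight and the ``$n\equiv1$, all derivatives on $g$'' Leibniz term assemble into the displayed coefficient $\frac{1}{t}\{(\Blowupexp+\Pos)\delta_{\ j}^{a} - 2 t\SecondFund_{\ j}^{a}\}$ in \eqref{E:COMMUTEDPARTIALTONEDERIVATIVEOFMETRICCMC} and into the factor $(\Blowupexp+\Pos-1)$ in \eqref{E:COMMUTEDPARTIALTKCMC}. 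Per Remark~\ref{R:ALWAYSMIXED}, I would always keep $\SecondFund$ in mixed form, which is what permits the main terms to avoid carrying differentiated index-raising factors of $g^{-1}$.

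For the commuted momentum constraints, I would first expand the covariant divergence in \eqref{E:MOMENTUM} in terms of $\partial$ and the Christoffel symbols of $g$, obtaining the schematic identity $\partial_{a}\SecondFund_{\ i}^{a} \mycong g^{-1}(\partial g)\SecondFund$; for the index-up version I would additionally use $g^{ab}\nabla_{a}\SecondFund_{\ b}^{i} = \nabla_{a}\SecondFund^{ai} = 0$, which follows from symmetry of $\SecondFund^{ab}$ together with \eqref{E:MOMENTUM}, to get $g^{ab}\partial_{a}\SecondFund_{\ b}^{i} \mycong (g^{-1})^{2}(\partial g)\SecondFund$. Multiplying by $t^{\Blowupexp+1}$ and applying $\partial_{\vec{I}}$, the unique Leibniz term in which all $|\vec{I}|$ derivatives land on the $\partial g$ factor inside $\Gamma$ carries a purely structural coefficient and is the borderline term \eqref{E:TOPORDERDIFFERENTIATEDBORDERMOMENTUMCONSTRAINTINDEXDOWNERRORTERM} (resp.\ \eqref{E:TOPORDERDIFFERENTIATEDBORDERMOMENTUMCONSTRAINTINDEXUPERRORTERM}); every other distribution carries at least one derivative on a factor of $g^{-1}$ or $\SecondFund$, so in particular the commutator $[g^{ab}\partial_{a},\partial_{\vec{I}}]$ in the index-up case always produces an extra factor of $\partial g^{-1}$, and all of these fit the junk forms \eqref{E:TOPORDERDIFFERENTIATEDJUNKMOMENTUMCONSTRAINTINDEXDOWNERRORTERM} and \eqref{E:TOPORDERDIFFERENTIATEDJUNKMOMENTUMCONSTRAINTINDEXUPERRORTERM}.

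For the evolution equations I would proceed similarly. For the metric, I would compute $\partial_{t}(t^{\Blowupexp+\Pos}\partial_{e}\partial_{\vec{I}}g_{ij})$ from \eqref{E:PARTIALTGCMC}: the derivative of the weight and the ``all derivatives on $g_{ia}$, $n\to1$'' term give the displayed coefficient, the ``all derivatives on $\SecondFund$'' term gives the main term $-2 t^{\Blowupexp+\Pos} n g_{ia}\partial_{e}\partial_{\vec{I}}\SecondFund_{\ j}^{a}$, the ``all derivatives on $n$'' term gives the borderline $\leftexp{(Border;\Pos;\vec{I})}{\mathfrak{H}}_{eij}\mycongstar t^{\Blowupexp+\Pos}(\partial\partial_{\vec{I}}n)g\SecondFund$, and everything else (including the $(n-1)$ piece) is junk. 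For $\SecondFund$, I would compute $\partial_{t}(t^{\Blowupexp+\Pos}\partial_{\vec{I}}\SecondFund_{\ j}^{i})$ from \eqref{E:PARTIALTKCMC} after substituting \eqref{E:RICCICURVATUREEXACT} for $\Ric_{\ j}^{i}$: the $- t^{-1}n\SecondFund_{\ j}^{i}$ term supplies both the first main term (its ``$n\to1$, all derivatives on $\SecondFund$'' piece, combined with the weight) and the borderline $\leftexp{(Border;\Pos;\vec{I})}{\mathfrak{K}}_{\ j}^{i}\mycongstar t^{\Blowupexp+\Pos-1}(\partial_{\vec{I}}n)\SecondFund$ (its ``all derivatives on $n$'' piece), while its remaining distributions, $\partial_{\vec{I}_1}(n-1)\,\partial_{\vec{I}_2}\SecondFund$, supply the first junk term of \eqref{E:SECONDFUNDCOMMUTEDJUNKTERM}; the $-g^{ia}\nabla_{a}\nabla_{j}n$ term supplies the second main term $-t^{\Blowupexp+\Pos}g^{ia}\partial_{a}\partial_{j}\partial_{\vec{I}}n$ plus junk (the Christoffel correction $g^{ia}\Gamma_{a\ j}^{\ b}\partial_b n$ and all distributed cases); and $n\Ric_{\ j}^{i}$ supplies the third main term $\frac{1}{2}t^{\Blowupexp+\Pos}n g^{ic}g^{ab}\{\partial_{a}\partial_{c}\partial_{\vec{I}}g_{bj}+\cdots\}$ (its ``all $\vec{I}$ derivatives on the $\partial^{2}g$ factors'' piece) plus junk (the $\Gamma\Gamma$-type terms and the distributed $\partial^{2}g$ cases).

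The content of the lemma is thus elementary, and the hard part is purely organizational: verifying that the above enumeration is exhaustive and that each of the finitely many products is classified correctly. The operative rule is that a term is ``borderline'' exactly when all commuted derivatives are concentrated on one distinguished factor, so its coefficient is a fixed structural constant independent of $N$ (justifying $\mycongstar$) and no spare power of $t$ is available at the top derivative level; a term is ``junk'' when at least one derivative hits a factor whose undifferentiated size carries a smallness or room gain measured by $\MetricLownorm$ --- a factor of $\partial g$, of $g^{-1}-\KasnerMetric^{-1}$, or of $n-1$ --- in which case the coefficient may be $N$-dependent (hence $\mycong$) but the estimate has room to spare. I expect this classification step, rather than any individual computation, to be where care is genuinely required.
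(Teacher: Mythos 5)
Your proposal is correct and follows essentially the same route as the paper's proof, which likewise commutes \eqref{E:MOMENTUM} (in both index positions), \eqref{E:PARTIALTGCMC}, and \eqref{E:PARTIALTKCMC} (with $\Ric_{\ j}^i$ decomposed via \eqref{E:RICCICURVATUREEXACT} into a principal $\partial^2 g$ part and a schematic $g^{-3}(\partial g)^2$ error) with the weighted derivative operators and then sorts the Leibniz terms into main, borderline, and junk exactly as you describe, dismissing the bookkeeping as ``straightforward computations.'' Two quibbles only: the $t^{-1}$ producing the $(\Blowupexp+\Pos-1)$ coefficient is already explicit in \eqref{E:PARTIALTKCMC}, so the CMC condition need not be re-invoked at this stage, and your closing heuristic that a junk term always has a derivative landing on a ``good'' factor misses the junk cases in which all $N$ derivatives land on $\SecondFund$ (e.g.\ $\vec{I}_3=\vec{I}$ in \eqref{E:TOPORDERDIFFERENTIATEDJUNKMOMENTUMCONSTRAINTINDEXDOWNERRORTERM}), which are junk because the \emph{undifferentiated} $\partial g$ coefficient supplies the room.
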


\begin{proof}
 To prove \eqref{E:TOPORDERDIFFERENTIATEDBORDERMOMENTUMCONSTRAINTINDEXDOWN},
 we first write equation \eqref{E:MOMENTUM} relative to the transported spatial
coordinates in the schematic form
$\partial_a \SecondFund_{\ i}^a = g^{-1} (\partial g) \SecondFund$.
Commuting this equation with
$t^{\Blowupexp + 1} \partial_{\vec{I}}$,
we arrive at \eqref{E:TOPORDERDIFFERENTIATEDBORDERMOMENTUMCONSTRAINTINDEXDOWN}.
The proof of \eqref{E:TOPORDERDIFFERENTIATEDBORDERMOMENTUMCONSTRAINTINDEXUP} 
is similar, but we
start by raising the index $i$ in equation \eqref{E:MOMENTUM}
to deduce
$g^{ab} \nabla_a \SecondFund_{\ b}^i
=
0
$
and then (schematically) writing this equation in coordinates as
$g^{ab} \partial_a \SecondFund_{\ b}^i = g^{-2} (\partial g) \SecondFund$.
	
	To prove \eqref{E:COMMUTEDPARTIALTONEDERIVATIVEOFMETRICCMC},
	we commute equation \eqref{E:PARTIALTGCMC}
	with $\partial_e \partial_{\vec{I}}$
	and then with 
	$t^{\Blowupexp + \Pos}$
	and carry out straightforward computations.
	
	To prove \eqref{E:COMMUTEDPARTIALTKCMC},
	we first use \eqref{E:RICCICURVATUREEXACT} 
to decompose
\begin{align} \label{E:RICCIDECOMPOSITIONINTOPRINCIPLEANDERROR}
		\Ric_{\ j}^i
		& =	
		\frac{1}{2}
		g^{ic} 
		g^{ab}
		\left\lbrace
			 \partial_a \partial_c g_{bj} 
				+ 
				\partial_a \partial_j g_{bc} 
				- 
				\partial_a \partial_b g_{cj}
				-
				\partial_c \partial_j g_{ab} 
		\right\rbrace
		+
		\mbox{\upshape Error}_{\ j}^i,
\end{align}
where (schematically)
$\mbox{\upshape Error} = g^{-3} (\partial g)^2$.
We now use \eqref{E:RICCIDECOMPOSITIONINTOPRINCIPLEANDERROR}
to decompose the term
$
\Ric_{\ j}^i
$	
on RHS~\eqref{E:PARTIALTKCMC},
commute the evolution equation \eqref{E:PARTIALTKCMC}
with $\partial_{\vec{I}}$
and then with $t^{\Blowupexp + \Pos}$,
and carry out straightforward computations,
thereby arriving at \eqref{E:COMMUTEDPARTIALTKCMC} .
\end{proof}

\subsection{Energy currents}
\label{SS:ENERGYCURRENTS}
When deriving top-order energy estimates (see the proof of Prop.\,\ref{P:MAINTOPORDERMETRICENERGYESTIMATE}), 
we will integrate by parts by applying the divergence theorem
on spacetime regions of the form 
$[t,1] \times \mathbb{T}^{\mydim}$
with the help of the vectorfields ${\leftexp{(\vec{I})}{\mathbf{J}}}$ 
featured in the following definition. Put differently,
the vectorfields ${\leftexp{(\vec{I})}{\mathbf{J}}}$ are convenient
for bookkeeping during integration by parts.

\begin{definition}[\textbf{Energy current vectorfields}]
	\label{D:ENERGYCURRENT}
To each top-order spatial multi-index $\vec{I}$ (i.e., $|\vec{I}| = N$), 
we associate the 
energy current ${\leftexp{(\vec{I})}{\mathbf{J}}}$,
which we define to be the vectorfield with 
the following components relative to the
CMC-transported spatial coordinates:
\begin{subequations}
	\begin{align}
		{\leftexp{(\vec{I})}{\mathbf{J}^0}}
		& :=
			\left|
				t^{\Blowupexp + 1} \partial_{\vec{I}} \SecondFund 
			\right|_g^2
			 + 
				\frac{1}{4} 
				\left|
					t^{\Blowupexp + 1} \partial \partial_{\vec{I}} g 
				\right|_g^2, 
				\label{E:METRICCURRENT0} \\
		{\leftexp{(\vec{I})}{\mathbf{J}^j}}
		& := 
		2 g^{ab} 
			(t^{\Blowupexp + 1} \partial_{\vec{I}} \SecondFund_{\ a}^j)
			(t^{\Blowupexp + 1} \partial_b \partial_{\vec{I}} n)
			\label{E:METRICCURRENTSPACE} \\
	& \ \
			+ 
			n
			g^{ij}
			g^{bc}
			(t^{\Blowupexp + 1} \partial_{\vec{I}} \SecondFund_{\ b}^a)
			(t^{\Blowupexp + 1} \partial_i \partial_{\vec{I}} g_{ac})
			+ 
			n
			g^{ab} g^{cd}
			(t^{\Blowupexp + 1} \partial_{\vec{I}} \SecondFund_{\ c}^j)
			(t^{\Blowupexp + 1} \partial_d \partial_{\vec{I}} g_{ab})
			\notag \\
	& \ \ 
		- n
			g^{ab} 
			g^{cd} 
			(t^{\Blowupexp + 1} \partial_{\vec{I}} \SecondFund_{\ c}^j)
			(t^{\Blowupexp + 1} \partial_a \partial_{\vec{I}} g_{bd}) 
			- 
			n
			g^{ab} 
			g^{jc} 
			(t^{\Blowupexp + 1} \partial_{\vec{I}} \SecondFund_{\ c}^d)
			(t^{\Blowupexp + 1} \partial_a \partial_{\vec{I}} g_{bd}).
			\notag
\end{align}	
\end{subequations}
\end{definition}

\begin{remark}
	Note that the components $\lbrace {\leftexp{(\vec{I})}{\mathbf{J}^{\alpha}}} \rbrace_{\alpha = 0,\cdots,\mydim}$
	are quadratic forms in $(\partial_{\vec{I}} \SecondFund, \partial \partial_{\vec{I}} g, \partial \partial_{\vec{I}} n)$
	with coefficients that depend on $g$, $g^{-1}$, and $n$.
\end{remark}

\begin{lemma}[\textbf{Divergence identity verified by} ${\leftexp{(\vec{I})}{\mathbf{J}}}$]
	\label{L:DIVIDFORENERGYCURRENT}
	Let $\vec{I}$ be a top-order spatial multi-index, that is, a multi-index with $|\vec{I}|=N$. Then
	for solutions to the commuted equations of Lemma~\ref{L:EQUATIONSFORTOOPORDERDERIVATIVESOFMETRICANDSECONDFUND}
	with $\Pos = 1$ in equations \eqref{E:COMMUTEDPARTIALTONEDERIVATIVEOFMETRICCMC} and \eqref{E:COMMUTEDPARTIALTKCMC},
	the spacetime vectorfield ${\leftexp{(\vec{I})}{\mathbf{J}}}$ from
	Def.~\ref{D:ENERGYCURRENT} verifies the following divergence
	identity relative to the CMC-transported spatial coordinates:
	\begin{align} \label{E:DIVIDFORENERGYCURRENT}
		\partial_{\alpha} {\leftexp{(\vec{I})}{\mathbf{J}^{\alpha}}}
		& = 
				\frac{2 \Blowupexp}{t} 
				\left|
					t^{\Blowupexp + 1} \partial_{\vec{I}} \SecondFund 
				\right|_g^2
				+ 
				\frac{\Blowupexp + 1}{2 t} 
				\left|
					t^{\Blowupexp + 1} \partial \partial_{\vec{I}} g 
				\right|_g^2
				+
				\leftexp{(Border;\vec{I})}{\mathfrak{J}}
				+
				\leftexp{(Junk;\vec{I})}{\mathfrak{J}},
	\end{align}
	where
	\begin{subequations}
	\begin{align}
		\leftexp{(Border;\vec{I})}{\mathfrak{J}}
		& := 2 t^{\Blowupexp + 1}
				g_{ac}
				g^{bd}
				(\partial_{\vec{I}} \SecondFund_{\ b}^a)
				\leftexp{(Border;1;\vec{I})}{\mathfrak{K}}_{\ d}^c	
				+
				\frac{1}{2}
				t^{\Blowupexp + 1}
		 		g^{ad} g^{be} g^{cf}
				(\partial_a \partial_{\vec{I}} g_{bc})
				\leftexp{(Border;1;\vec{I})}{\mathfrak{H}}_{def}
				\label{E:METRICCURRENTBORDERTERMS}	\\
		& \ \
			+ 
			2 t^{\Blowupexp + 1} 
			g^{ab} 
			(\partial_a \partial_{\vec{I}} n)
			\leftexp{(Border;\vec{I})}{\mathfrak{M}}_b
			\notag
				\\
		& \ \
			+
			t^{\Blowupexp + 1} 
			n
			g^{ab} g^{cd}
			(\partial_c \partial_{\vec{I}} g_{ab})
			\leftexp{(Border;\vec{I})}{\mathfrak{M}}_d
			\notag \\
	& \ \ 
		- t^{\Blowupexp + 1} 
			n
			g^{ab} 
			g^{cd} 
			(\partial_a \partial_{\vec{I}} g_{bc}) 
			\leftexp{(Border;\vec{I})}{\mathfrak{M}}_d
			- 
			t^{\Blowupexp + 1}
			n
			g^{ab} 
			(\partial_a \partial_{\vec{I}} g_{bc})
			\leftexp{(Border;\vec{I})}{\widetilde{\mathfrak{M}}}^c
			\notag
				\\
		& \ \
				+
				2 t^{2 \Blowupexp + 2} 
				n
				g_{ac}
				g^{id}
				\SecondFund_{\ i}^b
				(\partial_{\vec{I}} \SecondFund_{\ b}^a)
				(\partial_{\vec{I}} \SecondFund_{\ d}^c)
				-
				2 t^{2 \Blowupexp + 2} 
				n
				g_{ai}
				g^{bd}
				\SecondFund_{\ c}^i
				(\partial_{\vec{I}} \SecondFund_{\ b}^a)
				(\partial_{\vec{I}} \SecondFund_{\ d}^c)
					\notag \\
			& \ \
				+ 
				\frac{t^{2 \Blowupexp + 2}}{2}
				n
				g^{be} g^{cf} 
				g^{id}
				\SecondFund_{\ i}^a
				(\partial_a \partial_{\vec{I}} g_{bc}) 
				(\partial_d \partial_{\vec{I}} g_{ef})
				+ 
				t^{2 \Blowupexp + 2} 
				n
				g^{ad} g^{ie} g^{cf}
				\SecondFund_{\ e}^b
				(\partial_a \partial_{\vec{I}} g_{bc}) 
				(\partial_d \partial_{\vec{I}} g_{if}),
				\notag
	\end{align}
	
	\begin{align}
		\leftexp{(Junk;\vec{I})}{\mathfrak{J}}
		& :=
			2 t^{\Blowupexp + 1}
				g_{ac}
				g^{bd}
				(\partial_{\vec{I}} \SecondFund_{\ b}^a)
				\leftexp{(Junk;1;\vec{I})}{\mathfrak{K}}_{\ d}^c	
				+
				\frac{1}{2}
				t^{\Blowupexp + 1}
				g^{ad} g^{be} g^{cf}
				(\partial_a \partial_{\vec{I}} g_{bc})
				\leftexp{(Junk;1;\vec{I})}{\mathfrak{H}}_{def}
				\label{E:METRICCURRENTJUNKTERMS}	\\
		& \ \
			+ 
			2 t^{\Blowupexp + 1} 
			g^{ab} 
			(\partial_a \partial_{\vec{I}} n)
			\leftexp{(Junk;\vec{I})}{\mathfrak{M}}_b
			\notag
				\\
		& \ \
			+
			t^{\Blowupexp + 1} 
			n
			g^{ab} g^{cd}
			(\partial_c \partial_{\vec{I}} g_{ab})
			\leftexp{(Junk;\vec{I})}{\mathfrak{M}}_d
			\notag \\
	& \ \ 
		- t^{\Blowupexp + 1} 
			n
			g^{ab} 
			g^{cd} 
			(\partial_a \partial_{\vec{I}} g_{bc}) 
			\leftexp{(Junk;\vec{I})}{\mathfrak{M}}_d
			- 
			t^{\Blowupexp + 1}
			n
			g^{ab} 
			(\partial_a \partial_{\vec{I}} g_{bc})
			\leftexp{(Junk;\vec{I})}{\widetilde{\mathfrak{M}}}^c
			\notag
				\\
	& \ \
		+ 2 t^{2 \Blowupexp + 2}
			(\partial_j g^{ab} )
			(\partial_{\vec{I}} \SecondFund_{\ a}^j)
			(\partial_b \partial_{\vec{I}} n)
			\notag \\
	& \ \
			+ 
			t^{2 \Blowupexp + 2} 
			\left\lbrace
				\partial_j (n g^{ij} g^{bc})
			\right\rbrace
			(\partial_{\vec{I}} \SecondFund_{\ b}^a)
			(\partial_{\vec{I}} \partial_i g_{ac})
			+ 
			t^{2 \Blowupexp + 2}
			\left\lbrace
				\partial_j (n g^{ab} g^{cd})
			\right\rbrace
			(\partial_{\vec{I}} \SecondFund_{\ c}^j)
			(\partial_d \partial_{\vec{I}} g_{ab})
			\notag \\
	& \ \ 
		- t^{2 \Blowupexp + 2}
			\left\lbrace
				\partial_j (n g^{ab} g^{cd})
			\right\rbrace
			(\partial_{\vec{I}} \SecondFund_{\ c}^j)
			(\partial_a \partial_{\vec{I}} g_{bd}) 
			- 
			t^{2 \Blowupexp + 2}
			\left\lbrace
				\partial_j (n g^{ab} g^{jc})
			\right\rbrace
			(\partial_{\vec{I}} \SecondFund_{\ c}^d)
			(\partial_a \partial_{\vec{I}} g_{bd}).
			\notag
	\end{align}
	\end{subequations}
	
\end{lemma}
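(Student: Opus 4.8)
The plan is to verify \eqref{E:DIVIDFORENERGYCURRENT} by direct computation, expanding $\partial_{\alpha}{\leftexp{(\vec{I})}{\mathbf{J}^{\alpha}}} = \partial_t {\leftexp{(\vec{I})}{\mathbf{J}^0}} + \partial_j {\leftexp{(\vec{I})}{\mathbf{J}^j}}$ with the help of the commuted equations of Lemma~\ref{L:EQUATIONSFORTOOPORDERDERIVATIVESOFMETRICANDSECONDFUND} (taking $\Pos = 1$ in \eqref{E:COMMUTEDPARTIALTONEDERIVATIVEOFMETRICCMC} and \eqref{E:COMMUTEDPARTIALTKCMC}) and of the commuted momentum constraints \eqref{E:TOPORDERDIFFERENTIATEDBORDERMOMENTUMCONSTRAINTINDEXDOWN}--\eqref{E:TOPORDERDIFFERENTIATEDBORDERMOMENTUMCONSTRAINTINDEXUP}, and then sorting the resulting terms into the two friction terms, the border error term $\leftexp{(Border;\vec{I})}{\mathfrak{J}}$, and the junk error term $\leftexp{(Junk;\vec{I})}{\mathfrak{J}}$.

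First I would compute $\partial_t {\leftexp{(\vec{I})}{\mathbf{J}^0}}$, where ${\leftexp{(\vec{I})}{\mathbf{J}^0}} = |t^{\Blowupexp+1}\partial_{\vec{I}}\SecondFund|_g^2 + \frac{1}{4}|t^{\Blowupexp+1}\partial\partial_{\vec{I}}g|_g^2$, distinguishing three sources. (i) When $\partial_t$ falls on a factor $t^{\Blowupexp+1}\partial_{\vec{I}}\SecondFund$ I substitute \eqref{E:COMMUTEDPARTIALTKCMC}; its leading term $(\Blowupexp + \Pos - 1)t^{\Blowupexp + \Pos - 1}\partial_{\vec{I}}\SecondFund_{\ j}^i = \Blowupexp t^{\Blowupexp}\partial_{\vec{I}}\SecondFund_{\ j}^i$ produces, after contracting against the remaining factor of $\partial_{\vec{I}}\SecondFund$ through $g$, exactly the friction term $\frac{2\Blowupexp}{t}|t^{\Blowupexp+1}\partial_{\vec{I}}\SecondFund|_g^2$; the source terms $-t^{\Blowupexp+1}g^{ia}\partial_a\partial_j\partial_{\vec{I}}n$ and $\frac{1}{2}t^{\Blowupexp+1}ng^{ic}g^{ab}\lbrace\cdots\rbrace$ supply the genuinely top-order (order $N+2$) couplings to be cancelled below; and $\leftexp{(Border;1;\vec{I})}{\mathfrak{K}}$, $\leftexp{(Junk;1;\vec{I})}{\mathfrak{K}}$ feed directly into the first summands of \eqref{E:METRICCURRENTBORDERTERMS} and \eqref{E:METRICCURRENTJUNKTERMS} respectively. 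Analogously, (ii) when $\partial_t$ falls on a factor $t^{\Blowupexp+1}\partial\partial_{\vec{I}}g$ I substitute \eqref{E:COMMUTEDPARTIALTONEDERIVATIVEOFMETRICCMC}; the $\frac{\Blowupexp+1}{t}\delta_{\ j}^a$ part of its leading coefficient produces the second friction term $\frac{\Blowupexp+1}{2t}|t^{\Blowupexp+1}\partial\partial_{\vec{I}}g|_g^2$, the $-2t^{\Blowupexp+1}ng_{ia}\partial_e\partial_{\vec{I}}\SecondFund_{\ j}^a$ source is the other top-order coupling, and $\leftexp{(Border;1;\vec{I})}{\mathfrak{H}}$, $\leftexp{(Junk;1;\vec{I})}{\mathfrak{H}}$ pass into \eqref{E:METRICCURRENTBORDERTERMS}, \eqref{E:METRICCURRENTJUNKTERMS}. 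Finally, (iii) when $\partial_t$ falls on one of the metric factors implicit in the pointwise norms $|\cdot|_g$, I use \eqref{E:PARTIALTGCMC}--\eqref{E:PARTIALTGINVERSECMC} (together with the $-2t\SecondFund_{\ j}^a$ part of the leading coefficient in \eqref{E:COMMUTEDPARTIALTONEDERIVATIVEOFMETRICCMC}), which yields precisely the $\SecondFund$-weighted quadratic expressions displayed in the last four lines of \eqref{E:METRICCURRENTBORDERTERMS}; these carry $N$-independent coefficients because they arise purely from the metric-contraction structure, hence are border rather than junk.

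Next I would compute $\partial_j {\leftexp{(\vec{I})}{\mathbf{J}^j}}$ by the Leibniz rule. The key structural point — and the reason the flux has the specific form of Def.~\ref{D:ENERGYCURRENT}, with its metric-flux summands mirroring the four terms of the Ricci expression \eqref{E:RICCICURVATUREEXACT} and appropriately symmetrized — is that the terms in which $\partial_j$ lands on the top-order factors $t^{\Blowupexp+1}\partial_b\partial_{\vec{I}}n$ or $t^{\Blowupexp+1}\partial_i\partial_{\vec{I}}g$ cancel, term by term and index by index, the order $N+2$ couplings produced in steps (i) and (ii), so that nothing of order $N+2$ survives. The terms in which $\partial_j$ lands on the contracted upper index of $\partial_{\vec{I}}\SecondFund_{\ a}^j$ produce the divergences $t^{\Blowupexp+1}\partial_a\partial_{\vec{I}}\SecondFund_{\ i}^a$ and $t^{\Blowupexp+1}g^{ab}\partial_a\partial_{\vec{I}}\SecondFund_{\ b}^i$, which I eliminate using the commuted momentum constraints \eqref{E:TOPORDERDIFFERENTIATEDBORDERMOMENTUMCONSTRAINTINDEXDOWN}--\eqref{E:TOPORDERDIFFERENTIATEDBORDERMOMENTUMCONSTRAINTINDEXUP}; this is the origin of the $\leftexp{(Border;\vec{I})}{\mathfrak{M}}$, $\leftexp{(Border;\vec{I})}{\widetilde{\mathfrak{M}}}$ (and $\leftexp{(Junk;\vec{I})}{\mathfrak{M}}$, $\leftexp{(Junk;\vec{I})}{\widetilde{\mathfrak{M}}}$) contributions in \eqref{E:METRICCURRENTBORDERTERMS}--\eqref{E:METRICCURRENTJUNKTERMS}. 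The remaining terms — those in which $\partial_j$ lands on a coefficient $n$, $g$, or $g^{-1}$ — are exactly the last five lines of \eqref{E:METRICCURRENTJUNKTERMS}. Adding $\partial_t {\leftexp{(\vec{I})}{\mathbf{J}^0}}$ and $\partial_j {\leftexp{(\vec{I})}{\mathbf{J}^j}}$ and collecting all terms yields \eqref{E:DIVIDFORENERGYCURRENT}.

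The conceptual content is standard energy-method bookkeeping; the main obstacle is purely organizational: one must verify that the index contractions of the order $N+2$ terms coming from the commuted evolution equations in steps (i)--(ii) agree exactly with those produced by differentiating the flux, so that the cancellation is complete, and one must track carefully which coefficients remain $N$-independent — giving the $\mycongstar$-type (border) contributions — versus which accumulate $N$-dependent combinatorial factors from the Leibniz expansions of $\partial_{\vec{I}}$ applied to products and hence are only $\mycong$-type (junk). Once these matchings are checked, the identity \eqref{E:DIVIDFORENERGYCURRENT} follows by straightforward collection of terms.
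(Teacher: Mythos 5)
Your proposal is correct and follows essentially the same route as the paper's proof: a direct computation of $\partial_t {\leftexp{(\vec{I})}{\mathbf{J}^0}} + \partial_j {\leftexp{(\vec{I})}{\mathbf{J}^j}}$ in which the commuted evolution equations (with $\Pos = 1$) are substituted for the time derivatives of the top-order quantities, the commuted momentum constraints are substituted for the divergences of $\partial_{\vec{I}} \SecondFund$, the $\partial_t$-derivatives of the metric coefficients are handled via \eqref{E:PARTIALTGCMC}--\eqref{E:PARTIALTGINVERSECMC}, and the flux is designed so that the order-$(N+2)$ couplings cancel. Your bookkeeping of which terms land in the friction, border, and junk categories matches the paper's.
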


\begin{proof}
	We view RHSs~\eqref{E:METRICCURRENT0}-\eqref{E:METRICCURRENTSPACE}
	as quadratic forms in $(\partial_{\vec{I}} \SecondFund, \partial \partial_{\vec{I}} g, \partial \partial_{\vec{I}} n)$
	with coefficients that depend on $g$, $g^{-1}$, and $n$.
	We now consider the expression
	$\partial_{\alpha} {\leftexp{(\vec{I})}{\mathbf{J}^{\alpha}}}$.
	On RHS~\eqref{E:METRICCURRENTJUNKTERMS},
	we place all terms in which spatial derivatives $\partial_j$ fall on the coefficients. 
	In contrast, when $\partial_t$ falls on
	$g$ or $g^{-1}$, we use \eqref{E:PARTIALTGCMC}-\eqref{E:PARTIALTGINVERSECMC}
	to substitute for $\partial_t g$ and $\partial_t g^{-1}$ and
	then place the resulting terms on RHS~\eqref{E:METRICCURRENTBORDERTERMS}
	(as the last four products).
	Next, we consider all of the terms 
	in the expression $\partial_{\alpha} {\leftexp{(\vec{I})}{\mathbf{J}^{\alpha}}}$
	in which a derivative falls on one of
	$\partial_{\vec{I}} \SecondFund$, 
	$\partial \partial_{\vec{I}} g$,
	or
	$\partial \partial_{\vec{I}} n$.
	For these terms, we use equations
	\eqref{E:TOPORDERDIFFERENTIATEDBORDERMOMENTUMCONSTRAINTINDEXDOWN}-\eqref{E:TOPORDERDIFFERENTIATEDBORDERMOMENTUMCONSTRAINTINDEXUP}
	and
	\eqref{E:COMMUTEDPARTIALTONEDERIVATIVEOFMETRICCMC}-\eqref{E:COMMUTEDPARTIALTKCMC} 
	for algebraic substitution, where
	$\Pos = 1$ (by assumption) in \eqref{E:COMMUTEDPARTIALTONEDERIVATIVEOFMETRICCMC}-\eqref{E:COMMUTEDPARTIALTKCMC}.
	More precisely, 
	in the expression $\partial_{\alpha} {\leftexp{(\vec{I})}{\mathbf{J}^{\alpha}}}$,
	we use equation
	\eqref{E:COMMUTEDPARTIALTKCMC}
	to substitute for the factor
	$\partial_t (t^{\Blowupexp + 1} \partial_{\vec{I}} \SecondFund_{\ d}^c)$
	in
	$
	2 g_{ac} g^{bd}
	(t^{\Blowupexp + 1} \partial_{\vec{I}} \SecondFund_{\ b}^a)
	\partial_t (t^{\Blowupexp + 1} \partial_{\vec{I}} \SecondFund_{\ d}^c)
	$,
	equation \eqref{E:COMMUTEDPARTIALTONEDERIVATIVEOFMETRICCMC}
	to substitute for the factor $\partial_t (t^{\Blowupexp + 1} \partial_{\vec{I}} \partial_d \partial g_{ef})$ in
	$ 
	\frac{1}{2} 
	g^{ad} g^{be} g^{cf}
	(t^{\Blowupexp + 1} \partial_{\vec{I}} \partial_a \partial g_{bc})
	\partial_t (t^{\Blowupexp + 1} \partial_{\vec{I}} \partial_d \partial g_{ef})
	$,
	equation \eqref{E:TOPORDERDIFFERENTIATEDBORDERMOMENTUMCONSTRAINTINDEXDOWN}
	to substitute for the factor 
	$(t^{\Blowupexp + 1} \partial_j \partial_{\vec{I}} \SecondFund_{\ a}^j)$ in
	$
	2 g^{ab} 
			(t^{\Blowupexp + 1} \partial_j \partial_{\vec{I}} \SecondFund_{\ a}^j)
			(t^{\Blowupexp + 1} \partial_b \partial_{\vec{I}} n)
	$,
	the factor $(t^{\Blowupexp + 1} \partial_j \partial_{\vec{I}} \SecondFund_{\ c}^j)$ in
	$
	n
			g^{ab} g^{cd}
			(t^{\Blowupexp + 1} \partial_j \partial_{\vec{I}} \SecondFund_{\ c}^j)
			(t^{\Blowupexp + 1} \partial_d \partial_{\vec{I}} g_{ab})
	$,
	and the factor $(t^{\Blowupexp + 1} \partial_j \partial_{\vec{I}} \SecondFund_{\ c}^j)$ in
	$
	- n
			g^{ab} 
			g^{cd} 
			(t^{\Blowupexp + 1} \partial_j \partial_{\vec{I}} \SecondFund_{\ c}^j)
			(t^{\Blowupexp + 1} \partial_a \partial_{\vec{I}} g_{bd}) 
	$,
	and equation \eqref{E:TOPORDERDIFFERENTIATEDBORDERMOMENTUMCONSTRAINTINDEXUP}
	to substitute for the factor
	$g^{jc} (t^{\Blowupexp + 1} \partial_j \partial_{\vec{I}} \SecondFund_{\ c}^d)$
	in
	$
	- 
			n
			g^{ab} 
			g^{jc} 
			(t^{\Blowupexp + 1} \partial_j \partial_{\vec{I}} \SecondFund_{\ c}^d)
			(t^{\Blowupexp + 1} \partial_a \partial_{\vec{I}} g_{bd})
	$.
	These algebraic substitutions lead to the elimination of all products that depend
	on a derivative of 
	$(\partial_{\vec{I}} \SecondFund, \partial \partial_{\vec{I}} g, \partial \partial_{\vec{I}} n)$,
	at the expense of introducing products that depend on the inhomogeneous terms, 
	for example
	$
	2 g_{ac} g^{bd}
	(t^{\Blowupexp + 1} \partial_{\vec{I}} \SecondFund_{\ b}^a)
	\leftexp{(Border;1;\vec{I})}{\mathfrak{K}}_{\ d}^c
	$
	and
	$
	2 g_{ac} g^{bd}
	(t^{\Blowupexp + 1} \partial_{\vec{I}} \SecondFund_{\ b}^a)
	\leftexp{(Junk;1;\vec{I})}{\mathfrak{K}}_{\ d}^c
	$.
	We place the borderline error term products such as
	$
	2 g_{ac} g^{bd}
	(t^{\Blowupexp + 1} \partial_{\vec{I}} \SecondFund_{\ b}^a)
	\leftexp{(Border;1;\vec{I})}{\mathfrak{K}}_{\ d}^c
	$
	on RHS~\eqref{E:METRICCURRENTBORDERTERMS}
	and the junk error term products such as
	$
	2 g_{ac} g^{bd}
	(t^{\Blowupexp + 1} \partial_{\vec{I}} \SecondFund_{\ b}^a)
	\leftexp{(Junk;1;\vec{I})}{\mathfrak{K}}_{\ d}^c
	$
	on RHS~\eqref{E:METRICCURRENTJUNKTERMS}.
	Moreover,
	as the first product on RHS~\eqref{E:DIVIDFORENERGYCURRENT},
	we place 
	$
	\frac{2 \Blowupexp}{t} 
			\left|
				t^{\Blowupexp + 1} \partial_{\vec{I}} \SecondFund 
			\right|_g^2
	$,
	which is generated by the first term on RHS~\eqref{E:COMMUTEDPARTIALTKCMC}
	when we use equation \eqref{E:COMMUTEDPARTIALTKCMC} (with $\Pos = 1$)
	to substitute for the factor $\partial_t (t^{\Blowupexp + 1} \partial_{\vec{I}} \SecondFund_{\ d}^c)$
	in the expression
	$
	2 g_{ac} g^{bd}
	(t^{\Blowupexp + 1} \partial_{\vec{I}} \SecondFund_{\ b}^a)
	\partial_t (t^{\Blowupexp + 1} \partial_{\vec{I}} \SecondFund_{\ d}^c)
	$.
	Finally,
	as the second product on RHS~\eqref{E:DIVIDFORENERGYCURRENT},
	we place 
	$
		\frac{\Blowupexp + 1}{2 t} 
				\left|
					t^{\Blowupexp + 1} \partial \partial_{\vec{I}} g 
				\right|_g^2
	$,
	which is generated by the first term on RHS~\eqref{E:COMMUTEDPARTIALTONEDERIVATIVEOFMETRICCMC}
	when we use equation \eqref{E:COMMUTEDPARTIALTONEDERIVATIVEOFMETRICCMC} 
	to substitute for the factor $\partial_t (t^{\Blowupexp + 1} \partial_{\vec{I}} \partial_d \partial g_{ef})$
	in the expression
	$ 
	\frac{1}{2} 
	g^{ad} g^{be} g^{cf}
	(t^{\Blowupexp + 1} \partial_{\vec{I}} \partial_a \partial g_{bc})
	\partial_t (t^{\Blowupexp + 1} \partial_{\vec{I}} \partial_d \partial g_{ef})
	$.
	In total, these steps yield the lemma.
	
\end{proof}

\subsection{Control of the error terms}
\label{SS:TOPORDERMETRICENERGYESTIMATESCONTROLOFERRORTERMS}
In this subsection, we derive estimates for
the error terms that will arise when we derive energy estimates
for solutions to the equations of
Lemma~\ref{SS:EQUATIONSFORTOPDERIVATIVESOFMETRIC}.

\subsubsection{Pointwise estimates for the error terms in the divergence of the energy current}
\label{SSS:POINTWISECONTROLOFERRORTERMSINDIVERGENCEOFTHEENERGYCURRENTS}
In this subsubsection, we derive pointwise estimates for 
the error terms 
$\leftexp{(Border;\vec{I})}{\mathfrak{J}}$
and
$\leftexp{(Junk;\vec{I})}{\mathfrak{J}}$
that appear in the expression \eqref{E:DIVIDFORENERGYCURRENT} for
$\partial_{\alpha} {\leftexp{(\vec{I})}{\mathbf{J}^{\alpha}}}$.

\begin{lemma}[\textbf{Pointwise estimates for the error terms in the divergence of the energy current}]
	Let $\vec{I}$ be a top-order spatial multi-index, that is, a multi-index with $|\vec{I}|=N$.
	Assume that the bootstrap assumptions \eqref{E:BOOTSTRAPASSUMPTIONS} hold.
	There exists a universal constant $C_* > 0$ \underline{independent of $N$ and $\Blowupexp$}
	such that if $N$ is sufficiently large in a manner that depends on $\Blowupexp$
	and if $\varepsilon$ is sufficiently small,
	then the error terms $\leftexp{(Border;\vec{I})}{\mathfrak{J}}$
	and $\leftexp{(Junk;\vec{I})}{\mathfrak{J}}$
	from \eqref{E:METRICCURRENTBORDERTERMS} and \eqref{E:METRICCURRENTJUNKTERMS}
	verify the following pointwise estimates for $t \in (\TBoot,1]$
	(where, as we described in Subsect.\,\ref{SSS:CONSTANTS}, constants ``$C$'' are allowed to depend on $N$ and other quantities):
\begin{subequations}
\begin{align} \label{E:POINTWISEBOUNDMETRICCURRENTBORDERTERMS}
	\left|
		\leftexp{(Border;\vec{I})}{\mathfrak{J}}
	\right|
	& \leq 
				C_*
				t^{2 \Blowupexp+1}
				\left|
					\partial_{\vec{I}} \SecondFund
				\right|_g^2
				+
				C_*
				t^{2 \Blowupexp+1}
				\left|
					\partial \partial_{\vec{I}} g
				\right|_g^2
				+
				C_*
				t^{2 \Blowupexp+1}
				\left|
					\partial \partial_{\vec{I}} n
				\right|_g^2
					\\
	& \ \
				+
				C_*
				t
				\left|
					\leftexp{(Border;1;\vec{I})}{\mathfrak{K}}
				\right|_g^2
				+
				C_*
				t
				\left|
					\leftexp{(Border;1;\vec{I})}{\mathfrak{H}}
				\right|_g^2
				\notag	\\
	& \ \
				+
				C_*
				t
				\left|
					\leftexp{(Border;\vec{I})}{\mathfrak{M}}
				\right|_g^2
				+
				C_*
				t
				\left|
					\leftexp{(Border;\vec{I})}{\widetilde{\mathfrak{M}}}
				\right|_g^2,
				\notag
\end{align}

\begin{align} \label{E:POINTWISEBOUNDMETRICCURRENTJUNKTERMS}
	\left|
		\leftexp{(Junk;\vec{I})}{\mathfrak{J}}
	\right|
	& \leq
				C
				t^{2 \Blowupexp + 1 + \Room}
				\left|
					\partial_{\vec{I}} \SecondFund
				\right|_g^2
				+
				C
				t^{2 \Blowupexp + 1 + \Room}
				\left|
					\partial \partial_{\vec{I}} g
				\right|_g^2
				+
				C
				t^{2 \Blowupexp + 1 + \Room}
				\left|
					\partial \partial_{\vec{I}} n
				\right|_g^2
					\\
	& \ \
				+
				C
				t^{1 - \Room}
				\left|
					\leftexp{(Junk;1;\vec{I})}{\mathfrak{K}}
				\right|_g^2
				+
				C
				t^{1 - \Room}
				\left|
					\leftexp{(Junk;1;\vec{I})}{\mathfrak{H}}
				\right|_g^2
				\notag	\\
	& \ \
				+
				C
				t^{1 - \Room}
				\left|
					\leftexp{(Junk;\vec{I})}{\mathfrak{M}}
				\right|_g^2
				+
				C
				t^{1 - \Room}
				\left|
					\leftexp{(Junk;\vec{I})}{\mathfrak{M}}
				\right|_g^2
				+
				C
				t^{1 - \Room}
				\left|
					\leftexp{(Junk;\vec{I})}{\widetilde{\mathfrak{M}}}
				\right|_g^2.
				\notag
\end{align}
\end{subequations}

\end{lemma}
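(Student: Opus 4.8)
The plan is to prove the two pointwise bounds \eqref{E:POINTWISEBOUNDMETRICCURRENTBORDERTERMS} and \eqref{E:POINTWISEBOUNDMETRICCURRENTJUNKTERMS} by inspecting term-by-term the explicit expressions \eqref{E:METRICCURRENTBORDERTERMS} and \eqref{E:METRICCURRENTJUNKTERMS} for $\leftexp{(Border;\vec{I})}{\mathfrak{J}}$ and $\leftexp{(Junk;\vec{I})}{\mathfrak{J}}$, and applying, in each case, the $g$-Cauchy--Schwarz inequality together with the pointwise bounds $|g^{-1}|_g \lesssim 1$, $|n| \lesssim 1 + \MetricLownorm(t) \lesssim 2$, and similar $L^\infty$-type controls that follow from Def.~\ref{D:LOWNORMS}, Lemma~\ref{L:BASICESTIMATEFORKASNER}, Lemma~\ref{L:SOBOLEVBORROWALITTLEHIGHNORM}, and the bootstrap assumptions \eqref{E:BOOTSTRAPASSUMPTIONS}. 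The governing philosophy is that the ``borderline'' current error terms carry the weight $t^{2\Blowupexp + 1}$ (no spare power of $t$), and the constants in their bounds must be $N$- and $\Blowupexp$-independent (hence ``$C_*$''), whereas the ``junk'' current error terms have a genuine surplus, which manifests as the favorable weight $t^{2\Blowupexp + 1 + \Room}$ and permits $N$-dependent constants ``$C$''.

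For \eqref{E:POINTWISEBOUNDMETRICCURRENTBORDERTERMS}, I would treat the terms of \eqref{E:METRICCURRENTBORDERTERMS} in three groups. First, the terms of the form $t^{\Blowupexp+1}(\text{metric factors})(\partial_{\vec{I}}\SecondFund \text{ or } \partial\partial_{\vec{I}}g)(\text{inhomogeneous term})$ --- namely the products involving $\leftexp{(Border;1;\vec{I})}{\mathfrak{K}}$, $\leftexp{(Border;1;\vec{I})}{\mathfrak{H}}$, $\leftexp{(Border;\vec{I})}{\mathfrak{M}}$, and $\leftexp{(Border;\vec{I})}{\widetilde{\mathfrak{M}}}$: here $g$-Cauchy--Schwarz plus Young's inequality in the form $|xy| \leq \tfrac12 t^{\Blowupexp} |x|^2 + \tfrac12 t^{-\Blowupexp}|y|^2$ (used so that $t^{\Blowupexp+1} \cdot t^{\Blowupexp+1}$ is split as $t^{2\Blowupexp+1}\cdot(\cdot)^2 + t \cdot (\cdot)^2$) produces exactly the terms $C_* t^{2\Blowupexp+1}|\partial_{\vec{I}}\SecondFund|_g^2$, $C_* t^{2\Blowupexp+1}|\partial\partial_{\vec{I}} g|_g^2$, $C_* t^{2\Blowupexp+1}|\partial\partial_{\vec{I}} n|_g^2$, and the $C_* t|\leftexp{(Border;\cdot)}{\cdots}|_g^2$ contributions on RHS~\eqref{E:POINTWISEBOUNDMETRICCURRENTBORDERTERMS}, with the metric factors absorbed into $C_*$ using $|g^{-1}|_g \lesssim 1$ and $|n|\lesssim 1$. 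Second, the four quadratic-in-$\SecondFund$ and quadratic-in-$\partial\partial_{\vec I}g$ products carrying weight $t^{2\Blowupexp+2}$ and an explicit factor $\SecondFund$: here I would use Def.~\ref{D:LOWNORMS} to write $|t\SecondFund|_g \lesssim 1$, so each such term is $\lesssim t^{2\Blowupexp+1}|\partial_{\vec I}\SecondFund|_g^2$ or $\lesssim t^{2\Blowupexp+1}|\partial\partial_{\vec I}g|_g^2$ with a $C_*$ constant, as required. The key point to verify carefully here is that no factor of $t^{-\Worstexp}$ sneaks in from passing between frame and $g$ norms --- one must keep everything in $|\cdot|_g$ and use only $|t\SecondFund|_g$, $|g^{-1}|_g$, $|n|$, never $|\cdot|_{Frame}$; this is precisely the structural feature stressed in Subsect.~\ref{SS:MAINIDEASINPROOF}.

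For \eqref{E:POINTWISEBOUNDMETRICCURRENTJUNKTERMS}, the argument is analogous but now I may spend a power $t^{\Room}$ freely. The inhomogeneous-term products involving $\leftexp{(Junk;1;\vec{I})}{\mathfrak{K}}$, $\leftexp{(Junk;1;\vec{I})}{\mathfrak{H}}$, $\leftexp{(Junk;\vec{I})}{\mathfrak{M}}$, and $\leftexp{(Junk;\vec{I})}{\widetilde{\mathfrak{M}}}$ are handled as before but with Young's inequality split as $t^{2\Blowupexp+1+\Room}(\cdot)^2 + t^{1-\Room}(\cdot)^2$. The terms on the last three lines of \eqref{E:METRICCURRENTJUNKTERMS} involve $\partial_j$ of $g^{-1}$ or of $n g^{ij}g^{bc}$ etc.; I would estimate $\partial_j g^{-1}$ and $\partial_j(ng g)$ in $L^\infty$ using \eqref{E:UPTOFOURDERIVATIVESOFGINVERSELINFINITYSOBOLEV}--\eqref{E:UPTOFOURDERIVATIVESOFLAPSELINFINITYSOBOLEV} of Lemma~\ref{L:SOBOLEVBORROWALITTLEHIGHNORM}, which costs a factor $t^{-2\Worstexp - \Blowupexp\updelta}$ or so; since \eqref{E:WORSTEXPANDROOMLOTSOFUSEFULINEQUALITIES} gives $2\Worstexp + 4\Room < 1/3$ and $\Blowupexp\updelta$ can be taken $< \Room$ by choosing $N$ large, the net surviving power of $t$ is at least $t^{\Room}$ after the Cauchy--Schwarz/Young split, yielding the stated bound with $N$-dependent $C$. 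I expect the main obstacle to be purely bookkeeping: making sure that in \emph{every} one of the roughly fifteen products, the accounting of frame-vs-$g$ norm conversions, the weight powers, and the $\Blowupexp\updelta$ losses leaves a genuine nonnegative surplus in the junk case and exactly zero surplus (but an $N$-independent constant) in the borderline case --- there is no conceptual difficulty, only the risk of an off-by-a-power error, and the safeguard is to systematically keep the estimate in the geometric norm $|\cdot|_g$ and invoke \eqref{E:WORSTEXPANDROOMLOTSOFUSEFULINEQUALITIES} at the end.
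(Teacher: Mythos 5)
Your proposal is correct and follows essentially the same route as the paper: for the borderline terms, $g$-Cauchy--Schwarz plus Young's inequality with the $t^{2\Blowupexp+1}/t$ weight split, together with the $N$-independent pointwise bounds $|g|_g=|g^{-1}|_g\leq C_*$, $|n|\leq C_*$, $|\SecondFund|_g\leq C_* t^{-1}$; and for the junk terms, the same argument with the extra $t^{\Room}$ surplus supplied by $\|n-1\|_{L^{\infty}(\Sigma_t)}\lesssim t^{\Room}$ and by bounding $\partial g^{-1}$ and $\partial n$ in $L_g^{\infty}$ via Lemma~\ref{L:SOBOLEVBORROWALITTLEHIGHNORM} and \eqref{E:POINTWISENORMCOMPARISON}, closing with \eqref{E:WORSTEXPANDROOMLOTSOFUSEFULINEQUALITIES}. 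No gaps.
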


\begin{proof}
Throughout this proof, we will assume that $\Blowupexp \updelta$ is sufficiently small
(and in particular that $\Blowupexp \updelta < \Room$);
in view of the discussion in Subsect.\,\ref{SS:SOBOLEVEMBEDDING}, we see that
at fixed $\Blowupexp$, this can be achieved by choosing $N$ to be sufficiently large.
	
	The estimate \eqref{E:POINTWISEBOUNDMETRICCURRENTBORDERTERMS}
	follows in a straightforward fashion from applying the $g$-Cauchy--Schwarz inequality
	and Young's inequality (in the form $|ab| \leq \frac{1}{2}(a^2 + b^2)$)
	to the products on RHS~\eqref{E:METRICCURRENTBORDERTERMS}
	and using the estimate $|g|_g = |g^{-1}|_g \leq C_*$
	as well as the pointwise estimates
	$|n| \leq C_*$
	and
	$|\SecondFund|_g \leq C_* t^{-1}$,
	which follow from
	\eqref{E:WORSTEXPANDROOMLOTSOFUSEFULINEQUALITIES},
	Def.\,\ref{D:LOWNORMS},
	and the bootstrap assumptions.
	
	To prove \eqref{E:POINTWISEBOUNDMETRICCURRENTJUNKTERMS},
	we first note the estimate
	$
	\| n-1 \|_{L^{\infty}(\Sigma_t)}
	\leq C t^{\Room}
	$,
	which follows from
	Def.~\ref{D:LOWNORMS},
	\eqref{E:WORSTEXPANDROOMLOTSOFUSEFULINEQUALITIES},
	and the bootstrap assumptions.
	Next, we use \eqref{E:POINTWISENORMCOMPARISON} with $l=2$ and $m=1$ to deduce
	$
	\| \partial g^{-1} \|_{L_g^{\infty}(\Sigma_t)}
	\leq
	t^{-3 \Worstexp}
	\| g^{-1} \|_{\dot{W}_{Frame}^{1,\infty}(\Sigma_t)}
	$.
	Also using
	\eqref{E:WORSTEXPANDROOMLOTSOFUSEFULINEQUALITIES}
	and
	\eqref{E:UPTOFOURDERIVATIVESOFGINVERSELINFINITYSOBOLEV},
	we deduce that
	$	
	\| \partial g^{-1} \|_{L_g^{\infty}(\Sigma_t)}
	\leq 
	C t^{-5 \Worstexp - \Blowupexp \updelta}
	\leq 
	C
	t^{\Room - 1}
	$.
	Using similar reasoning and the bound \eqref{E:UPTOFOURDERIVATIVESOFLAPSELINFINITYSOBOLEV},
	we deduce that
	$	
	\| \partial n \|_{L_g^{\infty}(\Sigma_t)}
	\leq 
	C t^{2 - 11 \Worstexp - \Room - \Blowupexp \updelta}
	\leq 
	C $.
	Using these three bounds
	to help control the $|\cdot|_g$ norms of the products
	on RHS~\eqref{E:METRICCURRENTJUNKTERMS},
	we can derive inequality
	\eqref{E:POINTWISEBOUNDMETRICCURRENTJUNKTERMS}
	using arguments similar to the ones
	we used to deduce \eqref{E:POINTWISEBOUNDMETRICCURRENTBORDERTERMS}.
	
	\end{proof}

\subsubsection{$L^2$ estimates for the error terms in the divergence of the energy current}
\label{SSS:L2CONTROLOFERRORTERMSINDIVERGENCEOFTHEENERGYCURRENTS}
In this subsubsection,
we bound the error terms from Lemma~\ref{L:EQUATIONSFORTOOPORDERDERIVATIVESOFMETRICANDSECONDFUND}
in the norm $\| \cdot \|_{L_g^2(\Sigma_t)}$.

\begin{lemma}[$L^2$ \textbf{control of the error terms in the top-order energy estimates for $g$ and $\SecondFund$}] 
	\label{L:L2CONTROLOFERRORTERMSINTOPORDERMETRICENERGYESTIMATES}
	Assume that the bootstrap assumptions \eqref{E:BOOTSTRAPASSUMPTIONS} hold.
	There exists a universal constant $C_* > 0$ \underline{independent of $N$ and $\Blowupexp$}
	such that if $N$ is sufficiently large 
	and if $\varepsilon$ is sufficiently small,
	then the following estimates hold for $t \in (\TBoot,1]$
	(where, as we described in Subsect.\,\ref{SSS:CONSTANTS}, constants ``$C$'' are allowed to depend on $N$ and other quantities).
	
	\medskip
	\noindent \underline{\textbf{Borderline top-order error term estimates}}.
	For each top-order spatial multi-index $\vec{I}$ (i.e., $|\vec{I}| = N$),
	the following estimates hold for the
	error terms from
	\eqref{E:TOPORDERDIFFERENTIATEDBORDERMOMENTUMCONSTRAINTINDEXDOWNERRORTERM},
	\eqref{E:TOPORDERDIFFERENTIATEDBORDERMOMENTUMCONSTRAINTINDEXUPERRORTERM},
	\eqref{E:TOPORDERDIFFERENTIATEDBORDERONCEDIFFERENTIATEDMETRICERRORTERM},
	and 
	\eqref{E:SECONDFUNDCOMMUTEDBORDERLINETERM},
	where $\Pos = 1$ 
	on RHSs~\eqref{E:TOPORDERDIFFERENTIATEDBORDERONCEDIFFERENTIATEDMETRICERRORTERM},
	and 
	\eqref{E:SECONDFUNDCOMMUTEDBORDERLINETERM}:
	\begin{subequations}
	\begin{align}
		\left\|
			\leftexp{(Border;\vec{I})}{\mathfrak{M}}
		\right\|_{L_g^2(\Sigma_t)}
		& \leq 
			C_* 
			\left\|
				t^{\Blowupexp} \partial \partial_{\vec{I}} g
			\right\|_{L_g^2(\Sigma_t)},
				\label{E:TOPORDERDIFFERENTIATEDBORDERMOMENTUMCONSTRAINTINDEXDOWNERRORTERML2ESTIMATE}	\\
		\left\|
			\leftexp{(Border;\vec{I})}{\widetilde{\mathfrak{M}}}
		\right\|_{L_g^2(\Sigma_t)}
		& \leq 
			C_* 
			\left\|
				t^{\Blowupexp} \partial \partial_{\vec{I}} g
			\right\|_{L_g^2(\Sigma_t)},
				\label{E:TOPORDERDIFFERENTIATEDBORDERMOMENTUMCONSTRAINTINDEXUPERRORTERML2ESTIMATE} 
				\\
		\left\|
				\leftexp{(Border;1,\vec{I})}{\mathfrak{H}}
		\right\|_{L_g^2(\Sigma_t)}
		& \leq 
			C_* 
			\left\|
				t^{\Blowupexp} \partial_{\vec{I}} \SecondFund
			\right\|_{L_g^2(\Sigma_t)}
			+
			C
			t^{\Room - 1}
			\left\lbrace
				\MetricLownorm(t) + \MetricHighnorm(t)
			\right\rbrace,
			\label{E:TOPORDERDIFFERENTIATEDBORDERMETRICERRORTERML2ESTIMATE}	
				\\
		\left\|
			\leftexp{(Border;1;\vec{I})}{\mathfrak{K}}
		\right\|_{L_g^2(\Sigma_t)}
		& \leq 
			C_* 
			\left\|
				t^{\Blowupexp} \partial_{\vec{I}} \SecondFund
			\right\|_{L_g^2(\Sigma_t)}
			+
			C
			t^{\Room - 1}
			\left\lbrace
				\MetricLownorm(t) + \MetricHighnorm(t)
			\right\rbrace.
			\label{E:SECONDFUNDCOMMUTEDBORDERLINETERML2ESTIMATE}
	\end{align}
	\end{subequations}
	
	\medskip
	
	\noindent \underline{\textbf{Non-borderline top-order error term estimates}}.
	The following estimates hold for the
	error terms from
	\eqref{E:TOPORDERDIFFERENTIATEDJUNKMOMENTUMCONSTRAINTINDEXDOWNERRORTERM},
	\eqref{E:TOPORDERDIFFERENTIATEDJUNKMOMENTUMCONSTRAINTINDEXUPERRORTERM},
	\eqref{E:TOPORDERDIFFERENTIATEDJUNKONCEDIFFERENTIATEDMETRICERRORTERM},
	and 
	\eqref{E:SECONDFUNDCOMMUTEDJUNKTERM},
	where $\Pos = 1$ on 
	RHSs~\eqref{E:TOPORDERDIFFERENTIATEDJUNKONCEDIFFERENTIATEDMETRICERRORTERM}
	and
	\eqref{E:SECONDFUNDCOMMUTEDJUNKTERM}:
	\begin{subequations}
	\begin{align}
	\max_{|\vec{I}|=N}
	\left\|
		\leftexp{(Junk;\vec{I})}{\mathfrak{M}}
	\right\|_{L_g^2(\Sigma_t)}
		& \leq 
			C
			t^{\Room - 1}
			\left\lbrace
				\MetricLownorm(t) + \MetricHighnorm(t)
			\right\rbrace,
				\label{E:TOPORDERDIFFERENTIATEDJUNKMOMENTUMCONSTRAINTINDEXDOWNERRORTERML2ESTIMATE}	\\
		\max_{|\vec{I}|=N}
		\left\|
			\leftexp{(Junk;\vec{I})}{\widetilde{\mathfrak{M}}}
		\right\|_{L_g^2(\Sigma_t)}
		& \leq 
			C
			t^{\Room - 1}
			\left\lbrace
				\MetricLownorm(t) + \MetricHighnorm(t)
			\right\rbrace,
				\label{E:TOPORDERDIFFERENTIATEDJUNKMOMENTUMCONSTRAINTINDEXUPERRORTERML2ESTIMATE} 
				\\
	\max_{|\vec{I}|=N}
	\left\|
		\leftexp{(Junk;1;\vec{I})}{\mathfrak{H}}
	\right\|_{L_g^2(\Sigma_t)}
	& \leq C 
		t^{\Room - 1}
		\left\lbrace
			\MetricLownorm(t) + \MetricHighnorm(t)
		\right\rbrace,
			\label{E:TOPORDERDIFFERENTIATEDJUNKMETRICERRORTERML2ESTIMATE} \\
	\max_{|\vec{I}|=N}
	\left\|
		\leftexp{(Junk;1;\vec{I})}{\mathfrak{K}}
	\right\|_{L_g^2(\Sigma_t)}
	& \leq C 
		t^{\Room - 1}
		\left\lbrace
			\MetricLownorm(t) + \MetricHighnorm(t)
		\right\rbrace.
		\label{E:TOPORDERSECONDFUNDJUNKERRORTERML2BOUND}
	\end{align}
	\end{subequations}
	
\end{lemma}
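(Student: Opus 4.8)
The plan is to treat each error term listed in Lemma~\ref{L:EQUATIONSFORTOOPORDERDERIVATIVESOFMETRICANDSECONDFUND} (together with the lapse error terms from Lemma~\ref{L:COMMUTEDLAPSEEQUATIONS}, which feed into the energy current through Lemma~\ref{L:DIVIDFORENERGYCURRENT}) as a schematic tensor product $\prod_r \partial_{\vec{I}_r} v_r$ with $v_r \in \{g, g^{-1}, \SecondFund, n-1, \partial n, \ldots\}$ and $\sum_r |\vec{I}_r| \le N$. For each such product I would split into cases according to which factor carries the top-order derivatives, bound that factor in $\|\cdot\|_{\dot H^{N}_{Frame}(\Sigma_t)}$ or $\|\cdot\|_{\dot H^{N-1}_{Frame}(\Sigma_t)}$ via the product inequality \eqref{E:FRAMENORML2PRODUCTBOUNDINERMSOFLINFINITYANDHMDOT}, bound the remaining low-order factors in $\|\cdot\|_{L^\infty_{Frame}(\Sigma_t)}$ using Def.~\ref{D:LOWNORMS}, Lemma~\ref{L:BASICESTIMATEFORKASNER}, Lemma~\ref{L:SOBOLEVBORROWALITTLEHIGHNORM}, and (for factors of $n-1$, $\partial n$) Prop.~\ref{P:ELLIPTICANDMAXIMUMPRINCIPLEESTIMATESFORTHELAPSE}, convert between the Frame and $g$ Sobolev norms using the norm-comparison estimate \eqref{E:L2PRODUCTBOUNDINERMSOFLINFINITYANDHMDOT} (equivalently \eqref{E:POINTWISENORMCOMPARISON}), and finally check via the chain of inequalities \eqref{E:WORSTEXPANDROOMLOTSOFUSEFULINEQUALITIES} (and $\Blowupexp\updelta < \Room$, achievable by taking $N$ large) that the resulting power of $t$ is $\ge \Room - 1$, absorbing everything into $C t^{\Room-1}\{\MetricLownorm(t) + \MetricHighnorm(t)\}$.

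\textbf{The borderline terms.} For \eqref{E:TOPORDERDIFFERENTIATEDBORDERMOMENTUMCONSTRAINTINDEXDOWNERRORTERML2ESTIMATE}--\eqref{E:SECONDFUNDCOMMUTEDBORDERLINETERML2ESTIMATE} the point is that the coefficient of the leading piece must be kept \emph{universal} (hence the $C_*$), which is why these terms are written with $\mycongstar$ in Lemma~\ref{L:EQUATIONSFORTOOPORDERDERIVATIVESOFMETRICANDSECONDFUND}. Here I would isolate the single factor carrying all $N$ spatial derivatives ($\partial\partial_{\vec{I}} g$ for $\mathfrak{M}$, $\widetilde{\mathfrak{M}}$; $\partial\partial_{\vec{I}} n$ for $\mathfrak{H}$; $\partial_{\vec{I}} n$ for $\mathfrak{K}$) and estimate it directly in $\|\cdot\|_{L^2_g(\Sigma_t)}$, while the remaining factors are $g$, $g^{-1}$, $n$, $\SecondFund$, for which the bootstrap assumptions give the $C_*$-strength pointwise bounds $|g|_g = |g^{-1}|_g \le C_*$, $|n| \le C_*$, $|\SecondFund|_g \le C_* t^{-1}$. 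For the index-up momentum term \eqref{E:TOPORDERDIFFERENTIATEDBORDERMOMENTUMCONSTRAINTINDEXUPERRORTERML2ESTIMATE} the two factors of $g^{-1}$ are harmless precisely because $|g^{-1}|_g \le C_*$, so no singular power of $t^{-\Worstexp}$ is incurred; the output is $C_* \|t^{\Blowupexp}\partial\partial_{\vec{I}} g\|_{L^2_g(\Sigma_t)}$. For $\mathfrak{H}$ and $\mathfrak{K}$ there is in addition a below-top-order remainder (a derivative falling on $n-1$ or on $\SecondFund$), which I would control using interpolation together with the $\dot H^{N-1}$-level norms of Def.~\ref{D:HIGHNORMS}, producing the stated $C t^{\Room - 1}\{\MetricLownorm + \MetricHighnorm\}$ term.

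\textbf{The junk terms and the main obstacle.} The non-borderline estimates \eqref{E:TOPORDERDIFFERENTIATEDJUNKMOMENTUMCONSTRAINTINDEXDOWNERRORTERML2ESTIMATE}--\eqref{E:TOPORDERSECONDFUNDJUNKERRORTERML2BOUND} follow by the same template; in each case a derivative is forced onto a low-order factor, so one gains a small positive power of $t$ and has room to spare. I expect the genuine difficulty to be \eqref{E:TOPORDERDIFFERENTIATEDJUNKMOMENTUMCONSTRAINTINDEXUPERRORTERML2ESTIMATE}, the junk part of the index-up commuted momentum constraint \eqref{E:TOPORDERDIFFERENTIATEDJUNKMOMENTUMCONSTRAINTINDEXUPERRORTERM}. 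There the product is $(\partial_{\vec{I}_1}g^{-1})(\partial_{\vec{I}_2}g^{-1})(\partial\partial_{\vec{I}_3}g)\,\partial_{\vec{I}_4}\SecondFund$ with $|\vec{I}_3| \le N-1$, and the only top-order options are to put $N$ derivatives on $g^{-1}$ or on $\SecondFund$. Converting the Frame norm of the type $\binom{0}{3}$ tensor $\partial g$ (when it appears below top order) into a $g$-norm costs $t^{-3\Worstexp}$ by \eqref{E:POINTWISENORMCOMPARISON}, and combined with the $t^{-2\Worstexp}$ from $\|g^{\pm1}\|_{L^\infty_{Frame}}$ this makes the naive estimate only barely fail to reach $t^{\Room-1}$. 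The fix is to route the top-order derivatives onto $\SecondFund$ whenever possible and to use the \emph{stronger} near-top-order weighted norms built into Def.~\ref{D:HIGHNORMS} --- in particular $t^{\Blowupexp + 3\Worstexp + \Room}\|\SecondFund\|_{\dot H^{N-1}_{Frame}(\Sigma_t)}$ and $t^{\Blowupexp + 5\Worstexp + 3\Room - 1}\|g\|_{\dot H^{N-1}_{Frame}(\Sigma_t)}$ --- rather than pure interpolation, together with the Hamiltonian-constraint rewriting of $\Sc$ that was used to derive the commuted equations. This is exactly the place where the margin in \eqref{E:WORSTEXPANDROOMLOTSOFUSEFULINEQUALITIES}, i.e.\ $\Worstexp + 2\Room < 1/6$, is consumed, so I would carry out the bookkeeping of powers of $t$ there with particular care.
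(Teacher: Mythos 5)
Your overall strategy --- case-splitting on which factor carries the top-order derivatives, the Frame product inequality \eqref{E:FRAMENORML2PRODUCTBOUNDINERMSOFLINFINITYANDHMDOT} plus interpolation and Lemma~\ref{L:SOBOLEVBORROWALITTLEHIGHNORM} for the low-order factors, the norm comparison \eqref{E:POINTWISENORMCOMPARISON}, and power-counting against \eqref{E:WORSTEXPANDROOMLOTSOFUSEFULINEQUALITIES} --- is exactly the paper's proof, and you correctly locate the tight case (\eqref{E:TOPORDERDIFFERENTIATEDJUNKMOMENTUMCONSTRAINTINDEXUPERRORTERML2ESTIMATE}) as the place where $\Worstexp < 1/6$ is consumed.

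Two descriptive points are off, though neither derails the argument. First, the borderline terms $\leftexp{(Border;1;\vec{I})}{\mathfrak{H}} \mycongstar t^{\Blowupexp+1}(\partial\partial_{\vec{I}} n)\, g\, \SecondFund$ and $\leftexp{(Border;1;\vec{I})}{\mathfrak{K}} \mycongstar t^{\Blowupexp}(\partial_{\vec{I}} n)\, \SecondFund$ contain \emph{no} below-top-order remainder: they are single products of a top-order lapse derivative with undifferentiated $g$ and $\SecondFund$. After $g$-Cauchy--Schwarz with $|g|_g \le C_*$ and $|\SecondFund|_g \le C_* t^{-1}$, the entire content of \eqref{E:TOPORDERDIFFERENTIATEDBORDERMETRICERRORTERML2ESTIMATE}--\eqref{E:SECONDFUNDCOMMUTEDBORDERLINETERML2ESTIMATE} is the top-order elliptic estimate \eqref{E:LAPSETOPORDERELLIPTIC}, which is what produces \emph{both} the $C_*\|t^{\Blowupexp}\partial_{\vec{I}}\SecondFund\|_{L_g^2(\Sigma_t)}$ term and the $C t^{\Room-1}\{\MetricLownorm(t)+\MetricHighnorm(t)\}$ remainder; interpolation of a "derivative falling on $n-1$ or $\SecondFund$" plays no role there. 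Second, in the tight case of \eqref{E:TOPORDERDIFFERENTIATEDJUNKMOMENTUMCONSTRAINTINDEXUPERRORTERML2ESTIMATE} (namely $|\vec{I}_3| = N-1$), the norm that saves the day is the weighted entry $t^{\Blowupexp + 2\Worstexp + \Room}\|\partial g\|_{\dot{H}_g^{N-1}(\Sigma_t)}$ of Def.~\ref{D:HIGHNORMS}, which yields the bound $t^{-6\Worstexp - \Room - \Blowupexp\updelta}\MetricHighnorm(t)$ and hence the requirement $6\Worstexp + 2\Room < 1$ --- not the Frame norms of $\SecondFund$ or $g$ at order $N-1$ that you cite; and the Hamiltonian-constraint substitution for $\Sc$ enters only the derivation of the top-order lapse equation \eqref{E:COMMUTEDLAPSEHIGHORDER}, not the commuted momentum constraint.
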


\begin{proof}
	Throughout this proof, we will assume that $\Blowupexp \updelta$ is sufficiently small
	(and in particular that $\Blowupexp \updelta < \Room$);
	in view of the discussion in Subsect.\,\ref{SS:SOBOLEVEMBEDDING}, we see that
	at fixed $\Blowupexp$, this can be achieved by choosing $N$ to be sufficiently large.
	
	\medskip
	
	\noindent \underline{\textbf{Proof of \eqref{E:TOPORDERDIFFERENTIATEDBORDERMOMENTUMCONSTRAINTINDEXDOWNERRORTERML2ESTIMATE}-\eqref{E:SECONDFUNDCOMMUTEDBORDERLINETERML2ESTIMATE}}}:
	Let $\vec{I}$ be a spatial multi-index with $|\vec{I}|=N$.
	We first use 
	Def.~\ref{D:LOWNORMS},
	the fact that $|g^{-1}|_g \leq C_*$,
	the bootstrap assumptions,
	and $g$-Cauchy--Schwarz
	to bound the product on RHS~\eqref{E:TOPORDERDIFFERENTIATEDBORDERMOMENTUMCONSTRAINTINDEXDOWNERRORTERM}
	as follows:
	$\left|
		\leftexp{(Border;\vec{I})}{\mathfrak{M}}
	 \right|_g
		\leq C_* t^{\Blowupexp + 1} 
		\left\|
			\SecondFund
		\right\|_{L_g^{\infty}(\Sigma_t)}
		|\partial \partial_{\vec{I}} g|_g
		\leq C_* t^{\Blowupexp} |\partial \partial_{\vec{I}} g|_g
	$.
	Taking the norm $\| \cdot \|_{L^2(\Sigma_t)}$ of this inequality,
	we conclude that
	$
		\left\|
			\leftexp{(Border;\vec{I})}{\mathfrak{M}}
		\right\|_{L_g^2(\Sigma_t)}
	\leq
		C_* 
		\left\|
			t^{\Blowupexp} \partial \partial_{\vec{I}} g
		\right\|_{L_g^2(\Sigma_t)}
	$
	as desired.
	The estimate \eqref{E:TOPORDERDIFFERENTIATEDBORDERMOMENTUMCONSTRAINTINDEXUPERRORTERML2ESTIMATE}
	follows similarly based on equation \eqref{E:TOPORDERDIFFERENTIATEDBORDERMOMENTUMCONSTRAINTINDEXUPERRORTERM},
	and we omit the details.
	The estimates \eqref{E:TOPORDERDIFFERENTIATEDBORDERMETRICERRORTERML2ESTIMATE} and \eqref{E:SECONDFUNDCOMMUTEDBORDERLINETERML2ESTIMATE}
	follow similarly based on equations
	\eqref{E:TOPORDERDIFFERENTIATEDBORDERONCEDIFFERENTIATEDMETRICERRORTERM}
	and 
	\eqref{E:SECONDFUNDCOMMUTEDBORDERLINETERM}
	(with $\Pos = 1$ by assumption)
	and the elliptic estimate \eqref{E:LAPSETOPORDERELLIPTIC},
	and we omit the details.
	
	\medskip

\noindent \underline{\textbf{Proof of \eqref{E:TOPORDERDIFFERENTIATEDJUNKMOMENTUMCONSTRAINTINDEXDOWNERRORTERML2ESTIMATE}-\eqref{E:TOPORDERDIFFERENTIATEDJUNKMOMENTUMCONSTRAINTINDEXUPERRORTERML2ESTIMATE}}}:	
We prove only \eqref{E:TOPORDERDIFFERENTIATEDJUNKMOMENTUMCONSTRAINTINDEXUPERRORTERML2ESTIMATE};
the estimate 
\eqref{E:TOPORDERDIFFERENTIATEDJUNKMOMENTUMCONSTRAINTINDEXDOWNERRORTERML2ESTIMATE}
can be proved by applying a similar argument to the products on 
RHS~\eqref{E:TOPORDERDIFFERENTIATEDJUNKMOMENTUMCONSTRAINTINDEXDOWNERRORTERM}
and we omit those details.
Let $\vec{I}$ be a spatial multi-index with $|\vec{I}|=N$.
To obtain the desired bound for the sum on RHS~\eqref{E:TOPORDERDIFFERENTIATEDJUNKMOMENTUMCONSTRAINTINDEXUPERRORTERM},
we first consider the cases in which $|\vec{I}_1| = N$ or $|\vec{I}_2| = N$.
Using that $|g^{-1}|_g \lesssim 1$
and $g$-Cauchy--Schwarz,
we first bound the products under consideration in the norm $\| \cdot \|_{L_g^2(\Sigma_t)}$
by
$
\lesssim
t^{\Blowupexp + 1}
\| \partial g \|_{L_g^{\infty}} 
\| \SecondFund \|_{L_g^{\infty}} 
\| g^{-1} \|_{\dot{H}_g^N}
$.
Applying \eqref{E:POINTWISENORMCOMPARISON}
to 
$\| \partial g \|_{L_g^{\infty}}$ 
(with $l=0$ and $m=3$), 
we deduce that the RHS of the previous expression is
$
\lesssim
t^{\Blowupexp + 1 - 3 \Worstexp}
\| g \|_{\dot{W}_{Frame}^{1,\infty}} 
\| \SecondFund \|_{L_g^{\infty}} 
\| g^{-1} \|_{\dot{H}_g^N}
$.
From Defs.\,\ref{D:LOWNORMS} and~\ref{D:HIGHNORMS},
the estimate
\eqref{E:UPTOFOURDERIVATIVESOFGLINFINITYSOBOLEV},
and the bootstrap assumptions, 
we deduce that the RHS of the previous expression is
$
\lesssim
t^{- 6 \Worstexp - \Room - \Blowupexp \updelta}
\MetricHighnorm(t)
$,
which, 
in view of \eqref{E:WORSTEXPANDROOMLOTSOFUSEFULINEQUALITIES}, 
is $\lesssim \mbox{RHS~\eqref{E:TOPORDERDIFFERENTIATEDJUNKMOMENTUMCONSTRAINTINDEXUPERRORTERML2ESTIMATE}}$ as desired. 
We now consider the case in which $|\vec{I}_3| = N-1$ 
on RHS~\eqref{E:TOPORDERDIFFERENTIATEDJUNKMOMENTUMCONSTRAINTINDEXUPERRORTERM}.
Using that $|g^{-1}|_g \lesssim 1$
and $g$-Cauchy--Schwarz,
we first bound the products under consideration in the norm $\| \cdot \|_{L_g^2(\Sigma_t)}$
by
$
\lesssim
t^{\Blowupexp + 1}
\| g^{-1} \|_{\dot{W}_g^{1,\infty}} \| \SecondFund \|_{L_g^{\infty}} \| \partial g \|_{\dot{H}_g^{N-1}}
+
t^{\Blowupexp + 1}
\| \SecondFund \|_{\dot{W}_g^{1,\infty}} \| \partial g \|_{\dot{H}_g^{N-1}}
$.
Applying \eqref{E:POINTWISENORMCOMPARISON}
to $\| g^{-1} \|_{\dot{W}_g^{1,\infty}}$
(with $l=2$ and $m=0$)
and to
$\| \SecondFund \|_{\dot{W}_g^{1,\infty}}$
(with $l=m=1$),
we deduce that the RHS of the previous expression is
$
\lesssim
t^{\Blowupexp + 1 - 2 \Worstexp}
\| g^{-1} \|_{\dot{W}_{Frame}^{1,\infty}} 
\| \SecondFund \|_{L_g^{\infty}} 
\| \partial g \|_{\dot{H}_g^{N-1}}
+
t^{\Blowupexp + 1 - 2 \Worstexp}
\| \SecondFund \|_{\dot{W}_{Frame}^{1,\infty}} 
\| \partial g \|_{\dot{H}_g^{N-1}}
$.
From Defs.\,\ref{D:LOWNORMS} and~\ref{D:HIGHNORMS},
the estimate
\eqref{E:UPTOFOURDERIVATIVESOFGINVERSELINFINITYSOBOLEV},
and the bootstrap assumptions, 
we deduce that the RHS of the previous expression is
$
\lesssim
t^{- 6 \Worstexp - \Room - \Blowupexp \updelta}
\MetricHighnorm(t)
$,
which, 
in view of \eqref{E:WORSTEXPANDROOMLOTSOFUSEFULINEQUALITIES}, 
is $\lesssim \mbox{RHS~\eqref{E:TOPORDERDIFFERENTIATEDJUNKMOMENTUMCONSTRAINTINDEXUPERRORTERML2ESTIMATE}}$
as desired.
We now consider the case in which $|\vec{I}_4| = N$
on RHS~\eqref{E:TOPORDERDIFFERENTIATEDJUNKMOMENTUMCONSTRAINTINDEXUPERRORTERM}.
Using that $|g^{-1}|_g \lesssim 1$
and $g$-Cauchy--Schwarz,
we first bound the products under consideration in the norm $\| \cdot \|_{L_g^2(\Sigma_t)}$
by
$
\lesssim
t^{\Blowupexp + 1}
\| \partial g \|_{L_g^{\infty}} 
\| \SecondFund \|_{\dot{H}_g^N}
$.
Applying \eqref{E:POINTWISENORMCOMPARISON}
to $\| \partial g \|_{L_g^{\infty}}$
(with $l=0$ and $m=3$), 
we deduce that the RHS of the previous expression is
$
\lesssim
t^{\Blowupexp + 1 - 3 \Worstexp}
\| g \|_{\dot{W}_{Frame}^{1,\infty}} 
\| \SecondFund \|_{\dot{H}_g^N}
$.
From Def.\,\ref{D:HIGHNORMS},
the estimate
\eqref{E:UPTOFOURDERIVATIVESOFGLINFINITYSOBOLEV},
and the bootstrap assumptions, 
we deduce that the RHS of the previous expression is
$
\lesssim
t^{- 5 \Worstexp - \Blowupexp \updelta}
\MetricHighnorm(t)
$,
which, in view of \eqref{E:WORSTEXPANDROOMLOTSOFUSEFULINEQUALITIES}, 
is $\lesssim \mbox{RHS~\eqref{E:TOPORDERDIFFERENTIATEDJUNKMOMENTUMCONSTRAINTINDEXUPERRORTERML2ESTIMATE}}$
as desired.
It remains for us to consider the remaining terms on RHS~\eqref{E:TOPORDERDIFFERENTIATEDJUNKMOMENTUMCONSTRAINTINDEXUPERRORTERM}, 
in which 
$|\vec{I}_1|,|\vec{I}_2|,|\vec{I}_4| \leq N-1$
and
$|\vec{I}_3| \leq N-2$.
We first use 
\eqref{E:POINTWISENORMCOMPARISON} with $l=1$ and $m=0$
(since $\leftexp{(Junk;\vec{I})}{\widetilde{\mathfrak{M}}}^i$ is a type $\binom{1}{0}$ tensorfield),
\eqref{E:BASICINTERPOLATION}, 
and \eqref{E:FRAMENORML2PRODUCTBOUNDINERMSOFLINFINITYANDHMDOT}
to bound 
(using that $|\vec{I}|=N$)
the terms under consideration as follows:
\begin{align} \label{E:FIRSTSTEPTOPORDERDIFFERENTIATEDJUNKMOMENTUMCONSTRAINTINDEXUPERRORTERML2ESTIMATEL2BOUND}
&
\mathop{\mathop{\sum_{\vec{I}_1 + \vec{I}_2 + \vec{I}_3 + \vec{I}_4 = \vec{I}}}_{|\vec{I}_1|,|\vec{I}_2|,|\vec{I}_4| \leq N-1}}_{|\vec{I}_3| \leq N-2}
	t^{\Blowupexp + 1}
	\left\|
		(\partial_{\vec{I}_1} g^{-1}) 
		(\partial_{\vec{I}_2} g^{-1})
		(\partial \partial_{\vec{I}_3} g) 
		\partial_{\vec{I}_4} \SecondFund
	\right\|_{L_g^2(\Sigma_t)}
		\\
& \ \	
	\lesssim
	t^{\Blowupexp + 1 - \Worstexp}
	\left\|
		g^{-1}
	\right\|_{W_{Frame}^{2,\infty}(\Sigma_t)}^2
	\left\|
		g - \KasnerMetric
	\right\|_{W_{Frame}^{2,\infty}(\Sigma_t)}
	\left\|
		\SecondFund
	\right\|_{\dot{H}_{Frame}^{N-1}(\Sigma_t)}
		\notag \\
& \ \
	+
	t^{\Blowupexp + 1 - \Worstexp}
	\left\|
		g^{-1}
	\right\|_{W_{Frame}^{2,\infty}(\Sigma_t)}^2
	\left\|
		\SecondFund
	\right\|_{W_{Frame}^{2,\infty}(\Sigma_t)}
	\left\|
		g
	\right\|_{\dot{H}_{Frame}^{N-1}(\Sigma_t)}
		\notag \\
& \ \
		+
		t^{\Blowupexp + 1 - \Worstexp}
	\left\|
		g^{-1}
	\right\|_{W_{Frame}^{2,\infty}(\Sigma_t)}
	\left\|
		g - \KasnerMetric
	\right\|_{W_{Frame}^{2,\infty}(\Sigma_t)}
	\left\|
		\SecondFund
	\right\|_{W_{Frame}^{2,\infty}(\Sigma_t)}
	\left\|
		g^{-1}
	\right\|_{\dot{H}_{Frame}^{N-1}(\Sigma_t)}
		\notag \\	
& \ \
	+
	t^{\Blowupexp + 1 - \Worstexp}
	\left\|
		g^{-1}
	\right\|_{W_{Frame}^{2,\infty}(\Sigma_t)}^2
	\left\|
		g - \KasnerMetric
	\right\|_{W_{Frame}^{2,\infty}(\Sigma_t)}
	\left\|
		\SecondFund
	\right\|_{W_{Frame}^{2,\infty}(\Sigma_t)}.
	\notag
\end{align}
From Defs.\,\ref{D:LOWNORMS} and~\ref{D:HIGHNORMS},
the estimates 
\eqref{E:KASNERMETRICESTIMATES},
\eqref{E:KASNERSECONDFUNDESTIMATES},
\eqref{E:UPTOFOURDERIVATIVESOFGLINFINITYSOBOLEV},
and
\eqref{E:UPTOFOURDERIVATIVESOFGINVERSELINFINITYSOBOLEV},
and the bootstrap assumptions, we deduce that
$\mbox{RHS~\eqref{E:FIRSTSTEPTOPORDERDIFFERENTIATEDJUNKMOMENTUMCONSTRAINTINDEXUPERRORTERML2ESTIMATEL2BOUND}}
\lesssim
t^{1 - 10 \Worstexp - 3 \Room - \Blowupexp \updelta}
\left\lbrace
	\MetricLownorm(t)
	+
	\MetricHighnorm(t) 
\right\rbrace
$.
In view of \eqref{E:WORSTEXPANDROOMLOTSOFUSEFULINEQUALITIES}, 
we see that the RHS of the previous expression is
$\lesssim \mbox{RHS~\eqref{E:TOPORDERDIFFERENTIATEDJUNKMOMENTUMCONSTRAINTINDEXUPERRORTERML2ESTIMATE}}$
as desired.

\medskip

\noindent \underline{\textbf{Proof of \eqref{E:TOPORDERDIFFERENTIATEDJUNKMETRICERRORTERML2ESTIMATE}}}:	
We stress that for this estimate, on RHS~\eqref{E:TOPORDERDIFFERENTIATEDJUNKONCEDIFFERENTIATEDMETRICERRORTERM},
we have $\Pos = 1$ and $|\vec{I}|=N$.

To bound the first product on RHS~\eqref{E:TOPORDERDIFFERENTIATEDJUNKONCEDIFFERENTIATEDMETRICERRORTERM},
we first use $g$-Cauchy--Schwarz to deduce that it is bounded 
 in the norm
$\| \cdot \|_{L_g^2(\Sigma_t)}$
by
$
\leq
t^{\Blowupexp + 1} 
\| n-1 \|_{L^{\infty}(\Sigma_t)}
\left\|
	\SecondFund
\right\|_{L_g^{\infty}(\Sigma_t)}
\left\|
	\partial g
\right\|_{\dot{H}_g^N(\Sigma_t)}
$.
From Defs.\,\ref{D:LOWNORMS} and~\ref{D:HIGHNORMS},
\eqref{E:WORSTEXPANDROOMLOTSOFUSEFULINEQUALITIES},
and the bootstrap assumptions,
we see that the RHS of the previous expression is
$
\lesssim
t^{1 - 10 \Worstexp - \Room} \MetricHighnorm(t)
\lesssim
t^{\Room - 1} \MetricHighnorm(t)
$
as desired.

To complete the proof of \eqref{E:TOPORDERDIFFERENTIATEDJUNKMETRICERRORTERML2ESTIMATE},
we must bound the three sums on RHS~\eqref{E:TOPORDERDIFFERENTIATEDJUNKONCEDIFFERENTIATEDMETRICERRORTERM}
in the norm $\| \cdot \|_{L_g^2(\Sigma_t)}$.
To bound the first sum on RHS~\eqref{E:TOPORDERDIFFERENTIATEDJUNKONCEDIFFERENTIATEDMETRICERRORTERM},
we first consider the products with $|\vec{I}_3|=N$.
Using that $|g|_g \lesssim 1$ and $g$-Cauchy--Schwarz,
we first bound the products under consideration in the norm $\| \cdot \|_{L_g^2(\Sigma_t)}$
by
$
\lesssim
t^{\Blowupexp + 1}
\| \partial n \|_{L_g^{\infty}(\Sigma_t)} 
 \| \SecondFund \|_{\dot{H}_g^N(\Sigma_t)}
$.
Applying \eqref{E:POINTWISENORMCOMPARISON}
to $\| \partial n \|_{L_g^{\infty}(\Sigma_t)}$
(with $l=0$ and $m=1$), 
we deduce that the RHS of the previous expression is
$
\lesssim
t^{\Blowupexp + 1 - \Worstexp}
\| n \|_{\dot{W}^{1,\infty}(\Sigma_t)} 
\| \SecondFund \|_{\dot{H}_g^N(\Sigma_t)}
$.
From Defs.\,\ref{D:HIGHNORMS},
the estimate \eqref{E:UPTOFOURDERIVATIVESOFLAPSELINFINITYSOBOLEV},
and the bootstrap assumptions, 
we deduce that the RHS of the previous expression is
$
\lesssim
t^{2 - 11 \Worstexp - \Room - \Blowupexp \updelta}
\MetricHighnorm(t)
$,
which, 
in view of \eqref{E:WORSTEXPANDROOMLOTSOFUSEFULINEQUALITIES}, 
is $\lesssim \mbox{RHS~\eqref{E:TOPORDERDIFFERENTIATEDJUNKMETRICERRORTERML2ESTIMATE}}$ as desired. 
It remains for us to bound the products
in the first sum on RHS~\eqref{E:TOPORDERDIFFERENTIATEDJUNKONCEDIFFERENTIATEDMETRICERRORTERM}
with $|\vec{I}_1|, |\vec{I}_3| \leq N-1$.
We first use
\eqref{E:L2PRODUCTBOUNDINERMSOFLINFINITYANDHMDOT} with $l=0$ and $m=3$ 
(since $\leftexp{(Junk;\Pos;\vec{I})}{\mathfrak{H}}$ is a type $\binom{0}{3}$ tensorfield)
and \eqref{E:BASICINTERPOLATION}
to deduce 
(using that $|\vec{I}|=N$)
that the products under consideration are bounded as follows:
\begin{align} \label{E:FIRSTSTEPTOPORDERJUNKMETRICERRRORTERM1L2BOUND2}
&
\mathop{\sum_{\vec{I}_1 + \vec{I}_2 + \vec{I}_3 = \vec{I}}}_{|\vec{I}_1|, |\vec{I}_3| \leq N-1}
	t^{\Blowupexp + 1}
	\left\|
		(\partial \partial_{\vec{I}_1} n) 
		(\partial_{\vec{I}_2} g) 
		\partial_{\vec{I}_3} \SecondFund
	\right\|_{L_g^2(\Sigma_t)}
		\\
& \ \	
	\lesssim
	t^{\Blowupexp + 1 - 3 \Worstexp}
	\left\|
		n - 1
	\right\|_{W^{2,\infty}(\Sigma_t)}
	\left\|
		g 
	\right\|_{W_{Frame}^{1,\infty}(\Sigma_t)}
	\left\|
		\SecondFund
	\right\|_{\dot{H}_{Frame}^{N-1}(\Sigma_t)}
		\notag \\
& \ \
	+
	t^{\Blowupexp + 1 - 3 \Worstexp}
	\left\|
		n - 1
	\right\|_{W^{2,\infty}(\Sigma_t)}
	\left\|
		\SecondFund 
	\right\|_{W_{Frame}^{1,\infty}(\Sigma_t)}
	\left\|
		g
	\right\|_{\dot{H}_{Frame}^N(\Sigma_t)}
		\notag \\
& \ \
	+
	t^{\Blowupexp + 1 - 3 \Worstexp}
	\left\|
		g 
	\right\|_{W_{Frame}^{1,\infty}(\Sigma_t)}
	\left\|
		 \SecondFund
	\right\|_{W_{Frame}^{1,\infty}(\Sigma_t)}
	\left\|
		n
	\right\|_{\dot{H}^N(\Sigma_t)}
	\notag
		\\	
	& \ \
	+
	t^{\Blowupexp + 1 - 3 \Worstexp}
	\left\|
		n - 1
	\right\|_{W^{2,\infty}(\Sigma_t)}
	\left\|
		\SecondFund
	\right\|_{W_{Frame}^{1,\infty}(\Sigma_t)}
	\left\|
		g 
	\right\|_{W_{Frame}^{1,\infty}(\Sigma_t)}.
	\notag
\end{align}
From Defs.\,\ref{D:LOWNORMS} and~\ref{D:HIGHNORMS},
the estimates 

\eqref{E:KASNERSECONDFUNDESTIMATES},
\eqref{E:UPTOFOURDERIVATIVESOFGLINFINITYSOBOLEV},
and
\eqref{E:UPTOFOURDERIVATIVESOFLAPSELINFINITYSOBOLEV},
the elliptic estimates 
\eqref{E:LAPSELOWNORMELLIPTIC} 
and
\eqref{E:LAPSETOPORDERELLIPTIC},
and the bootstrap assumptions, 
we deduce that
\begin{align} \label{E:BOUNDFORLOWERORDERTERMSFIRSTSTEPTOPORDERJUNKMETRICERRRORTERM1L2BOUND2}
\mbox{RHS~\eqref{E:FIRSTSTEPTOPORDERJUNKMETRICERRRORTERM1L2BOUND2}}
&
\lesssim
t^{3 - 18 \Worstexp - 2 \Room - \Blowupexp \updelta}
\left\lbrace
	\MetricLownorm(t)
	+
	\MetricHighnorm(t) 
\right\rbrace
+
t^{2 - 15 \Worstexp - 2 \Room - \Blowupexp \updelta}
\left\lbrace
	\MetricLownorm(t)
	+
	\MetricHighnorm(t) 
\right\rbrace
	\\
& \ \
	+
	t^{- 5 \Worstexp - \Blowupexp \updelta}
	\left\lbrace
		\MetricLownorm(t)
		+
		\MetricHighnorm(t) 
	\right\rbrace.
	\notag
\end{align}
In view of \eqref{E:WORSTEXPANDROOMLOTSOFUSEFULINEQUALITIES}, 
we see that 
$\mbox{RHS~\eqref{E:BOUNDFORLOWERORDERTERMSFIRSTSTEPTOPORDERJUNKMETRICERRRORTERM1L2BOUND2}} 
\lesssim \mbox{RHS~\eqref{E:TOPORDERDIFFERENTIATEDJUNKMETRICERRORTERML2ESTIMATE}}$
as desired.

To bound the second sum on RHS~\eqref{E:TOPORDERDIFFERENTIATEDJUNKONCEDIFFERENTIATEDMETRICERRORTERM},
we first consider the products with $|\vec{I}_3|=N$.
Using $g$-Cauchy--Schwarz,
we first bound the products under consideration in the norm $\| \cdot \|_{L_g^2(\Sigma_t)}$
by
$
\lesssim
t^{\Blowupexp + 1}
\| n \|_{L^{\infty}(\Sigma_t)} 
\| \partial g \|_{L_g^{\infty}(\Sigma_t)} 
\| \SecondFund \|_{\dot{H}_g^N(\Sigma_t)}
$.
Applying \eqref{E:POINTWISENORMCOMPARISON}
to $\| \partial g \|_{L_g^{\infty}(\Sigma_t)} $
(with $l=0$ and $m=3$), 
we deduce that the RHS of the previous expression is
$
\lesssim
t^{\Blowupexp + 1 - 3 \Worstexp}
\| n \|_{L^{\infty}(\Sigma_t)} 
\| g \|_{\dot{W}_{Frame}^{1,\infty}(\Sigma_t)} 
\| \SecondFund \|_{\dot{H}_g^N(\Sigma_t)}
$.
From 
\eqref{E:WORSTEXPANDROOMLOTSOFUSEFULINEQUALITIES},
Defs.\,\ref{D:LOWNORMS} and~\ref{D:HIGHNORMS},
the estimate
\eqref{E:UPTOFOURDERIVATIVESOFGLINFINITYSOBOLEV},
and the bootstrap assumptions, 
we deduce that the RHS of the previous expression is
$
\lesssim
t^{- 5 \Worstexp - \Blowupexp \updelta}
\MetricHighnorm(t)
$,
which, 
in view of \eqref{E:WORSTEXPANDROOMLOTSOFUSEFULINEQUALITIES}, 
is $\lesssim \mbox{RHS~\eqref{E:TOPORDERDIFFERENTIATEDJUNKMETRICERRORTERML2ESTIMATE}}$ as desired. 
It remains for us to consider the remaining terms 
in the second sum on RHS~\eqref{E:TOPORDERDIFFERENTIATEDJUNKONCEDIFFERENTIATEDMETRICERRORTERM},
in which $|\vec{I}_2|, |\vec{I}_3| \leq N-1$.
We first use
\eqref{E:L2PRODUCTBOUNDINERMSOFLINFINITYANDHMDOT} with $l=0$ and $m=3$ 
(since $\leftexp{(Junk;\Pos;\vec{I})}{\mathfrak{H}}$ is a type $\binom{0}{3}$ tensorfield)
and \eqref{E:BASICINTERPOLATION}
to deduce 
(using that $|\vec{I}|=N$)
that the terms under consideration are bounded as follows:
\begin{align} \label{E:FIRSTSTEPTOPORDERJUNKMETRICERRRORTERM2L2BOUND2}
&
\mathop{\sum_{\vec{I}_1 + \vec{I}_2 + \vec{I}_3 = \vec{I}}}_{|\vec{I}_2|, |\vec{I}_3| \leq N-1}
	t^{\Blowupexp + 1}
	\left\|
		(\partial_{\vec{I}_1} n) 
		(\partial \partial_{\vec{I}_2} g) 
		\partial_{\vec{I}_3} \SecondFund
	\right\|_{L_g^2(\Sigma_t)}
		\\
& \ \	
	\lesssim
	t^{\Blowupexp + 1 - 3 \Worstexp}
	\left\|
		n 
	\right\|_{W^{1,\infty}(\Sigma_t)}
	\left\|
		g - \KasnerMetric
	\right\|_{W_{Frame}^{2,\infty}(\Sigma_t)}
	\left\|
		\SecondFund
	\right\|_{\dot{H}_{Frame}^{N-1}(\Sigma_t)}
		\notag \\
& \ \
	+
	t^{\Blowupexp + 1 - 3 \Worstexp}
	\left\|
		n 
	\right\|_{W^{1,\infty}(\Sigma_t)}
	\left\|
		\SecondFund 
	\right\|_{W_{Frame}^{1,\infty}(\Sigma_t)}
	\left\|
		g
	\right\|_{\dot{H}_{Frame}^N(\Sigma_t)}
		\notag \\
& \ \
	+
	t^{\Blowupexp + 1 - 3 \Worstexp}
	\left\|
		g - \KasnerMetric
	\right\|_{W_{Frame}^{2,\infty}(\Sigma_t)}
	\left\|
		 \SecondFund
	\right\|_{W_{Frame}^{1,\infty}(\Sigma_t)}
	\left\|
		n
	\right\|_{\dot{H}^N(\Sigma_t)}
	\notag
		\\	
	& \ \
	+
	t^{\Blowupexp + 1 - 3 \Worstexp}
	\left\|
		n 
	\right\|_{W^{1,\infty}(\Sigma_t)}
	\left\|
		\SecondFund
	\right\|_{L_{Frame}^{\infty}(\Sigma_t)}
	\left\|
		g - \KasnerMetric
	\right\|_{W_{Frame}^{2,\infty}(\Sigma_t)}.
	\notag
\end{align}
From 
\eqref{E:WORSTEXPANDROOMLOTSOFUSEFULINEQUALITIES},
Defs.\,\ref{D:LOWNORMS} and~\ref{D:HIGHNORMS},
the estimates 
\eqref{E:KASNERSECONDFUNDESTIMATES},
\eqref{E:UPTOFOURDERIVATIVESOFGLINFINITYSOBOLEV},
and
\eqref{E:UPTOFOURDERIVATIVESOFLAPSELINFINITYSOBOLEV},
and the bootstrap assumptions, 
we deduce that
$\mbox{RHS~\eqref{E:FIRSTSTEPTOPORDERJUNKMETRICERRRORTERM2L2BOUND2}}
\lesssim
t^{1 - 8 \Worstexp - \Room - \Blowupexp \updelta}
\left\lbrace
	\MetricLownorm(t)
	+
	\MetricHighnorm(t) 
\right\rbrace
+
t^{- 5 \Worstexp - \Room - \Blowupexp \updelta}
\left\lbrace
	\MetricLownorm(t)
	+
	\MetricHighnorm(t) 
\right\rbrace
$.
In view of \eqref{E:WORSTEXPANDROOMLOTSOFUSEFULINEQUALITIES}, 
we see that the RHS of the previous expression is
$\lesssim \mbox{RHS~\eqref{E:TOPORDERDIFFERENTIATEDJUNKMETRICERRORTERML2ESTIMATE}}$
as desired.

To bound the last sum on RHS~\eqref{E:TOPORDERDIFFERENTIATEDJUNKONCEDIFFERENTIATEDMETRICERRORTERM},
we first consider the products 
with $|\vec{I}_3|=N-1$.
Using that $|g|_g \lesssim 1$
and $g$-Cauchy--Schwarz,
we first bound the products under consideration in the norm $\| \cdot \|_{L_g^2(\Sigma_t)}$
by
$
\lesssim
t^{\Blowupexp + 1}
\| n \|_{\dot{W}^{1,\infty}(\Sigma_t)} 
\| \partial \SecondFund \|_{\dot{H}_g^{N-1}(\Sigma_t)}
+
t^{\Blowupexp + 1}
\| n \|_{L^{\infty}(\Sigma_t)} 
\| g \|_{\dot{W}_g^{1,\infty}(\Sigma_t)} 
\| \partial \SecondFund \|_{\dot{H}_g^{N-1}(\Sigma_t)}
$.
Using \eqref{E:POINTWISENORMCOMPARISON}
(with $l=0$ and $m=2$)
to estimate $\| g \|_{\dot{W}_g^{1,\infty}(\Sigma_t)}$,
and using the definition of the norms
$\| \cdot \|_{\dot{H}_g^M}$
and
$\| \cdot \|_{L_{Frame}^2(\Sigma_t)}$,
we deduce that the RHS of the previous expression is
$
\lesssim
t^{\Blowupexp + 1}
\| n \|_{\dot{W}^{1,\infty}(\Sigma_t)} 
\| g^{-1} \|_{L_{Frame}^{\infty}(\Sigma_t)}^{1/2}
\| \SecondFund \|_{\dot{H}_g^N(\Sigma_t)}
+
t^{\Blowupexp + 1 - 2 \Worstexp}
\| n \|_{L^{\infty}(\Sigma_t)} 
\| g \|_{\dot{W}_{Frame}^{1,\infty}(\Sigma_t)} 
\| g^{-1} \|_{L_{Frame}^{\infty}(\Sigma_t)}^{1/2}
\| \SecondFund \|_{\dot{H}_g^N(\Sigma_t)}
$.
From 
\eqref{E:WORSTEXPANDROOMLOTSOFUSEFULINEQUALITIES},
Defs.\,\ref{D:LOWNORMS} and~\ref{D:HIGHNORMS},
the estimates \eqref{E:KASNERMETRICESTIMATES},
\eqref{E:UPTOFOURDERIVATIVESOFGLINFINITYSOBOLEV}
and \eqref{E:UPTOFOURDERIVATIVESOFLAPSELINFINITYSOBOLEV},
and the bootstrap assumptions, 
we deduce that the RHS of the previous expression is
$
\lesssim
t^{2 - 11 \Worstexp - \Room - \Blowupexp \updelta}
\MetricHighnorm(t)
+
t^{- 5 \Worstexp - \Blowupexp \updelta}
\MetricHighnorm(t)
$,
which, 
in view of \eqref{E:WORSTEXPANDROOMLOTSOFUSEFULINEQUALITIES}, 
is $\lesssim \mbox{RHS~\eqref{E:TOPORDERDIFFERENTIATEDJUNKMETRICERRORTERML2ESTIMATE}}$ as desired. 
It remains for us to consider the remaining terms 
in the last sum on RHS~\eqref{E:TOPORDERDIFFERENTIATEDJUNKONCEDIFFERENTIATEDMETRICERRORTERM},
in which $|\vec{I}_3| \leq N-2$.
We first use
\eqref{E:L2PRODUCTBOUNDINERMSOFLINFINITYANDHMDOT} with $l=0$ and $m=3$ 
(since $\leftexp{(Junk;\Pos;\vec{I})}{\mathfrak{H}}$ is a type $\binom{0}{3}$ tensorfield)
and \eqref{E:BASICINTERPOLATION}
to deduce 
(using that $|\vec{I}|=N$)
that the terms under consideration are bounded as follows:
\begin{align} \label{E:FIRSTSTEPTOPORDERJUNKMETRICERRRORTERM3L2BOUND2}
&
\mathop{\sum_{\vec{I}_1 + \vec{I}_2 + \vec{I}_3 = \vec{I}}}_{|\vec{I}_3| \leq N-2}
	t^{\Blowupexp + 1}
	\left\|
		(\partial_{\vec{I}_1} n) 
		(\partial_{\vec{I}_2} g) 
		\partial \partial_{\vec{I}_3} \SecondFund
	\right\|_{L_g^2(\Sigma_t)}
		\\
& \ \	
	\lesssim
	t^{\Blowupexp + 1 - 3 \Worstexp}
	\left\|
		n 
	\right\|_{W^{2,\infty}(\Sigma_t)}
	\left\|
		g 
	\right\|_{W_{Frame}^{2,\infty}(\Sigma_t)}
	\left\|
		\SecondFund
	\right\|_{\dot{H}_{Frame}^{N-1}(\Sigma_t)}
		\notag \\
& \ \
	+
	t^{\Blowupexp + 1 - 3 \Worstexp}
	\left\|
		n 
	\right\|_{W^{2,\infty}(\Sigma_t)}
	\left\|
		\SecondFund 
	\right\|_{\dot{W}_{Frame}^{1,\infty}(\Sigma_t)}
	\left\|
		g
	\right\|_{\dot{H}_{Frame}^N(\Sigma_t)}
		\notag \\
& \ \
	+
	t^{\Blowupexp + 1 - 3 \Worstexp}
	\left\|
		g 
	\right\|_{W_{Frame}^{2,\infty}(\Sigma_t)}
	\left\|
		\SecondFund 
	\right\|_{\dot{W}_{Frame}^{1,\infty}(\Sigma_t)}
	\left\|
		n
	\right\|_{\dot{H}^N(\Sigma_t)}
	\notag
		\\	
	& \ \
	+
	t^{\Blowupexp + 1 - 3 \Worstexp}
	\left\|
		n 
	\right\|_{W^{2,\infty}(\Sigma_t)}
	\left\|
		\SecondFund 
	\right\|_{\dot{W}_{Frame}^{1,\infty}(\Sigma_t)}
	\left\|
		g 
	\right\|_{W_{Frame}^{2,\infty}(\Sigma_t)}.
	\notag
\end{align}
From 
\eqref{E:WORSTEXPANDROOMLOTSOFUSEFULINEQUALITIES},
Defs.\,\ref{D:LOWNORMS} and~\ref{D:HIGHNORMS},
the estimates 
\eqref{E:KASNERMETRICESTIMATES},
\eqref{E:UPTOFOURDERIVATIVESOFGLINFINITYSOBOLEV},
and
\eqref{E:UPTOFOURDERIVATIVESOFLAPSELINFINITYSOBOLEV},
and the bootstrap assumptions, 
we deduce that
$\mbox{RHS~\eqref{E:FIRSTSTEPTOPORDERJUNKMETRICERRRORTERM3L2BOUND2}}
\lesssim
t^{1 - 8 \Worstexp - \Room - \Blowupexp \updelta}
\left\lbrace
	\MetricLownorm(t)
	+
	\MetricHighnorm(t) 
\right\rbrace
+
t^{- 5 \Worstexp - \Room - \Blowupexp \updelta}
\left\lbrace
	\MetricLownorm(t)
	+
	\MetricHighnorm(t) 
\right\rbrace
$.
In view of \eqref{E:WORSTEXPANDROOMLOTSOFUSEFULINEQUALITIES}, 
we see that the RHS of the previous expression is
$\lesssim \mbox{RHS~\eqref{E:TOPORDERDIFFERENTIATEDJUNKMETRICERRORTERML2ESTIMATE}}$
as desired.

\medskip

\noindent \underline{\textbf{Proof of \eqref{E:TOPORDERSECONDFUNDJUNKERRORTERML2BOUND}}}:
We stress that for this estimate, on RHS~\eqref{E:SECONDFUNDCOMMUTEDJUNKTERM},
we have $\Pos = 1$ and $|\vec{I}|=N$.

To bound the first sum on RHS~\eqref{E:SECONDFUNDCOMMUTEDJUNKTERM},
we first consider the case in which $|\vec{I}_2| = N$.
Then the terms under consideration are bounded in the norm
$\| \cdot \|_{L_g^2(\Sigma_t)}$
by 
$\lesssim 
t^{\Blowupexp}
\| n-1 \|_{L^{\infty}(\Sigma_t)}
\left\|
	\SecondFund
\right\|_{\dot{H}_g^N(\Sigma_t)}
$.
From Defs.\,\ref{D:LOWNORMS} and~\ref{D:HIGHNORMS}
and \eqref{E:WORSTEXPANDROOMLOTSOFUSEFULINEQUALITIES},
we see that the RHS of the previous expression is
$
\lesssim
t^{1 - 10 \Worstexp - \Room} \MetricHighnorm(t)
\lesssim
t^{\Room - 1} \MetricHighnorm(t)
$
as desired.
We now consider the remaining cases, 
in which $|\vec{I}_1| \leq N-1$ and $|\vec{I}_2| \leq N-1$.
First, using \eqref{E:L2PRODUCTBOUNDINERMSOFLINFINITYANDHMDOT}
with $l=m=1$
(since $\leftexp{(Junk;1;\vec{I})}{\mathfrak{K}}$ is a type $\binom{1}{1}$ tensorfield)
we bound 
(using that $|\vec{I}|=N$)
the products under consideration as follows:
\begin{align} \label{E:FIRSTSTEPTOPORDERSECONDFUNDNONBORDERLINEERRRORTERM1}
&
\mathop{\sum_{\vec{I}_1 + \vec{I}_2 = \vec{I}}}_{|\vec{I}_1|, |\vec{I}_2| \leq N-1}
	\left\|
		t^{\Blowupexp}
		\left\lbrace
			\partial_{\vec{I}_1} (n-1) 
		\right\rbrace	
		\partial_{\vec{I}_2} \SecondFund
	\right\|_{L_g^2(\Sigma_t)}
		\\
& \ \	
	\lesssim
	t^{\Blowupexp - 2 \Worstexp}
	\left\|
		n - 1
	\right\|_{\dot{W}^{1,\infty}(\Sigma_t)}
	\left\|
		\SecondFund
	\right\|_{\dot{H}_{Frame}^{N-1}(\Sigma_t)}
	+
	t^{\Blowupexp - 2 \Worstexp}
	\left\|
		\SecondFund
	\right\|_{\dot{W}_{Frame}^{1,\infty}(\Sigma_t)}
	\left\|
		n
	\right\|_{\dot{H}^{N-1}(\Sigma_t)}.
	\notag
\end{align}
From Defs.\,\ref{D:LOWNORMS} and~\ref{D:HIGHNORMS},
the estimate \eqref{E:UPTOFOURDERIVATIVESOFLAPSELINFINITYSOBOLEV},
and the bootstrap assumptions, we deduce that
$\mbox{RHS~\eqref{E:FIRSTSTEPTOPORDERSECONDFUNDNONBORDERLINEERRRORTERM1}}
\lesssim
t^{2 - 15 \Worstexp - 2 \Room - \Blowupexp \updelta}
\MetricHighnorm(t) 
+  
t^{-3 \Worstexp} 
\left\lbrace
	\MetricLownorm(t) + \MetricHighnorm(t)
\right\rbrace
$.
In view of \eqref{E:WORSTEXPANDROOMLOTSOFUSEFULINEQUALITIES}, 
we see that the RHS of the previous expression is
$\lesssim \mbox{RHS~\eqref{E:TOPORDERSECONDFUNDJUNKERRORTERML2BOUND}}$
as desired.

To bound the second sum on RHS~\eqref{E:SECONDFUNDCOMMUTEDJUNKTERM},
we first consider the cases in which $|\vec{I}_5| = N$
or $|\vec{I}_6| = N$. Using that $|g^{-1}|_g \lesssim 1$,
we deduce that 
the terms under consideration are bounded in the norm
$\| \cdot \|_{L_g^2(\Sigma_t)}$
by 
$\lesssim 
t^{\Blowupexp + 1}
\| n \|_{L^{\infty}(\Sigma_t)}
\| \partial g \|_{L_g^{\infty}(\Sigma_t)}
\left\|
	\partial g
\right\|_{\dot{H}_g^N(\Sigma_t)}
$.
Using \eqref{E:POINTWISENORMCOMPARISON}
(with $l=0$ and $m=3$)
to estimate 
$\| \partial g \|_{L_g^{\infty}(\Sigma_t)}$,
we see that the RHS of the previous expression is 
$\lesssim 
t^{\Blowupexp + 1 - 3 \Worstexp}
\| n \|_{L^{\infty}(\Sigma_t)}
\| g \|_{\dot{W}_{Frame}^{1,\infty}(\Sigma_t)}
\left\|
	\partial g
\right\|_{\dot{H}_g^N(\Sigma_t)}
$.
From 
\eqref{E:WORSTEXPANDROOMLOTSOFUSEFULINEQUALITIES},
Defs.\,\ref{D:LOWNORMS} and~\ref{D:HIGHNORMS},
the estimate \eqref{E:UPTOFOURDERIVATIVESOFGLINFINITYSOBOLEV},
and the bootstrap assumptions,
we see that the RHS of the previous expression is
$\lesssim
t^{- 5 \Worstexp - \Blowupexp \updelta}  \MetricHighnorm(t)
$.
Using \eqref{E:WORSTEXPANDROOMLOTSOFUSEFULINEQUALITIES}, 
we see that the RHS of the previous expression is
$\lesssim \mbox{RHS~\eqref{E:TOPORDERSECONDFUNDJUNKERRORTERML2BOUND}}$
as desired.
We now consider the remaining cases, 
in which $|\vec{I}_5| \leq N-1$ and $|\vec{I}_6| \leq N-1$.
Using \eqref{E:L2PRODUCTBOUNDINERMSOFLINFINITYANDHMDOT}
with $l=m=1$
(since $\leftexp{(Junk;1;\vec{I})}{\mathfrak{K}}$ is a type $\binom{1}{1}$ tensorfield)
and \eqref{E:BASICINTERPOLATION},
we deduce 
(using that $|\vec{I}|=N$) that
\begin{align} \label{E:FIRSTSTEPTOPORDERSECONDFUNDNONBORDERLINEERRRORTERM2}
&
\mathop{\sum_{\vec{I}_1 + \vec{I}_2 + \cdots + \vec{I}_6 = \vec{I}}}_{|\vec{I}_5|, |\vec{I}_6| \leq N-1}
	t^{\Blowupexp + 1} 
	\left\|
		(\partial_{\vec{I}_1} n)
		(\partial_{\vec{I}_2} g^{-1})
		(\partial_{\vec{I}_3} g^{-1})
		(\partial_{\vec{I}_4} g^{-1})
		(\partial \partial_{\vec{I}_5} g)
		\partial \partial_{\vec{I}_6} g
	\right\|_{L_g^2(\Sigma_t)}
		\\
& \ \	
	\lesssim
	t^{\Blowupexp + 1 - 2 \Worstexp} 
	\left\|
		n
	\right\|_{W^{1,\infty}(\Sigma_t)}
	\left\|
		g^{-1}
	\right\|_{W_{Frame}^{1,\infty}(\Sigma_t)}^3
	\left\|
		g
	\right\|_{W_{Frame}^{1,\infty}(\Sigma_t)}
	\left\|
		g
	\right\|_{\dot{H}_{Frame}^N(\Sigma_t)}
		\notag \\
& \ \
	+
	t^{\Blowupexp + 1 - 2 \Worstexp} 
	\left\|
		n
	\right\|_{W^{1,\infty}(\Sigma_t)}
	\left\|
		g^{-1}
	\right\|_{W_{Frame}^{1,\infty}(\Sigma_t)}^2
	\left\|
		g
	\right\|_{W_{Frame}^{1,\infty}(\Sigma_t)}^2
	\left\|
		g^{-1}
	\right\|_{\dot{H}_{Frame}^N(\Sigma_t)}
		\notag \\
& \ \
	+
	t^{\Blowupexp + 1 - 2 \Worstexp} 
	\left\|
		g
	\right\|_{W_{Frame}^{1,\infty}(\Sigma_t)}^2
	\left\|
		g^{-1}
	\right\|_{W_{Frame}^{1,\infty}(\Sigma_t)}^3
	\left\|
		n
	\right\|_{\dot{H}^N(\Sigma_t)}
		\notag
			\\
& \ \
	+
	t^{\Blowupexp + 1 - 2 \Worstexp} 
	\left\|
		n
	\right\|_{W^{1,\infty}(\Sigma_t)}
	\left\|
		g^{-1}
	\right\|_{W_{Frame}^{1,\infty}(\Sigma_t)}^3
	\left\|
		g
	\right\|_{W_{Frame}^{1,\infty}(\Sigma_t)}^2.
	\notag
\end{align}
From 
\eqref{E:WORSTEXPANDROOMLOTSOFUSEFULINEQUALITIES},
Defs.\,\ref{D:LOWNORMS} and~\ref{D:HIGHNORMS}, 
the estimates 
\eqref{E:KASNERMETRICESTIMATES},
\eqref{E:UPTOFOURDERIVATIVESOFGLINFINITYSOBOLEV},
\eqref{E:UPTOFOURDERIVATIVESOFGINVERSELINFINITYSOBOLEV},
and
\eqref{E:UPTOFOURDERIVATIVESOFLAPSELINFINITYSOBOLEV},
the elliptic estimate \eqref{E:LAPSETOPORDERELLIPTIC},
and the bootstrap assumptions, we deduce that
$\mbox{RHS~\eqref{E:FIRSTSTEPTOPORDERSECONDFUNDNONBORDERLINEERRRORTERM2}}
\lesssim
t^{1 - 12 \Worstexp - \Room - \Blowupexp \updelta} 
\left\lbrace
	\MetricLownorm(t) 
	+
	\MetricHighnorm(t) 
\right\rbrace
$.
In view of \eqref{E:WORSTEXPANDROOMLOTSOFUSEFULINEQUALITIES}, 
we see that the RHS of the previous expression is
$\lesssim \mbox{RHS~\eqref{E:TOPORDERSECONDFUNDJUNKERRORTERML2BOUND}}$
as desired.

To bound the third sum on RHS~\eqref{E:SECONDFUNDCOMMUTEDJUNKTERM},
we first consider the case in which
$|\vec{I}_4| = N-1$.
Using that $|g^{-1}|_g \lesssim 1$
and $g$-Cauchy--Schwarz,
we deduce that 
the terms under consideration are bounded in the norm
$\| \cdot \|_{L_g^2(\Sigma_t)}$
by 
$
\lesssim 
t^{\Blowupexp + 1}
\| n \|_{\dot{W}^{1,\infty}(\Sigma_t)}
\left\|
	\partial^2 g
\right\|_{\dot{H}_g^{N-1}(\Sigma_t)}
+
t^{\Blowupexp + 1}
\| n \|_{L^{\infty}(\Sigma_t)}
\| \partial g \|_{L_g^{\infty}(\Sigma_t)}
\left\|
	\partial^2 g
\right\|_{\dot{H}_g^{N-1}(\Sigma_t)}
$.
Next, 
using \eqref{E:POINTWISENORMCOMPARISON} 
(with $l=0$ and $m=3$)
to estimate $\| \partial g \|_{L_g^{\infty}(\Sigma_t)}$,
and using the definition of the norms
$\| \cdot \|_{L_{Frame}^{\infty}(\Sigma_t)}$,
$\| \cdot \|_{L_g^{\infty}(\Sigma_t)}$,
and
$\| \cdot \|_{\dot{H}_g^M(\Sigma_t)}$,
we see that the RHS of the previous expression is 
$\lesssim 
t^{\Blowupexp + 1}
\| n \|_{\dot{W}^{1,\infty}(\Sigma_t)}
\| g^{-1} \|_{L_{Frame}^{\infty}(\Sigma_t)}^{1/2}
\left\|
	\partial g
\right\|_{\dot{H}_g^N(\Sigma_t)}
+
t^{\Blowupexp + 1 - 3 \Worstexp}
\| n \|_{L^{\infty}(\Sigma_t)}
\| g^{-1} \|_{L_{Frame}^{\infty}(\Sigma_t)}^{1/2}
\| g \|_{\dot{W}_{Frame}^{1,\infty}(\Sigma_t)}
\left\|
	\partial g
\right\|_{\dot{H}_g^N(\Sigma_t)}
$.
From 
\eqref{E:WORSTEXPANDROOMLOTSOFUSEFULINEQUALITIES},
Defs.\,\ref{D:LOWNORMS} and~\ref{D:HIGHNORMS},
the estimates 
\eqref{E:KASNERMETRICESTIMATES},
\eqref{E:UPTOFOURDERIVATIVESOFGLINFINITYSOBOLEV}
and \eqref{E:UPTOFOURDERIVATIVESOFLAPSELINFINITYSOBOLEV},
and the bootstrap assumptions,
we see that the RHS of the previous expression is
$\lesssim
t^{2 - 11 \Worstexp - \Room - \Blowupexp \updelta} \MetricHighnorm(t)
+
t^{-6 \Worstexp - \Blowupexp \updelta} \MetricHighnorm(t)
$.
In view of \eqref{E:WORSTEXPANDROOMLOTSOFUSEFULINEQUALITIES}, 
we see that the RHS of the previous expression is
$\lesssim \mbox{RHS~\eqref{E:TOPORDERSECONDFUNDJUNKERRORTERML2BOUND}}$
as desired.
We now consider the remaining cases, 
in which $|\vec{I}_4| \leq N-2$.
Using \eqref{E:L2PRODUCTBOUNDINERMSOFLINFINITYANDHMDOT}
with $l=m=1$ 
(since $\leftexp{(Junk;1;\vec{I})}{\mathfrak{K}}$ is a type $\binom{1}{1}$ tensorfield)
and \eqref{E:BASICINTERPOLATION},
we deduce 
(using that $|\vec{I}|=N$) 
that
\begin{align} \label{E:FIRSTSTEPTOPORDERSECONDFUNDNONBORDERLINEERRRORTERM3}
&
\mathop{\sum_{\vec{I}_1 + \vec{I}_2 + \vec{I}_3 + \vec{I}_4 = \vec{I}}}_{|\vec{I}_4| \leq N-2}
	t^{\Blowupexp + 1} 
	\left\|
		(\partial_{\vec{I}_1} n)
		(\partial_{\vec{I}_2} g^{-1})
		(\partial_{\vec{I}_3} g^{-1})
		\partial^2 \partial_{\vec{I}_4} g
	\right\|_{L_g^2(\Sigma_t)}
		\\
& \ \	
	\lesssim
	t^{\Blowupexp + 1 - 2 \Worstexp} 
	\left\|
		n
	\right\|_{W^{2,\infty}(\Sigma_t)}
	\left\|
		g^{-1}
	\right\|_{W_{Frame}^{2,\infty}(\Sigma_t)}^2
	\left\|
		g
	\right\|_{\dot{H}_{Frame}^N(\Sigma_t)}
		\notag \\
& \ \
	+
	t^{\Blowupexp + 1 - 2 \Worstexp} 
	\left\|
		n
	\right\|_{W^{2,\infty}(\Sigma_t)}
	\left\|
		g^{-1}
	\right\|_{W_{Frame}^{2,\infty}(\Sigma_t)}
	\left\|
		g
	\right\|_{W_{Frame}^{2,\infty}(\Sigma_t)}
	\left\|
		g^{-1}
	\right\|_{\dot{H}_{Frame}^N(\Sigma_t)}
		\notag \\
& \ \
	+
	t^{\Blowupexp + 1 - 2 \Worstexp} 
	\left\|
		g^{-1}
	\right\|_{W_{Frame}^{2,\infty}(\Sigma_t)}^2
	\left\|
		g
	\right\|_{W_{Frame}^{2,\infty}(\Sigma_t)}
	\left\|
		n
	\right\|_{\dot{H}^N(\Sigma_t)}.
	\notag
\end{align}
From 
\eqref{E:WORSTEXPANDROOMLOTSOFUSEFULINEQUALITIES},
Defs.\,\ref{D:LOWNORMS} and~\ref{D:HIGHNORMS}, 
\eqref{E:WORSTEXPANDROOMLOTSOFUSEFULINEQUALITIES},
the estimates 
\eqref{E:KASNERMETRICESTIMATES},
\eqref{E:UPTOFOURDERIVATIVESOFGLINFINITYSOBOLEV},
\eqref{E:UPTOFOURDERIVATIVESOFGINVERSELINFINITYSOBOLEV},
and
\eqref{E:UPTOFOURDERIVATIVESOFLAPSELINFINITYSOBOLEV},
the elliptic estimate \eqref{E:LAPSETOPORDERELLIPTIC},
and the bootstrap assumptions, we deduce that
$\mbox{RHS~\eqref{E:FIRSTSTEPTOPORDERSECONDFUNDNONBORDERLINEERRRORTERM3}}
\lesssim
t^{1 - 8 \Worstexp - \Room - \Blowupexp \updelta} 
\left\lbrace
	\MetricLownorm(t)
	+
	\MetricHighnorm(t) 
\right\rbrace
$.
In view of \eqref{E:WORSTEXPANDROOMLOTSOFUSEFULINEQUALITIES}, 
we see that the RHS of the previous expression is
$\lesssim \mbox{RHS~\eqref{E:TOPORDERSECONDFUNDJUNKERRORTERML2BOUND}}$
as desired.

To bound the last sum on RHS~\eqref{E:SECONDFUNDCOMMUTEDJUNKTERM},
we first consider the case in which
$|\vec{I}_3| = N$.
Using that $|g^{-1}|_g \lesssim 1$
and $g$-Cauchy--Schwarz,
we deduce that 
the terms under consideration are bounded in the norm
$\| \cdot \|_{L_g^2(\Sigma_t)}$
by 
$
\lesssim 
t^{\Blowupexp + 1}
\| \partial n \|_{L_g^{\infty}(\Sigma_t)}
\left\|
	\partial g
\right\|_{\dot{H}_g^N(\Sigma_t)}
$.
Next, using \eqref{E:POINTWISENORMCOMPARISON} 
(with $l=0$ and $m=1$)
to estimate $\| \partial n \|_{L_g^{\infty}(\Sigma_t)}$,
we see that the RHS of the previous expression is 
$\lesssim 
t^{\Blowupexp + 1 - \Worstexp}
\| n \|_{\dot{W}^{1,\infty}(\Sigma_t)}
\left\|
	\partial g
\right\|_{\dot{H}_g^N(\Sigma_t)}
$.
From Defs.\,\ref{D:LOWNORMS} and~\ref{D:HIGHNORMS},
the estimate \eqref{E:UPTOFOURDERIVATIVESOFLAPSELINFINITYSOBOLEV},
and the bootstrap assumptions,
we see that the RHS of the previous expression is
$
\lesssim
t^{2 - 11 \Worstexp - \Room - \Blowupexp \updelta}  \MetricHighnorm(t)
$,
which, in view of \eqref{E:WORSTEXPANDROOMLOTSOFUSEFULINEQUALITIES},
is $\lesssim \mbox{RHS~\eqref{E:TOPORDERSECONDFUNDJUNKERRORTERML2BOUND}}$
as desired.
We now consider the case in which
$|\vec{I}_4| = N$.
Using that $|g^{-1}|_g \lesssim 1$
and $g$-Cauchy--Schwarz,
we deduce that 
the terms under consideration are bounded in the norm
$\| \cdot \|_{L_g^2(\Sigma_t)}$
by 
$
\lesssim 
t^{\Blowupexp + 1}
\| \partial g \|_{L_g^{\infty}(\Sigma_t)}
\left\|
	\partial n
\right\|_{\dot{H}_g^N(\Sigma_t)}
$.
Next, using \eqref{E:POINTWISENORMCOMPARISON} 
(with $l=0$ and $m=3$)
to estimate $\| \partial g \|_{L_g^{\infty}(\Sigma_t)}$,
we see that the RHS of the previous expression is 
$\lesssim 
t^{\Blowupexp + 1 - 3 \Worstexp}
\| g \|_{\dot{W}_{Frame}^{1,\infty}(\Sigma_t)}
\left\|
	\partial n
\right\|_{\dot{H}_g^N(\Sigma_t)}
$.
From Defs.\,\ref{D:LOWNORMS} and~\ref{D:HIGHNORMS}
and the estimate \eqref{E:UPTOFOURDERIVATIVESOFGLINFINITYSOBOLEV},
we see that the RHS of the previous expression is
$\lesssim
t^{- 5 \Worstexp - \Blowupexp \updelta} 
\MetricLownorm(t)
\LapseHighnorm(t)
\lesssim
t^{- 5 \Worstexp - \Blowupexp \updelta} 
\MetricLownorm(t)
$.
In view of \eqref{E:WORSTEXPANDROOMLOTSOFUSEFULINEQUALITIES}, 
we see that the RHS of the previous expression is
$\lesssim \mbox{RHS~\eqref{E:TOPORDERSECONDFUNDJUNKERRORTERML2BOUND}}$
as desired.
We now consider the remaining cases, 
in which $|\vec{I}_3| \leq N-1$
and 
$|\vec{I}_4| \leq N-1$.
Using \eqref{E:L2PRODUCTBOUNDINERMSOFLINFINITYANDHMDOT}
with $l=m=1$ 
(since $\leftexp{(Junk;1;\vec{I})}{\mathfrak{K}}$ is a type $\binom{1}{1}$ tensorfield)
and \eqref{E:BASICINTERPOLATION},
we deduce 
(using that $|\vec{I}|=N$)
that
\begin{align} \label{E:FIRSTSTEPTOPORDERSECONDFUNDNONBORDERLINEERRRORTERM4}
&
\mathop{\sum_{\vec{I}_1 + \vec{I}_2 + \vec{I}_3 + \vec{I}_4 = \vec{I}}}_{|\vec{I}_3|, |\vec{I}_4| \leq N-1}
	t^{\Blowupexp + 1} 
	\left\|
		(\partial_{\vec{I}_1} g^{-1})
		(\partial_{\vec{I}_2} g^{-1})
		(\partial \partial_{\vec{I}_3} g)
		\partial \partial_{\vec{I}_4} n
	\right\|_{L_g^2(\Sigma_t)}
		\\
& \ \	
	\lesssim
	t^{\Blowupexp + 1 - 2 \Worstexp} 
	\left\|
		g^{-1}
	\right\|_{W_{Frame}^{1,\infty}(\Sigma_t)}^2
	\left\|
		g - \KasnerMetric
	\right\|_{W_{Frame}^{2,\infty}(\Sigma_t)}
	\left\|
		n
	\right\|_{\dot{H}^N(\Sigma_t)}
		\notag \\
& \ \
	+
	t^{\Blowupexp + 1 - 2 \Worstexp} 
	\left\|
		n
	\right\|_{W_{Frame}^{2,\infty}(\Sigma_t)}
	\left\|
		g^{-1}
	\right\|_{W_{Frame}^{1,\infty}(\Sigma_t)}^2
	\left\|
		g
	\right\|_{\dot{H}_{Frame}^N(\Sigma_t)}
		\notag \\
& \ \
	+
	t^{\Blowupexp + 1 - 2 \Worstexp} 
	\left\|
		n
	\right\|_{W_{Frame}^{2,\infty}(\Sigma_t)}
	\left\|
		g - \KasnerMetric
	\right\|_{W_{Frame}^{2,\infty}(\Sigma_t)}
	\left\|
		g^{-1}
	\right\|_{W_{Frame}^{1,\infty}(\Sigma_t)}
	\left\|
		g^{-1}
	\right\|_{\dot{H}_{Frame}^N(\Sigma_t)}
	\notag
		\\
	& \ \
	+
	t^{\Blowupexp + 1 - 2 \Worstexp} 
	\left\|
		n
	\right\|_{W_{Frame}^{2,\infty}(\Sigma_t)}
	\left\|
		g^{-1}
	\right\|_{W_{Frame}^{1,\infty}(\Sigma_t)}^2
	\left\|
		g - \KasnerMetric
	\right\|_{W_{Frame}^{2,\infty}(\Sigma_t)}.
	\notag
\end{align}
From 
\eqref{E:WORSTEXPANDROOMLOTSOFUSEFULINEQUALITIES},
Defs.\,\ref{D:LOWNORMS} and~\ref{D:HIGHNORMS}, 
the estimates \eqref{E:KASNERMETRICESTIMATES},
\eqref{E:UPTOFOURDERIVATIVESOFGLINFINITYSOBOLEV},
\eqref{E:UPTOFOURDERIVATIVESOFGINVERSELINFINITYSOBOLEV},
and \eqref{E:UPTOFOURDERIVATIVESOFLAPSELINFINITYSOBOLEV},
and the bootstrap assumptions, we deduce that
$\mbox{RHS~\eqref{E:FIRSTSTEPTOPORDERSECONDFUNDNONBORDERLINEERRRORTERM4}}
\lesssim
t^{1 - 8 \Worstexp - \Room - \Blowupexp \updelta} 
\left\lbrace
	\MetricLownorm(t)
	+
	\MetricHighnorm(t) 
\right\rbrace
$.
In view of \eqref{E:WORSTEXPANDROOMLOTSOFUSEFULINEQUALITIES}, 
we see that the RHS of the previous expression is
$\lesssim \mbox{RHS~\eqref{E:TOPORDERSECONDFUNDJUNKERRORTERML2BOUND}}$
as desired. We have therefore proved \eqref{E:TOPORDERSECONDFUNDJUNKERRORTERML2BOUND},
which completes the proof of the lemma.

\end{proof}

\subsection{Proof of Prop.\,\ref{P:MAINTOPORDERMETRICENERGYESTIMATE}}
\label{SS:PROOFOFMAINTOPORDERMETRICENERGYESTIMATE}
In this subsection, we prove Prop.\,\ref{P:MAINTOPORDERMETRICENERGYESTIMATE}.
Throughout this proof, we will assume that $\Blowupexp \updelta$ is sufficiently small
(and in particular that $\Blowupexp \updelta < \Room$);
in view of the discussion in Subsect.\,\ref{SS:SOBOLEVEMBEDDING}, we see that
at fixed $\Blowupexp$, this can be achieved by choosing $N$ to be sufficiently large.

Let $\vec{I}$ be a spatial multi-index with $|\vec{I}|=N$
and let ${\leftexp{(\vec{I})}{\mathbf{J}}}$ be the energy current from Def.~\ref{D:ENERGYCURRENT}.
Applying the divergence theorem on the region $[t,1] \times \mathbb{T}^{\mydim}$,
and considering equation \eqref{E:METRICCURRENT0},
we deduce that for $t \in (\TBoot,1]$, we have
	\begin{align} \label{E:DIVTHMAPPLIEDTOCURRENT}
		&
		\int_{\Sigma_t}
			\left\lbrace
			\left|
				t^{\Blowupexp + 1} \partial_{\vec{I}} \SecondFund 
			\right|_g^2
			 + 
			\frac{1}{4} 
			\left|
				t^{\Blowupexp + 1} \partial \partial_{\vec{I}} g 
			\right|_g^2
			\right\rbrace
		\, dx
			\\
		& =
		\int_{\Sigma_1}
			\left\lbrace
			\left|
				\partial_{\vec{I}} \SecondFund 
			\right|_g^2
			 + 
			\frac{1}{4} 
			\left|
				\partial \partial_{\vec{I}} g 
			\right|_g^2
			\right\rbrace
		\, dx
		-
		\int_{s=t}^1
			\int_{\Sigma_s}
				\partial_{\alpha} {\leftexp{(\vec{I})}{\mathbf{J}^{\alpha}}}
			\, dx
		\, ds.
		\notag
	\end{align}
	We now use equation \eqref{E:DIVIDFORENERGYCURRENT}
	to substitute for the last integral on
	RHS~\eqref{E:DIVTHMAPPLIEDTOCURRENT},
	thereby obtaining
	\begin{align} \label{E:POSTSUBSTITUTIONDIVTHMAPPLIEDTOCURRENT}
		&
		\int_{\Sigma_t}
			\left\lbrace
			\left|
				t^{\Blowupexp + 1} \partial_{\vec{I}} \SecondFund 
			\right|_g^2
			 + 
			\frac{1}{4} 
			\left|
				t^{\Blowupexp + 1} \partial \partial_{\vec{I}} g 
			\right|_g^2
			\right\rbrace
		\, dx
			\\
		& =
		\int_{\Sigma_1}
			\left\lbrace
			\left|
				\partial_{\vec{I}} \SecondFund 
			\right|_g^2
			 + 
			\frac{1}{4} 
			\left|
				\partial \partial_{\vec{I}} g 
			\right|_g^2
			\right\rbrace
		\, dx
			\notag \\
	& \ \
		-
		\int_{s=t}^1
			s^{-1}
				\int_{\Sigma_s}
					\left\lbrace
						2 \Blowupexp
						\left|
							s^{\Blowupexp + 1} \partial_{\vec{I}} \SecondFund 
						\right|_g^2
						+ 
						\frac{\Blowupexp + 1}{2} 
						\left|
							s^{\Blowupexp + 1} \partial \partial_{\vec{I}} g 
						\right|_g^2
				\right\rbrace
			\, dx
		\, ds
		\notag
			\\
		& \ \
			-
			\int_{s=t}^1
			\int_{\Sigma_s}
				\leftexp{(Border;\vec{I})}{\mathfrak{J}}
			\, dx
		\, ds
		-
			\int_{s=t}^1
			\int_{\Sigma_s}
				\leftexp{(Junk;\vec{I})}{\mathfrak{J}}
			\, dx
		\, ds.
		\notag
	\end{align}
	Next, we use  
	the pointwise estimates 
	\eqref{E:POINTWISEBOUNDMETRICCURRENTBORDERTERMS}
	and
	\eqref{E:POINTWISEBOUNDMETRICCURRENTJUNKTERMS}
	and the elliptic estimate
	\eqref{E:LAPSETOPORDERELLIPTIC}
	to deduce that the
	last two integrals on RHS~\eqref{E:POSTSUBSTITUTIONDIVTHMAPPLIEDTOCURRENT}
	are bounded in magnitude by
	\begin{align} \label{E:BOUNDFORERRORTERMSINPOSTSUBSTITUTIONDIVTHMAPPLIEDTOCURRENT}
		& \leq
		C_*
		\int_{s=t}^1
				s^{-1}
				\left\lbrace
					\left\|
						s^{\Blowupexp + 1} \partial_{\vec{I}} \SecondFund 
					\right\|_{L_g^2(\Sigma_s)}^2
					+
					\frac{1}{4}
					\left\|
						s^{\Blowupexp + 1} \partial \partial_{\vec{I}} g 
					\right\|_{L_g^2(\Sigma_s)}^2
				\right\rbrace
			\, ds
				\\
		& \ \
			+
			C_*
			\int_{s=t}^1
				s
				\left\|
					\leftexp{(Border;\vec{I})}{\mathfrak{K}}
				\right\|_{L_g^2(\Sigma_s)}^2
			\, ds
			+
			C_*
			\int_{s=t}^1
				s
				\left\|
					\leftexp{(Border;\vec{I})}{\mathfrak{H}}
				\right\|_{L_g^2(\Sigma_s)}^2
			\, ds
				\notag \\
	& \ \
			+
			C_*
			\int_{s=t}^1
				s
				\left\|
					\leftexp{(Border;\vec{I})}{\mathfrak{M}}
				\right\|_{L_g^2(\Sigma_s)}^2
			\, ds
			+
			C_*
			\int_{s=t}^1
				s
				\left\|
					\leftexp{(Border;\vec{I})}{\widetilde{\mathfrak{M}}}
				\right\|_{L_g^2(\Sigma_s)}^2
			\, ds
				\notag 
					\\
		& \ \
			+
			C
			\int_{s=t}^1
				s^{1 - \Room}
				\left\|
					\leftexp{(Junk;\vec{I})}{\mathfrak{K}}
				\right\|_{L_g^2(\Sigma_s)}^2
			\, ds
			+
			C
			\int_{s=t}^1
				s^{1 - \Room}
				\left\|
					\leftexp{(Junk;\vec{I})}{\mathfrak{H}}
				\right\|_{L_g^2(\Sigma_s)}^2
			\, ds
				\notag \\
	& \ \
			+
			C
			\int_{s=t}^1
				s^{1 - \Room}
				\left\|
					\leftexp{(Junk;\vec{I})}{\mathfrak{M}}
				\right\|_{L_g^2(\Sigma_s)}^2
			\, ds
			+
			C
			\int_{s=t}^1
				s^{1 - \Room}
				\left\|
					\leftexp{(Junk;\vec{I})}{\widetilde{\mathfrak{M}}}
				\right\|_{L_g^2(\Sigma_s)}^2
			\, ds
				\notag 
					\\
		& \ \
			+
			C
			\int_{s=t}^1
				s^{\Room - 1}
				\left\lbrace
					\MetricLownorm^2(s) 
					+ 
					\MetricHighnorm^2(s)
				\right\rbrace
			\, ds.
			\notag
	\end{align}
	Next, we use the estimates 
	\eqref{E:TOPORDERDIFFERENTIATEDBORDERMOMENTUMCONSTRAINTINDEXDOWNERRORTERML2ESTIMATE}-\eqref{E:TOPORDERSECONDFUNDJUNKERRORTERML2BOUND}
	to bound the terms
	on lines two to five of
	RHS~\eqref{E:BOUNDFORERRORTERMSINPOSTSUBSTITUTIONDIVTHMAPPLIEDTOCURRENT}.
	Also noting that the term
	$
	\int_{\Sigma_1}
			\left\lbrace
			\left|
				\partial_{\vec{I}} \SecondFund 
			\right|_g^2
			 + 
			\frac{1}{4} 
			\left|
				\partial \partial_{\vec{I}} g 
			\right|_g^2
			\right\rbrace
		\, dx
	$
	on RHS~\eqref{E:POSTSUBSTITUTIONDIVTHMAPPLIEDTOCURRENT}
	is $\leq C \MetricHighnorm^2(1)$,
	we arrive at the desired bound \eqref{E:MAINTOPORDERMETRICENERGYESTIMATE}.
	
	\hfill $\qed$

\section{Estimates for the Near-Top-Order Derivatives of \texorpdfstring{$g$ and $\SecondFund$}{the First and Second Fundamental Forms}}
\label{S:ESTIMATESFORJUSTBELOWTOPORDERDERIVATIVESOFMETRICANDSECONDFUNDAMENTALFORM}
Our primary goal in this section is to prove the following proposition,
which provides the main integral inequalities that we will use
to control the near-top-order derivatives of $g$ and $\SecondFund$;
recall that, as we explained near the end of Subsect.\,\ref{SS:MAINIDEASINPROOF}, 
for technical reasons,
we need these estimates to close our bootstrap argument.
The proof of the proposition is located in Subsect.\,\ref{SS:INTEGRALINEQUALITIESFORJUSTBELOWTOPDERIVATIVESOFMETRICANDSECONDFUNDAMENTALFORM}.
In Subsects.\,\ref{SS:EQUATIONSVERIFIEDBYMETRICOMMMUTED}-\ref{SS:CONTROLOFERRORTERMSINNEARTOPORDERMETRICESTIMATES},
we provide the identities and estimates that we will use when proving the proposition.

\begin{proposition}[\textbf{Integral inequalities for the near-top-order derivatives of $g$ and $\SecondFund$}]
	\label{P:INTEGRALINEQUALITIESFORJUSTBELOWTOPDERIVATIVESOFMETRICANDSECONDFUND}
		Under the bootstrap assumptions \eqref{E:BOOTSTRAPASSUMPTIONS},
		there exists a universal constant $C_* > 0$ \underline{independent of $N$ and $\Blowupexp$}
	such that if $N$ is sufficiently large in a manner that depends on $\Blowupexp$
	and if $\varepsilon$ is sufficiently small, 
	then the following integral inequalities hold for $t \in (\TBoot,1]$
	(where, as we described in Subsect.\,\ref{SSS:CONSTANTS}, constants ``$C$'' are allowed to depend on $N$ and other quantities):
	\begin{subequations}
	\begin{align} \label{E:JUSTBELOWTOPORDERMETRICENERGYINTEGRALINEQUALITY}
		\left\|	
			t^{\Blowupexp + 2 \Worstexp + \Room} g
		\right\|_{\dot{H}_{Frame}^N(\Sigma_t)}^2
		& \leq
			C \MetricHighnorm^2(1)
			-
			2 \Blowupexp
			\int_{s=t}^1
				s^{-1}
				\left\|	
					s^{\Blowupexp + 2 \Worstexp + \Room}  g
				\right\|_{\dot{H}_{Frame}^N(\Sigma_s)}^2
			\, ds
				\\
		& \ \
		+
		C
		\int_{s=t}^1
			s^{\Room - 1}
			\left\lbrace
				\MetricLownorm^2(s) + \MetricHighnorm^2(s)
			\right\rbrace
		\, ds,
		\notag
			\\
		\left\|	
			t^{\Blowupexp + 2 \Worstexp + \Room} g^{-1}
		\right\|_{\dot{H}_{Frame}^N(\Sigma_t)}^2
		& \leq
			C \MetricHighnorm^2(1)
			-
			2 \Blowupexp
			\int_{s=t}^1
				s^{-1}
				\left\|	
					s^{\Blowupexp + 2 \Worstexp + \Room} g^{-1}
				\right\|_{\dot{H}_{Frame}^N(\Sigma_s)}^2
			\, ds
				\label{E:JUSTBELOWTOPORDERINVERSEMETRICENERGYINTEGRALINEQUALITY} \\
		& \ \
		+
		C
		\int_{s=t}^1
			s^{\Room - 1}
			\left\lbrace
				\MetricLownorm^2(s) + \MetricHighnorm^2(s)
			\right\rbrace
		\, ds,
		\notag
		\end{align}
		\end{subequations}
		
	\begin{subequations}
	\begin{align} \label{E:GNORMJUSTBELOWTOPORDERMETRICENERGYINTEGRALINEQUALITY}
		\left\|	
			t^{\Blowupexp + \Worstexp + \Room} g
		\right\|_{\dot{H}_g^N(\Sigma_t)}^2
		& \leq
			C \MetricHighnorm^2(1)
			-
			2 \Blowupexp
			\int_{s=t}^1
				s^{-1}
				\left\|	
					s^{\Blowupexp + \Worstexp + \Room}  g
				\right\|_{\dot{H}_g^N(\Sigma_s)}^2
			\, ds
				\\
		& \ \
		+
		C
		\int_{s=t}^1
			s^{\Room - 1}
			\left\lbrace
				\MetricLownorm^2(s) + \MetricHighnorm^2(s)
			\right\rbrace
		\, ds,
		\notag
			\\
		\left\|	
			t^{\Blowupexp + \Worstexp + \Room} g^{-1}
		\right\|_{\dot{H}_g^N(\Sigma_t)}^2
		& \leq
			C \MetricHighnorm^2(1)
			-
			2 \Blowupexp
			\int_{s=t}^1
				s^{-1}
				\left\|	
					s^{\Blowupexp + \Worstexp + \Room} g^{-1}
				\right\|_{\dot{H}_g^N(\Sigma_s)}^2
			\, ds
				\label{E:GNORMJUSTBELOWTOPORDERINVERSEMETRICENERGYINTEGRALINEQUALITY} \\
		& \ \
		+
		C
		\int_{s=t}^1
			s^{\Room - 1}
			\left\lbrace
				\MetricLownorm^2(s) + \MetricHighnorm^2(s)
			\right\rbrace
		\, ds,
		\notag
		\end{align}
		\end{subequations}
	
		\begin{align} \label{E:GNORMJUSTBELOWTOPORDERWITHGRADIENTMETRICENERGYINTEGRALINEQUALITY}
		\left\|	
			t^{\Blowupexp + 2 \Worstexp + \Room} \partial g
		\right\|_{\dot{H}_g^{N-1}(\Sigma_t)}^2
		& \leq
			C \MetricHighnorm^2(1)
			-
			\left\lbrace
				2 \Blowupexp
				- 
				C_*
			\right\rbrace
			\int_{s=t}^1
				s^{-1}
				\left\|	
					s^{\Blowupexp + 2 \Worstexp + \Room} \partial g
				\right\|_{\dot{H}_g^{N-1}(\Sigma_s)}^2
			\, ds
				\\
		& \ \
		+
		C
		\int_{s=t}^1
			s^{\Room - 1}
			\left\lbrace
				\MetricLownorm^2(s) + \MetricHighnorm^2(s)
			\right\rbrace
		\, ds,
		\notag
\end{align}

	\begin{subequations}
	\begin{align} \label{E:TWOBELOWTOPORDERMETRICENERGYINTEGRALINEQUALITY}
		\left\|	
			t^{\Blowupexp + 5 \Worstexp + 3 \Room - 1} g
		\right\|_{\dot{H}_{Frame}^{N-1}(\Sigma_t)}^2
		& \leq
			C \MetricHighnorm^2(1)
			-
			\left\lbrace
				2 \Blowupexp
				-
				C_*
			\right\rbrace
			\int_{s=t}^1
				s^{-1}
				\left\|	
					s^{\Blowupexp + 5 \Worstexp + 3 \Room - 1} g
			\right\|_{\dot{H}_{Frame}^{N-1}(\Sigma_s)}^2
			\, ds
				\\
		& \ \
		+
		C
		\int_{s=t}^1
			s^{\Room - 1}
			\left\lbrace
				\MetricLownorm^2(s) + \MetricHighnorm^2(s)
			\right\rbrace
		\, ds,
		\notag
			\\
		\left\|	
			t^{\Blowupexp + 5 \Worstexp + 3 \Room - 1} g^{-1}
		\right\|_{\dot{H}_{Frame}^{N-1}(\Sigma_t)}^2
		& \leq
			C \MetricHighnorm^2(1)
			-
			\left\lbrace
				2 \Blowupexp
				-
				C_*
			\right\rbrace
			\int_{s=t}^1
				s^{-1}
				\left\|	
					s^{\Blowupexp + 5 \Worstexp + 3 \Room - 1} g^{-1}
				\right\|_{\dot{H}_{Frame}^{N-1}(\Sigma_s)}^2
			\, ds
				\label{E:TWOBELOWTOPORDERINVERSEMETRICENERGYINTEGRALINEQUALITY} \\
		& \ \
		+
		C
		\int_{s=t}^1
			s^{\Room - 1}
			\left\lbrace
				\MetricLownorm^2(s) + \MetricHighnorm^2(s)
			\right\rbrace
		\, ds,
		\notag
		\end{align}
		\end{subequations}
		
		\begin{subequations}
		\begin{align}
		\left\|	
			t^{\Blowupexp + 3 \Worstexp + \Room} \SecondFund
		\right\|_{\dot{H}_{Frame}^{N-1}(\Sigma_t)}^2
		& \leq
			C \MetricHighnorm^2(1)
			-
			\left\lbrace
				2 \Blowupexp 
				- 
				C_*
			\right\rbrace
			\int_{s=t}^1
				s^{-1}
				\left\|	
					s^{\Blowupexp + 3 \Worstexp + \Room} \SecondFund
				\right\|_{\dot{H}_{Frame}^{N-1}(\Sigma_s)}^2
			\, ds
				\label{E:JUSTBELOWTOPORDERSECONDFUNDINTEGRALINEQUALITY} \\
		& \ \
			+
			\int_{s=t}^1
			s^{\Room - 1}
			\left\lbrace
				\MetricLownorm^2(s) + \MetricHighnorm^2(s)
			\right\rbrace
		\, ds,
			\notag
	\end{align}
	
	\begin{align}
		\left\|	
			t^{\Blowupexp + 3 \Worstexp + \Room} \SecondFund
		\right\|_{\dot{H}_g^{N-1}(\Sigma_t)}^2
		& \leq
			C \MetricHighnorm^2(1)
			-
			\left\lbrace
				2 \Blowupexp 
				- 
				C_*
			\right\rbrace
			\int_{s=t}^1
				s^{-1}
				\left\|	
					s^{\Blowupexp + 3 \Worstexp + \Room} \SecondFund
				\right\|_{\dot{H}_g^{N-1}(\Sigma_s)}^2
			\, ds
				\label{E:JUSTBELOWTOPORDERGNORMSECONDFUNDINTEGRALINEQUALITY} \\
		& \ \
			+
			\int_{s=t}^1
			s^{\Room - 1}
			\left\lbrace
				\MetricLownorm^2(s) + \MetricHighnorm^2(s)
			\right\rbrace
		\, ds.
			\notag
	\end{align}
	\end{subequations}
	
\end{proposition}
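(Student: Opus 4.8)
The plan is to prove all of \eqref{E:JUSTBELOWTOPORDERMETRICENERGYINTEGRALINEQUALITY}--\eqref{E:JUSTBELOWTOPORDERGNORMSECONDFUNDINTEGRALINEQUALITY} by a single, unified method: for each of the $t$-weighted $\dot{H}^M$-type norms appearing on the left-hand sides (with $M = N$ or $M = N-1$), differentiate the corresponding energy integral $\int_{\Sigma_t} |t^{\mathrm{weight}} \partial_{\vec I}(\cdot)|_{*}^2 \, dx$ in $t$, where $|\cdot|_*$ is either the Frame norm or the $g$-norm. The $\partial_t$ derivative produces three kinds of contributions: (i) a ``good'' friction term coming from $\partial_t$ hitting the explicit weight $t^{\mathrm{weight}}$, which generates the $-2\Blowupexp \int s^{-1}(\cdots)\,ds$ term after noting $t \in (0,1]$; (ii) for the $g$-norms, additional terms where $\partial_t$ falls on the factors of $g$ or $g^{-1}$ implicit in $|\cdot|_g$, which we handle via \eqref{E:PARTIALTGCMC}--\eqref{E:PARTIALTGINVERSECMC} — these produce either further friction-type terms (contributing to the universal constant $C_*$ when there is no extra smallness, as in \eqref{E:GNORMJUSTBELOWTOPORDERWITHGRADIENTMETRICENERGYINTEGRALINEQUALITY}, \eqref{E:TWOBELOWTOPORDERMETRICENERGYINTEGRALINEQUALITY}, \eqref{E:JUSTBELOWTOPORDERSECONDFUNDINTEGRALINEQUALITY}, \eqref{E:JUSTBELOWTOPORDERGNORMSECONDFUNDINTEGRALINEQUALITY}) or controllable error integrals; and (iii) error integrals coming from $\partial_t$ hitting $\partial_{\vec I}(\cdot)$ itself, for which we substitute using the commuted evolution equations \eqref{E:COMMUTEDPARTIALTONEDERIVATIVEOFMETRICCMC} (with the appropriate choice of $\Pos$ matching the weight) for the metric quantities and \eqref{E:COMMUTEDPARTIALTKCMC} for $\SecondFund$. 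The key observation that makes the $g$ and $g^{-1}$ estimates clean is that these are \emph{transport-type} estimates: we do \emph{not} integrate by parts, so the principal terms in \eqref{E:COMMUTEDPARTIALTKCMC} and \eqref{E:COMMUTEDPARTIALTONEDERIVATIVEOFMETRICCMC} that involve extra spatial derivatives get thrown into the error terms, losing one derivative — which is exactly the mechanism described at the end of Subsect.\,\ref{SS:MAINIDEASINPROOF}, and which is why these below-top estimates have weaker $t$-weights than the top-order norm $t^{\Blowupexp+1}$.

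The execution order would be: first record, in a preliminary lemma, the commuted equations \eqref{E:COMMUTEDPARTIALTONEDERIVATIVEOFMETRICCMC}--\eqref{E:COMMUTEDPARTIALTKCMC} specialized to the various needed values of $\Pos$ (namely $\Pos = 2\Worstexp + \Room$, $\Worstexp + \Room$, $5\Worstexp + 3\Room - 1$, $3\Worstexp + \Room$, etc., read off from Def.\,\ref{D:HIGHNORMS}), and a second preliminary lemma bounding the resulting $\leftexp{(Border;\Pos;\vec I)}{\mathfrak H}$, $\leftexp{(Junk;\Pos;\vec I)}{\mathfrak H}$, $\leftexp{(Border;\Pos;\vec I)}{\mathfrak K}$, $\leftexp{(Junk;\Pos;\vec I)}{\mathfrak K}$ in $\| \cdot \|_{L_g^2(\Sigma_t)}$ and $\| \cdot \|_{L_{Frame}^2(\Sigma_t)}$ by $C t^{\Room - 1 - \mathrm{weight}/\cdots}\{\MetricLownorm + \MetricHighnorm\}$ — these proofs are structurally identical to the error-term estimates in Lemma~\ref{L:L2CONTROLOFERRORTERMSINTOPORDERMETRICENERGYESTIMATES}, using Lemma~\ref{L:SOBOLEVBORROWALITTLEHIGHNORM}, Lemma~\ref{L:L2PRODUCTBOUNDINERMSOFLINFINITYANDHMDOT}, the norm comparison \eqref{E:POINTWISENORMCOMPARISON}, the elliptic estimates of Prop.\,\ref{P:ELLIPTICANDMAXIMUMPRINCIPLEESTIMATESFORTHELAPSE}, the Kasner bounds of Lemma~\ref{L:BASICESTIMATEFORKASNER}, and \eqref{E:WORSTEXPANDROOMLOTSOFUSEFULINEQUALITIES} repeatedly to absorb powers of $t^{-\Worstexp}$ into the $t^{\Room}$ room. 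Then, for each estimate, carry out the energy differentiation, apply Cauchy--Schwarz and Young's inequality to the error integrals, use the preliminary error bounds, and convert the pointwise estimate into the stated integral inequality via the fundamental theorem of calculus on $[t,1]$, bounding the initial-data term at $s = 1$ by $C \MetricHighnorm^2(1)$. For the metric norms without a gradient — \eqref{E:JUSTBELOWTOPORDERMETRICENERGYINTEGRALINEQUALITY}--\eqref{E:GNORMJUSTBELOWTOPORDERINVERSEMETRICENERGYINTEGRALINEQUALITY} — one directly differentiates $\| t^{\mathrm{weight}} \partial_{\vec I} g\|^2$ using \eqref{E:COMMUTEDPARTIALTGCMC}-type identities for $\partial_{\vec I} g$ itself (rather than $\partial_e \partial_{\vec I} g$), which is even simpler and produces no $C_*$ since the friction is purely from the weight.

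The main obstacle will be verifying, for each of the roughly ten estimates, that the error integrand obeys a bound of the form $C s^{\Room - 1}\{\MetricLownorm^2 + \MetricHighnorm^2\}$ — i.e., that after all the factors of $t^{\pm \Worstexp}$ generated by passing between Frame and $g$ norms, by the norm comparison lemma, and by the Kasner metric growth, the net power of $t$ is still at least $\Room - 1$, leaving a genuinely integrable (indeed, room-to-spare) singularity. This is exactly the bookkeeping that \eqref{E:WORSTEXPANDROOMLOTSOFUSEFULINEQUALITIES} is designed to make work, but it must be checked case by case, and the chain $\Blowupexp\updelta < \Room$ must be invoked (choosing $N$ large at fixed $\Blowupexp$) to swallow the Sobolev-embedding loss $t^{-\Blowupexp \updelta}$ from Lemma~\ref{L:SOBOLEVBORROWALITTLEHIGHNORM}. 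A secondary subtlety is the appearance of the universal $C_*$ (as opposed to $C$) in \eqref{E:GNORMJUSTBELOWTOPORDERWITHGRADIENTMETRICENERGYINTEGRALINEQUALITY}, \eqref{E:TWOBELOWTOPORDERMETRICENERGYINTEGRALINEQUALITY}, \eqref{E:TWOBELOWTOPORDERINVERSEMETRICENERGYINTEGRALINEQUALITY}, \eqref{E:JUSTBELOWTOPORDERSECONDFUNDINTEGRALINEQUALITY}, \eqref{E:JUSTBELOWTOPORDERGNORMSECONDFUNDINTEGRALINEQUALITY}: these are the estimates where the borderline error terms $\leftexp{(Border;\Pos;\vec I)}{\mathfrak K}$ or $\leftexp{(Border;\Pos;\vec I)}{\mathfrak H}$ contribute a friction-sign term with an $N$-independent coefficient that cannot be made small by $\varepsilon$ but \emph{can} be overwhelmed by choosing $\Blowupexp$ large — so one must carefully track which terms carry $\mycongstar$ versus $\mycong$ coefficients and keep the former lumped into $C_*$. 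Once all error integrands are controlled, the passage to the integral inequalities and the absorption of the $s^{-1} C_* \int (\cdots)$ contributions into the coefficient $-\{2\Blowupexp - C_*\}$ is routine.
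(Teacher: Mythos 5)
Your proposal is correct and follows essentially the same route as the paper: transport-type (no integration by parts) energy identities for the $t$-weighted below-top norms, obtained by differentiating in time, substituting via the commuted evolution equations \eqref{E:COMMUTEDPARTIALTGCMC}--\eqref{E:COMMUTEDPARTIALTGINVERSECMC}, \eqref{E:COMMUTEDPARTIALTONEDERIVATIVEOFMETRICCMC}, and \eqref{E:COMMUTEDPARTIALTKCMC} with the $\Pos$ values dictated by Def.\,\ref{D:HIGHNORMS}, handling the implicit $g^{\pm 1}$ factors in $|\cdot|_g$ with \eqref{E:PARTIALTGCMC}--\eqref{E:PARTIALTGINVERSECMC}, and bounding the derivative-losing error terms in $L^2$ exactly as in the top-order error lemma. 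One small correction: at these below-top orders the borderline terms $\leftexp{(Border;\Pos;\vec I)}{\mathfrak{H}}$ and $\leftexp{(Border;\Pos;\vec I)}{\mathfrak{K}}$ are in fact absorbed into the $C\,s^{\Room-1}\{\MetricLownorm^2+\MetricHighnorm^2\}$ integral (thanks to the elliptic lapse estimates), and the universal $C_*$ in the friction coefficient instead originates from the $N$-independent quadratic-form coefficients --- the $-2nt\SecondFund$ factor in the evolution equations and the $\partial_t g^{\pm1}$ substitutions inside $|\cdot|_g$, which are pointwise bounded by $C_* t^{-1}$ times the energy density as in \eqref{E:POINTWISEBOUNDSFORERRORTERMSWITHTIMEDERIVATIVESOFGINVERSE}.
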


\subsection{The equations}
\label{SS:EQUATIONSVERIFIEDBYMETRICOMMMUTED}
In this subsection, we derive the evolution equations verified by the
near-top-order derivatives of $g$.

\begin{lemma}[\textbf{The equations}]
	\label{L:EQUATIONSFORMETRICITSELFCOMMUTED}
	Let $\vec{I}$ be a spatial multi-index and let $\Pos \geq 0$ be a constant.
	Then the following \textbf{commuted metric evolution equations} hold:
	\begin{subequations}
	\begin{align}
	\partial_t (t^{\Blowupexp + \Pos} \partial_{\vec{I}} g_{ij})
		& = 
			\frac{1}{t}
			\left\lbrace
				(\Blowupexp + \Pos)
				\delta_{\ j}^a
				- 2 n t \SecondFund_{\ j}^a
				\right\rbrace
				(t^{\Blowupexp + \Pos} \partial_{\vec{I}} g_{ia}) 
			+ 
		\leftexp{(\Pos;\vec{I})}{\mathfrak{G}}_{ij}, 
		\label{E:COMMUTEDPARTIALTGCMC} \\
	\partial_t (t^{\Blowupexp + \Pos} \partial_{\vec{I}} g^{ij})
	& = 
	\frac{1}{t}
	\left\lbrace
		(\Blowupexp + \Pos)
		\delta_{\ a}^j
		+ 
		2 n t \SecondFund_{\ a}^j
	\right\rbrace
	(t^{\Blowupexp + \Pos} \partial_{\vec{I}} g^{ia}) 
	+
	\leftexp{(\Pos;\vec{I})}{\widetilde{\mathfrak{G}}}^{ij}, 
		\label{E:COMMUTEDPARTIALTGINVERSECMC} 
\end{align}
\end{subequations}
where
\begin{subequations}
\begin{align}
		\leftexp{(\Pos;\vec{I})}{\mathfrak{G}}_{ij}
		& \mycong
		\mathop{\sum_{\vec{I}_1 + \vec{I}_2 + \vec{I}_3 = \vec{I}}}_{|\vec{I}_2| \leq |\vec{I}|-1}
		t^{\Blowupexp + \Pos}
		(\partial_{\vec{I}_1} n) 
		(\partial_{\vec{I}_2} g) 
		\partial_{\vec{I}_3} \SecondFund,
			\label{E:COMMUTEDPARTIALTGERRORTERM} \\
	\leftexp{(\Pos;\vec{I})}{\widetilde{\mathfrak{G}}}^{ij}
	& \mycong
		\mathop{\sum_{\vec{I}_1 + \vec{I}_2 + \vec{I}_3 = \vec{I}}}_{|\vec{I}_2| \leq |\vec{I}|-1}
		t^{\Blowupexp + \Pos}
		(\partial_{\vec{I}_1} n) 
		(\partial_{\vec{I}_2} g^{-1}) 
		\partial_{\vec{I}_3} \SecondFund.
		\label{E:COMMUTEDPARTIALTGINVERSEERRORTERM} 
	\end{align}
\end{subequations}

\end{lemma}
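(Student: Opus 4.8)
The plan is to derive both identities by direct differentiation, treating $\partial_t$ and the spatial multi-indexed operator $\partial_{\vec{I}}$ as commuting partial coordinate derivatives (both are partial derivatives with respect to the coordinates, so they commute). First I would start from the metric evolution equation \eqref{E:PARTIALTGCMC}, namely $\partial_t g_{ij} = - 2 n g_{ia} \SecondFund_{\ j}^a$, and expand $\partial_t(t^{\Blowupexp + \Pos} \partial_{\vec{I}} g_{ij}) = (\Blowupexp + \Pos) t^{\Blowupexp + \Pos - 1} \partial_{\vec{I}} g_{ij} + t^{\Blowupexp + \Pos} \partial_{\vec{I}} \partial_t g_{ij}$. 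The first summand immediately contributes the $\frac{1}{t}(\Blowupexp + \Pos)\delta_{\ j}^a (t^{\Blowupexp + \Pos} \partial_{\vec{I}} g_{ia})$ piece appearing in \eqref{E:COMMUTEDPARTIALTGCMC}.

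For the second summand I would substitute \eqref{E:PARTIALTGCMC} and apply the Leibniz rule to $\partial_{\vec{I}}(n g_{ia} \SecondFund_{\ j}^a)$. The unique term in which all $|\vec{I}|$ derivatives land on the factor $g_{ia}$ is $-2 n \SecondFund_{\ j}^a \partial_{\vec{I}} g_{ia}$; multiplying by $t^{\Blowupexp+\Pos}$ and inserting a compensating factor $t/t$ rewrites it as $-\frac{1}{t}\bigl(2 n t \SecondFund_{\ j}^a\bigr)(t^{\Blowupexp+\Pos}\partial_{\vec{I}} g_{ia})$, which together with the previous paragraph assembles the bracketed coefficient $\frac{1}{t}\{(\Blowupexp + \Pos)\delta_{\ j}^a - 2 n t \SecondFund_{\ j}^a\}$ on RHS~\eqref{E:COMMUTEDPARTIALTGCMC}. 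Every remaining Leibniz summand has the metric factor $\partial_{\vec{I}_2} g$ carrying at most $|\vec{I}|-1$ derivatives and is, up to a constant multinomial coefficient, of the schematic form $t^{\Blowupexp+\Pos}(\partial_{\vec{I}_1} n)(\partial_{\vec{I}_2} g)\partial_{\vec{I}_3}\SecondFund$ with $\vec{I}_1 + \vec{I}_2 + \vec{I}_3 = \vec{I}$ and $|\vec{I}_2| \leq |\vec{I}|-1$; collecting these produces exactly $\leftexp{(\Pos;\vec{I})}{\mathfrak{G}}_{ij}$ as in \eqref{E:COMMUTEDPARTIALTGERRORTERM}, the notation $\mycong$ being appropriate precisely because the combinatorial coefficients are the $\pm C$ constants of Subsubsect.\,\ref{SSS:SCHEMATIC}.

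The identity \eqref{E:COMMUTEDPARTIALTGINVERSECMC} follows by the identical argument applied to \eqref{E:PARTIALTGINVERSECMC}, i.e., $\partial_t g^{ij} = 2 n g^{ia}\SecondFund_{\ a}^j$: the term with all derivatives on $g^{ia}$ now produces the coefficient $+2 n t \SecondFund_{\ a}^j$ (opposite sign to the $g$ case), the $\partial_t$-on-the-weight term again gives $(\Blowupexp + \Pos)\delta_{\ a}^j$, and all other Leibniz terms are absorbed into $\leftexp{(\Pos;\vec{I})}{\widetilde{\mathfrak{G}}}^{ij}$ of \eqref{E:COMMUTEDPARTIALTGINVERSEERRORTERM}. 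This lemma presents no analytic obstacle — the content is purely algebraic, parallel to the derivation of \eqref{E:COMMUTEDPARTIALTONEDERIVATIVEOFMETRICCMC} in Lemma~\ref{L:EQUATIONSFORTOOPORDERDERIVATIVESOFMETRICANDSECONDFUND} but for $g$ itself rather than $\partial g$. The only point requiring genuine care is the bookkeeping: verifying that after extracting the single leading linear term, every remaining term indeed has a metric factor of order at most $|\vec{I}|-1$ and conforms to the stated schematic template, and that the undifferentiated-$n$ contribution — which would otherwise be the most dangerous term — is precisely the one retained inside the leading coefficient rather than hidden in $\mathfrak{G}$.
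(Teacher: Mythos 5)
Your proposal is correct and follows exactly the route the paper takes: the paper's proof of Lemma~\ref{L:EQUATIONSFORMETRICITSELFCOMMUTED} simply states that \eqref{E:COMMUTEDPARTIALTGCMC}--\eqref{E:COMMUTEDPARTIALTGINVERSECMC} follow from commuting \eqref{E:PARTIALTGCMC}--\eqref{E:PARTIALTGINVERSECMC} with $\partial_{\vec{I}}$ and multiplying by $t^{\Blowupexp + \Pos}$, and your write-up is precisely that computation carried out explicitly, with the correct extraction of the single Leibniz term having $|\vec{I}_2| = |\vec{I}|$ into the leading coefficient and the correct sign flip for the $g^{-1}$ equation.
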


\begin{proof}
	Equation \eqref{E:COMMUTEDPARTIALTGCMC} follows in a straightforward fashion
	from commuting equation \eqref{E:PARTIALTGCMC}
	first with $\partial_{\vec{I}}$ and then with $t^{\Blowupexp + \Pos}$.
	Equation \eqref{E:COMMUTEDPARTIALTGINVERSECMC} follows from
	applying the same procedure to equation \eqref{E:PARTIALTGINVERSECMC}.
\end{proof}

\subsection{Control of the error terms in the near-top-order energy estimates}
\label{SS:CONTROLOFERRORTERMSINNEARTOPORDERMETRICESTIMATES}
We now bound various $L^2$ norms of the error terms from Lemma~\ref{L:EQUATIONSFORMETRICITSELFCOMMUTED}
in terms of the solution norms.

\begin{lemma}[$L^2$ \textbf{control of the error terms in the near-below-top-order 
		energy estimates for the $g$ and $\SecondFund$}] 
	\label{L:L2CONTROLOFERRORTERMSINJUSTBELOWTOPORDERMETRICENERGYESTIMATES}
	Assume that the bootstrap assumptions \eqref{E:BOOTSTRAPASSUMPTIONS} hold.
	There exists a constant $C > 0$
	such that if $N$ is sufficiently large in a manner that depends on $\Blowupexp$
	and if $\varepsilon$ is sufficiently small,  
	then the following estimates hold for $t \in (\TBoot,1]$.
	
	\medskip
	
	\noindent \underline{\textbf{Error term estimates for the near-top-order derivatives of} $g$}.
	The following estimates hold for the
	error terms from
	\eqref{E:COMMUTEDPARTIALTGERRORTERM}-\eqref{E:COMMUTEDPARTIALTGINVERSEERRORTERM}:
	\begin{subequations}
	\begin{align} \label{E:METRICJUSTBELOWTOPORDERERRORL2ESTIMATE}
		\max_{|\vec{I}| = N}
		\left\|
			\leftexp{(2 \Worstexp + \Room;\vec{I})}{\mathfrak{G}}
		\right\|_{L_{Frame}^2(\Sigma_t)}
		& 
		\leq
		C
		t^{\Room - 1}
		\left\lbrace
			\MetricLownorm(t) + \MetricHighnorm(t)
		\right\rbrace,
				\\
		\max_{|\vec{I}| = N}
		\left\|
			\leftexp{(2 \Worstexp + \Room;\vec{I})}{\widetilde{\mathfrak{G}}}
		\right\|_{L_{Frame}^2(\Sigma_t)}
		& 
		\leq
		C
		t^{\Room - 1}
		\left\lbrace
			\MetricLownorm(t) + \MetricHighnorm(t)
		\right\rbrace,
		\label{E:INVERSEMETRICJUSTBELOWTOPORDERERRORL2ESTIMATE}
	\end{align}
	\end{subequations}
	
	\begin{subequations}
	\begin{align} \label{E:METRICJUSTBELOWTOPORDERERRORGNORML2ESTIMATE}
		\max_{|\vec{I}| = N}
		\left\|
			\leftexp{(\Worstexp + \Room;\vec{I})}{\mathfrak{G}}
		\right\|_{L_g^2(\Sigma_t)}
		& 
		\leq
		C t^{\Room-1}
		\left\lbrace
			\MetricLownorm(t) + \MetricHighnorm(t)
		\right\rbrace,
			\\
		\max_{|\vec{I}| = N}
		\left\|
			\leftexp{(\Worstexp + \Room;\vec{I})}{\widetilde{\mathfrak{G}}}
		\right\|_{L_g^2(\Sigma_t)}
		& 
		\leq
		C t^{\Room-1}
		\left\lbrace
			\MetricLownorm(t) + \MetricHighnorm(t)
		\right\rbrace.
		\label{E:INVERSEMETRICJUSTBELOWTOPORDERERRORGNORML2ESTIMATE}
	\end{align}
	\end{subequations}
	
	Furthermore,
	the following estimates hold for the error terms from
	\eqref{E:COMMUTEDPARTIALTGERRORTERM}-\eqref{E:COMMUTEDPARTIALTGINVERSEERRORTERM}:
	\begin{subequations}
	\begin{align} \label{E:METRICTWOBELOWTOPORDERERRORL2ESTIMATE}
		\max_{|\vec{I}| = N-1}
		\left\|
			\leftexp{(\Blowupexp + 5 \Worstexp + 3 \Room - 1;\vec{I})}{\mathfrak{G}}
		\right\|_{L_{Frame}^2(\Sigma_t)}
		& 
		\leq
		C 
		\varepsilon
		t^{\Blowupexp + 5 \Worstexp + 3 \Room - 2}
		\left\|
			g 
		\right\|_{\dot{H}_{Frame}^{N-1}(\Sigma_t)}
		+
		C 
		t^{\Room - 1}
		\left\lbrace
			\MetricLownorm(t) + \MetricHighnorm(t)
		\right\rbrace,
			\\
		\max_{|\vec{I}| = N-1}
		\left\|
			\leftexp{(\Blowupexp + 5 \Worstexp + 3 \Room - 1;\vec{I})}{\widetilde{\mathfrak{G}}}
		\right\|_{L_{Frame}^2(\Sigma_t)}
		& 
		\leq
		C 
		\varepsilon
		t^{\Blowupexp + 5 \Worstexp + 3 \Room - 2}
		\left\|
			g^{-1} 
		\right\|_{\dot{H}_{Frame}^{N-1}(\Sigma_t)}
		+
		C
		t^{\Room - 1}
		\left\lbrace
			\MetricLownorm(t) + \MetricHighnorm(t)
		\right\rbrace.
		\label{E:INVERSEMETRICTWOBELOWTOPORDERERRORL2ESTIMATE}
	\end{align}
	\end{subequations}
	
	In addition, 
	the following estimates hold for the
	error terms from
	\eqref{E:TOPORDERDIFFERENTIATEDBORDERONCEDIFFERENTIATEDMETRICERRORTERM}-\eqref{E:TOPORDERDIFFERENTIATEDJUNKONCEDIFFERENTIATEDMETRICERRORTERM}:
	\begin{subequations}
	\begin{align}
		\max_{|\vec{I}| = N-1}
		\left\|
			\leftexp{(Border;2 \Worstexp + \Room;\vec{I})}{\mathfrak{H}}
		\right\|_{L_g^2(\Sigma_t)}
		& \leq
		C 
		t^{\Room - 1} 
		\left\lbrace
			\MetricLownorm(t)
			+
			\MetricHighnorm(t) 
		\right\rbrace,
			\label{E:ONEBELOWTOPORDERDIFFERENTIATEDBORDERONCEDIFFERENTIATEDMETRICERRORTERML2ESTIMATE} \\
		\max_{|\vec{I}| = N-1}
		\left\|
			\leftexp{(Junk;2 \Worstexp + \Room;\vec{I})}{\mathfrak{H}}
		\right\|_{L_g^2(\Sigma_t)}
		& \leq
		C 
		t^{\Room - 1} 
		\left\lbrace
			\MetricLownorm(t)
			+
			\MetricHighnorm(t) 
		\right\rbrace.
		\label{E:ONEBELOWTOPORDERDIFFERENTIATEDJUNKONCEDIFFERENTIATEDMETRICERRORTERML2ESTIMATE}
	\end{align}	
	\end{subequations}
	
	Let $\vec{I}$ be a spatial multi-index with $|\vec{I}|=N-1$ and let
	$T$ be the type $\binom{0}{3}$ $\Sigma_t$-tangent tensorfield
	with the following components relative to the transported
	spatial coordinates:
	$
	\displaystyle
	T_{eij}
	=:
	- 2 n g_{ia} \partial_e \partial_{\vec{I}} \SecondFund_{\ j}^a
	$.
	Then the following estimate holds:
	\begin{align} \label{E:L2GNORMESTIMATEFORTOPORDERSECONDFUNDTERMSINJUSTBELOWTOPORDERSTIMATESFORMETRIC}
		t^{\Blowupexp + 2 \Worstexp + \Room} 
		\left\|
			T
		\right\|_{L_g^2(\Sigma_t)}
		& \leq
		C
		t^{\Room - 1}
		\MetricHighnorm(t).
	\end{align}

	\medskip
	\noindent \underline{\textbf{Error term estimates for the just-below-top-order derivatives of} $\SecondFund$}.
	The following estimates hold for the
	error terms from
	\eqref{E:SECONDFUNDCOMMUTEDBORDERLINETERM}
	and
	\eqref{E:SECONDFUNDCOMMUTEDJUNKTERM}:
	\begin{subequations}
	\begin{align} \label{E:JUSTBELOWTOPORDERSECONDFUNDBORDERLINEERRORL2}
		\max_{|\vec{I}| = N-1}
		\left\|
			\leftexp{(Border;3 \Worstexp + \Room;\vec{I}}{\mathfrak{K}}
		\right\|_{L_{Frame}^2(\Sigma_t)}
		& 
		\leq
	C
	t^{\Room - 1}
	\left\lbrace
		\MetricLownorm(t)
		+
		\MetricHighnorm(t) 
	\right\rbrace,
			\\
		\max_{|\vec{I}| = N-1}
		\left\|
			\leftexp{(Junk;3 \Worstexp + \Room;\vec{I}}{\mathfrak{K}}
		\right\|_{L_{Frame}^2(\Sigma_t)}
		& \leq 
		C
		t^{\Room - 1}
		\left\lbrace
			\MetricLownorm(t) + \MetricHighnorm(t)
		\right\rbrace,
			\label{E:JUSTBELOWTOPORDERSECONDFUNDJUNKLINEERRORL2}
	\end{align}
	
	\begin{align} \label{E:JUSTBELOWTOPORDERSECONDFUNDBORDERLINEERRORL2GNORM}
		\max_{|\vec{I}| = N-1}
		\left\|
			\leftexp{(Border;3 \Worstexp + \Room;\vec{I}}{\mathfrak{K}}
		\right\|_{L_g^2(\Sigma_t)}
		& 
		\leq
	C
	t^{\Room - 1}
	\left\lbrace
		\MetricLownorm(t)
		+
		\MetricHighnorm(t) 
	\right\rbrace,
			\\
		\max_{|\vec{I}| = N-1}
		\left\|
			\leftexp{(Junk;3 \Worstexp + \Room;\vec{I}}{\mathfrak{K}}
		\right\|_{L_g^2(\Sigma_t)}
		& \leq 
		C
		t^{\Room - 1}
		\left\lbrace
			\MetricLownorm(t) + \MetricHighnorm(t)
		\right\rbrace.
			\label{E:JUSTBELOWTOPORDERSECONDFUNDJUNKLINEERRORL2GNORM}
	\end{align}
	\end{subequations}
	
	Let $\vec{I}$ be any spatial multi-index with $|\vec{I}| = N-1$
	and let $T$ be the type $\binom{1}{1}$ $\Sigma_t$-tangent tensorfield
	with the following components relative to the transported
	spatial coordinates:
	$
	\displaystyle
	T_{\ j}^i
	=:
	-
	g^{ia} \partial_a \partial_j \partial_{\vec{I}} n
	+
	\frac{1}{2}
	n 
			g^{ic} 
			g^{ab}
			\left\lbrace
			 \partial_a \partial_c \partial_{\vec{I}} g_{bj} 
				+ 
				\partial_a \partial_j \partial_{\vec{I}} g_{bc} 
				- 
				\partial_a \partial_b \partial_{\vec{I}} g_{cj}
				-
				\partial_c \partial_j \partial_{\vec{I}} g_{ab} 
			\right\rbrace
	$.
	Then the following estimates hold:
	\begin{subequations}
	\begin{align} \label{E:L2ESTIMATEFORTOPORDERMETRICANDLAPSEERRORTERMSINJUSTBELOWTOPORDERSTIMATESFORSECONDFUND}
		t^{\Blowupexp + 3 \Worstexp + \Room} 
		\left\|
			 T
		\right\|_{L_{Frame}^2(\Sigma_t)}
		& \leq
		C
		t^{\Room - 1}
		\MetricHighnorm(t),
	\end{align}
	
	\begin{align} \label{E:L2GNORMESTIMATEFORTOPORDERMETRICANDLAPSEERRORTERMSINJUSTBELOWTOPORDERSTIMATESFORSECONDFUND}
		t^{\Blowupexp + 3 \Worstexp + \Room} 
		\left\|
			 T
		\right\|_{L_g^2(\Sigma_t)}
		& \leq
		C
		t^{\Room - 1}
		\MetricHighnorm(t).
	\end{align}
	\end{subequations}
\end{lemma}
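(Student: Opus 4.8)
\textbf{Proof proposal for Lemma~\ref{L:L2CONTROLOFERRORTERMSINJUSTBELOWTOPORDERMETRICENERGYESTIMATES}.}

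The plan is to treat each of the asserted $L^2$ bounds by the same machinery already developed earlier in the paper: expand each schematically-denoted error term into its constituent products, distribute the spatial derivatives via the Leibniz rule over the summands, and then bound each product by the Sobolev product inequalities of Lemma~\ref{L:L2PRODUCTBOUNDINERMSOFLINFINITYANDHMDOT} together with the interpolation estimates of Lemma~\ref{L:STANDARDSOBOLEVINTERPOLATION} and the embedding estimates of Lemma~\ref{L:SOBOLEVBORROWALITTLEHIGHNORM}. Throughout the proof I would fix $\Blowupexp \updelta$ small (in particular $\Blowupexp \updelta < \Room$), which is achievable by taking $N$ large, exactly as in the preceding error-term lemmas. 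For the $\| \cdot \|_{L_g^2}$ estimates I would first invoke the norm comparison \eqref{E:POINTWISENORMCOMPARISON} to pass to $\| \cdot \|_{L_{Frame}^2}$ at the cost of a harmless power $t^{-(l+m)\Worstexp}$, and for the pointwise $L^\infty$ factors I would use the comparison inequalities and the estimates \eqref{E:KASNERMETRICESTIMATES}, \eqref{E:KASNERSECONDFUNDESTIMATES}, \eqref{E:UPTOFOURDERIVATIVESOFGLINFINITYSOBOLEV}--\eqref{E:UPTOFOURDERIVATIVESOFLAPSELINFINITYSOBOLEV}, plus the lapse elliptic estimates \eqref{E:LAPSELOWNORMELLIPTIC}, \eqref{E:LAPSETOPORDERELLIPTIC}, \eqref{E:LAPSEJUSTBELOWTOPORDERELLIPTIC} whenever a factor of $n$ or its derivatives appears.

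The estimates divide naturally into three groups. First, the estimates \eqref{E:METRICJUSTBELOWTOPORDERERRORL2ESTIMATE}--\eqref{E:INVERSEMETRICJUSTBELOWTOPORDERERRORGNORML2ESTIMATE} for $\leftexp{(\Pos;\vec{I})}{\mathfrak{G}}$ and $\leftexp{(\Pos;\vec{I})}{\widetilde{\mathfrak{G}}}$ at order $N$: here every summand on RHS~\eqref{E:COMMUTEDPARTIALTGERRORTERM}--\eqref{E:COMMUTEDPARTIALTGINVERSEERRORTERM} is a product $(\partial_{\vec{I}_1} n)(\partial_{\vec{I}_2} g)\partial_{\vec{I}_3}\SecondFund$ with $|\vec{I}_2| \le N-1$, so the top-order derivative always lands on $n$ or $\SecondFund$, never on $g$; I would split into the cases $|\vec{I}_1| = N$ (control via $\LapseHighnorm$ using \eqref{E:LAPSETOPORDERELLIPTIC}), $|\vec{I}_3| = N$ (control via the $\dot{H}^N_{Frame}$ or $\dot{H}^N_g$ piece of $\MetricHighnorm$), and all indices $\le N-1$ (interpolation, which loses only a power $t^{-\Blowupexp\updelta}$). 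Second, the order $N-1$ estimates \eqref{E:METRICTWOBELOWTOPORDERERRORL2ESTIMATE}--\eqref{E:INVERSEMETRICTWOBELOWTOPORDERERRORL2ESTIMATE} and the $\mathfrak{H}$, $\mathfrak{K}$ estimates at $|\vec{I}| = N-1$: these are handled identically, except that now in the highest subcase the top derivative can land on $g$ itself, producing a $\| g \|_{\dot{H}^{N-1}_{Frame}}$ term multiplied by a small constant $C\varepsilon$ — this is exactly the ``$C\varepsilon t^{\cdots}\| g \|_{\dot{H}^{N-1}_{Frame}}$'' term made explicit on the right-hand sides of \eqref{E:METRICTWOBELOWTOPORDERERRORL2ESTIMATE}--\eqref{E:INVERSEMETRICTWOBELOWTOPORDERERRORL2ESTIMATE}, and it will later be absorbed back into the left side of the corresponding Gronwall inequality. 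Third, the estimates \eqref{E:L2GNORMESTIMATEFORTOPORDERSECONDFUNDTERMSINJUSTBELOWTOPORDERSTIMATESFORMETRIC}, \eqref{E:L2ESTIMATEFORTOPORDERMETRICANDLAPSEERRORTERMSINJUSTBELOWTOPORDERSTIMATESFORSECONDFUND}, and \eqref{E:L2GNORMESTIMATEFORTOPORDERMETRICANDLAPSEERRORTERMSINJUSTBELOWTOPORDERSTIMATESFORSECONDFUND} for the genuinely top-order ``principal'' terms $T$ that appear in the below-top-order evolution equations \eqref{E:COMMUTEDPARTIALTONEDERIVATIVEOFMETRICCMC}, \eqref{E:COMMUTEDPARTIALTKCMC}: for these I would use $|g|_g = |g^{-1}|_g \le C_*$ and the bound $|n| \le C_*$, then pass to the geometric norm and recognize the resulting quantity as controlled by the $\dot{H}^N_g$ pieces $t^{\Blowupexp+1}\| \SecondFund \|_{\dot{H}^N_g}$, $t^{\Blowupexp+1}\| \partial g \|_{\dot{H}^N_g}$, and $t^{\Blowupexp+1}\| \partial n \|_{\dot{H}^N_g}$ of $\MetricHighnorm$ and $\LapseHighnorm$, again invoking \eqref{E:LAPSETOPORDERELLIPTIC} for the lapse; the prefactor $t$-powers are arranged precisely so that the net exponent is $\geq \Room - 1$ by virtue of \eqref{E:WORSTEXPANDROOMLOTSOFUSEFULINEQUALITIES}.

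The main obstacle is bookkeeping rather than conceptual: the products are quintic or sextic in $(g, g^{-1}, \partial g, \SecondFund, n)$, so there are many subcases once the Leibniz rule is distributed, and in each subcase one must verify — using only the structural inequalities \eqref{E:WORSTEXPANDROOMLOTSOFUSEFULINEQUALITIES} and the smallness of $\Blowupexp\updelta$ — that the accumulated power of $t$ is at least $\Room - 1$ (or, in the $N-1$ estimates with a top-order $g$-factor, that the dangerous term comes with a factor of $\varepsilon$). The delicate point, as flagged near the end of Subsect.\,\ref{SS:MAINIDEASINPROOF}, is that the discrepancy between $\| \cdot \|_{\dot{H}^M_{Frame}}$ and $\| \cdot \|_{\dot{H}^M_g}$ is a singular power $t^{-\Worstexp}$ per tensor slot, so one must be careful to choose, in each subcase, the norm (frame versus geometric) that the high norm $\MetricHighnorm$ actually controls with the right weight — this is exactly why Def.\,\ref{D:HIGHNORMS} carries both frame and geometric pieces at orders $N$ and $N-1$, with carefully tuned exponents. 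Modulo these routine but lengthy case checks, each estimate reduces to an application of the already-established lemmas, so I would present one representative subcase in full for each of the three groups and indicate that the remaining subcases are entirely analogous, as is done in the proofs of the preceding error-term lemmas.
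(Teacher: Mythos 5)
Your overall strategy coincides with the paper's proof: the same case analysis on where the top-order derivatives land, the same reliance on Lemmas~\ref{L:L2PRODUCTBOUNDINERMSOFLINFINITYANDHMDOT}, \ref{L:STANDARDSOBOLEVINTERPOLATION}, and \ref{L:SOBOLEVBORROWALITTLEHIGHNORM}, the norm comparison \eqref{E:POINTWISENORMCOMPARISON}, $g$-Cauchy--Schwarz, and the elliptic lapse estimates, all under the standing assumption $\Blowupexp \updelta < \Room$. Your treatment of the order-$N$ estimates and of the principal terms $T$ matches the paper's.

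One detail in your second group is mischaracterized, and as stated it would not produce the required smallness. For \eqref{E:METRICTWOBELOWTOPORDERERRORL2ESTIMATE} you attribute the term $C\varepsilon\, t^{\Blowupexp + 5\Worstexp + 3\Room - 2}\|g\|_{\dot{H}_{Frame}^{N-1}(\Sigma_t)}$ to ``the top derivative landing on $g$ itself.'' But the structure of $\leftexp{(\Pos;\vec{I})}{\mathfrak{G}}$ in \eqref{E:COMMUTEDPARTIALTGERRORTERM} enforces $|\vec{I}_2| \leq |\vec{I}| - 1 = N-2$, so all $N-1$ derivatives can never fall on $g$; and if they could, the coefficient would be $\|n\|_{L^{\infty}}\|\SecondFund\|_{L_{Frame}^{\infty}} \lesssim t^{-1}$ with \emph{no} factor of $\varepsilon$, which would destroy the absorption step in Prop.\,\ref{P:INTEGRALINEQUALITIESFORJUSTBELOWTOPDERIVATIVESOFMETRICANDSECONDFUND}. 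The actual mechanism in the paper is the opposite: in the worst subcase $N-2$ derivatives fall on $g$ and exactly one derivative falls on $n$ or $\SecondFund$; the factor $\varepsilon$ comes from $\sum_{|\vec{I}_1|+|\vec{I}_2|=1}\|\partial_{\vec{I}_1} n\|_{L^{\infty}}\|\partial_{\vec{I}_2}\SecondFund\|_{L_{Frame}^{\infty}} \lesssim \varepsilon t^{-1}$ (using that $\partial\KasnerSecondFund = 0$, so $\|\partial\SecondFund\|_{L_{Frame}^{\infty}} \leq t^{-1}\MetricLownorm(t)$, and that $\|\partial n\|_{L^{\infty}}$ is small by \eqref{E:UPTOFOURDERIVATIVESOFLAPSELINFINITYSOBOLEV}), and the norm $\|g\|_{\dot{H}_{Frame}^{N-1}}$ then appears only after interpolating $\|g\|_{\dot{H}_{Frame}^{N-2}}$ upward via \eqref{E:BASICINTERPOLATION}. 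You should correct this subcase accordingly; the rest of the plan goes through as you describe.
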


\begin{proof}
Throughout this proof, we will assume that $\Blowupexp \updelta$ is sufficiently small
(and in particular that $\Blowupexp \updelta < \Room$);
in view of the discussion in Subsect.\,\ref{SS:SOBOLEVEMBEDDING}, we see that
at fixed $\Blowupexp$, this can be achieved by choosing $N$ to be sufficiently large.
	
	\medskip
	\noindent \underline{\textbf{Proof of 
	\eqref{E:METRICJUSTBELOWTOPORDERERRORL2ESTIMATE}-\eqref{E:INVERSEMETRICJUSTBELOWTOPORDERERRORL2ESTIMATE}}}:
	We first prove \eqref{E:METRICJUSTBELOWTOPORDERERRORL2ESTIMATE}.
	We stress that for this estimate, 
	on RHS~\eqref{E:COMMUTEDPARTIALTGERRORTERM},
	we have $\Pos = 2 \Worstexp + \Room$ 
	and $|\vec{I}|=N$.
	
	We first consider the case in which $|\vec{I}_1| = N$ on RHS~\eqref{E:COMMUTEDPARTIALTGERRORTERM}.
	Using \eqref{E:POINTWISENORMCOMPARISON} with $l = 0$ and $m=2$
	(since $\leftexp{(2 \Worstexp + \Room;\vec{I})}{\mathfrak{G}}$ is type $\binom{0}{2}$),
	the fact that $|g|_g \lesssim 1$,
	and $g$-Cauchy--Schwarz,
	we deduce that the products under consideration are bounded in the norm
	$\| \cdot \|_{L_{Frame}^2(\Sigma_t)}$
	by
	$\lesssim
	t^{\Blowupexp + \Room}
	\left\| 
		\SecondFund 
	\right\|_{L_g^{\infty}(\Sigma_t)}
	\left\|
		n
	\right\|_{\dot{H}^N(\Sigma_t)}
	$.
	From Defs.\,\ref{D:LOWNORMS} and~\ref{D:HIGHNORMS},
	the elliptic estimate \eqref{E:LAPSETOPORDERELLIPTIC},
	and the bootstrap assumptions,
	we deduce that the RHS of the previous expression
	is 
	$
	\lesssim
	t^{\Room - 1}
	\left\lbrace
		\MetricLownorm(t)
		+
		\MetricHighnorm(t) 
	\right\rbrace
	$
	as desired.
	We next consider the case in which $|\vec{I}_3| = N$ on RHS~\eqref{E:COMMUTEDPARTIALTGERRORTERM}.
  Using \eqref{E:POINTWISENORMCOMPARISON} with $l = 0$ and $m=2$
	(since $\leftexp{(2 \Worstexp + \Room;\vec{I})}{\mathfrak{G}}$ is type $\binom{0}{2}$),
	and $g$-Cauchy--Schwarz,
	we deduce that the products under consideration are bounded in the norm
	$\| \cdot \|_{L_{Frame}^2(\Sigma_t)}$
	by
	$\lesssim
	t^{\Blowupexp + \Room}
	\| n \|_{L^{\infty}(\Sigma_t)}
	\left\|
		\SecondFund
	\right\|_{\dot{H}_g^N(\Sigma_t)}
	$.
	From 
	\eqref{E:WORSTEXPANDROOMLOTSOFUSEFULINEQUALITIES},
	Defs.\,\ref{D:LOWNORMS} and~\ref{D:HIGHNORMS},
	and the bootstrap assumptions,
	we deduce that the RHS of the previous expression
	is 
	$
	\lesssim
	t^{\Room - 1}
	\MetricHighnorm(t) 
	$
	as desired.
	It remains for us to consider the cases in which $|\vec{I}_1|,|\vec{I}_2|,|\vec{I}_3| \leq N-1$
	on RHS~\eqref{E:COMMUTEDPARTIALTGERRORTERM}.
	We first use 
	\eqref{E:BASICINTERPOLATION}
	and
	\eqref{E:FRAMENORML2PRODUCTBOUNDINERMSOFLINFINITYANDHMDOT}
	to bound 
	(using that $|\vec{I}|=N$)
	the products under consideration as follows:
	\begin{align} \label{E:FIRSTSTEPMETRICJUSTBELOWTOPORDERJUNKERRORLASTTERM}
		&
		\mathop{\sum_{\vec{I}_1 + \vec{I}_2 + \vec{I}_3 = \vec{I}}}_{|\vec{I}_1|,|\vec{I}_2|,|\vec{I}_3| \leq N-1}
		t^{\Blowupexp + 2 \Worstexp + \Room}
		\left\|
			(\partial_{\vec{I}_1} n) 
			(\partial_{\vec{I}_2} g) 
			\partial_{\vec{I}_3} \SecondFund
		\right\|_{L_{Frame}^2(\Sigma_t)}
			\\
		& \lesssim
		t^{\Blowupexp + 2 \Worstexp + \Room}
		\left\|
			n
		\right\|_{W_{Frame}^{1,\infty}(\Sigma_t)}
		\left\|
			\SecondFund
		\right\|_{W_{Frame}^{1,\infty}(\Sigma_t)}
		\left\|
			g
		\right\|_{\dot{H}_{Frame}^{N-1}(\Sigma_t)}
		\notag
			\\
		& 
			\ \
		+ 
		t^{\Blowupexp + 2 \Worstexp + \Room}
		\left\|
			n
		\right\|_{W^{1,\infty}(\Sigma_t)}
		\left\|
			g
		\right\|_{W_{Frame}^{1,\infty}(\Sigma_t)}
		\left\|
			\SecondFund
		\right\|_{\dot{H}_{Frame}^{N-1}(\Sigma_t)}
		\notag
			\\
		& 
			\ \
		+ 
		t^{\Blowupexp + 2 \Worstexp + \Room}
		\left\|
			g
		\right\|_{W_{Frame}^{1,\infty}(\Sigma_t)}
		\left\|
			\SecondFund
		\right\|_{W_{Frame}^{1,\infty}(\Sigma_t)}
		\left\|
			n
		\right\|_{\dot{H}^{N-1}(\Sigma_t)}
		\notag
			\\
			\ \
	& \ \
		+ 
		t^{\Blowupexp + 2 \Worstexp + \Room}
		\left\|
			n
		\right\|_{W^{1,\infty}(\Sigma_t)}
		\left\|
			g
		\right\|_{W_{Frame}^{1,\infty}(\Sigma_t)}
		\left\|
			\SecondFund
		\right\|_{W_{Frame}^{1,\infty}(\Sigma_t)}.
		\notag
	\end{align}
From 
\eqref{E:WORSTEXPANDROOMLOTSOFUSEFULINEQUALITIES},
Defs.\,\ref{D:LOWNORMS} and~\ref{D:HIGHNORMS},
the estimates \eqref{E:KASNERMETRICESTIMATES},
\eqref{E:KASNERSECONDFUNDESTIMATES},
\eqref{E:UPTOFOURDERIVATIVESOFGLINFINITYSOBOLEV},
and
\eqref{E:UPTOFOURDERIVATIVESOFLAPSELINFINITYSOBOLEV},
the elliptic estimate \eqref{E:LAPSEJUSTBELOWTOPORDERELLIPTIC},
and the bootstrap assumptions, 
we deduce that
$
	\mbox{RHS~\eqref{E:FIRSTSTEPMETRICJUSTBELOWTOPORDERJUNKERRORLASTTERM}}
	\lesssim
	t^{- 3 \Worstexp - 2 \Room - \Blowupexp \updelta}
	\left\lbrace
		\MetricLownorm(t)
		+
		\MetricHighnorm(t) 
	\right\rbrace
	$,
	which, in view of \eqref{E:WORSTEXPANDROOMLOTSOFUSEFULINEQUALITIES},
	is 
	$
	\lesssim
	\mbox{RHS~\eqref{E:METRICJUSTBELOWTOPORDERERRORL2ESTIMATE}}
	$
	as desired. We have therefore proved \eqref{E:METRICJUSTBELOWTOPORDERERRORL2ESTIMATE}.
	
	The estimate \eqref{E:INVERSEMETRICJUSTBELOWTOPORDERERRORL2ESTIMATE} can be proved
	by applying nearly identical arguments to
	RHS~\eqref{E:COMMUTEDPARTIALTGINVERSEERRORTERM},
	and we omit the details.
	
	\medskip
	\noindent \underline{\textbf{Proof of 
	\eqref{E:METRICJUSTBELOWTOPORDERERRORGNORML2ESTIMATE}-\eqref{E:INVERSEMETRICJUSTBELOWTOPORDERERRORGNORML2ESTIMATE}}}:
	We first prove \eqref{E:METRICJUSTBELOWTOPORDERERRORGNORML2ESTIMATE}.
	We stress that for this estimate, 
	on RHS~\eqref{E:COMMUTEDPARTIALTGERRORTERM},
	we have $\Pos = \Worstexp + \Room$ 
	and $|\vec{I}|=N$.
	
	To bound RHS~\eqref{E:COMMUTEDPARTIALTGERRORTERM},
	we first consider the case in which $|\vec{I}_3|= N$.
	Using that $|g|_g \lesssim 1$ and $g$-Cauchy--Schwarz,
	we deduce that the products under consideration are
	bounded in the norm $\| \cdot \|_{L_g^2(\Sigma_t)}$
	by 
	$
	\lesssim 
	t^{\Blowupexp + \Worstexp + \Room} 
	\| n \|_{L^{\infty}(\Sigma_t)}
	\left\|
		\SecondFund
	\right\|_{\dot{H}_g^N(\Sigma_t)}
	$.
	Next, from 
	\eqref{E:WORSTEXPANDROOMLOTSOFUSEFULINEQUALITIES},
	Defs.\,\ref{D:LOWNORMS} and~\ref{D:HIGHNORMS},
	and the bootstrap assumptions, 
	we deduce that the
	RHS of the previous expression is
	$
	\lesssim
	t^{\Worstexp + \Room - 1}
	\MetricHighnorm(t)
	$,
	which is $\lesssim \mbox{RHS~\eqref{E:METRICJUSTBELOWTOPORDERERRORGNORML2ESTIMATE}}$ 
	as desired.
	We now consider the case in which $|\vec{I}_1|= N$ on RHS~\eqref{E:COMMUTEDPARTIALTGERRORTERM}.
	Using that $|g|_g \lesssim 1$ and $g$-Cauchy--Schwarz,
	we deduce that the products under consideration are
	bounded in the norm $\| \cdot \|_{L_g^2(\Sigma_t)}$
	by 
	$
	\lesssim 
	t^{\Blowupexp + \Worstexp + \Room} 
	\| \SecondFund \|_{L_g^{\infty}(\Sigma_t)}
	\left\|
		n
	\right\|_{\dot{H}_g^N(\Sigma_t)}
	$.
	Using Def.~\ref{D:LOWNORMS},
	the estimate \eqref{E:KASNERSECONDFUNDESTIMATES},
	and the elliptic estimate
	\eqref{E:LAPSETOPORDERELLIPTIC},
	we deduce that the
	RHS of the previous expression is
	$
	\lesssim
	t^{\Worstexp + \Room - 1} 
	\left\lbrace
		\MetricLownorm(t) 
		+ 
		\MetricHighnorm(t)
	\right\rbrace
	$,
	which is $\lesssim \mbox{RHS~\eqref{E:METRICJUSTBELOWTOPORDERERRORGNORML2ESTIMATE}}$ 
	as desired.
	It remains for us to consider the remaining cases, in which
$|\vec{I}_1|, |\vec{I}_2|, |\vec{I}_3| \leq N-1$
on RHS~\eqref{E:COMMUTEDPARTIALTGERRORTERM}.
We first use
\eqref{E:L2PRODUCTBOUNDINERMSOFLINFINITYANDHMDOT} with $l=0$ and $m=2$
(since $\leftexp{(\Worstexp + \Room;\vec{I})}{\mathfrak{G}}$ is type $\binom{0}{2}$) 
and \eqref{E:BASICINTERPOLATION}
to deduce 
(using that $|\vec{I}|=N$)
that the products under consideration are bounded as follows:	
\begin{align} \label{E:FIRSTSTEPMETRICJUSTBELOWTOPORDERERRORGNORML2ESTIMATELOWERORDERERRORL2TERM2}
	\mathop{\sum_{\vec{I}_1 + \vec{I}_2 + \vec{I}_3 = \vec{I}}}_{|\vec{I}_1|, |\vec{I}_2|, |\vec{I}_3| \leq N-1}
		t^{\Blowupexp + \Worstexp + \Room}
		\left\|
			(\partial_{\vec{I}_1} n) 
			(\partial_{\vec{I}_2} g) 
			\partial_{\vec{I}_3} \SecondFund
		\right\|_{L_g^2(\Sigma_t)}
		\lesssim
		\mathop{\sum_{\vec{I}_1 + \vec{I}_2 + \vec{I}_3 = \vec{I}}}_{|\vec{I}_1|, |\vec{I}_2|, |\vec{I}_3| \leq N-1}
		t^{\Blowupexp - \Worstexp + \Room}
		\left\|
			(\partial_{\vec{I}_1} n) 
			(\partial_{\vec{I}_2} g) 
			\partial_{\vec{I}_3} \SecondFund
		\right\|_{L_{Frame}^2(\Sigma_t)}.
	\end{align}
	Since RHS~\eqref{E:FIRSTSTEPMETRICJUSTBELOWTOPORDERERRORGNORML2ESTIMATELOWERORDERERRORL2TERM2} 
	is equal to $t^{-3 \Worstexp}$ times
	LHS~\eqref{E:FIRSTSTEPMETRICJUSTBELOWTOPORDERJUNKERRORLASTTERM}, the arguments
	surrounding \eqref{E:FIRSTSTEPMETRICJUSTBELOWTOPORDERJUNKERRORLASTTERM}
	imply that 
	$
\mbox{RHS~\eqref{E:FIRSTSTEPMETRICJUSTBELOWTOPORDERERRORGNORML2ESTIMATELOWERORDERERRORL2TERM2}}
\lesssim
t^{- 6 \Worstexp - 2 \Room - \Blowupexp \updelta}
\left\lbrace
	\MetricLownorm(t)
	+
	\MetricHighnorm(t)
\right\rbrace
$,
which, in view of \eqref{E:WORSTEXPANDROOMLOTSOFUSEFULINEQUALITIES},
is $\lesssim \mbox{RHS~\eqref{E:METRICJUSTBELOWTOPORDERERRORGNORML2ESTIMATE}}$ as desired.
We have therefore proved \eqref{E:METRICJUSTBELOWTOPORDERERRORGNORML2ESTIMATE}.	
	
The estimate \eqref{E:INVERSEMETRICJUSTBELOWTOPORDERERRORGNORML2ESTIMATE}
can be proved by applying nearly identical arguments to the products on
RHS~\eqref{E:COMMUTEDPARTIALTGINVERSEERRORTERM}	
(with $\Pos = \Worstexp + \Room$ and $|\vec{I}|=N$)
and we omit the details.
	
	\medskip
	\noindent \underline{\textbf{Proof of 
	\eqref{E:METRICTWOBELOWTOPORDERERRORL2ESTIMATE}-\eqref{E:INVERSEMETRICTWOBELOWTOPORDERERRORL2ESTIMATE}}}:
	We first prove \eqref{E:METRICTWOBELOWTOPORDERERRORL2ESTIMATE}.
	We stress that for this estimate, 
	on RHS~\eqref{E:COMMUTEDPARTIALTGERRORTERM},
	we have $\Pos = \Blowupexp + 5 \Worstexp + 3 \Room - 1$
	and $|\vec{I}| = N-1$.
	
	We first use 
	\eqref{E:BASICINTERPOLATION}
	and
	\eqref{E:FRAMENORML2PRODUCTBOUNDINERMSOFLINFINITYANDHMDOT}
	to bound the terms on RHS~\eqref{E:COMMUTEDPARTIALTGERRORTERM} as follows:
	\begin{align} \label{E:TWOBELOWTOPORDERJUNKERRORL2BOUND}
		&
		\mathop{\sum_{\vec{I}_1 + \vec{I}_2 + \vec{I}_3 = \vec{I}}}_{|\vec{I}_2| \leq N-2}
		t^{\Blowupexp + 5 \Worstexp + 3 \Room - 1}
		\left\|
			(\partial_{\vec{I}_1} n) 
			(\partial_{\vec{I}_2} g) 
			\partial_{\vec{I}_3} \SecondFund
		\right\|_{L_{Frame}^2(\Sigma_t)}
			\\
		& \lesssim
		t^{\Blowupexp + 5 \Worstexp + 3 \Room - 1}
		\left\|
			n
		\right\|_{W^{1,\infty}(\Sigma_t)}
		\left\|
			g
		\right\|_{L_{Frame}^{\infty}(\Sigma_t)}
		\left\|
			\SecondFund
		\right\|_{\dot{H}_{Frame}^{N-1}(\Sigma_t)}
		\notag
			\\
		& 
			\ \
		+ 
		t^{\Blowupexp + 5 \Worstexp + 3 \Room - 1}
		\left\|
			g
		\right\|_{L_{Frame}^{\infty}(\Sigma_t)}
		\left\|
			\SecondFund
		\right\|_{W_{Frame}^{1,\infty}(\Sigma_t)}
		\left\|
			n
		\right\|_{\dot{H}^{N-1}(\Sigma_t)}
		\notag
		\\
		& 
			\ \
		+ 
		t^{\Blowupexp + 5 \Worstexp + 3 \Room - 1}
		\sum_{|\vec{I}_1| + |\vec{I}_2| = 1}
		\left\|
			\partial_{\vec{I}_1} n
		\right\|_{L{\infty}(\Sigma_t)}
		\left\|
			\partial_{\vec{I}_2} \SecondFund
		\right\|_{L_{Frame}^{\infty}(\Sigma_t)}
		\left\|
			g 
		\right\|_{\dot{H}_{Frame}^{N-2}(\Sigma_t)}
		\notag
			\\
		& \ \
		+
		t^{\Blowupexp + 5 \Worstexp + 3 \Room - 1}
		\sum_{|\vec{I}_1| + |\vec{I}_2| = 1}
		\left\|
			\partial_{\vec{I}_1} n
		\right\|_{L{\infty}(\Sigma_t)}
		\left\|
			\partial_{\vec{I}_2} \SecondFund
		\right\|_{L_{Frame}^{\infty}(\Sigma_t)}
		\left\|
			g 
		\right\|_{L_{Frame}^{\infty}(\Sigma_t)}.
			\notag
	\end{align}
From \eqref{E:WORSTEXPANDROOMLOTSOFUSEFULINEQUALITIES},
Defs.\,\ref{D:LOWNORMS} and~\ref{D:HIGHNORMS},
the estimates
\eqref{E:KASNERMETRICESTIMATES},
\eqref{E:KASNERSECONDFUNDESTIMATES},
and \eqref{E:UPTOFOURDERIVATIVESOFLAPSELINFINITYSOBOLEV},
the elliptic estimates \eqref{E:LAPSELOWNORMELLIPTIC}
and
\eqref{E:LAPSEJUSTBELOWTOPORDERELLIPTIC},
and the bootstrap assumptions, 
we deduce that the products on RHS~\eqref{E:TWOBELOWTOPORDERJUNKERRORL2BOUND},
except for the sum on the next-to-last line,
are bounded by
$
	\lesssim
	t^{2 \Room - 1 - \Blowupexp \updelta}
	\left\lbrace
		\MetricLownorm(t)
		+
		\MetricHighnorm(t) 
	\right\rbrace
	$,
	which is 
	$
	\lesssim
	\mbox{RHS~\eqref{E:METRICTWOBELOWTOPORDERERRORL2ESTIMATE}}
	$
	as desired. 
	To handle the remaining sum on the next-to-last line of RHS~\eqref{E:TWOBELOWTOPORDERJUNKERRORL2BOUND},
	we first note the bound
	$
	\sum_{|\vec{I}_1| + |\vec{I}_2| = 1}
		\left\|
			\partial_{\vec{I}_1} n
		\right\|_{L{\infty}(\Sigma_t)}
		\left\|
			\partial_{\vec{I}_1} \SecondFund
		\right\|_{L_{Frame}^{\infty}(\Sigma_t)}
	\lesssim \varepsilon t^{-1}
	$,
	which follows from
	Defs.\,\ref{D:LOWNORMS} and~\ref{D:HIGHNORMS},
	\eqref{E:WORSTEXPANDROOMLOTSOFUSEFULINEQUALITIES},
	the estimates 
	\eqref{E:KASNERSECONDFUNDESTIMATES}
	and
	\eqref{E:UPTOFOURDERIVATIVESOFLAPSELINFINITYSOBOLEV},
	and the bootstrap assumptions. From this bound
	and the interpolation estimate \eqref{E:BASICINTERPOLATION},
	it follows that the sum on the next-to-last line of RHS~\eqref{E:TWOBELOWTOPORDERJUNKERRORL2BOUND} is
	$
	\lesssim
	\varepsilon
	t^{\Blowupexp + 5 \Worstexp + 3 \Room - 2}
	\left\|
		g 
	\right\|_{\dot{H}_{Frame}^{N-2}(\Sigma_t)}
	\lesssim
	\varepsilon
	t^{\Blowupexp + 5 \Worstexp + 3 \Room - 2}
	\left\|
		g - \KasnerMetric
	\right\|_{L_{Frame}^{\infty}(\Sigma_t)}
	+
	\varepsilon
	t^{\Blowupexp + 5 \Worstexp + 3 \Room - 2}
	\left\|
		g 
	\right\|_{\dot{H}_{Frame}^{N-1}(\Sigma_t)}
	$.
	From Defs.\,\ref{D:LOWNORMS} and~\ref{D:HIGHNORMS},
	it follows that if $\Blowupexp \geq 1$,
	then the RHS of the previous estimate is
	$
	\lesssim
	\varepsilon
	t^{\Room - 1} \MetricLownorm(t)
	+
	\varepsilon
	t^{\Blowupexp + 5 \Worstexp + 3 \Room - 2}
	\left\|
		g 
	\right\|_{\dot{H}_{Frame}^{N-1}(\Sigma_t)}
	$,
	which is $\lesssim \mbox{\upshape RHS}~\eqref{E:METRICTWOBELOWTOPORDERERRORL2ESTIMATE}$
	as desired. We have therefore proved \eqref{E:METRICTWOBELOWTOPORDERERRORL2ESTIMATE}.
	
	The estimate \eqref{E:INVERSEMETRICTWOBELOWTOPORDERERRORL2ESTIMATE} can be proved
	by applying nearly identical arguments to
	RHS~\eqref{E:COMMUTEDPARTIALTGINVERSEERRORTERM}
	(with $\Pos = \Blowupexp + 5 \Worstexp + 3 \Room - 1$ and $|\vec{I}| = N-1$),
	and we omit the details.

\medskip
\noindent \underline{\textbf{Proof of \eqref{E:ONEBELOWTOPORDERDIFFERENTIATEDBORDERONCEDIFFERENTIATEDMETRICERRORTERML2ESTIMATE}}}:
We stress that for this estimate,
on RHS~\eqref{E:TOPORDERDIFFERENTIATEDBORDERONCEDIFFERENTIATEDMETRICERRORTERM},
we have 
$\Pos = 2 \Worstexp + \Room$
and
$|\vec{I}|=N-1$.

Using that $|g|_g \lesssim 1$ and $g$-Cauchy--Schwarz,
we deduce that the product on RHS~\eqref{E:TOPORDERDIFFERENTIATEDBORDERONCEDIFFERENTIATEDMETRICERRORTERM}
is bounded in the norm $\| \cdot \|_{L_g^2(\Sigma_t)}$
by 
$
\lesssim 
	t^{\Blowupexp + 2 \Worstexp + \Room} 
	\| \SecondFund \|_{L_g^{\infty}(\Sigma_t)}
	\left\|
		\partial n
	\right\|_{\dot{H}_g^{N-1}(\Sigma_t)}
$.
Using \eqref{E:POINTWISENORMCOMPARISON} 
(with $l = 0$ and $m=1$)
to estimate
$\left\|
		\partial n
\right\|_{\dot{H}_g^{N-1}(\Sigma_t)}
$,
we deduce that the RHS of the previous expression is
$
\lesssim 
	t^{\Blowupexp + \Worstexp + \Room} 
	\| \SecondFund \|_{L_g^{\infty}(\Sigma_t)}
	\left\|
		n
	\right\|_{\dot{H}^N(\Sigma_t)}
$.
From Def.~\ref{D:LOWNORMS},
the elliptic estimate
\eqref{E:LAPSETOPORDERELLIPTIC},
and the bootstrap assumptions, 
we deduce that the
RHS of the previous expression is
	$
	\lesssim 
	t^{\Blowupexp + \Worstexp + \Room - 1} 
	\left\|
		n
	\right\|_{\dot{H}^N(\Sigma_t)}
	\lesssim
	t^{\Worstexp + \Room - 1}
	\left\lbrace
		\MetricLownorm(t) + \MetricHighnorm(t)
	\right\rbrace
	$,
	which is $\lesssim \mbox{RHS~\eqref{E:ONEBELOWTOPORDERDIFFERENTIATEDBORDERONCEDIFFERENTIATEDMETRICERRORTERML2ESTIMATE}}$ 
	as desired.

\medskip
\noindent \underline{\textbf{Proof of \eqref{E:ONEBELOWTOPORDERDIFFERENTIATEDJUNKONCEDIFFERENTIATEDMETRICERRORTERML2ESTIMATE}}}:
We stress that for this estimate,
on RHS~\eqref{E:TOPORDERDIFFERENTIATEDJUNKONCEDIFFERENTIATEDMETRICERRORTERM},
we have 
$\Pos = 2 \Worstexp + \Room$
and 
$|\vec{I}|=N-1$.

To bound the first product on RHS~\eqref{E:TOPORDERDIFFERENTIATEDJUNKONCEDIFFERENTIATEDMETRICERRORTERM},
we first use $g$-Cauchy--Schwarz to deduce that it is bounded 
in the norm
$\| \cdot \|_{L_g^2(\Sigma_t)}$
by
$
\leq
t^{\Blowupexp + 2 \Worstexp + \Room} 
\| n-1 \|_{L^{\infty}(\Sigma_t)}
\left\|
	\SecondFund
\right\|_{L_g^{\infty}(\Sigma_t)}
\left\|
	\partial g
\right\|_{\dot{H}_g^{N-1}(\Sigma_t)}
$.
From Defs.\,\ref{D:LOWNORMS} and~\ref{D:HIGHNORMS}
and the bootstrap assumptions,
we see that the RHS of the previous expression is
$
\lesssim
t^{1 - 10 \Worstexp - \Room}
\MetricHighnorm(t)
$, 
which, in view of \eqref{E:WORSTEXPANDROOMLOTSOFUSEFULINEQUALITIES},
is 
$\lesssim \mbox{RHS~\eqref{E:ONEBELOWTOPORDERDIFFERENTIATEDJUNKONCEDIFFERENTIATEDMETRICERRORTERML2ESTIMATE}}$
as desired.

To bound the first sum on RHS~\eqref{E:TOPORDERDIFFERENTIATEDJUNKONCEDIFFERENTIATEDMETRICERRORTERM},
we first use
\eqref{E:L2PRODUCTBOUNDINERMSOFLINFINITYANDHMDOT} with $l=0$ and $m=3$ 
(since $\leftexp{(Junk;2 \Worstexp + \Room;\vec{I})}{\mathfrak{H}}$ is type $\binom{0}{3}$)
and \eqref{E:BASICINTERPOLATION}
to deduce 
(using that $|\vec{I}|=N-1$)
that the products under consideration are bounded as follows:	
	\begin{align} \label{E:TOPORDERDIFFERENTIATEDJUNKONCEDIFFERENTIATEDMETRICERRORTERML2TERM2}
	&
	\mathop{\sum_{\vec{I}_1 + \vec{I}_2 + \vec{I}_3 = \vec{I}}}_{|\vec{I}_1| \leq N-2}
		t^{\Blowupexp + 2 \Worstexp + \Room}
		\left\|
			(\partial \partial_{\vec{I}_1} n) 
			(\partial_{\vec{I}_2} g) 
			\partial_{\vec{I}_3} \SecondFund
		\right\|_{L_g^2(\Sigma_t)}
			\\
		& \lesssim
		t^{\Blowupexp - \Worstexp + \Room}
		\left\|
			n 
		\right\|_{\dot{W}^{1,\infty}(\Sigma_t)}
		\left\|
			g
		\right\|_{W_{Frame}^{1,\infty}(\Sigma_t)}
		\left\|
			\SecondFund
		\right\|_{\dot{H}_{Frame}^{N-1}(\Sigma_t)}
			\notag \\
		& \ \
		+
		t^{\Blowupexp - \Worstexp + \Room}
		\left\|
			n 
		\right\|_{\dot{W}^{1,\infty}(\Sigma_t)}
		\left\|
			\SecondFund
		\right\|_{W_{Frame}^{1,\infty}(\Sigma_t)}
		\left\|
			g
		\right\|_{\dot{H}_{Frame}^{N-1}(\Sigma_t)}
		\notag
			\\
		& \ \
		+
		t^{\Blowupexp - \Worstexp + \Room}
		\left\|
			g
		\right\|_{W_{Frame}^{1,\infty}(\Sigma_t)}
		\left\|
			\SecondFund
		\right\|_{W_{Frame}^{1,\infty}(\Sigma_t)}
		\left\| 
			n 
		\right\|_{\dot{H}^{N-1}(\Sigma_t)}
		\notag
			\\
		& \ \
			+
		t^{\Blowupexp - \Worstexp + \Room}
		\left\| 
			n 
		\right\|_{\dot{W}^{1,\infty}(\Sigma_t)}
		\left\|
			g
		\right\|_{W_{Frame}^{1,\infty}(\Sigma_t)}
		\left\|
			\SecondFund
		\right\|_{W_{Frame}^{1,\infty}(\Sigma_t)}.
		\notag
	\end{align}
From Defs.\,\ref{D:LOWNORMS} and~\ref{D:HIGHNORMS},
the estimates \eqref{E:KASNERMETRICESTIMATES},
\eqref{E:KASNERSECONDFUNDESTIMATES},
\eqref{E:UPTOFOURDERIVATIVESOFGLINFINITYSOBOLEV},
and \eqref{E:UPTOFOURDERIVATIVESOFLAPSELINFINITYSOBOLEV},
the elliptic estimates
\eqref{E:LAPSELOWNORMELLIPTIC} 
and
\eqref{E:LAPSEJUSTBELOWTOPORDERELLIPTIC},
and the bootstrap assumptions,	
we see that 
$
\mbox{RHS~\eqref{E:TOPORDERDIFFERENTIATEDJUNKONCEDIFFERENTIATEDMETRICERRORTERML2TERM2}}
\lesssim
t^{2 - 16 \Worstexp - 3 \Room - \Blowupexp \updelta}
\left\lbrace
	\MetricLownorm(t)
	+
	\MetricHighnorm(t)
\right\rbrace
+
t^{- 4 \Worstexp + \Room - \Blowupexp \updelta}
\left\lbrace
	\MetricLownorm(t)
	+
	\MetricHighnorm(t)
\right\rbrace
$,
which, in view of \eqref{E:WORSTEXPANDROOMLOTSOFUSEFULINEQUALITIES},
is $\lesssim \mbox{RHS~\eqref{E:ONEBELOWTOPORDERDIFFERENTIATEDJUNKONCEDIFFERENTIATEDMETRICERRORTERML2ESTIMATE}}$ as desired.

Using essentially the same reasoning,
we find that the second and third sums on RHS~\eqref{E:TOPORDERDIFFERENTIATEDJUNKONCEDIFFERENTIATEDMETRICERRORTERM}
are bounded in the norm $\| \cdot \|_{L_g^2(\Sigma_t)}$
by
$\lesssim 
\eqref{E:TOPORDERDIFFERENTIATEDJUNKONCEDIFFERENTIATEDMETRICERRORTERML2TERM2}
$
and thus are 
 $\lesssim \mbox{RHS~\eqref{E:ONEBELOWTOPORDERDIFFERENTIATEDJUNKONCEDIFFERENTIATEDMETRICERRORTERML2ESTIMATE}}$
as well. We have therefore proved \eqref{E:ONEBELOWTOPORDERDIFFERENTIATEDJUNKONCEDIFFERENTIATEDMETRICERRORTERML2ESTIMATE}.

\medskip
\noindent \underline{\textbf{Proof of \eqref{E:L2GNORMESTIMATEFORTOPORDERSECONDFUNDTERMSINJUSTBELOWTOPORDERSTIMATESFORMETRIC}}}:
First, using
that $|g|_g \lesssim 1$ and $g$-Cauchy--Schwarz,
we deduce that 
$
\| T \|_{L_g^2(\Sigma_t)}
\lesssim 
	t^{\Blowupexp + 2 \Worstexp + \Room} 
	\| n \|_{L^{\infty}(\Sigma_t)}
	\left\|
		\partial \SecondFund
	\right\|_{\dot{H}_g^{N-1}(\Sigma_t)}
$.
Using the definition of the norms
$\| \cdot \|_{\dot{H}_g^M}$
and
$\| \cdot \|_{L_{Frame}^2(\Sigma_t)}$,
we deduce that the RHS of the previous expression is
$\lesssim 
	t^{\Blowupexp + 2 \Worstexp + \Room} 
	\| n \|_{L^{\infty}(\Sigma_t)}
	\left\|
		g^{-1}
	\right\|_{L_{Frame}^{\infty}}^{1/2}
	\left\|
		\SecondFund
	\right\|_{\dot{H}_g^N(\Sigma_t)}
$.
From 
\eqref{E:WORSTEXPANDROOMLOTSOFUSEFULINEQUALITIES},
Defs.\,\ref{D:LOWNORMS} and~\ref{D:HIGHNORMS},
the estimate
\eqref{E:KASNERMETRICESTIMATES},
and the bootstrap assumptions, 
we deduce that the RHS of the previous expression is
$
\lesssim
t^{\Worstexp + \Room - 1} 
\MetricHighnorm(t)
$,
which is $\lesssim \mbox{RHS~\eqref{E:L2GNORMESTIMATEFORTOPORDERSECONDFUNDTERMSINJUSTBELOWTOPORDERSTIMATESFORMETRIC}}$
as desired.

\medskip
	\noindent \underline{\textbf{Proof of \eqref{E:JUSTBELOWTOPORDERSECONDFUNDBORDERLINEERRORL2} and 
	\eqref{E:JUSTBELOWTOPORDERSECONDFUNDBORDERLINEERRORL2GNORM}}}:
	We stress that for these estimates,
	on RHS~\eqref{E:SECONDFUNDCOMMUTEDBORDERLINETERM}, 
	we have $\Pos = 3 \Worstexp + \Room$ and $|\vec{I}| = N-1$. 
	
	We first prove \eqref{E:JUSTBELOWTOPORDERSECONDFUNDBORDERLINEERRORL2}. 
	We start by noting that
	RHS~\eqref{E:SECONDFUNDCOMMUTEDBORDERLINETERM} 
	is bounded in the norm $\| \cdot \|_{L_{Frame}^2(\Sigma_t)}$
	by
	$
	\leq
	C
	t^{\Blowupexp + 3 \Worstexp + \Room - 1} 
	\left\|
		\SecondFund
	\right\|_{L_{Frame}^{\infty}(\Sigma_t)}
	\left\|
		n
	\right\|_{\dot{H}^{N-1}(\Sigma_t)}
	$.
	From Defs.\,\ref{D:LOWNORMS} and~\ref{D:HIGHNORMS},
	the estimate
	\eqref{E:KASNERSECONDFUNDESTIMATES},
	the elliptic estimate 
	\eqref{E:LAPSEJUSTBELOWTOPORDERELLIPTIC},
	and the bootstrap assumptions,
	we find that the RHS of the previous expression is
	$
	\lesssim
	t^{2 \Worstexp + \Room - 1}
	\left\lbrace
		\MetricLownorm(t)
		+
		\MetricHighnorm(t) 
	\right\rbrace$,
	which is $\lesssim \mbox{RHS~\eqref{E:JUSTBELOWTOPORDERSECONDFUNDBORDERLINEERRORL2}}$
	as desired. 
	
	The estimate \eqref{E:JUSTBELOWTOPORDERSECONDFUNDBORDERLINEERRORL2GNORM}
	can be proved using a nearly identical argument,
	the key point being that like
	$
	\left\|
		\SecondFund
	\right\|_{L_{Frame}^{\infty}(\Sigma_t)}
	$,
	the term
	$
	\left\|
		\SecondFund
	\right\|_{L_g^{\infty}(\Sigma_t)}
	$
	is bounded by $\lesssim t^{-1}$.
	
	\medskip
	\noindent \underline{\textbf{Proof of \eqref{E:JUSTBELOWTOPORDERSECONDFUNDJUNKLINEERRORL2}}}:
	We stress that for this estimate,
	on RHS~\eqref{E:SECONDFUNDCOMMUTEDJUNKTERM}, 
	we have $\Pos = 3 \Worstexp + \Room$  and 
	$|\vec{I}| = N-1$. 
	
	To bound the first sum on RHS~\eqref{E:SECONDFUNDCOMMUTEDJUNKTERM},
	we first use
	\eqref{E:FRAMENORML2PRODUCTBOUNDINERMSOFLINFINITYANDHMDOT}
	and 
	\eqref{E:BASICINTERPOLATION}
	to bound 
	(using that $|\vec{I}| = N-1$)
	the terms under consideration as follows:
	\begin{align} \label{E:FIRSTSTEPJUSTBELOWTOPORDERSECONDFUNDJUNKLINEERRORL2TERM1}
	&
	\mathop{\sum_{\vec{I}_1 + \vec{I}_2 = \vec{I}}}_{|\vec{I}_1| \leq N-2}
		t^{\Blowupexp + 3 \Worstexp + \Room - 1}
		\left\|
			\left\lbrace
				\partial_{\vec{I}_1} (n-1) 
			\right\rbrace	
			\partial_{\vec{I}_2} \SecondFund
		\right\|_{L_{Frame}^2(\Sigma_t)}
			\\
		& \lesssim
		t^{\Blowupexp + 3 \Worstexp + \Room - 1}
		\left\|
			n - 1
		\right\|_{L^{\infty}(\Sigma_t)}
		\left\|
			\SecondFund
		\right\|_{\dot{H}_{Frame}^{N-1}(\Sigma_t)}
			\notag \\
	& \ \
		+
		t^{\Blowupexp + 3 \Worstexp + \Room - 1}
		\left\|
			\SecondFund
		\right\|_{\dot{W}_{Frame}^{1,\infty}(\Sigma_t)}
		\left\|
			n
		\right\|_{\dot{H}^{N-1}(\Sigma_t)}
			\notag
			\\
	& \ \
		+
		t^{\Blowupexp + 3 \Worstexp + \Room - 1}
		\left\|
			n -1
		\right\|_{L^{\infty}(\Sigma_t)}
		\left\|
			\SecondFund
		\right\|_{\dot{W}_{Frame}^{1,\infty}(\Sigma_t)}.
		\notag
\end{align}
From Defs.\,\ref{D:LOWNORMS} and~\ref{D:HIGHNORMS}
and the bootstrap assumptions,
we see that 
$
\mbox{RHS~\eqref{E:FIRSTSTEPJUSTBELOWTOPORDERSECONDFUNDJUNKLINEERRORL2TERM1}}
\lesssim
t^{1 - 10 \Worstexp - \Room}
\MetricHighnorm(t)
+
t^{2 \Worstexp + \Room - 1}
\MetricLownorm(t)
$,
which, in view of \eqref{E:WORSTEXPANDROOMLOTSOFUSEFULINEQUALITIES},
is $\lesssim \mbox{RHS~\eqref{E:JUSTBELOWTOPORDERSECONDFUNDJUNKLINEERRORL2}}$ as desired.
	
	To bound the second sum on RHS~\eqref{E:SECONDFUNDCOMMUTEDJUNKTERM},
	we first consider the cases in which
	$|\vec{I}_5| = N-1$
	or
	$|\vec{I}_6| = N-1$.
	Using that $|g^{-1}|_g \lesssim 1$, 
	using the estimate \eqref{E:POINTWISENORMCOMPARISON} with $l=m=1$
	(since $\leftexp{(Junk;3 \Worstexp + \Room;\vec{I}}{\mathfrak{K}}$ is type $\binom{1}{1}$),
	and then again using
	\eqref{E:POINTWISENORMCOMPARISON}
	(this time with $l=0$ and $m=3$)
	to estimate the term
	$\left\|
		\partial g
	\right\|_{L_g^{\infty}(\Sigma_t)}
	$,
	we deduce that the products under consideration are bounded in the norm
	$\| \cdot \|_{L_{Frame}^2(\Sigma_t)}$ as follows:
	\begin{align} \label{E:FIRSTSTEPSECONDFUNDCOMMUTEDJUNKTERM2ANNOYINGTOPORDERTERMS}
		& \lesssim
			t^{\Blowupexp + \Worstexp + \Room}
			\left\|
				n
			\right\|_{L^{\infty}(\Sigma_t}
			\left\|
				\partial g
			\right\|_{L_g^{\infty}(\Sigma_t)}
			\left\|
				\partial g
			\right\|_{\dot{H}_g^{N-1}(\Sigma_t)}
				\\
		& \lesssim
			t^{\Blowupexp - 2 \Worstexp + \Room}
			\left\|
				n
			\right\|_{L^{\infty}(\Sigma_t}
			\left\|
				g
			\right\|_{\dot{W}_{Frame}^{1,\infty}(\Sigma_t}
			\left\|
				\partial g
			\right\|_{\dot{H}_g^{N-1}(\Sigma_t)}.
			\notag
	\end{align}
From 
\eqref{E:WORSTEXPANDROOMLOTSOFUSEFULINEQUALITIES},
Defs.\,\ref{D:LOWNORMS} and~\ref{D:HIGHNORMS},
the estimate \eqref{E:UPTOFOURDERIVATIVESOFGLINFINITYSOBOLEV},
and the bootstrap assumptions,
we see that 
$
\mbox{\upshape RHS~\eqref{E:FIRSTSTEPSECONDFUNDCOMMUTEDJUNKTERM2ANNOYINGTOPORDERTERMS}} 
\lesssim
t^{\Room - 6 \Worstexp - \Blowupexp \updelta}
\MetricHighnorm(t)
$,
which, in view of \eqref{E:WORSTEXPANDROOMLOTSOFUSEFULINEQUALITIES},
is $\lesssim \mbox{RHS~\eqref{E:JUSTBELOWTOPORDERSECONDFUNDJUNKLINEERRORL2}}$ as desired.
It remains for us to consider the cases in which
$|\vec{I}_5| \leq N-2$
and
$|\vec{I}_6| \leq N-2$.
Using 
\eqref{E:BASICINTERPOLATION}
and
\eqref{E:FRAMENORML2PRODUCTBOUNDINERMSOFLINFINITYANDHMDOT},
we bound 
(using that $|\vec{I}| = N-1$)
the terms under consideration as follows:	
	\begin{align} \label{E:FIRSTSTEPJUSTBELOWTOPORDERSECONDFUNDJUNKLINEERRORL2TERM2}
	&
	\mathop{\sum_{\vec{I}_1 + \vec{I}_2 + \cdots + \vec{I}_6 = \vec{I}}}_{|\vec{I}_5|, |\vec{I}_6| \leq N-2}
		t^{\Blowupexp + 3 \Worstexp + \Room}
		\left\|
			(\partial_{\vec{I}_1} n)
			(\partial_{\vec{I}_2} g^{-1})
			(\partial_{\vec{I}_3} g^{-1})
			(\partial_{\vec{I}_4} g^{-1})
			(\partial \partial_{\vec{I}_5} g)
			\partial \partial_{\vec{I}_6} g
		\right\|_{L_{Frame}^2(\Sigma_t)}
			\\
		& \lesssim
		t^{\Blowupexp + 3 \Worstexp + \Room}
		\left\|
			n 
		\right\|_{W^{1,\infty}(\Sigma_t)}
		\left\|
			g^{-1}
		\right\|_{W_{Frame}^{1,\infty}(\Sigma_t)}^3
		\left\|
			g - \KasnerMetric
		\right\|_{W_{Frame}^{2,\infty}(\Sigma_t)}
		\left\|
			g
		\right\|_{\dot{H}_{Frame}^{N-1}(\Sigma_t)}
			\notag \\
		& \ \
		+
		t^{\Blowupexp + 3 \Worstexp + \Room}
		\left\|
			n 
		\right\|_{W^{1,\infty}(\Sigma_t)}
		\left\|
			g^{-1}
		\right\|_{W_{Frame}^{1,\infty}(\Sigma_t)}^2
		\left\|
			g - \KasnerMetric
		\right\|_{W_{Frame}^{2,\infty}(\Sigma_t)}^2
		\left\|
			g^{-1}
		\right\|_{\dot{H}_{Frame}^{N-1}(\Sigma_t)}
		\notag
			\\
		& \ \
		+
		t^{\Blowupexp + 3 \Worstexp + \Room}
		\left\|
			g^{-1}
		\right\|_{W_{Frame}^{1,\infty}(\Sigma_t)}^2
		\left\|
			g - \KasnerMetric
		\right\|_{W_{Frame}^{2,\infty}(\Sigma_t)}^2
		\left\|
			g^{-1}
		\right\|_{W_{Frame}^{1,\infty}(\Sigma_t)}
		\left\| 
			n 
		\right\|_{\dot{H}^{N-1}(\Sigma_t)}
		\notag
			\\
		& \ \
		+
		t^{\Blowupexp + 3 \Worstexp + \Room}
		\left\|
			n 
		\right\|_{W^{1,\infty}(\Sigma_t)}
		\left\|
			g^{-1}
		\right\|_{W_{Frame}^{1,\infty}(\Sigma_t)}^3
		\left\|
			g - \KasnerMetric
		\right\|_{W_{Frame}^{2,\infty}(\Sigma_t)}^2.
		\notag
	\end{align}
From 
\eqref{E:WORSTEXPANDROOMLOTSOFUSEFULINEQUALITIES},
Defs.\,\ref{D:LOWNORMS} and~\ref{D:HIGHNORMS},
the estimate \eqref{E:UPTOFOURDERIVATIVESOFGLINFINITYSOBOLEV},
and the bootstrap assumptions,
we see that 
$
\mbox{RHS~\eqref{E:FIRSTSTEPJUSTBELOWTOPORDERSECONDFUNDJUNKLINEERRORL2TERM2}}
\lesssim
t^{1 - 10 \Worstexp - 2 \Room - \Blowupexp \updelta}
\left\lbrace
	\MetricLownorm(t)
	+
	\MetricHighnorm(t)
\right\rbrace
$,
which, in view of \eqref{E:WORSTEXPANDROOMLOTSOFUSEFULINEQUALITIES},
is $\lesssim \mbox{RHS~\eqref{E:JUSTBELOWTOPORDERSECONDFUNDJUNKLINEERRORL2}}$ as desired.

  To bound the third sum on RHS~\eqref{E:SECONDFUNDCOMMUTEDJUNKTERM},
	we first use
	\eqref{E:BASICINTERPOLATION}
	and
	\eqref{E:FRAMENORML2PRODUCTBOUNDINERMSOFLINFINITYANDHMDOT}
	to bound 
	(using that $|\vec{I}| = N-1$)
	the terms under consideration as follows:
	\begin{align} \label{E:FIRSTSTEPJUSTBELOWTOPORDERSECONDFUNDJUNKLINEERRORL2TERM3}
	&
	\mathop{\sum_{\vec{I}_1 + \vec{I}_2 + \vec{I}_3 + \vec{I}_4 = \vec{I}}}_{|\vec{I}_4| \leq N-2}
		t^{\Blowupexp + 3 \Worstexp + \Room}
		\left\|
			(\partial_{\vec{I}_1} n)
			(\partial_{\vec{I}_2} g^{-1})
			(\partial_{\vec{I}_3} g^{-1})
			\partial^2 \partial_{\vec{I}_4} g
		\right\|_{L_{Frame}^2(\Sigma_t)}
			\\
		& \lesssim
		t^{\Blowupexp + 3 \Worstexp + \Room}
		\left\|
			n
		\right\|_{W^{1,\infty}(\Sigma_t)}
		\left\|
			g^{-1}
		\right\|_{W_{Frame}^{1,\infty}(\Sigma_t)}^2
		\left\|
			g
		\right\|_{\dot{H}_{Frame}^N(\Sigma_t)}
			\notag \\
		& \ \
		+
		t^{\Blowupexp + 3 \Worstexp + \Room}
		\left\|
			g^{-1}
		\right\|_{W_{Frame}^{1,\infty}(\Sigma_t)}^2
		\left\|
			g
		\right\|_{\dot{W}_{Frame}^{2,\infty}(\Sigma_t)}
		\left\|
			n
		\right\|_{\dot{H}^{N-1}(\Sigma_t)}
		\notag
			\\
		& \ \
		+	
		t^{\Blowupexp + 3 \Worstexp + \Room}
		\left\|
			n
		\right\|_{W^{1,\infty}(\Sigma_t)}
		\left\|
			g^{-1}
		\right\|_{W_{Frame}^{1,\infty}(\Sigma_t)}
		\left\|
			g
		\right\|_{\dot{W}_{Frame}^{2,\infty}(\Sigma_t)}
		\left\|
			g^{-1}
		\right\|_{\dot{H}_{Frame}^N(\Sigma_t)}
		\notag
			\\
		& \ \
		+	
		t^{\Blowupexp + 3 \Worstexp + \Room}
		\left\|
			n
		\right\|_{W^{1,\infty}(\Sigma_t)}
		\left\|
			g^{-1}
		\right\|_{W_{Frame}^{1,\infty}(\Sigma_t)}^2
		\left\|
			g
		\right\|_{\dot{W}_{Frame}^{2,\infty}(\Sigma_t)}.
		\notag
		\end{align}
From \eqref{E:WORSTEXPANDROOMLOTSOFUSEFULINEQUALITIES},
Defs.\,\ref{D:LOWNORMS} and~\ref{D:HIGHNORMS},
the estimates \eqref{E:KASNERMETRICESTIMATES}, 
\eqref{E:UPTOFOURDERIVATIVESOFGLINFINITYSOBOLEV},
and
\eqref{E:UPTOFOURDERIVATIVESOFGINVERSELINFINITYSOBOLEV},
and the bootstrap assumptions,
we see that 
$
\mbox{RHS~\eqref{E:FIRSTSTEPJUSTBELOWTOPORDERSECONDFUNDJUNKLINEERRORL2TERM2}}
\lesssim
t^{- 3 \Worstexp - \Blowupexp \updelta}
\left\lbrace
	\MetricLownorm(t)
	+
	\MetricHighnorm(t)
\right\rbrace
+
t^{1 + \Room - 4 \Worstexp  - \Blowupexp \updelta}
\left\lbrace
	\MetricLownorm(t)
	+
	\MetricHighnorm(t)
\right\rbrace
$,
which, in view of \eqref{E:WORSTEXPANDROOMLOTSOFUSEFULINEQUALITIES},
is $\lesssim \mbox{RHS~\eqref{E:JUSTBELOWTOPORDERSECONDFUNDJUNKLINEERRORL2}}$ as desired.

To bound the last sum on RHS~\eqref{E:SECONDFUNDCOMMUTEDJUNKTERM},
we first use
\eqref{E:FRAMENORML2PRODUCTBOUNDINERMSOFLINFINITYANDHMDOT}
to bound 
(using that $|\vec{I}| = N-1$)
the terms under consideration as follows:
\begin{align} \label{E:FIRSTSTEPJUSTBELOWTOPORDERSECONDFUNDJUNKLINEERRORL2TERM4}
	&
	\sum_{\vec{I}_1 + \vec{I}_2 + \vec{I}_3 + \vec{I}_4 = \vec{I}}
		t^{\Blowupexp + 3 \Worstexp + \Room}
		\left\|
			(\partial_{\vec{I}_1} g^{-1})
			(\partial_{\vec{I}_2} g^{-1})
			(\partial \partial_{\vec{I}_3} g)
			\partial \partial_{\vec{I}_4} n
		\right\|_{L_{Frame}^2(\Sigma_t)}
			\\
		& \lesssim
		t^{\Blowupexp + 3 \Worstexp + \Room}
		\left\|
			g^{-1}
		\right\|_{L_{Frame}^{\infty}(\Sigma_t)}^2
		\left\|
			g
		\right\|_{\dot{W}_{Frame}^{1,\infty}(\Sigma_t)}
		\left\|
			n
		\right\|_{\dot{H}^N(\Sigma_t)}
			\notag \\
		& \ \
		+
		t^{\Blowupexp + 3 \Worstexp + \Room}
		\left\|
			n
		\right\|_{\dot{W}^{1,\infty}(\Sigma_t)}
		\left\|
			g^{-1}
		\right\|_{L_{Frame}^{\infty}(\Sigma_t)}^2
		\left\|
			g
		\right\|_{\dot{H}_{Frame}^N(\Sigma_t)}
			\notag \\
		& \ \
		+
		t^{\Blowupexp + 3 \Worstexp + \Room}
		\left\|
			n
		\right\|_{\dot{W}^{1,\infty}(\Sigma_t)}
		\left\|
			g^{-1}
		\right\|_{L_{Frame}^{\infty}(\Sigma_t)}
		\left\|
			g
		\right\|_{\dot{W}_{Frame}^{1,\infty}(\Sigma_t)}
		\left\|
			g^{-1}
		\right\|_{\dot{H}_{Frame}^{N-1}(\Sigma_t)}.
		\notag
		\end{align}
From Defs.\,\ref{D:LOWNORMS} and~\ref{D:HIGHNORMS},
the estimates \eqref{E:KASNERMETRICESTIMATES},
\eqref{E:UPTOFOURDERIVATIVESOFGLINFINITYSOBOLEV},
and
\eqref{E:UPTOFOURDERIVATIVESOFLAPSELINFINITYSOBOLEV},
and the bootstrap assumptions,
we see that 
\begin{align} \label{E:NEARFINALBOUNDJUSTBELOWTOPORDERSECONDFUNDJUNKLINEERRORL2TERM4}
\mbox{RHS~\eqref{E:FIRSTSTEPJUSTBELOWTOPORDERSECONDFUNDJUNKLINEERRORL2TERM4}}
&
\lesssim
t^{\Room - 3 \Worstexp - \Blowupexp \updelta}
\left\lbrace
	\MetricLownorm(t)
	+
	\MetricHighnorm(t)
\right\rbrace
+
t^{2 - 13 \Worstexp - \Room - \Blowupexp \updelta}
\left\lbrace
	\MetricLownorm(t)
	+
	\MetricHighnorm(t)
\right\rbrace
	\\
& \ \
+ 
t^{3 - 16 \Worstexp - 3 \Room - \Blowupexp \updelta}
\left\lbrace
	\MetricLownorm(t)
	+
	\MetricHighnorm(t)
\right\rbrace
\notag
\end{align}
which, in view of \eqref{E:WORSTEXPANDROOMLOTSOFUSEFULINEQUALITIES},
is $\lesssim \mbox{RHS~\eqref{E:JUSTBELOWTOPORDERSECONDFUNDJUNKLINEERRORL2}}$ as desired.

\medskip
\noindent \underline{\textbf{Proof of \eqref{E:JUSTBELOWTOPORDERSECONDFUNDJUNKLINEERRORL2GNORM}}}:
We stress that for this estimate,
on RHS~\eqref{E:SECONDFUNDCOMMUTEDJUNKTERM}, 
we have $\Pos = 3 \Worstexp + \Room$ and 
$|\vec{I}| = N-1$.

We claim that we only have to bound 
(in the norm $\| \cdot \|_{L_g^2(\Sigma_t)}$)
the second sum on RHS~\eqref{E:SECONDFUNDCOMMUTEDJUNKTERM}
in the cases in which
$|\vec{I}_5| = N-1$
or
$|\vec{I}_6| = N-1$.
For by inspecting the proof of \eqref{E:JUSTBELOWTOPORDERSECONDFUNDJUNKLINEERRORL2} given above,
and using \eqref{E:WORSTEXPANDROOMLOTSOFUSEFULINEQUALITIES},
we see that all remaining products 
on RHS~\eqref{E:SECONDFUNDCOMMUTEDJUNKTERM}
are bounded in the norm
$\| \cdot \|_{L_{Frame}^2}$
by
$
\lesssim 
t^{2 \Worstexp + \Room - 1}
\left\lbrace
	\MetricLownorm(t)
	+
	\MetricHighnorm(t)
\right\rbrace
$.
Hence, using \eqref{E:POINTWISENORMCOMPARISON} with $l=m=1$
(since $\leftexp{(Junk;3 \Worstexp + \Room;\vec{I}}{\mathfrak{K}}$ is type $\binom{1}{1}$),
we find that these same products are bounded in the norm
$\| \cdot \|_{L_g^2}$
by
$
\lesssim 
t^{\Room - 1}
\left\lbrace
	\MetricLownorm(t)
	+
	\MetricHighnorm(t)
\right\rbrace
$,
which is $\lesssim \mbox{RHS~\eqref{E:JUSTBELOWTOPORDERSECONDFUNDJUNKLINEERRORL2GNORM}}$ as desired.

To handle the remaining cases in which 
$|\vec{I}_5| = N-1$
or
$|\vec{I}_6| = N-1$
in the second sum on RHS~\eqref{E:SECONDFUNDCOMMUTEDJUNKTERM},
we first use 
that $|g^{-1}|_g \lesssim 1$ 
and $g$-Cauchy--Schwarz
to deduce that the products under consideration are bounded in the norm
$\| \cdot \|_{L_g^2(\Sigma_t)}$ by
$
\lesssim
t^{\Blowupexp + 3 \Worstexp + \Room}
\left\|
	n
\right\|_{L^{\infty}(\Sigma_t}
\left\|
	\partial g
\right\|_{L_g^{\infty}(\Sigma_t)}
\left\|
	\partial g
\right\|_{\dot{H}_g^{N-1}(\Sigma_t)}
$.
Using \eqref{E:POINTWISENORMCOMPARISON} 
(with $l=0$ and $m=3$)
to estimate
$\left\|
	\partial g
\right\|_{L_g^{\infty}(\Sigma_t)}
$,
we deduce that the RHS of the previous expression is
$
\lesssim
t^{\Blowupexp + \Room}
\left\|
	n
\right\|_{L^{\infty}(\Sigma_t}
\left\|
	g
\right\|_{\dot{W}_{Frame}^{1,\infty}(\Sigma_t}
\left\|
	\partial g
\right\|_{\dot{H}_g^{N-1}(\Sigma_t)}
$.
From 
\eqref{E:WORSTEXPANDROOMLOTSOFUSEFULINEQUALITIES},
Defs.\,\ref{D:LOWNORMS} and~\ref{D:HIGHNORMS},
the estimate \eqref{E:UPTOFOURDERIVATIVESOFGLINFINITYSOBOLEV},
and the bootstrap assumptions,
we deduce that the RHS of the previous expression is
$
t^{- 4 \Worstexp - \Blowupexp \updelta}
\left\lbrace
	\MetricLownorm(t)
	+
	\MetricHighnorm(t)
\right\rbrace
$,
which, in view of \eqref{E:WORSTEXPANDROOMLOTSOFUSEFULINEQUALITIES},
is $\lesssim \mbox{RHS~\eqref{E:JUSTBELOWTOPORDERSECONDFUNDJUNKLINEERRORL2GNORM}}$ as desired.

	\medskip
	\noindent \underline{\textbf{Proof of \eqref{E:L2ESTIMATEFORTOPORDERMETRICANDLAPSEERRORTERMSINJUSTBELOWTOPORDERSTIMATESFORSECONDFUND}}}:
	First, using \eqref{E:POINTWISENORMCOMPARISON} with $l=m=1$
	(since $T$ is a type $\binom{1}{1}$ tensorfield),
	we deduce that
	$
	t^{\Blowupexp + 3 \Worstexp + \Room} 
	\| T \|_{L_{Frame}^2(\Sigma_t)}
	\lesssim
	t^{\Blowupexp + \Worstexp + \Room} 
	\| T \|_{L_g^2(\Sigma_t)} 
	$.
	Next, using that $|g^{-1}|_g \lesssim 1$,
	$g$-Cauchy--Schwarz,
	and the estimate $\| n \|_{L^{\infty}(\Sigma_t)} \lesssim 1$
	(which is a simple consequence of \eqref{E:WORSTEXPANDROOMLOTSOFUSEFULINEQUALITIES}, Defs~\ref{D:LOWNORMS}, and the bootstrap assumptions),
	we find that 
	$
	t^{\Blowupexp + \Worstexp + \Room} 
	\| T \|_{L_g^2(\Sigma_t)} 
	\lesssim
	t^{\Blowupexp + \Worstexp + \Room}
	\| \partial^2 n \|_{\dot{H}_g^{N-1}}
	+
	t^{\Blowupexp + \Worstexp + \Room}
	\| \partial^2 g \|_{\dot{H}_g^{N-1}}
	$.
	Using the definition of the norms
$\| \cdot \|_{\dot{H}_g^M}$
and
$\| \cdot \|_{L_{Frame}^{\infty}(\Sigma_t)}$,
we deduce that the RHS of the previous expression is
$
\lesssim
	t^{\Blowupexp + \Worstexp + \Room}
	\| g^{-1} \|_{L_{Frame}^{\infty}(\Sigma_t)}^{1/2}
	\| \partial n \|_{\dot{H}_g^N}
	+
	t^{\Blowupexp + \Worstexp + \Room}
	\| g^{-1} \|_{L_{Frame}^{\infty}(\Sigma_t)}^{1/2}
	\| \partial g \|_{\dot{H}_g^N}
	$.
	From Defs.\,\ref{D:LOWNORMS} and~\ref{D:HIGHNORMS},
	\eqref{E:KASNERMETRICESTIMATES},
	the elliptic estimate \eqref{E:LAPSETOPORDERELLIPTIC},
	and the bootstrap assumptions,
	we find that the RHS of the previous expression is
	$
	\lesssim 
	t^{\Room - 1} 
	\left\lbrace
		\MetricLownorm(t)
		+
		\MetricHighnorm(t)
	\right\rbrace$,
	which is $\lesssim \mbox{RHS~\eqref{E:L2ESTIMATEFORTOPORDERMETRICANDLAPSEERRORTERMSINJUSTBELOWTOPORDERSTIMATESFORSECONDFUND}}$
	as desired. 
		
	\medskip
	\noindent \underline{\textbf{Proof of \eqref{E:L2GNORMESTIMATEFORTOPORDERMETRICANDLAPSEERRORTERMSINJUSTBELOWTOPORDERSTIMATESFORSECONDFUND}}}:
	The arguments given in the proof of
	\eqref{E:L2ESTIMATEFORTOPORDERMETRICANDLAPSEERRORTERMSINJUSTBELOWTOPORDERSTIMATESFORSECONDFUND}
	yield that
	$
	t^{\Blowupexp + 3 \Worstexp + \Room} 
	\| T \|_{L_g^2(\Sigma_t)} 
	\lesssim
	t^{\Blowupexp + 3 \Worstexp + \Room} 
	\| g^{-1} \|_{L_{Frame}^{\infty}(\Sigma_t)}^{1/2}
	\| \partial n \|_{\dot{H}_g^N}
	+
	t^{\Blowupexp + 3 \Worstexp + \Room} 
	\| g^{-1} \|_{L_{Frame}^{\infty}(\Sigma_t)}^{1/2}
	\| \partial^2 g \|_{\dot{H}_g^N}
	\lesssim 
	t^{2 \Worstexp + \Room - 1} 
	\left\lbrace
		\MetricLownorm(t)
		+
		\MetricHighnorm(t)
	\right\rbrace
	$,
	which is a better bound than we need.

\end{proof}

\subsection{Proof of Prop.\,\ref{P:INTEGRALINEQUALITIESFORJUSTBELOWTOPDERIVATIVESOFMETRICANDSECONDFUND}}
\label{SS:INTEGRALINEQUALITIESFORJUSTBELOWTOPDERIVATIVESOFMETRICANDSECONDFUNDAMENTALFORM}
In this subsection, we prove Prop.\,\ref{P:INTEGRALINEQUALITIESFORJUSTBELOWTOPDERIVATIVESOFMETRICANDSECONDFUND}.
Throughout this proof, we will assume that $\Blowupexp \updelta$ is sufficiently small
(and in particular that $\Blowupexp \updelta < \Room$);
in view of the discussion in Subsect.\,\ref{SS:SOBOLEVEMBEDDING}, we see that
at fixed $\Blowupexp$, this can be achieved by choosing $N$ to be sufficiently large.

To prove \eqref{E:JUSTBELOWTOPORDERMETRICENERGYINTEGRALINEQUALITY},
we first use the fundamental theorem of calculus and the evolution equation \eqref{E:COMMUTEDPARTIALTGCMC}
with $\Pos = 2 \Worstexp + \Room$
to deduce that 
for any multi-index $\vec{I}$ with $|\vec{I}| = N$, we have
(where there is no summation over $i,j$ and we stress that $0 < t \leq 1$):
\begin{align} \label{E:FIRSTSTEPJUSTBELOWTOPORDERMETRICENERGYINTEGRALINEQUALITY}
	\left\|	
		t^{\Blowupexp + 2 \Worstexp + \Room} \partial_{\vec{I}} g_{ij}
	\right\|_{L^2(\Sigma_t)}^2
	& 
	= 
	\left\|	
		\partial_{\vec{I}} g_{ij}
	\right\|_{L^2(\Sigma_1)}^2
		\\
	& \ \
		-
		2	
		\int_{s=t}^1
			\int_{\Sigma_s}
				s^{-1}
				\left\lbrace
				(\Blowupexp + 2 \Worstexp + \Room)
				\delta_{\ j}^a
				- 2 n s \SecondFund_{\ j}^a
				\right\rbrace
				(s^{\Blowupexp + 2 \Worstexp + \Room} \partial_{\vec{I}} g_{ia}) 
				(s^{\Blowupexp + 2 \Worstexp + \Room} \partial_{\vec{I}} g_{ij}) 
				 dx
		\, ds
		\notag \\
	& \ \
		-
		2
		\int_{s=t}^1
			\int_{\Sigma_s}
				(s^{\Blowupexp + 2 \Worstexp + \Room} \partial_{\vec{I}} g_{ij}) 
				\leftexp{(2 \Worstexp + \Room;\vec{I})}{\mathfrak{G}}_{ij}
			\, dx
		\, ds.
	\notag
\end{align}
From Def.~\ref{D:HIGHNORMS},
Cauchy--Schwarz,
Young's inequality,
and the estimate \eqref{E:METRICJUSTBELOWTOPORDERERRORL2ESTIMATE},
we deduce that the
last integral on RHS~\eqref{E:FIRSTSTEPJUSTBELOWTOPORDERMETRICENERGYINTEGRALINEQUALITY}
can be bounded as follows:
\begin{align} \label{E:BOUNDFORLASTERRFIRSTSTEPJUSTBELOWTOPORDERMETRICENERGYINTEGRALINEQUALITY}
	2
		\int_{s=t}^1
			\int_{\Sigma_s}
				(s^{\Blowupexp + 2 \Worstexp} \partial_{\vec{I}} g_{ij}) 
				\leftexp{(\Pos;\vec{I})}{\mathfrak{G}}_{ij}
			\, dx
		\, ds
	& \leq
		 C
		\int_{s=t}^1
			s^{\Room - 1}
			\left\lbrace
				\MetricLownorm^2(s) + \MetricHighnorm^2(s)
			\right\rbrace
		\, ds.
\end{align}
Next, we use Def.~\ref{D:LOWNORMS},
the fact that
$s \KasnerSecondFund(s,x)$ is equal to the diagonal tensor $-\mbox{\upshape diag}(q_1,\cdots,q_{\mydim})$,
\eqref{E:WORSTEXPANDROOMLOTSOFUSEFULINEQUALITIES},
and the bootstrap assumptions to bound the (scalar) component $2 n s \SecondFund_{\ j}^a$
on RHS~\eqref{E:FIRSTSTEPJUSTBELOWTOPORDERMETRICENERGYINTEGRALINEQUALITY} as follows:
$\left|2 n s \SecondFund_{\ j}^a \right| \leq 2 \Worstexp \delta_{\ j}^a + C \varepsilon$,
where $\delta_{\ j}^a$ is the standard Kronecker delta.
It follows that the integral on the second line of 
RHS~\eqref{E:FIRSTSTEPJUSTBELOWTOPORDERMETRICENERGYINTEGRALINEQUALITY}
is bounded from above by
$
\leq
-
\left\lbrace
	2 \Blowupexp
	+
	\Worstexp
\right\rbrace
		\int_{s=t}^1
			\int_{\Sigma_s}
				s^{-1}
				(s^{\Blowupexp + 2 \Worstexp + \Room} \partial_{\vec{I}} g_{ij})^2
				 dx
		\, ds
+
C \varepsilon
\int_{s=t}^1
	s^{-1}
	\left\|	
		s^{\Blowupexp + 2 \Worstexp + \Room} g
	\right\|_{\dot{H}_{Frame}^N(\Sigma_t)}^2
\, ds
$.
Combining these estimates,
noting that
$
\left\|	
	\partial_{\vec{I}} g_{ij}
\right\|_{L^2(\Sigma_1)}^2
\leq
\MetricHighnorm^2(1)
$,
summing the resulting estimates over $1 \leq i,j \leq \mydim$
and over $\vec{I}$ with $|\vec{I}| = N$,
and taking $\varepsilon$ to be sufficiently small,
we arrive at \eqref{E:JUSTBELOWTOPORDERMETRICENERGYINTEGRALINEQUALITY}.

The estimate \eqref{E:JUSTBELOWTOPORDERINVERSEMETRICENERGYINTEGRALINEQUALITY}
can be proved using a similar argument based on
the evolution equation \eqref{E:COMMUTEDPARTIALTGINVERSECMC}
with $\Pos = 2 \Worstexp + \Room$ and $|\vec{I}| = N$
and the estimate \eqref{E:INVERSEMETRICJUSTBELOWTOPORDERERRORL2ESTIMATE},
we omit the details.

The estimate \eqref{E:TWOBELOWTOPORDERMETRICENERGYINTEGRALINEQUALITY}
can be proved using a similar argument based on
the evolution equation \eqref{E:COMMUTEDPARTIALTGCMC}
with $\Pos = 5 \Worstexp + 3 \Room - 1$ and $|\vec{I}| = N - 1$
and the estimate \eqref{E:METRICTWOBELOWTOPORDERERRORL2ESTIMATE},
and we omit the details.

The estimate \eqref{E:TWOBELOWTOPORDERINVERSEMETRICENERGYINTEGRALINEQUALITY}
can be proved using a similar argument based on
the evolution equation \eqref{E:COMMUTEDPARTIALTGINVERSECMC}
with $\Pos = 5 \Worstexp + 3 \Room - 1$ and $|\vec{I}| = N - 1$
and the estimate \eqref{E:INVERSEMETRICTWOBELOWTOPORDERERRORL2ESTIMATE},
and we omit the details.

The estimate \eqref{E:JUSTBELOWTOPORDERSECONDFUNDINTEGRALINEQUALITY}
can be proved using a similar argument based on
the evolution equation \eqref{E:COMMUTEDPARTIALTKCMC}
with $\Pos = 3 \Worstexp + \Room$ and $|\vec{I}| = N - 1$
and the estimates
\eqref{E:JUSTBELOWTOPORDERSECONDFUNDBORDERLINEERRORL2},
\eqref{E:JUSTBELOWTOPORDERSECONDFUNDJUNKLINEERRORL2},
and
\eqref{E:L2ESTIMATEFORTOPORDERMETRICANDLAPSEERRORTERMSINJUSTBELOWTOPORDERSTIMATESFORSECONDFUND},
and we omit the details.

To prove \eqref{E:GNORMJUSTBELOWTOPORDERMETRICENERGYINTEGRALINEQUALITY},
we let $\vec{I}$ be any
any multi-index with $|\vec{I}| = N$.
Using the evolution equation \eqref{E:COMMUTEDPARTIALTGCMC}
with $\Pos = \Worstexp + \Room$,
the definition of the norm $|\cdot|_g$,
and equation \eqref{E:PARTIALTGINVERSECMC}
(to substitute for the factors of $\partial_t g^{-1}$ that appear 
when $\partial_t$ falls on the factors of $g^{-1}$ that are inherent in the definition of $|\cdot|_g$),
we deduce that 
\begin{align} \label{E:FIRSTSTEPGNORMJUSTBELOWTOPORDERMETRICENERGYINTEGRALINEQUALITY}
\partial_t
\left\lbrace
\left|
	t^{\Blowupexp + \Worstexp + \Room} \partial_{\vec{I}} g
\right|_g^2
\right\rbrace
& =
		4 n
		g^{ac} \SecondFund^{bd}
		(t^{\Blowupexp + \Worstexp + \Room} \partial_{\vec{I}} g_{ab})
		(t^{\Blowupexp + \Worstexp + \Room} \partial_{\vec{I}} g_{cd})
	+
	2
	g^{ac} g^{bd}
	(t^{\Blowupexp + \Worstexp + \Room} \partial_{\vec{I}} g_{ab})
	\partial_t (t^{\Blowupexp + \Worstexp + \Room} \partial_{\vec{I}} g_{cd}).
\end{align}
Next, using equation \eqref{E:COMMUTEDPARTIALTGCMC} to substitute for
the factor 
$\partial_t (t^{\Blowupexp + 2 \Worstexp + \Room} \partial_{\vec{I}} g_{cd})$
on RHS~\eqref{E:FIRSTSTEPGNORMJUSTBELOWTOPORDERMETRICENERGYINTEGRALINEQUALITY},
we obtain
\begin{align} \label{E:SECONDSTEPGNORMJUSTBELOWTOPORDERMETRICENERGYINTEGRALINEQUALITY}
\partial_t
\left\lbrace
\left|
	t^{\Blowupexp + \Worstexp + \Room} \partial_{\vec{I}} g
\right|_g^2
\right\rbrace
& =
		\frac{2(\Blowupexp + \Worstexp + \Room)}{t}
		\left|
			t^{\Blowupexp + \Worstexp + \Room} \partial_{\vec{I}} g
		\right|_g^2
		+
		2
		g^{ac} g^{bd}
		(t^{\Blowupexp + \Worstexp + \Room} \partial_{\vec{I}} g_{ab})
		\leftexp{(\Worstexp + \Room;\vec{I})}{\mathfrak{G}}_{cd}.
\end{align}
Integrating \eqref{E:SECONDSTEPGNORMJUSTBELOWTOPORDERMETRICENERGYINTEGRALINEQUALITY}
over the spacetime slab $(t,1] \times \mathbb{T}^{\mydim}$,
using $g$-Cauchy--Schwarz, and appealing to Def.\,\ref{D:HIGHNORMS},
we obtain the following estimate (where we stress that $t < 1$):
\begin{align} \label{E:THIRDSTEPGNORMJUSTBELOWTOPORDERMETRICENERGYINTEGRALINEQUALITY}
	\left\|	
		t^{\Blowupexp + \Worstexp + \Room} \partial_{\vec{I}} g
	\right\|_{L_g^2(\Sigma_t)}^2
	& 
	\leq
	\left\|	
		\partial_{\vec{I}} g
	\right\|_{L_g^2(\Sigma_1)}^2
		\\
	& \ \
		-
		2	(\Blowupexp + \Worstexp + \Room)
		\int_{s=t}^1
				s^{-1}
					\left\|	
						s^{\Blowupexp + \Worstexp + \Room} \partial_{\vec{I}} g
					\right\|_{L_g^2(\Sigma_s)}^2
			\, ds
		\notag \\
	& \ \
		+
		2
		\int_{s=t}^1
			\int_{\Sigma_s}
				\MetricHighnorm(s)
				\left\|
					\leftexp{(\Worstexp + \Room;\vec{I})}{\mathfrak{G}}
				\right\|_{L_g^2(\Sigma_s)}
			\, dx
		\, ds.
	\notag
\end{align}
Using \eqref{E:METRICJUSTBELOWTOPORDERERRORGNORML2ESTIMATE}
to bound the integrand factor 
$
\left\|
	\leftexp{(\Worstexp + \Room;\vec{I})}{\mathfrak{G}}
\right\|_{L_g^2(\Sigma_s)}
$
on RHS~\eqref{E:THIRDSTEPGNORMJUSTBELOWTOPORDERMETRICENERGYINTEGRALINEQUALITY},
using Young's inequality, 
noting that
$
\left\|	
	\partial_{\vec{I}} g
\right\|_{L_g^2(\Sigma_1)}^2
\leq
\MetricHighnorm^2(1)
$,
and summing the resulting estimates over $\vec{I}$ with $|\vec{I}| = N$,
we arrive at the desired bound \eqref{E:GNORMJUSTBELOWTOPORDERMETRICENERGYINTEGRALINEQUALITY}.

The estimate \eqref{E:GNORMJUSTBELOWTOPORDERINVERSEMETRICENERGYINTEGRALINEQUALITY}
can be proved using a similar argument based on
equation \eqref{E:COMMUTEDPARTIALTGINVERSECMC} with 
$\Pos = \Worstexp + \Room$
and
$|\vec{I}| = N$,
equation \eqref{E:PARTIALTGCMC} 
(which one uses to substitute for the factors of $\partial_t g$ that appear 
when $\partial_t$ falls on the factors of $g$ that are inherent in the definition of $|\cdot|_g$),
and the estimate \eqref{E:INVERSEMETRICJUSTBELOWTOPORDERERRORGNORML2ESTIMATE};
we omit the details.

The estimate \eqref{E:GNORMJUSTBELOWTOPORDERWITHGRADIENTMETRICENERGYINTEGRALINEQUALITY}
can be proved using a similar argument based on
the evolution equation \eqref{E:COMMUTEDPARTIALTONEDERIVATIVEOFMETRICCMC} with $\Pos = 2 \Worstexp + \Room$,
and $|\vec{I}| = N-1$,
equation \eqref{E:PARTIALTGINVERSECMC} 
(to substitute for the factors of $\partial_t g^{-1}$ that appear 
when $\partial_t$ falls on the factors of $g^{-1}$ that are inherent in the definition of $|\cdot|_g$),
and the estimates 
\eqref{E:ONEBELOWTOPORDERDIFFERENTIATEDBORDERONCEDIFFERENTIATEDMETRICERRORTERML2ESTIMATE}-\eqref{E:ONEBELOWTOPORDERDIFFERENTIATEDJUNKONCEDIFFERENTIATEDMETRICERRORTERML2ESTIMATE}
and \eqref{E:L2GNORMESTIMATEFORTOPORDERSECONDFUNDTERMSINJUSTBELOWTOPORDERSTIMATESFORMETRIC}.
We omit the details, noting only that the factors of $\partial_t g^{-1}$ 
and the factor $- 2 t \SecondFund_{\ j}^a$
in the first braces on RHS~\eqref{E:COMMUTEDPARTIALTONEDERIVATIVEOFMETRICCMC}
lead to the terms
$
2 n \SecondFund^{ad} g^{be} g^{cf}
(t^{\Blowupexp + \Worstexp + \Room} \partial_a \partial_{\vec{I}} g_{bc})
(t^{\Blowupexp + \Worstexp + \Room} \partial_d \partial_{\vec{I}} g_{ef})
+
4 (n - 1) g^{ad} \SecondFund^{be} g^{cf}
(t^{\Blowupexp + \Worstexp + \Room} \partial_a \partial_{\vec{I}} g_{bc})
(t^{\Blowupexp + \Worstexp + \Room} \partial_d \partial_{\vec{I}} g_{ef})
$,
which we pointwise bound in magnitude as follows by using
$g$-Cauchy--Schwarz, 
the fact that $|g^{-1}|_g \leq C_*$,
\eqref{E:WORSTEXPANDROOMLOTSOFUSEFULINEQUALITIES},
\eqref{E:KASNERSECONDFUNDESTIMATES},
Def.\,\ref{D:LOWNORMS},
and the bootstrap assumptions:
\begin{align} \label{E:POINTWISEBOUNDSFORERRORTERMSWITHTIMEDERIVATIVESOFGINVERSE}
&
2 n \SecondFund^{ad} g^{be} g^{cf}
(t^{\Blowupexp + \Worstexp + \Room} \partial_a \partial_{\vec{I}} g_{bc})
(t^{\Blowupexp + \Worstexp + \Room} \partial_d \partial_{\vec{I}} g_{ef})
+
4 (n-1) g^{ad} \SecondFund^{be} g^{cf}
(t^{\Blowupexp + \Worstexp + \Room} \partial_a \partial_{\vec{I}} g_{bc})
(t^{\Blowupexp + \Worstexp + \Room} \partial_d \partial_{\vec{I}} g_{ef})
	\\
&
\leq
C_*
\left\lbrace
	\| n \|_{L^{\infty}(\Sigma_t)}
	+
	\| n - 1 \|_{L^{\infty}(\Sigma_t)}
\right\rbrace
\left\|
	\SecondFund
\right\|_{L_g^{\infty}(\Sigma_t)}
\left|
	t^{\Blowupexp + \Worstexp + \Room} \partial_a \partial_{\vec{I}} g
\right|_g^2
\leq
C_*
t^{-1}
\left|
	t^{\Blowupexp + \Worstexp + \Room} \partial_a \partial_{\vec{I}} g
\right|_g^2.
\notag
\end{align}
We further remark that these factors of $C_*$ lead to the $C_*$-dependent
products on RHS~\eqref{E:GNORMJUSTBELOWTOPORDERWITHGRADIENTMETRICENERGYINTEGRALINEQUALITY}.

The estimate \eqref{E:JUSTBELOWTOPORDERGNORMSECONDFUNDINTEGRALINEQUALITY}
can be proved using a similar argument based on
equation \eqref{E:COMMUTEDPARTIALTKCMC} with $\Pos = 3 \Worstexp + \Room$
and
$|\vec{I}| = N-1$
(where we use equations \eqref{E:PARTIALTGCMC}-\eqref{E:PARTIALTGINVERSECMC}
to substitute for the factors of $\partial_t g$ and $\partial_t g^{-1}$ that arise when 
$\partial_t$ falls on the factors of $g$ and $g^{-1}$ inherent in the definition of $|\cdot|_g$),
and the estimates \eqref{E:JUSTBELOWTOPORDERSECONDFUNDBORDERLINEERRORL2GNORM},
\eqref{E:JUSTBELOWTOPORDERSECONDFUNDJUNKLINEERRORL2GNORM},
and \eqref{E:L2GNORMESTIMATEFORTOPORDERMETRICANDLAPSEERRORTERMSINJUSTBELOWTOPORDERSTIMATESFORSECONDFUND};
we omit the details.

\hfill $\qed$

\section{The Main A Priori Estimates}
\label{S:APRIORIESTIMATES}
In this section, we use the estimates derived in
Sects.\,\ref{S:CONTROLOFLAPSEINTERMSOFMETRICANDSECONDFUND}-\ref{S:ESTIMATESFORJUSTBELOWTOPORDERDERIVATIVESOFMETRICANDSECONDFUNDAMENTALFORM}
to prove the main technical result of the article: 
Prop.\,\ref{P:MAINAPRIORIESTIMATES}, which provides
a priori estimates for the solution norms
from Defs.\,\ref{D:LOWNORMS} and~\ref{D:HIGHNORMS}.
The proposition in particular yields a strict improvement of the bootstrap assumptions.

\subsection{Integral inequality for the high norm}
\label{SS:INTEGRALINEQUALITYFORTHEHIGHMETRICNORM}
We start with the following lemma, in which we derive an integral inequality for
the high norm $\MetricHighnorm(t)$. The lemma is an analog of 
Prop.\,\ref{P:INTEGRALINEQUALITYFORLOWMETRICNORM},
in which we derived a similar but simpler inequality for
the low norm $\MetricLownorm(t)$.

\begin{lemma}[\textbf{Integral inequality for the high norm}]
	\label{L:INTEGRALINEQUALITYHIGHNORM}
	Assume that the bootstrap assumptions \eqref{E:BOOTSTRAPASSUMPTIONS} hold.
	There exists a universal constant $C_* > 0$ \underline{independent of $N$ and $\Blowupexp$}
	such that if $N$ is sufficiently large 
	and if $\varepsilon$ is sufficiently small, 
	then the following integral inequality holds for $t \in (\TBoot,1]$
	(where, as we described in Subsect.\,\ref{SSS:CONSTANTS}, constants ``$C$'' are allowed to depend on $N$ and other quantities):
	\begin{align} 
		\MetricHighnorm^2(t)
		&
		\leq
		C
		\MetricHighnorm^2(1)
		-
			\left\lbrace
					2 \Blowupexp 
					-
					C_*
			\right\rbrace
			\int_{s=t}^1
				s^{-1}
				\MetricHighnorm^2(s)
			\, ds
			\label{E:HIGHMETRICNORMINTEGRALINEQUALITY} \\
		& \ \
			+
			C
		\int_{s=t}^1
			s^{\Room - 1}
			\left\lbrace
				\MetricLownorm^2(s)
				+
				\MetricHighnorm^2(s)
			\right\rbrace
		\, ds.
			\notag 
	\end{align}

\end{lemma}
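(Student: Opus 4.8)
The proof assembles the various integral inequalities already obtained in Sections~\ref{S:TOPORDERESTIMATESMETRICANDSECONDFUND}--\ref{S:ESTIMATESFORJUSTBELOWTOPORDERDERIVATIVESOFMETRICANDSECONDFUNDAMENTALFORM} together with the elliptic lapse estimates of Section~\ref{S:CONTROLOFLAPSEINTERMSOFMETRICANDSECONDFUND}, and squares/re-expresses them in terms of $\MetricHighnorm(t)$. Recall that $\MetricHighnorm(t)$ is defined in Def.~\ref{D:HIGHNORMS} as the maximum over a finite collection of $t$-weighted Sobolev quantities: the top-order quantities $t^{\Blowupexp+1}\|\SecondFund\|_{\dot H_g^N}$, $t^{\Blowupexp+1}\|\partial g\|_{\dot H_g^N}$; the metric quantities $t^{\Blowupexp+\Worstexp+\Room}\|g^{\pm 1}\|_{\dot H_g^N}$ and $t^{\Blowupexp+2\Worstexp+\Room}\|g^{\pm1}\|_{\dot H_{Frame}^N}$; the once-differentiated quantity $t^{\Blowupexp+2\Worstexp+\Room}\|\partial g\|_{\dot H_g^{N-1}}$; the near-top $\SecondFund$ quantities $t^{\Blowupexp+3\Worstexp+\Room}\|\SecondFund\|_{\dot H_{Frame}^{N-1}}$ and $t^{\Blowupexp+3\Worstexp+\Room}\|\SecondFund\|_{\dot H_g^{N-1}}$; and the two-below-top metric quantities $t^{\Blowupexp+5\Worstexp+3\Room-1}\|g^{\pm1}\|_{\dot H_{Frame}^{N-1}}$. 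Since $\MetricHighnorm^2(t) = \max_{\text{(quantities)}} (\cdots)^2$ and the max of finitely many quantities each bounded by a common expression is itself bounded by that expression, it suffices to derive, for each of the quantities listed in Def.~\ref{D:HIGHNORMS}, an integral inequality of the schematic form
\begin{align} \label{E:SCHEMATICFORMFORPROOF}
	(\text{quantity at } t)^2
	& \leq C \MetricHighnorm^2(1)
		- \left\lbrace 2 \Blowupexp - C_* \right\rbrace \int_{s=t}^1 s^{-1} (\text{quantity at } s)^2 \, ds
			\\
	& \ \
		+ C \int_{s=t}^1 s^{\Room - 1} \left\lbrace \MetricLownorm^2(s) + \MetricHighnorm^2(s) \right\rbrace \, ds,
	\notag
\end{align}
and then take the maximum over all quantities, using that $-\{2\Blowupexp - C_*\}\int_{s=t}^1 s^{-1}(\text{quantity at }s)^2\,ds \leq -\{2\Blowupexp - C_*\}\int_{s=t}^1 s^{-1}\MetricHighnorm^2(s)\,ds$ holds automatically when $2\Blowupexp - C_* \geq 0$ (which we may assume by choosing $\Blowupexp$ large) and the quantity in question equals the one realizing the max at the relevant time --- more carefully, one bounds each squared quantity above by the RHS of~\eqref{E:SCHEMATICFORMFORPROOF} with the final friction integral dropped whenever the quantity is \emph{not} the maximizer, and kept when it is; a standard pointwise-in-time bookkeeping argument over the finite collection then yields~\eqref{E:HIGHMETRICNORMINTEGRALINEQUALITY}.

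The individual inequalities of the required form~\eqref{E:SCHEMATICFORMFORPROOF} are almost entirely supplied already. For the top-order quantities $t^{\Blowupexp+1}\|\SecondFund\|_{\dot H_g^N}$ and $t^{\Blowupexp+1}\|\partial g\|_{\dot H_g^N}$, inequality~\eqref{E:MAINTOPORDERMETRICENERGYESTIMATE} of Prop.~\ref{P:MAINTOPORDERMETRICENERGYESTIMATE} is precisely of this form (with $\frac14$ weights that are harmless). For $t^{\Blowupexp+2\Worstexp+\Room}\|g^{\pm1}\|_{\dot H_{Frame}^N}$, use~\eqref{E:JUSTBELOWTOPORDERMETRICENERGYINTEGRALINEQUALITY}--\eqref{E:JUSTBELOWTOPORDERINVERSEMETRICENERGYINTEGRALINEQUALITY}; for $t^{\Blowupexp+\Worstexp+\Room}\|g^{\pm1}\|_{\dot H_g^N}$, use~\eqref{E:GNORMJUSTBELOWTOPORDERMETRICENERGYINTEGRALINEQUALITY}--\eqref{E:GNORMJUSTBELOWTOPORDERINVERSEMETRICENERGYINTEGRALINEQUALITY}; for $t^{\Blowupexp+2\Worstexp+\Room}\|\partial g\|_{\dot H_g^{N-1}}$, use~\eqref{E:GNORMJUSTBELOWTOPORDERWITHGRADIENTMETRICENERGYINTEGRALINEQUALITY}; for the two-below-top quantities $t^{\Blowupexp+5\Worstexp+3\Room-1}\|g^{\pm1}\|_{\dot H_{Frame}^{N-1}}$, use~\eqref{E:TWOBELOWTOPORDERMETRICENERGYINTEGRALINEQUALITY}--\eqref{E:TWOBELOWTOPORDERINVERSEMETRICENERGYINTEGRALINEQUALITY}; and for the near-top $\SecondFund$ quantities $t^{\Blowupexp+3\Worstexp+\Room}\|\SecondFund\|_{\dot H_{Frame}^{N-1}}$ and $t^{\Blowupexp+3\Worstexp+\Room}\|\SecondFund\|_{\dot H_g^{N-1}}$, use~\eqref{E:JUSTBELOWTOPORDERSECONDFUNDINTEGRALINEQUALITY}--\eqref{E:JUSTBELOWTOPORDERGNORMSECONDFUNDINTEGRALINEQUALITY}. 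In each case the friction coefficient is either $2\Blowupexp$ (with $C_*=0$) or $2\Blowupexp - C_*$, and the error integral on the RHS is exactly $C\int_{s=t}^1 s^{\Room-1}\{\MetricLownorm^2(s)+\MetricHighnorm^2(s)\}\,ds$, matching~\eqref{E:SCHEMATICFORMFORPROOF}. I would also note that any quantity involving the lapse (were it present here) is controlled via Prop.~\ref{P:ELLIPTICANDMAXIMUMPRINCIPLEESTIMATESFORTHELAPSE}, which bounds $\LapseHighnorm$ by $C_*\MetricHighnorm + C t^{\Room}\{\MetricLownorm + \MetricHighnorm\}$, but since~\eqref{E:HIGHMETRICNORMINTEGRALINEQUALITY} only concerns $\MetricHighnorm$ this does not enter directly --- it enters only inside the error-term estimates already invoked in Sections~\ref{S:TOPORDERESTIMATESMETRICANDSECONDFUND}--\ref{S:ESTIMATESFORJUSTBELOWTOPORDERDERIVATIVESOFMETRICANDSECONDFUNDAMENTALFORM}.

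The one genuinely non-trivial point --- the ``main obstacle'' --- is the bookkeeping needed to convert ``max of quantities, each $\leq$ RHS'' into the single clean inequality~\eqref{E:HIGHMETRICNORMINTEGRALINEQUALITY}, because the friction term on the RHS of each individual inequality refers to \emph{that} quantity's own history, not to $\MetricHighnorm$'s. The resolution is: since all friction coefficients are bounded below by $2\Blowupexp - C_*$ and all the friction integrands are nonnegative, for any quantity $Q$ we have $Q^2(t) \leq C\MetricHighnorm^2(1) + C\int_{s=t}^1 s^{\Room-1}\{\MetricLownorm^2(s)+\MetricHighnorm^2(s)\}\,ds$ after simply discarding the (sign-favorable, when $2\Blowupexp \geq C_*$) friction term; taking the max over $Q$ gives~\eqref{E:HIGHMETRICNORMINTEGRALINEQUALITY} \emph{without} the friction term on the RHS, which is in fact a weaker but still sufficient statement --- however, to retain the friction term (which is what the statement asserts and what is needed later for the Gronwall argument in Section~\ref{S:APRIORIESTIMATES}), one argues instead at the level of a fixed time $t$: let $Q_\star$ be a quantity realizing $\MetricHighnorm(t) = Q_\star(t)$; apply $Q_\star$'s individual inequality; bound its friction integrand $s^{-1}Q_\star^2(s)$ from above by $s^{-1}\MetricHighnorm^2(s)$ pointwise in $s$ (valid since $Q_\star(s) \leq \MetricHighnorm(s)$ by definition of the max); and conclude. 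Since the choice of $Q_\star$ depends on $t$ but the resulting inequality~\eqref{E:SCHEMATICFORMFORPROOF}-with-$\MetricHighnorm$-on-both-sides holds for every $t \in (\TBoot,1]$, we obtain~\eqref{E:HIGHMETRICNORMINTEGRALINEQUALITY}. Finally, I would record that $\Blowupexp$ is chosen large enough that $2\Blowupexp - C_* > 0$, so that the friction term genuinely has a favorable sign and can be safely discarded in the subsequent Gronwall step; here $C_*$ is the universal constant (independent of $N$ and $\Blowupexp$) appearing in Props.~\ref{P:MAINTOPORDERMETRICENERGYESTIMATE} and~\ref{P:INTEGRALINEQUALITIESFORJUSTBELOWTOPDERIVATIVESOFMETRICANDSECONDFUND}.
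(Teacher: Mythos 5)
Your assembly of the individual integral inequalities uses the right collection of ingredients, but the key bookkeeping step---the one you yourself flag as the ``main obstacle''---runs the inequality in the wrong direction. You fix a time $t$, let $Q_\star$ realize $\MetricHighnorm(t)$, invoke $Q_\star$'s individual inequality, and then ``bound the friction integrand $s^{-1}Q_\star^2(s)$ from above by $s^{-1}\MetricHighnorm^2(s)$.'' But the friction term carries the coefficient $-\{2\Blowupexp - C_*\}$, which is non-positive in the regime of interest, so the upper bound $Q_\star^2(s) \leq \MetricHighnorm^2(s)$ on the integrand yields
\[
-\left\lbrace 2\Blowupexp - C_* \right\rbrace \int_{s=t}^1 s^{-1} Q_\star^2(s)\,ds
\;\geq\;
-\left\lbrace 2\Blowupexp - C_* \right\rbrace \int_{s=t}^1 s^{-1} \MetricHighnorm^2(s)\,ds,
\]
i.e., the replacement \emph{decreases} the right-hand side, and the chain of inequalities breaks. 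The discrepancy $\{2\Blowupexp - C_*\}\int_{s=t}^1 s^{-1}\{\MetricHighnorm^2(s) - Q_\star^2(s)\}\,ds$ is nonnegative but carries the weight $s^{-1}$, which is not uniformly integrable as $\TBoot \downarrow 0$, so it cannot be absorbed into the $C\int_{s=t}^1 s^{\Room-1}\{\cdots\}\,ds$ error integral. Your fallback of discarding the friction term altogether proves only the inequality without the friction integral, which is strictly weaker than \eqref{E:HIGHMETRICNORMINTEGRALINEQUALITY} when $2\Blowupexp > C_*$ and hence does not establish the lemma as stated (even though, as you correctly observe, the weaker version would suffice for the Gronwall step in Prop.\,\ref{P:MAINAPRIORIESTIMATES}).

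The paper's proof avoids the issue by \emph{summing} rather than maximizing: one sums \eqref{E:MAINTOPORDERMETRICENERGYESTIMATE} over all $\vec{I}$ with $|\vec{I}|=N$ together with \eqref{E:JUSTBELOWTOPORDERMETRICENERGYINTEGRALINEQUALITY}--\eqref{E:JUSTBELOWTOPORDERGNORMSECONDFUNDINTEGRALINEQUALITY}. The resulting quantity $S(t)$, namely the sum of the squares of all the entries appearing in Def.\,\ref{D:HIGHNORMS}, satisfies $\MetricHighnorm^2(t) \leq S(t) \leq C\,\MetricHighnorm^2(t)$, and---this is the point---the summed friction integrand also satisfies $S(s) \geq \MetricHighnorm^2(s)$, so the negative term $-\{2\Blowupexp - C_*\}\int_{s=t}^1 s^{-1}S(s)\,ds$ is bounded above by $-\{2\Blowupexp - C_*\}\int_{s=t}^1 s^{-1}\MetricHighnorm^2(s)\,ds$ in the correct direction. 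Replacing your max-based bookkeeping by this summation yields \eqref{E:HIGHMETRICNORMINTEGRALINEQUALITY} immediately.
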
	

\begin{proof}
		We sum the estimates \eqref{E:MAINTOPORDERMETRICENERGYESTIMATE}
		over all $\vec{I}$ with $|\vec{I}|=N$
		together with the estimates
		\eqref{E:JUSTBELOWTOPORDERMETRICENERGYINTEGRALINEQUALITY}-\eqref{E:JUSTBELOWTOPORDERGNORMSECONDFUNDINTEGRALINEQUALITY}.
		In view of definition \eqref{E:METRICHIGHNORM},
		we conclude the estimate
		\eqref{E:HIGHMETRICNORMINTEGRALINEQUALITY}.
\end{proof}	

\subsection{A priori estimates for the solution norms}
\label{SS:MAINAPRIORIESTIMATES}
In the next proposition, we provide the main result of Sect.\,\ref{S:APRIORIESTIMATES}.

\begin{proposition}[\textbf{A priori estimates for the solution norms}]
	\label{P:MAINAPRIORIESTIMATES}
	Recall that $\MetricLownorm(t)$, $\LapseLownorm(t)$, $\MetricHighnorm(t)$, and $\LapseHighnorm(t)$
	are the norms from Defs.\,\ref{D:LOWNORMS} and~\ref{D:HIGHNORMS},
	and assume that the bootstrap assumptions \eqref{E:BOOTSTRAPASSUMPTIONS} hold.
	Let $\mathring{\upepsilon}$ be the following norm of the difference between
	the Kasner initial data and the perturbed initial data:
	\begin{align} \label{E:APRIORIESTIMATESTATEMENTGEOMETRICDATASIZE}
		\mathring{\upepsilon} 
		& := 
		\left\|
			g - \KasnerMetric
		\right\|_{L_{Frame}^{\infty}(\Sigma_1)}
		+
		\left\|
			\SecondFund - \KasnerSecondFund
		\right\|_{L_{Frame}^{\infty}(\Sigma_1)}
		+
		\left\|
			g 
		\right\|_{\dot{H}_{Frame}^{N+1}(\Sigma_1)}
		+
		\left\|
			\SecondFund 
		\right\|_{\dot{H}_{Frame}^N(\Sigma_1)}.
	\end{align}
	If $\Blowupexp$ is sufficiently large
	and if $N$ is sufficiently large in a manner that depends on 
	$\Blowupexp$, 
	$\Worstexp$,
	$\Room$,
	and $\mydim$,
	then there exists a constant $C_{N,\Blowupexp,\Worstexp,\Room,\mydim} > 1$
	such that if $\varepsilon$ is sufficiently small in a manner that depends on
	$N$, $\Blowupexp$, $\Worstexp$, $\Room$, and $\mydim$, 
	then the following estimates hold for 
	$t \in (\TBoot,1]$:
	\begin{align} \label{E:MAINAPRIORIESTIMATE}
		\MetricLownorm(t)
		+
		\MetricHighnorm(t)
		+
		\LapseLownorm(t)
		+
		\LapseHighnorm(t)
		&
		\leq 
			C_{N,\Blowupexp,\Worstexp,\Room,\mydim}
			\mathring{\upepsilon}.
	\end{align}
	In particular, if $C_{N,\Blowupexp,\Worstexp,\Room,\mydim} \mathring{\upepsilon} < \varepsilon$,
	then \eqref{E:MAINAPRIORIESTIMATE}
	yields a strict improvement of the bootstrap assumptions. 
\end{proposition}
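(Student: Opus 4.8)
The plan is to combine the integral inequalities already established into a single Gronwall-type estimate for the quantity $\mathbb{M}(t) := \MetricLownorm^2(t) + \MetricHighnorm^2(t)$, to remove the friction integrals by taking $\Blowupexp$ large, and to close the argument using that the weight $s^{\Room - 1}$ is integrable on $(0,1]$. First I would fix $\Worstexp$ and $\Room$ once and for all (they are determined by the background Kasner exponents through \eqref{E:WORSTEXPANDROOMLOTSOFUSEFULINEQUALITIES}), and then choose $\Blowupexp \geq 1$ large enough that $2\Blowupexp - C_* \geq 0$, where $C_*$ denotes the various universal ($N$- and $\Blowupexp$-independent) constants appearing in the friction terms of Prop.\,\ref{P:MAINTOPORDERMETRICENERGYESTIMATE}, Prop.\,\ref{P:INTEGRALINEQUALITIESFORJUSTBELOWTOPDERIVATIVESOFMETRICANDSECONDFUND}, and hence of Lemma~\ref{L:INTEGRALINEQUALITYHIGHNORM}. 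With this choice, every integral on the right-hand sides of those estimates that is preceded by a factor $-\{2\Blowupexp - C_*\}$ is nonpositive (since $0 < t \le 1$ makes the integrand nonnegative), so it may simply be discarded. After fixing $\Blowupexp$, I would then take $N$ large in a manner depending on $\Blowupexp$, $\Worstexp$, $\Room$, $\mydim$ --- precisely so that $\Blowupexp \updelta$ is as small as needed and all of the cited lemmas apply --- and finally shrink $\varepsilon$.

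Granting this, Lemma~\ref{L:INTEGRALINEQUALITYHIGHNORM} (with the friction integral dropped) gives
\[
\MetricHighnorm^2(t) \leq C \MetricHighnorm^2(1) + C \int_{s=t}^1 s^{\Room - 1} \mathbb{M}(s) \, ds ,
\]
while squaring the low-norm inequality \eqref{E:INTEGRALINEQUALITYFORLOWMETRICNORM}, applying the Cauchy--Schwarz inequality with weight $s^{\Room - 1}$, and using $\int_0^1 s^{\Room - 1} \, ds = 1/\Room < \infty$ yields
\[
\MetricLownorm^2(t) \leq C \MetricLownorm^2(1) + C \int_{s=t}^1 s^{\Room - 1} \mathbb{M}(s) \, ds .
\]
Adding these produces $\mathbb{M}(t) \leq C\{\MetricLownorm^2(1) + \MetricHighnorm^2(1)\} + C \int_{s=t}^1 s^{\Room - 1} \mathbb{M}(s) \, ds$ for $t \in (\TBoot, 1]$, with $C$ now allowed to depend on $N, \Blowupexp, \Worstexp, \Room, \mydim$. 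Since $\mathbb{M}$ is continuous on $(\TBoot,1]$ (the solution being classical there) and $s^{\Room - 1}$ is integrable, the reverse-time Gronwall inequality gives $\mathbb{M}(t) \leq C\{\MetricLownorm^2(1) + \MetricHighnorm^2(1)\} \exp(C/\Room)$, hence $\MetricLownorm(t) + \MetricHighnorm(t) \leq C\{\MetricLownorm(1) + \MetricHighnorm(1)\}$ uniformly on $(\TBoot, 1]$.

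It then remains to convert the time-$1$ norms into the single data quantity $\mathring{\upepsilon}$ of \eqref{E:APRIORIESTIMATESTATEMENTGEOMETRICDATASIZE} and to recover the lapse. For the former, one uses that $\KasnerMetric$ restricted to $\Sigma_1$ equals $\mathrm{diag}(1,\dots,1)$, so $g^{-1} - \KasnerMetric^{-1}$ and its Sobolev norms near the identity are controlled by those of $g - \KasnerMetric$; standard Sobolev embedding on $\mathbb{T}^{\mydim}$ (valid once $N$ is large relative to $\mydim$), interpolation between the $L^{\infty}$ differences and the $\dot{H}^{N+1}_{Frame}$/$\dot{H}^N_{Frame}$ norms appearing in $\mathring{\upepsilon}$, and the triviality of the Frame-versus-$g$ comparison at $t = 1$ then give $\MetricLownorm(1) + \MetricHighnorm(1) \lesssim \mathring{\upepsilon}$. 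For the lapse, Prop.\,\ref{P:ELLIPTICANDMAXIMUMPRINCIPLEESTIMATESFORTHELAPSE} (in particular the bound $\LapseLownorm(t) + \LapseHighnorm(t) \leq C\{\MetricLownorm(t) + \MetricHighnorm(t)\}$ noted at the end of its proof, which follows from \eqref{E:LAPSELOWNORMELLIPTIC}, the top-order estimate, and the near-top-order estimate) upgrades the metric bound to the full left-hand side of \eqref{E:MAINAPRIORIESTIMATE}. Combining, $\MetricLownorm(t) + \MetricHighnorm(t) + \LapseLownorm(t) + \LapseHighnorm(t) \leq C_{N,\Blowupexp,\Worstexp,\Room,\mydim}\,\mathring{\upepsilon}$, and if $C_{N,\Blowupexp,\Worstexp,\Room,\mydim}\,\mathring{\upepsilon} < \varepsilon$ this sum is strictly smaller than $\varepsilon$, improving the bootstrap assumptions \eqref{E:BOOTSTRAPASSUMPTIONS}.

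The substantive difficulties --- the friction/coercivity structure of the top- and near-top-order energy estimates, the $L^2$ bounds on the error terms, and the interpolation scheme that keeps the high-norm singularity from contaminating the low order --- are already absorbed into Prop.\,\ref{P:INTEGRALINEQUALITYFORLOWMETRICNORM}, Prop.\,\ref{P:MAINTOPORDERMETRICENERGYESTIMATE}, Prop.\,\ref{P:INTEGRALINEQUALITIESFORJUSTBELOWTOPDERIVATIVESOFMETRICANDSECONDFUND}, and Lemma~\ref{L:INTEGRALINEQUALITYHIGHNORM}. Thus the only real obstacle remaining in this proof is the bookkeeping of the hierarchy of parameters: one must respect the order $\Worstexp, \Room$ fixed, then $\Blowupexp$ chosen large (to make $2\Blowupexp - C_* \geq 0$ for the universal constants), then $N$ chosen large depending on $\Blowupexp$ (so that $\Blowupexp\updelta$ is negligible and every cited lemma is applicable), then $\varepsilon$ chosen small depending on all of these, and one must check that this ordering is consistent with the hypotheses of all the results being invoked.
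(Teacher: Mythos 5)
Your proposal is correct and follows essentially the same route as the paper's proof: square the low-norm inequality, discard the friction integral in the high-norm inequality by fixing $\Blowupexp \geq \max\{C_*/2,1\}$, run Gronwall on $Q(t)=\MetricLownorm^2(t)+\MetricHighnorm^2(t)$ using the integrability of $s^{\Room-1}$, convert $Q(1)$ to $\mathring{\upepsilon}$ via Sobolev embedding/interpolation, and recover the lapse norms from the elliptic estimates of Prop.\,\ref{P:ELLIPTICANDMAXIMUMPRINCIPLEESTIMATESFORTHELAPSE}. Your parameter hierarchy ($\Worstexp,\Room$ fixed, then $\Blowupexp$, then $N$, then $\varepsilon$) matches the paper's exactly.
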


\begin{proof}
	We first square inequality \eqref{E:INTEGRALINEQUALITYFORLOWMETRICNORM}
	and use the Cauchy--Schwarz estimate
	\begin{align}
	&
	\left(
	\int_{s=t}^1
		s^{\Room - 1} 
		\left\lbrace
			\MetricLownorm(s) + \MetricHighnorm(s)
		\right\rbrace
	\, ds
	\right)^2
		\\
	&
	\leq C
	\int_{s=t}^1
		s^{\Room - 1} 
		\left\lbrace
			\MetricLownorm^2(s) + \MetricHighnorm^2(s)
		\right\rbrace
	\, ds
	\times
	\underbrace{
	\int_{s=t}^1
		s^{\Room - 1}
	\, ds
	}_{\leq C}
		\notag \\
& 
	\leq 
	C
	\int_{s=t}^1
		s^{\Room - 1} 
		\left\lbrace
			\MetricLownorm^2(s) + \MetricHighnorm^2(s)
		\right\rbrace
		\, ds
		\notag
	\end{align}
	to deduce
	\begin{align} \label{E:SQUAREDINTEGRALINEQUALITYFORLOWMETRICNORM}
	\MetricLownorm^2(t)
	&
	\leq
	\MetricLownorm^2(1)
	+
	C
	\int_{s=t}^1
		s^{\Room - 1} 
		\left\lbrace
			\MetricLownorm^2(s) + \MetricHighnorm^2(s)
		\right\rbrace
	\, ds.
\end{align}
	We now fix $\Blowupexp \geq \max \lbrace C_*/2,1 \rbrace$,
	where $C_* > 0$ is the universal constant (independent of $N$ and $\Blowupexp$)
	on RHS~\eqref{E:HIGHMETRICNORMINTEGRALINEQUALITY}, and the assumption $\Blowupexp \geq 1$
	was used in other parts of the paper (for example, the proof of \eqref{E:METRICTWOBELOWTOPORDERERRORL2ESTIMATE}).
	Given this choice of $\Blowupexp$, 
	we fix $N$ large enough such that whenever 
	$\varepsilon$ is sufficiently small, the estimates of
	Lemmas~\ref{P:INTEGRALINEQUALITYFORLOWMETRICNORM} and~\ref{L:INTEGRALINEQUALITYHIGHNORM} hold.
	Note that our choice of $\Blowupexp$ ensures that the factor
	$\left\lbrace
		2 \Blowupexp 
		-
		C_*
	\right\rbrace
	$
	on RHS~\eqref{E:HIGHMETRICNORMINTEGRALINEQUALITY} is non-positive;
	hence the first time integral on RHS~\eqref{E:HIGHMETRICNORMINTEGRALINEQUALITY}
	is non-positive and can be discarded.
	In particular, using \eqref{E:SQUAREDINTEGRALINEQUALITYFORLOWMETRICNORM} 
	and Lemma~\ref{L:INTEGRALINEQUALITYHIGHNORM},
	we see that for $t \in (\TBoot,1]$, the quantity
	$Q(t) := \MetricLownorm^2(t) + \MetricHighnorm^2(t)$
	verifies
	$Q(t) 
	\leq C Q(1) 
	+ 
	C 
	\int_{s=t}^1
		s^{\Room - 1}
		Q(s)
	\, ds
	$.
	Since the function $s^{\Room - 1}$ is integrable over the interval $s \in (0,1]$, 
	we conclude from Gronwall's inequality that $Q(t) \leq C Q(1)$ for $t \in (\TBoot,1]$.
	Moreover, from Defs.\,\ref{D:LOWNORMS} and~\ref{D:HIGHNORMS},
	definition \eqref{E:APRIORIESTIMATESTATEMENTGEOMETRICDATASIZE},
	standard Sobolev interpolation (i.e., \eqref{E:BASICINTERPOLATION}), 
	and Sobolev embedding,
	we deduce that if $N$ is sufficiently large,
	then $Q(1) \leq C \mathring{\upepsilon}^2$.
	From this bound and the bound $Q(t) \leq C Q(1)$, we deduce that
	$Q(t) \leq C \mathring{\upepsilon}^2$ for $t \in (\TBoot,1]$.
	From this bound,
	\eqref{E:LAPSELOWNORMELLIPTIC},
	\eqref{E:LAPSETOPORDERELLIPTIC},
	and
	\eqref{E:LAPSEJUSTBELOWTOPORDERELLIPTIC},
	we conclude, in view of Defs.\,\ref{D:LOWNORMS} and~\ref{D:HIGHNORMS}, 
	the desired bound \eqref{E:MAINAPRIORIESTIMATE}.
	
\end{proof}

\section{Estimates Tied to Curvature Blowup and the Length of Past-Directed Causal Geodesics}
\label{S:CURVATUREANDTIMELIKECURVELENGTHESTIMATES}
In this section, we derive the main estimates needed to show curvature blowup and geodesic incompleteness
for the solutions under study.

\subsection{Curvature estimates}
\label{SS:CURVATUREESTIMATES}
In the next lemma, we derive a pointwise estimate that shows in particular that
the Kretschmann scalar blows up as $t \downarrow 0$.

\begin{lemma}[\textbf{Pointwise estimate for the Kretschmann scalar}]
\label{L:KRETSCHMANNSCALARESTIMATE}
Under the hypotheses and conclusions of Prop.\,\ref{P:MAINAPRIORIESTIMATES},
perhaps enlarging $N$ if necessary,
the following pointwise estimate holds for $t \in (\TBoot,1]$:
\begin{align} \label{E:KRETSCHMANNSCALARESTIMATE}
		\Riemfour_{\alpha \beta \gamma \delta} \Riemfour^{\alpha \beta \gamma \delta}
	&
	=
	4 
	t^{-4}
	\left\lbrace
		\sum_{i=1}^{\mydim} (q_i^2 - q_i)^2 
		+ 
		\sum_{1 \leq i < j = \mydim} q_i^2 q_j^2
	\right\rbrace
	+
	t^{-4}
	\mathcal{O}(\mathring{\upepsilon}).
\end{align}

\end{lemma}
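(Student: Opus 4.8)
The plan is to compute $\Riemfour_{\alpha\beta\gamma\delta}\Riemfour^{\alpha\beta\gamma\delta}$ exactly for the perturbed spacetime metric $\gfour = -n^2\, dt \otimes dt + g_{ab}\, dx^a \otimes dx^b$ in CMC-transported spatial coordinates, extract the leading-order contribution coming from the background Kasner solution, and estimate all remaining terms by $t^{-4}\mathcal{O}(\mathring{\upepsilon})$ using the a priori estimates of Prop.\,\ref{P:MAINAPRIORIESTIMATES}. First, I would recall the Gauss--Codazzi-type decomposition of the spacetime curvature components relative to the $\Sigma_t$ foliation: the components of $\Riemfour$ can be expressed algebraically in terms of the intrinsic Riemann curvature $\Riemann$ of $g$, the second fundamental form $\SecondFund$ (and its square), the lapse $n$, and the spatial and temporal derivatives of $n$ and $\SecondFund$ (via the Ricci and Codazzi equations, together with $\partial_t \SecondFund$ expressed through equation \eqref{E:PARTIALTKCMC}). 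Then $\Riemfour_{\alpha\beta\gamma\delta}\Riemfour^{\alpha\beta\gamma\delta}$ becomes a sum of contractions, with respect to $g^{-1}$ and $n^{-2}$, of these building blocks.

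Second, I would identify which terms survive as $t \downarrow 0$ at the rate $t^{-4}$. For the background Kasner metric, $\widetilde{n}\equiv 1$, $\Riemann$ of $\KasnerMetric$ vanishes, and $\KasnerSecondFund_{\ j}^i = -t^{-1}\mathrm{diag}(q_1,\dots,q_{\mydim})$, so the only nonzero curvature components come from the ``electric'' part built out of $\partial_t \KasnerSecondFund + \KasnerSecondFund^2$ and the ``magnetic''/mixed part built out of products $\KasnerSecondFund \cdot \KasnerSecondFund$ with distinct indices. A direct computation of $\widetilde{\Riemfour}_{\alpha\beta\gamma\delta}\widetilde{\Riemfour}^{\alpha\beta\gamma\delta}$ for \eqref{E:KASNERSOLUION}--\eqref{E:KASNERSPATIALMETRIC} gives exactly $4t^{-4}\big\{\sum_i (q_i^2 - q_i)^2 + \sum_{i<j} q_i^2 q_j^2\big\}$; the factor $q_i^2 - q_i$ records that the relevant Kasner quantity is $\partial_t(t\widetilde{k}_{\ i}^i) + \cdots \sim t^{-2}(q_i^2 - q_i)$, and the cross terms $q_i^2 q_j^2$ come from the off-diagonal sectional-curvature-type contractions. (This also reproves the stated fact that Kasner solutions have $\widetilde{\Riemfour}^2 \sim t^{-4}$ except in the degenerate flat case.)

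Third, I would control the difference $\Riemfour^2 - \widetilde{\Riemfour}^2$. Writing every curvature building block as (Kasner part) $+$ (perturbation), the difference is a sum of products in which at least one factor is a perturbation. Using Prop.\,\ref{P:MAINAPRIORIESTIMATES}, Def.\,\ref{D:LOWNORMS}, the elliptic lapse estimates \eqref{E:LAPSELOWNORMELLIPTIC}--\eqref{E:LAPSETIMEDERIVATIVEESTIMATE}, and Lemma\,\ref{L:SOBOLEVBORROWALITTLEHIGHNORM} (enlarging $N$ so the $\Blowupexp\updelta$ losses are harmless), one has $\|g - \KasnerMetric\|_{W_{Frame}^{2,\infty}} \lesssim \mathring{\upepsilon}\, t^{-2\Worstexp - \Blowupexp\updelta}$, $\|t\SecondFund - t\KasnerSecondFund\|_{W_{Frame}^{2,\infty}} \lesssim \mathring{\upepsilon}\, t^{-\Blowupexp\updelta}$, $\|n-1\|_{W^{2,\infty}} \lesssim \mathring{\upepsilon}\, t^{2 - 10\Worstexp - \Room - \Blowupexp\updelta}$, and $|\partial_t n| \lesssim \mathring{\upepsilon}\, t^{1 - 10\Worstexp - \Room}$; the spatial curvature term $\Ric$ (equivalently $\Riemann$) of $g$ is bounded using \eqref{E:RICCICURVATUREEXACT} and the interpolated bounds on $\partial^2 g$, $(\partial g)^2$, $g^{-1}$ by $\lesssim \mathring{\upepsilon}\, t^{-10\Worstexp - \Blowupexp\updelta}$. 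Because $\Worstexp < 1/6$, each such mixed product, after forming $\Riemfour^2$ and contracting against $g^{-1}$ and $n^{-2}$ (whose $|\cdot|_{Frame}$ norms are $\lesssim t^{-2\Worstexp}$ and $\lesssim 1$ respectively), carries a power of $t$ strictly greater than $-4$ times a positive power of $\mathring{\upepsilon}$ — except for the single ``$\SecondFund$--$\SecondFund$ cross-term'' contributions where exactly one $\KasnerSecondFund$ is replaced by a perturbation, which yield precisely $t^{-4}\mathcal{O}(\mathring{\upepsilon})$. Collecting everything gives \eqref{E:KRETSCHMANNSCALARESTIMATE}.

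The main obstacle is bookkeeping: I expect the hardest part to be organizing the full list of curvature building blocks and verifying, term by term, that the spatial-curvature contributions and all ``cross'' contributions (which in principle could look as singular as $t^{-4 - c\Worstexp}$ once worst-case factors of $|g^{-1}|_{Frame} \sim t^{-2\Worstexp}$ are inserted) are in fact tamed by the constraint $\Worstexp + 2\Room < 1/6$ exactly as in the energy-estimate portions of the paper, so that nothing beyond $t^{-4}\mathcal{O}(\mathring{\upepsilon})$ survives. Once the power-counting is set up in parallel with Subsects.\,\ref{SS:MAINIDEASINPROOF} and the error-term lemmas, the Kasner computation in Step 2 is an elementary (if tedious) calculation, and the perturbative Step 3 is routine given Prop.\,\ref{P:MAINAPRIORIESTIMATES}.
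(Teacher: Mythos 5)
Your proposal is correct and follows essentially the same route as the paper: a Gauss--Codazzi-type decomposition of $\Riemfour$ into its all-spatial, mixed, and $0$-component blocks built from $\SecondFund$, $\Riemann$, $n$, and $\partial_t(t\SecondFund)$, an explicit Kasner computation yielding the leading $4t^{-4}\{\sum_i(q_i^2-q_i)^2+\sum_{i<j}q_i^2q_j^2\}$ term, and a power-counting of the perturbative remainders using the a priori estimates, the interpolation bounds of Lemma~\ref{L:SOBOLEVBORROWALITTLEHIGHNORM}, and $\Worstexp<1/6$ to show everything else is $t^{-4}\mathcal{O}(\mathring{\upepsilon})$. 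The only detail worth noting is that the paper organizes the bookkeeping by proving three clean component/norm estimates (\eqref{E:4RIEMANNSPATIALBLOWUP}--\eqref{E:4RIEMANN0SPATIALSPATIALSPATIALBLOWUP}) before contracting, which is exactly the term-by-term verification you anticipate as the main obstacle.
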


\begin{proof}
Throughout this proof, we will assume that $\Blowupexp \updelta$ is sufficiently small
(and in particular that $\Blowupexp \updelta < \Room$);
in view of the discussion in Subsect.\,\ref{SS:SOBOLEVEMBEDDING}, we see that
at fixed $\Blowupexp$, this can be achieved by choosing $N$ to be sufficiently large.
	
First, we observe the following identity, 
	which holds relative to CMC-transported spatial coordinates in view of
	\eqref{E:SPACETIMEMETRICDECOMPOSEDINTOLAPSEANDFIRSTFUNDAMENTALFORM}
	and the curvature properties
	$\Riemfour_{\alpha \beta \gamma \delta}
	=
	- \Riemfour_{\beta \alpha \gamma \delta}
	=
	- \Riemfour_{\alpha \beta \delta \gamma}
	= 
	\Riemfour_{\gamma \delta \alpha \beta}$:
	\begin{align} \label{E:KRETSCHMANNNORMDECOMP}
		t^4 \Riemfour_{\alpha \beta \gamma \delta} \Riemfour^{\alpha \beta \gamma \delta}
			& = t^4 \Riemfour_{ab}^{\ \ cd} \Riemfour_{cd}^{\ \ ab}
				+ 
				4 t^4 \Riemfour_{a0}^{\ \ c 0} \Riemfour_{c0}^{\ \ a0} 
				\\
		& \ \ - 4 n^{-2} t^4 |\Riemfour_{0 \cdot}|_g^2,
			\notag
\end{align}
where
$
\Riemfour_{0 \cdot}
$
is defined to be the type $\binom{0}{3}$
$\Sigma_t$-tangent tensorfield
with components
$\Riemfour_{0bcd}$
relative to the transported spatial coordinates.
Next, from standard calculations based in part on the Gauss and Codazzi equations
(see, for example, \cite{aS2010}*{Equation (4.14)} and \cite{aS2010}*{Equation (4.18)}), 
we find that relative to the CMC-transported spatial coordinates,
the components 
$\Riemfour_{\alpha \beta}^{\ \ \ \mu \nu}$
of the (type $\binom{2}{2}$) Riemann curvature tensor of $\gfour$ 
can be decomposed into principal terms and error terms as follows:	
	\begin{subequations}
	\begin{align}
		\Riemfour_{ab}^{\ \ cd} 
			& =
			\SecondFund_{\ a}^c \SecondFund_{\ b}^d
			- 
			\SecondFund_{\ a}^d \SecondFund_{\ b}^c
			+ 
			\pmb{\triangle}_{ab}^{\ \ cd},
			\label{E:RFOURALLSPATIALDECOMP} \\
		\Riemfour_{a0}^{\ \ c 0} & 
		= t^{-1} \SecondFund_{\ a}^c
			+ \SecondFund_{\ e}^c \SecondFund_{\ a}^e 
			+ \pmb{\triangle}_{a0}^{\ \ c 0}, 
			\label{E:RFOURTWO0DECOMP} \\
		n^{-1} \Riemfour_{0b}^{\ \ cd} & = \pmb{\triangle}_{0b}^{\ \ cd},
		\label{E:RFOURONE0DECOMP}
	\end{align}
	\end{subequations}
	where the error terms are defined by
	\begin{subequations}
	\begin{align}
		\pmb{\triangle}_{ab}^{\ \ cd} & := \Riemann_{ab}^{\ \ cd},
		 \label{E:RFOURFOURERROR} \\
		\pmb{\triangle}_{a0}^{\ \ c 0} 
			& := - t^{-1} n^{-1} \partial_t (t \SecondFund_{\ a}^c)
			+ 
			t^{-1} (n^{-1} - 1)\SecondFund_{\ a}^c 
			\label{E:RFOURTWOERROR} \\
		& \ \ - n^{-1} g^{ec} \partial_a \partial_e n
			+ n^{-1} g^{ec} \Gamma_{a \ e}^{\ f} \partial_f n, 
			 \notag \\
		\pmb{\triangle}_{0b}^{\ \ cd} & := 
					g^{ce} \partial_e (\SecondFund_{\ b}^d)
				- g^{de} \partial_e (\SecondFund_{\ b}^c) 
				\label{E:RFOURTHREEERROR} \\
		& \ \ 
			+ g^{ce} \Gamma_{e \ f}^{\ d} \SecondFund_{\ b}^f
			- g^{ce} \Gamma_{e \ b}^{\ f} \SecondFund_{\ f}^d
			- g^{de} \Gamma_{e \ f}^{\ c} \SecondFund_{\ b}^f
			+ g^{de} \Gamma_{e \ b}^{\ f} \SecondFund_{\ f}^c. 
			\notag
	\end{align}
\end{subequations}
In \eqref{E:RFOURFOURERROR}, $\Riemann_{ab}^{\ \ cd}$ denotes a component of the (type $\binom{2}{2}$) 
Riemann curvature tensor of of $g$.

We now claim that the following estimates hold,
where $\KasnerSecondFund$ is the Kasner second fundamental form:
	\begin{align}
		\left\| t^2 \Riemfour_{ab}^{\ \ cd} 
			- (t^2 \KasnerSecondFund_{\ a}^c \KasnerSecondFund_{\ b}^d
			- t^2 \KasnerSecondFund_{\ a}^d \KasnerSecondFund_{\ b}^c) 
		\right\|_{L^{\infty}(\Sigma_t)} 
			& \lesssim \mathring{\upepsilon}, 
			\label{E:4RIEMANNSPATIALBLOWUP} \\
		\left\| 
			t^2 \Riemfour_{a0}^{\ \ c 0} 
			- 
			(t \KasnerSecondFund_{\ a}^c
			+ 
			t^2 \KasnerSecondFund_{\ e}^c \KasnerSecondFund_{\ a}^e) 
		\right\|_{L^{\infty}(\Sigma_t)}
			& \lesssim \mathring{\upepsilon}, 
			\label{E:4RIEMANN0SPATIAL0SPATIALBLOWUP} \\
		\left\| 
			n^{-1} t^2 \Riemfour_{0 \cdot} 
		\right\|_{L_g^{\infty}(\Sigma_t)}
		& \lesssim 
		\mathring{\upepsilon},
			\label{E:4RIEMANN0SPATIALSPATIALSPATIALBLOWUP}
	\end{align}
	where we stress that \eqref{E:4RIEMANNSPATIALBLOWUP}-\eqref{E:4RIEMANN0SPATIAL0SPATIALBLOWUP}
	are estimates for \emph{components} of tensorfields and
	\eqref{E:4RIEMANN0SPATIALSPATIALSPATIALBLOWUP} is an estimate for the \emph{norm} $|\cdot|_g$ of 
	the tensorfield
	$
		\Riemfour_{0 \cdot}
	$.
	Let us momentarily accept 
	\eqref{E:4RIEMANNSPATIALBLOWUP}-\eqref{E:4RIEMANN0SPATIALSPATIALSPATIALBLOWUP}.
	Then from \eqref{E:KRETSCHMANNNORMDECOMP},
	\eqref{E:4RIEMANNSPATIALBLOWUP}-\eqref{E:4RIEMANN0SPATIALSPATIALSPATIALBLOWUP},
	Def.\,\ref{D:LOWNORMS},
	and the estimate \eqref{E:MAINAPRIORIESTIMATE}
	(which in particular implies the component bound
	$t \SecondFund_{\ b}^a = t \KasnerSecondFund_{\ b}^a + \mathcal{O}(\mathring{\upepsilon})$),
	we deduce that
	\begin{align} \label{E:ALMOSTKRETSCHMANNSCALARESTIMATE}
	t^4 \Riemfour_{\alpha \beta \gamma \delta} \Riemfour^{\alpha \beta \gamma \delta}
	&
	=
	2 
	t^4 (\KasnerSecondFund_{\ a}^c \KasnerSecondFund_{\ c}^a)^2
	+
	4 t^2 \KasnerSecondFund_{\ a}^c \KasnerSecondFund_{\ c}^a
	+
	8 t^3 \KasnerSecondFund_{\ a}^c \KasnerSecondFund_{\ d}^a \KasnerSecondFund_{\ c}^d
	+
	2 t^4 \KasnerSecondFund_{\ e}^c \KasnerSecondFund_{\ a}^e \KasnerSecondFund_{\ d}^a \KasnerSecondFund_{\ c}^d
	+
	\mathcal{O}(\mathring{\upepsilon}).
\end{align}
Next, using the fact that in CMC-transported spatial coordinates,
$t \KasnerSecondFund$ is equal to the diagonal tensor $-\mbox{\upshape diag}(q_1,\cdots,q_{\mydim})$,
we compute that
\begin{align}
	t^2 \KasnerSecondFund_{\ a}^c \KasnerSecondFund_{\ c}^a
	& 
	= \sum_{i=1}^{\mydim} q_i^2,
		 \label{E:KASNEREXPONENTSUMID1} \\
	t^3 \KasnerSecondFund_{\ a}^c \KasnerSecondFund_{\ d}^a \KasnerSecondFund_{\ c}^d
	& = - \sum_{i=1}^{\mydim} q_i^3,
		\label{E:KASNEREXPONENTSUMID2} \\
	t^4 \KasnerSecondFund_{\ e}^c \KasnerSecondFund_{\ a}^e \KasnerSecondFund_{\ d}^a \KasnerSecondFund_{\ c}^d
	& = \sum_{i=1}^{\mydim} q_i^4.
	\label{E:KASNEREXPONENTSUMID3}
\end{align}
From \eqref{E:ALMOSTKRETSCHMANNSCALARESTIMATE} 
and \eqref{E:KASNEREXPONENTSUMID1}-\eqref{E:KASNEREXPONENTSUMID3},
we arrive at the desired bound \eqref{E:KRETSCHMANNSCALARESTIMATE}.

It remains for us to prove \eqref{E:4RIEMANNSPATIALBLOWUP}-\eqref{E:4RIEMANN0SPATIALSPATIALSPATIALBLOWUP}.
To prove \eqref{E:4RIEMANNSPATIALBLOWUP}, we first note the schematic identity
$\Riemann_{ab}^{\ \ cd}
\mycong g^{-2} \partial^2 g + g^{-3} (\partial g)^2
$,
which follows from \eqref{E:RIEMANNCURVATUREEXACT}.
Hence, bounding each factor on the RHS of the schematic identity
in the norm
$\| \cdot \|_{L_{Frame}^{\infty}(\Sigma_t)}$
with the help of the estimates 
\eqref{E:KASNERMETRICESTIMATES},
\eqref{E:UPTOFOURDERIVATIVESOFGLINFINITYSOBOLEV}-\eqref{E:UPTOFOURDERIVATIVESOFGINVERSELINFINITYSOBOLEV},
and \eqref{E:MAINAPRIORIESTIMATE},
we deduce the estimate
$\| \Riemann_{ab}^{\ \ cd} \|_{L^{\infty}(\Sigma_t)}
\lesssim 
\mathring{\upepsilon}
t^{- 10 \Worstexp - \Blowupexp \updelta}
$.
From this estimate,
\eqref{E:RFOURALLSPATIALDECOMP},
\eqref{E:RFOURFOURERROR},
the aforementioned component estimate
$t \SecondFund_{\ b}^a = t \KasnerSecondFund_{\ b}^a + \mathcal{O}(\mathring{\upepsilon})$,
and \eqref{E:WORSTEXPANDROOMLOTSOFUSEFULINEQUALITIES},
we conclude \eqref{E:4RIEMANNSPATIALBLOWUP}.
The estimate \eqref{E:4RIEMANN0SPATIAL0SPATIALBLOWUP} can be proved by combining similar arguments
based on equation \eqref{E:RFOURTWO0DECOMP} 
with the additional bounds 
\eqref{E:UPTOFOURDERIVATIVESOFLAPSELINFINITYSOBOLEV}
and
\eqref{E:SECONDFUNDORDER0TIMEDERIVATIVEESTIMATE},
which are needed to help control the terms on RHS~\eqref{E:RFOURTWOERROR}; 
we omit the straightforward details.
To prove \eqref{E:4RIEMANN0SPATIALSPATIALSPATIALBLOWUP}, 
we first use the fact that $|g^{-1}|_g \lesssim 1$ and the $g$-Cauchy--Schwarz
inequality to deduce that the norm $|\cdot|_g$ of RHS~\eqref{E:RFOURTHREEERROR}
is bounded by 
$\lesssim |\partial \SecondFund|_g + |\partial g|_g |\SecondFund|_g$.
Next, using \eqref{E:POINTWISENORMCOMPARISON}, we bound the RHS of the previous expression
as follows:
\begin{align} \label{E:SECONDBOUNDFORCURVATURETENSOR0SPACEERRORTERM}
|\partial \SecondFund|_g + |\partial g|_g |\SecondFund|_g
&
\lesssim 
t^{-3 \Worstexp}
\| \SecondFund \|_{W_{Frame}^{1,\infty}(\Sigma_t)}
+
t^{-3 \Worstexp}
\| g \|_{\dot{W}_{Frame}^{1,\infty}(\Sigma_t)}^{1/2}
\| \SecondFund \|_{L_g^{\infty}(\Sigma_t)}.
\end{align}
From Def.~\ref{D:LOWNORMS} and the estimates 
\eqref{E:KASNERSECONDFUNDESTIMATES},
\eqref{E:UPTOFOURDERIVATIVESOFGLINFINITYSOBOLEV},
and
\eqref{E:MAINAPRIORIESTIMATE},
we deduce,
in view of \eqref{E:WORSTEXPANDROOMLOTSOFUSEFULINEQUALITIES},
that 
$\mbox{RHS~\eqref{E:SECONDBOUNDFORCURVATURETENSOR0SPACEERRORTERM}} 
\lesssim 
\mathring{\upepsilon} t^{-1 - 4 \Worstexp - \Blowupexp \updelta}
\lesssim
\mathring{\upepsilon} t^{-2}$.
Considering also equation \eqref{E:RFOURONE0DECOMP},
we see that we have proved the desired bound \eqref{E:4RIEMANN0SPATIALSPATIALSPATIALBLOWUP}.
\end{proof}

\subsection{Estimates for the length of past-directed causal geodesic segments}
\label{SS:LENGTHOFCAUSALGEODESICS}
In this subsection, we show that for the solutions under consideration,
the length of any past-directed causal geodesic segment is uniformly bounded by a constant.

\begin{lemma}[\textbf{Estimates for the length of past-directed causal geodesic segments}]
\label{L:LENGTHOFCAUSALGEODESICS}
Under the hypotheses and conclusions of Prop.\,\ref{P:MAINAPRIORIESTIMATES},
perhaps enlarging $N$ and shrinking $\mathring{\upepsilon}$ if necessary,
the following holds:
any past-directed causal geodesic $\pmb{\zeta}$ that emanates from $\Sigma_1$ 
and is contained in the region $(\TBoot,1] \times \mathbb{T}^{\mydim}$
has an affine length that is bounded from above by
\begin{align} \label{E:AFFINTEBLOWUPTIME}
	\leq \mathscr{A}(\TBoot) \leq \mathscr{A}(0) \leq \frac{\left|\mathscr{A}^{'}(1)\right|}{1 - \Worstexp},
\end{align}	
where $\mathscr{A}(t)$ is the affine parameter along $\pmb{\zeta}$ viewed as a function of $t$ along $\pmb{\zeta}$ 
(normalized by $\mathscr{A}(1) = 0$).
\end{lemma}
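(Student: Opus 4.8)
## Proof Proposal

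The plan is to bound the affine length of a past-directed causal geodesic $\pmb{\zeta}$ by analyzing how its affine parameter $\mathscr{A}$ grows as a function of the CMC time coordinate $t$, exploiting the quantitative control over the lapse $n$ and the metric $g$ that the a priori estimates of Prop.\,\ref{P:MAINAPRIORIESTIMATES} furnish. First I would parametrize $\pmb{\zeta}$ by $t$ (which is legitimate since $t$ is a time function and hence strictly monotonic along any causal curve), writing the affine parameter as $\mathscr{A}(t)$ with $\mathscr{A}(1) = 0$ and $\mathscr{A}$ increasing as $t$ decreases toward $\TBoot$. The key quantity is $\mathscr{A}'(t) = d\mathscr{A}/dt$, whose magnitude I would control by a differential inequality. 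Using the geodesic equation together with the decomposition \eqref{E:SPACETIMEMETRICDECOMPOSEDINTOLAPSEANDFIRSTFUNDAMENTALFORM} of $\gfour$ into lapse and first fundamental form, the quantity $\left|\mathscr{A}'(t)\right|$ satisfies an ODE driven by the Christoffel symbols of $\gfour$; the relevant components are $n^{-1} \partial_t n$, $n g^{ab} \SecondFund_{\ b}^c$, and the spatial Christoffel symbols of $g$. The factor $n^{-1}\partial_t n$ is controlled by the estimates \eqref{E:LAPSELOWNORMELLIPTIC} for $\| n-1 \|_{L^{\infty}(\Sigma_t)}$ and \eqref{E:LAPSETIMEDERIVATIVEESTIMATE} for $\| \partial_t n \|_{L^{\infty}(\Sigma_t)}$, which together give $|n^{-1}\partial_t n| \lesssim t^{1 - 10\Worstexp - \Room}\mathring{\upepsilon} \lesssim \mathring{\upepsilon}$ (bounded, and in fact integrable). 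The factors involving $\SecondFund$ are controlled by Def.\,\ref{D:LOWNORMS}, which gives $|t\SecondFund|_g \approx 1$, so that $|n g^{ab}\SecondFund_{\ b}^c|$ is of size $\lesssim t^{-1}$ — a borderline rate that is nonetheless integrable against a compensating power coming from the normalization of the causal vector.

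The heart of the argument is the observation that for a causal geodesic, the ``energy'' $E(t) := -\gfour(\pmb{\zeta}', \pmb{\zeta}')$ relative to an affine parametrization is conserved (equal to zero for null geodesics, positive and constant for timelike ones), and in the gauge \eqref{E:SPACETIMEMETRICDECOMPOSEDINTOLAPSEANDFIRSTFUNDAMENTALFORM} this translates into a relation of the form $n^2 (dt/d\mathscr{A})^2 = E + g_{ab}(dx^a/d\mathscr{A})(dx^b/d\mathscr{A}) \geq g_{ab}(dx^a/d\mathscr{A})(dx^b/d\mathscr{A})$. Combining this with the evolution equation for the spatial momentum $p_a := g_{ab} dx^b/d\mathscr{A}$ (which along an affinely parametrized geodesic satisfies $dp_a/d\mathscr{A} = \tfrac12 (\partial_a g_{bc}) (dx^b/d\mathscr{A})(dx^c/d\mathscr{A})$, and in particular $d(g^{ab}p_a p_b)/d\mathscr{A}$ is controlled by time derivatives of $g$, i.e.\ by $n\SecondFund$), I would derive a bound showing $\mathscr{A}'(t) = dt/d\mathscr{A}$ has reciprocal $d\mathscr{A}/dt$ bounded above by $\lesssim n^{-1}/\sqrt{\text{(spatial momentum)}}$, and that the decay of the spatial metric component norms combined with the near-Kasner bound \eqref{E:INTROMETRICBOUNDS} yields $|\mathscr{A}'(t)| \lesssim t^{-\Worstexp}|\mathscr{A}'(1)|$ up to factors $1 + \mathcal{O}(\mathring{\upepsilon})$. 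This is where I expect the main obstacle to lie: carefully tracking the interplay between the conserved energy, the spatial momentum evolution, and the possibly-mixed Kasner exponents to extract precisely the exponent $\Worstexp$ (and not something worse) in the bound $|\mathscr{A}'(t)| \lesssim t^{-\Worstexp}$. The bound \eqref{E:WORSTEXPANDROOMLOTSOFUSEFULINEQUALITIES} guarantees $\Worstexp < 1/6 < 1$, so $t^{-\Worstexp}$ is integrable on $(0,1]$, which is exactly what is needed.

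Granting the differential inequality $|\mathscr{A}'(t)| \leq |\mathscr{A}'(1)| \cdot t^{-\Worstexp}(1 + \mathcal{O}(\mathring{\upepsilon}))$, I would then simply integrate from $t$ down to $\TBoot$ (or, since the bound is uniform, all the way to $0$):
\begin{align} \label{E:AFFINELENGTHINTEGRATION}
	\mathscr{A}(\TBoot)
	& = \int_{s=\TBoot}^1 |\mathscr{A}'(s)| \, ds
	\leq |\mathscr{A}'(1)| (1 + \mathcal{O}(\mathring{\upepsilon})) \int_{s=0}^1 s^{-\Worstexp} \, ds
	= \frac{|\mathscr{A}'(1)| (1 + \mathcal{O}(\mathring{\upepsilon}))}{1 - \Worstexp}.
\end{align}
Shrinking $\mathring{\upepsilon}$ so that the factor $(1 + \mathcal{O}(\mathring{\upepsilon}))$ is harmless (for instance, absorbing it into a slightly worse constant, or choosing $\Worstexp$ with a bit of room per \eqref{E:WORSTEXPANDROOMLOTSOFUSEFULINEQUALITIES} to dominate it) yields $\mathscr{A}(\TBoot) \leq \mathscr{A}(0) \leq |\mathscr{A}'(1)|/(1 - \Worstexp)$, which is the chain of inequalities \eqref{E:AFFINTEBLOWUPTIME}; the monotonicity $\mathscr{A}(\TBoot) \leq \mathscr{A}(0)$ is immediate from $\mathscr{A}' \geq 0$ along the past-directed geodesic and the fact that $0 < \TBoot$. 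Since the right-hand side is a finite constant independent of $\TBoot$, the affine length of $\pmb{\zeta}$ is uniformly bounded, as claimed. I would remark that this, combined with the standard fact that the interval $(\TBoot,1]$ can be extended down to $(0,1]$ once Theorem~\ref{T:MAINTHM} is in hand, shows every such geodesic reaches $\{t = 0\}$ in finite affine time, establishing past geodesic incompleteness.
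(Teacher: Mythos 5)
Your proposal is correct and follows essentially the same route as the paper: parametrize the geodesic by $t$, combine the causality bound $g_{ab}\dot{\pmb{\zeta}}^a\dot{\pmb{\zeta}}^b \leq n^2(\dot{\pmb{\zeta}}^0)^2$ with the eigenvalue bound $|t\SecondFund| \leq q_{(Max)} + \mathcal{O}(\mathring{\upepsilon})$ and the lapse estimates \eqref{E:LAPSETIMEDERIVATIVEESTIMATE}, \eqref{E:UPTOFOURDERIVATIVESOFLAPSELINFINITYSOBOLEV} to obtain the Gronwall-ready inequality $|\mathscr{A}''| \leq \Worstexp\, t^{-1}|\mathscr{A}'|$ from the $0$-component of the geodesic equation, and integrate. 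The one refinement worth noting is that the paper absorbs the $\mathcal{O}(\mathring{\upepsilon})$ correction into $\Worstexp$ \emph{before} applying Gronwall — using $q_{(Max)} + C\mathring{\upepsilon} \leq \Worstexp$, which \eqref{E:WORSTEXPANDROOMLOTSOFUSEFULINEQUALITIES} guarantees for $\mathring{\upepsilon}$ small — so the final bound is exactly $|\mathscr{A}'(1)|/(1-\Worstexp)$ with no residual $(1+\mathcal{O}(\mathring{\upepsilon}))$ factor; this is precisely the second of the two fixes you propose, and it is the one you should use.
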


\begin{proof}
Throughout this proof, we will assume that $\Blowupexp \updelta$ is sufficiently small
(and in particular that $\Blowupexp \updelta < \Room$);
in view of the discussion in Subsect.\,\ref{SS:SOBOLEVEMBEDDING}, we see that
at fixed $\Blowupexp$, this can be achieved by choosing $N$ to be sufficiently large.

Let $\pmb{\zeta}(\mathscr{A})$ be a past-directed affinely parametrized geodesic verifying the hypotheses of the lemma.
Note that the component $\pmb{\zeta}^0$ can be identified with the CMC time coordinate. 
In the rest of the proof, we view the affine parameter $\mathscr{A}$ as a function of 
$t = \pmb{\zeta}^0$ along $\pmb{\zeta}$. 
We normalize $\mathscr{A}(t)$ by setting $\mathscr{A}(1) = 0$. 
We also define $\dot{\pmb{\zeta}}^{\mu} := \frac{d}{d \mathscr{A}} \pmb{\zeta}^{\mu}$ and $\mathscr{A}' = \frac{d}{dt} \mathscr{A}$. 
By the chain rule, we have 
\begin{align} 
	\mathscr{A}^{'} 
	& = \frac{1}{\dot{\pmb{\zeta}}^0}, 
	&&
	\ddot{\pmb{\zeta}}^0 
	= \dot{\pmb{\zeta}}^0 \frac{d}{dt} \dot{\pmb{\zeta}}^0 = - (\mathscr{A}^{'})^{-3} \mathscr{A}^{''}.
		\label{E:GEODESICDOTANDDDOTCHAINRULE}
\end{align}

For use below, we note that since $\pmb{\zeta}$ is a causal curve,
we have (by the definition of a causal curve) that $\gfour(\dot{\pmb{\zeta}},\dot{\pmb{\zeta}}) \leq 0$.
Considering also the expression \eqref{E:SPACETIMEMETRICDECOMPOSEDINTOLAPSEANDFIRSTFUNDAMENTALFORM} for $\gfour$, 
we deduce that relative to the CMC-transported coordinates, causal curves satisfy
\begin{align} \label{E:GEODESICSPATIALVELOCITYBOUNDEDBY0VELOCITY}
	g_{ab}\dot{\zeta}^a\dot{\zeta}^b \leq n^2 (\dot{\pmb{\zeta}}^0)^2. 
\end{align}

Next, we note that (relative to CMC-transported spatial coordinates),
the $0$ component of the geodesic equation is
$\ddot{\pmb{\zeta}}^0 
+
\Chfour_{\alpha \ \beta}^{\ 0}|_{\pmb{\zeta}}
\dot{\pmb{\zeta}}^{\alpha}
\dot{\pmb{\zeta}}^{\beta}
=
0
$,
where 
$\Chfour_{\alpha \ \beta}^{\ 0}
$
are Christoffel symbols of $\gfour$
(see \eqref{E:FOURCHRISTOFFEL}).
Using 
\eqref{E:SPACETIMEMETRICDECOMPOSEDINTOLAPSEANDFIRSTFUNDAMENTALFORM} and \eqref{E:PARTIALTGCMC},
we compute that this geodesic equation component 
can be written in the following more explicit form:
\begin{align} \label{E:GEO0}
		\ddot{\pmb{\zeta}}^0 
			+ 
			(\partial_t \ln n)|_{\pmb{\zeta}} (\dot{\pmb{\zeta}}^0)^2 
			+ 
			2 (\partial_a \ln n)|_{\pmb{\zeta}} \dot{\pmb{\zeta}}^a \dot{\pmb{\zeta}}^0 
			- 
			(n^{-1} g_{ac} \SecondFund_{\ b}^c)|_{\pmb{\zeta}} \dot{\pmb{\zeta}}^a \dot{\pmb{\zeta}}^b 
			& = 0. 
\end{align}
Multiplying \eqref{E:GEO0} by $- (\mathscr{A}^{'})^{-3}$
and using the $g$-Cauchy--Schwarz inequality,
\eqref{E:GEODESICDOTANDDDOTCHAINRULE},
and
\eqref{E:GEODESICSPATIALVELOCITYBOUNDEDBY0VELOCITY}, 
we deduce
\begin{align} \label{E:ADOUBLEPRIMEFIRSTEQUATION}
	\left|
		\mathscr{A}^{''} 
	\right| & \leq 
		\left|
			n^{-1}
			\SecondFund_{\ b}^a \dot{\pmb{\zeta}}_a \dot{\pmb{\zeta}}^b 
			(\mathscr{A}')^3
		\right|
		+
		\big(n^{-1} |\partial_t n| + 2 |\partial n|_g \big) |\mathscr{A}^{'}|.
\end{align}
From Def.\,\ref{D:LOWNORMS} and the estimate \eqref{E:MAINAPRIORIESTIMATE},
we see that in CMC-transported spatial coordinates, 
$t \SecondFund$ is equal to the diagonal tensor $\mbox{\upshape diag} - (q_1,\cdots,q_{\mydim})$
plus an $\mathcal{O}(\mathring{\upepsilon})$ correction.
Hence,
the eigenvalues of $t \SecondFund$ are all bounded in magnitude by $q_{(Max)} + \mathcal{O}(\mathring{\upepsilon})$,
where $q_{(Max)} := \max_{i=1,\cdots,\mydim} |q_i|$.
Also using \eqref{E:GEODESICDOTANDDDOTCHAINRULE}-\eqref{E:GEODESICSPATIALVELOCITYBOUNDEDBY0VELOCITY},
we see that
\begin{align} \label{E:BOUNDFORMAINTERMINLENGTHEST}
\left|
	n^{-1}
	\SecondFund_{\ b}^a \dot{\pmb{\zeta}}_a \dot{\pmb{\zeta}}^b 
	(\mathscr{A}')^3
\right|
&
\leq
\left\lbrace
	q_{(Max)} 
	+ 
	\mathcal{O}(\mathring{\upepsilon})
\right\rbrace
t^{-1} 
n^{-1}
(\mathscr{A}')^3
\left|
	\dot{\pmb{\zeta}}	
\right|_g^2
	\\
&
\leq
\left\lbrace
	q_{(Max)} 
	+ 
	\mathcal{O}(\mathring{\upepsilon})
\right\rbrace
t^{-1} 
|\mathscr{A}^{'}|
+
\left\lbrace
	q_{(Max)} 
	+ 
	\mathcal{O}(\mathring{\upepsilon})
\right\rbrace
t^{-1} 
|n-1|
|\mathscr{A}^{'}|.
\notag
\end{align}
Moreover, from Def.\,\ref{D:LOWNORMS},
\eqref{E:WORSTEXPANDROOMLOTSOFUSEFULINEQUALITIES},
and the estimate \eqref{E:MAINAPRIORIESTIMATE},
we see that the last product on RHS~\eqref{E:BOUNDFORMAINTERMINLENGTHEST}
obeys the bound
\begin{align}
\left\lbrace
	q_{(Max)} 
	+ 
	\mathcal{O}(\mathring{\upepsilon})
\right\rbrace
t^{-1} 
|n-1|
|\mathscr{A}^{'}|
\leq C \mathring{\upepsilon} t^{-1} |\mathscr{A}^{'}|.
\end{align}
Furthermore, 
also using 
\eqref{E:KASNERMETRICESTIMATES},
\eqref{E:POINTWISENORMCOMPARISON},
\eqref{E:UPTOFOURDERIVATIVESOFLAPSELINFINITYSOBOLEV},
and
\eqref{E:LAPSETIMEDERIVATIVEESTIMATE}, 
we see that the last term on RHS~\eqref{E:ADOUBLEPRIMEFIRSTEQUATION} is bounded as follows:
\begin{align} \label{E:ADOUBLEPRIMELASTTERMESTIMATE}
\big(n^{-1} |\partial_t n| + 2 |\partial n|_g \big) |\mathscr{A}^{'}|
&
\leq C \mathring{\upepsilon} t^{-1} |\mathscr{A}^{'}|.
\end{align}
Combining \eqref{E:ADOUBLEPRIMEFIRSTEQUATION}-\eqref{E:ADOUBLEPRIMELASTTERMESTIMATE}
and taking into account \eqref{E:WORSTEXPANDROOMLOTSOFUSEFULINEQUALITIES}, 
we deduce that if $\mathring{\upepsilon}$ is sufficiently small,
then the following bound holds:
\begin{align} \label{E:DDTAFFINEGRONWALLREADY}
	\left|\mathscr{A}^{''} \right| & \leq t^{-1} \Worstexp |\mathscr{A}^{'}|,
	& t & \in (\TBoot,1]. 
\end{align}
Applying Gronwall's inequality to \eqref{E:DDTAFFINEGRONWALLREADY}, we deduce that
\begin{align} \label{E:DDTAFFINEESTIMATED}
	|\mathscr{A}^{'}(t)| & \leq |\mathscr{A}^{'}(1)| t^{- \Worstexp}, & t & \in (\TBoot,1].
\end{align}
Integrating \eqref{E:DDTAFFINEESTIMATED} from time $t$ to time $1$ and using the assumption $\mathscr{A}(1)=0$, 
we find that
\begin{align} \label{E:AFFINEESTIMATE}
	\mathscr{A}(t) 
	& \leq 
	\frac{|\mathscr{A}^{'}(1)|}{1 - \Worstexp}
	(1 - t^{1 - \Worstexp}), & t  & \in (\TBoot,1],
\end{align}
from which the desired estimate \eqref{E:AFFINTEBLOWUPTIME} follows.

\end{proof}

\section{The Main Stable Blowup Theorem}
\label{S:MAINTHM}
We now state and prove our main stable blowup theorem.
As we noted in Remark~\ref{R:ADDITONALINFORMATION}, it
is possible to derive substantial additional information
about the solution, going beyond that provided by the theorem.

\begin{theorem}[\textbf{The main stable blowup theorem}]
\label{T:MAINTHM}
	Let $\widetilde{\gfour} = - dt \otimes dt + \KasnerMetric_{ab} dx^a \otimes dx^b$
	be an Einstein-vacuum Kasner solution on $(0,\infty) \times \mathbb{T}^{\mydim}$ 
	i.e., $\KasnerMetric = \mbox{\upshape diag}(t^{2 q_1},t^{2 q_2},\cdots,t^{2 q_{\mydim}})$,
	where $\sum_{i=1}^{\mydim} q_i = \sum_{i=1}^{\mydim} q_i^2 = 1$, 
	and assume that
	\begin{align} \label{E:MAINTHMKASNEREXPONENTSMALLNESS}
		\max_{i=1,\cdots,\mydim} |q_i| < \frac{1}{6}.
	\end{align}
	Recall that in Subsect.\,\ref{SS:EXISTENCEOFKASNER}, 
	we showed that such Kasner solutions
	exist when $\mydim \geq 38$.
	Let $\KasnerSecondFund = - t^{-1} \mbox{\upshape diag}(q_1,q_2,\cdots,q_{\mydim})$
	denote the corresponding Kasner (mixed) second fundamental form.
	Let $(\Sigma_1 = \mathbb{T}^{\mydim},\mathring{g},\mathring{\SecondFund})$
	be initial data for the Einstein-vacuum equations
	verifying 
	the constraints \eqref{E:HAMILTONIAN}-\eqref{E:MOMENTUM}
	and the CMC condition $\mathring{\SecondFund}_{\ a}^a = - 1$
	(see, however, Remark~\ref{R:NONEEDFORCMC}),
	and let
	\begin{align} \label{E:GEOMETRICDATASIZE}
		\mathring{\upepsilon} 
		& := 
		\left\|
			\mathring{g} - \KasnerMetric
		\right\|_{L_{Frame}^{\infty}(\Sigma_1)}
		+
		\left\|
			\mathring{\SecondFund} - \KasnerSecondFund
		\right\|_{L_{Frame}^{\infty}(\Sigma_1)}
		+
		\left\|
			\mathring{g} 
		\right\|_{\dot{H}_{Frame}^{N+1}(\Sigma_1)}
		+
		\left\|
			\mathring{\SecondFund}
		\right\|_{\dot{H}_{Frame}^N(\Sigma_1)}.
	\end{align}
	Assume that
	\begin{itemize}
		\item $\Blowupexp > 0$ is sufficiently large.
		\item $N > 0$ is sufficiently large, where the required largeness depends on
		$\Blowupexp$, $\Worstexp$, $\Room$, and $\mydim$.
			Here we recall that $\Worstexp > 0$ and $\Room > 0$ are the constants fixed in Subsect.\,\ref{SS:PARAMETERSINNORMS}.
		\item $\mathring{\upepsilon}$ is sufficiently small,
			where the required smallness depends on $N$, $\Blowupexp$, $\Worstexp$, $\Room$, and $\mydim$.
	\end{itemize}
	Then the following conclusions hold.
	
	\medskip
	
	\noindent \underline{\textbf{Existence and norm estimates on $(0,1] \times \mathbb{T}^{\mydim}$}}.
	The initial data launch a solution $(g,\SecondFund,n)$ 
	to the Einstein-vacuum equations in 
	CMC-transported spatial coordinates 
	(that is, the equations of Prop.\,\ref{P:EINSTEINVACUUMEQUATIONSINCMCTRANSPORTEDSPATIALCOORDINATES},
	where $\gfour = - n^2 dt \otimes dt + g_{ab} dx^a \otimes dx^b$ is the spacetime metric)
	that exists classically for $(t,x) \in (0,1] \times \mathbb{T}^{\mydim}$. Moreover, there exists a constant 
	$C > 1$ (depending on $N$, $\Blowupexp$, $\Worstexp$, $\Room$, and $\mydim$)
	such that the $(N,\Blowupexp,\Worstexp,\Room)$-dependent norms
	from Defs.\,\ref{D:LOWNORMS} and~\ref{D:HIGHNORMS} 
	verify the following
	estimate for $t \in (0,1]$:
	\begin{align} \label{E:MAINTHMAPRIORIESTIMATE}
	\MetricLownorm(t)
		+
	\MetricHighnorm(t)
		+
	\LapseLownorm(t)
		+
	\LapseHighnorm(t)
	& \leq C \mathring{\upepsilon}.
\end{align}

\medskip

\noindent \underline{\textbf{Description of the maximal globally hyperbolic development and curvature blowup}}.
The spacetime Kretschmann scalar verifies the following estimate
for $t \in (0,1]$:
\begin{align} \label{E:MATINTHMKRETSCHMANNSCALARESTIMATE}
		t^4 \Riemfour_{\alpha \beta \gamma \delta} \Riemfour^{\alpha \beta \gamma \delta}
	&
	=
	4 
	\left\lbrace
		\sum_{i=1}^{\mydim} (q_i^2 - q_i)^2 
		+ 
		\sum_{1 \leq i < j = \mydim} q_i^2 q_j^2
	\right\rbrace
	+
	\mathcal{O}(\mathring{\upepsilon}).
\end{align}
In particular, for $\mathring{\upepsilon}$ sufficiently small, 
$\Riemfour_{\alpha \beta \gamma \delta} \Riemfour^{\alpha \beta \gamma \delta}$
blows up like $\lbrace C + \mathcal{O}(\mathring{\upepsilon}) \rbrace t^{-4}$ as $t \downarrow 0$ 
(where 
$C = 4 
	\left\lbrace
		\sum_{i=1}^{\mydim} (q_i^2 - q_i)^2 
		+ 
		\sum_{1 \leq i < j = \mydim} q_i^2 q_j^2
	\right\rbrace$).
Consequently, the maximal (classical) globally hyperbolic development of the data
is $((0,1] \times \mathbb{T}^{\mydim},\gfour)$, and $\gfour$ cannot be continued
as a $C^2$ Lorentzian metric to the past of the singular hypersurface $\Sigma_0$.
That is, the past of $\Sigma_1$ in the maximal (classical) globally hyperbolic development of the data
is foliated by the family of spacelike hypersurfaces $\Sigma_t$, 
along which the CMC condition $\SecondFund_{\ a}^a = - t^{-1}$ holds.

\medskip
\noindent \underline{\textbf{Bounded length of past-directed causal geodesics}}.
Let $\pmb{\zeta}$ be any past-directed causal geodesic 
that emanates from $\Sigma_1$, 
and let $\mathscr{A} = \mathscr{A}(t)$ be an affine parameter along $\pmb{\zeta}$,
where $\mathscr{A}$ is viewed as a function of $t$ along $\pmb{\zeta}$ 
that is normalized by $\mathscr{A}(1) = 0$.
Then $\pmb{\zeta}$ crashes into the singular hypersurface $\Sigma_0$ in finite affine parameter time
	\begin{align} \label{E:MAINTHMAFFINTEBLOWUPTIME}
	\mathscr{A}(0) \leq \frac{\left|\mathscr{A}^{'}(1)\right|}{1 - \Worstexp},
\end{align}	
where 
$
\displaystyle
\mathscr{A}^{'}(t) := \frac{d}{dt} \mathscr{A}(t)
$.
\end{theorem}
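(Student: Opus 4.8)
The plan is to assemble the statement of Theorem~\ref{T:MAINTHM} almost entirely from results already established in Sections~\ref{S:APRIORIESTIMATES}--\ref{S:CURVATUREANDTIMELIKECURVELENGTHESTIMATES}, using a continuity/bootstrap argument to upgrade the conditional a~priori estimates into unconditional ones. First, I would invoke the local well-posedness theory for the Einstein-vacuum equations in CMC-transported spatial coordinates (the system of Prop.~\ref{P:EINSTEINVACUUMEQUATIONSINCMCTRANSPORTEDSPATIALCOORDINATES}): given initial data on $\Sigma_1 = \mathbb{T}^{\mydim}$ of the stated Sobolev regularity satisfying the constraints, there is a unique classical solution $(g,\SecondFund,n)$ on some slab $(\TBoot,1] \times \mathbb{T}^{\mydim}$, and the $H^{N+1} \times H^N \times H^{N+1}$ regularity persists as long as the solution norms stay finite and $t$ stays positive. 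The standard continuation criterion reduces everything to showing that the norms $\MetricLownorm + \MetricHighnorm + \LapseLownorm + \LapseHighnorm$ do not blow up as $t \downarrow \TBoot$ for any $\TBoot > 0$.

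The heart of the argument is the bootstrap. I would define $\TBoot$ to be the infimum of all times $\tau \in (0,1)$ such that the solution exists classically on $(\tau,1]$ and the bootstrap assumptions \eqref{E:BOOTSTRAPASSUMPTIONS} hold with parameter $\varepsilon$. After fixing $\Worstexp, \Room$ as in Def.~\ref{D:PARAMETERSINNORMS}, then $\Blowupexp$ large (at least $\max\{C_*/2,1\}$ as in the proof of Prop.~\ref{P:MAINAPRIORIESTIMATES}), then $N$ large depending on $\Blowupexp,\Worstexp,\Room,\mydim$, and finally $\varepsilon$ small, Remark~\ref{R:SMALLNESSOFEPSILON} and Prop.~\ref{P:MAINAPRIORIESTIMATES} tell us that on $(\TBoot,1]$ the norms are in fact bounded by $C_{N,\Blowupexp,\Worstexp,\Room,\mydim}\, \mathring{\upepsilon}$. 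Choosing $\mathring{\upepsilon}$ small enough that $C_{N,\Blowupexp,\Worstexp,\Room,\mydim}\, \mathring{\upepsilon} < \varepsilon/2$ yields a strict improvement of the bootstrap assumptions. If $\TBoot > 0$, then by the strict improvement and local existence the solution would extend to a slightly earlier time while still obeying \eqref{E:BOOTSTRAPASSUMPTIONS}, contradicting the definition of $\TBoot$; hence $\TBoot = 0$. This proves classical existence on all of $(0,1] \times \mathbb{T}^{\mydim}$ together with the norm bound \eqref{E:MAINTHMAPRIORIESTIMATE}, and it relabels $\mathring{\upepsilon}$ in the sense of \eqref{E:APRIORIESTIMATESTATEMENTGEOMETRICDATASIZE} as the $\mathring{\upepsilon}$ of \eqref{E:GEOMETRICDATASIZE} since the CMC condition identifies $\Sigma_1$ with $\{t=1\}$ and $(\mathring g, \mathring{\SecondFund})$ with $(g,\SecondFund)|_{\Sigma_1}$.

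With global existence on $(0,1]$ in hand, the curvature statement \eqref{E:MATINTHMKRETSCHMANNSCALARESTIMATE} is immediate from Lemma~\ref{L:KRETSCHMANNSCALARESTIMATE} after multiplying by $t^4$. From this, $\Riemfour_{\alpha\beta\gamma\delta}\Riemfour^{\alpha\beta\gamma\delta}$ is unbounded as $t \downarrow 0$ provided the leading coefficient $C = 4\{\sum_i (q_i^2-q_i)^2 + \sum_{i<j} q_i^2 q_j^2\}$ is strictly positive; I would note that $C = 0$ would force every $q_i \in \{0,1\}$ and all products $q_iq_j$ with $i\neq j$ to vanish, i.e.\ exactly one $q_i = 1$ and the rest $0$, which is incompatible with \eqref{E:MAINTHMKASNEREXPONENTSMALLNESS}. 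Since a bounded $C^2$ extension of $\gfour$ past $\Sigma_0$ would keep the Kretschmann scalar bounded near $\Sigma_0$, no such extension exists, and standard facts about maximal globally hyperbolic developments then identify $((0,1]\times\mathbb{T}^{\mydim},\gfour)$ as the (past of $\Sigma_1$ in the) maximal development, foliated by the CMC slices $\Sigma_t$. Finally, the geodesic incompleteness statement \eqref{E:MAINTHMAFFINTEBLOWUPTIME} is exactly Lemma~\ref{L:LENGTHOFCAUSALGEODESICS} applied on each slab $(\TBoot,1]$ and then taking $\TBoot \downarrow 0$: the bound $\mathscr{A}(t) \leq \frac{|\mathscr{A}'(1)|}{1-\Worstexp}(1 - t^{1-\Worstexp})$ is uniform in the slab, so every past-directed causal geodesic reaches $\Sigma_0$ within affine time $\frac{|\mathscr{A}'(1)|}{1-\Worstexp}$.

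The main obstacle is entirely contained in Prop.~\ref{P:MAINAPRIORIESTIMATES} and the work feeding into it --- the delicate hierarchy of $t$-weighted energy and elliptic estimates that shows the high-order norms, though allowed to be very singular, do not propagate their singular behavior down to the low-order norms --- but since that proposition is already established in the excerpt, the remaining task here is the comparatively routine orchestration: verifying the continuation criterion, running the open-closed continuity argument to conclude $\TBoot = 0$, checking the non-degeneracy $C > 0$, and passing the slab-uniform bounds to the limit $\TBoot \downarrow 0$. The one point requiring a little care is making the choices of parameters in the correct order ($\Worstexp,\Room$, then $\Blowupexp$, then $N$, then $\varepsilon$, then $\mathring{\upepsilon}$) so that no circular dependence arises, exactly as dictated by Remark~\ref{R:SMALLNESSOFEPSILON} and the statement of Prop.~\ref{P:MAINAPRIORIESTIMATES}.
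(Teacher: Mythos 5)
Your proposal is correct and follows essentially the same route as the paper's proof: local well-posedness plus a continuity/bootstrap argument in which Prop.\,\ref{P:MAINAPRIORIESTIMATES} rules out saturation of the bootstrap assumptions (forcing $\TBoot = 0$), followed by direct citation of Lemmas~\ref{L:KRETSCHMANNSCALARESTIMATE} and~\ref{L:LENGTHOFCAUSALGEODESICS} for the blowup and incompleteness statements. Your explicit check that the leading Kretschmann coefficient is strictly positive under \eqref{E:MAINTHMKASNEREXPONENTSMALLNESS} is a small but worthwhile addition that the paper leaves implicit.
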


\begin{proof}
	We first fix $\Blowupexp$ and $N$ to be large enough so that
	all of the estimates 
	proved (under bootstrap assumptions)
	in the previous sections hold true.
	By standard local well-posedness, 
	if $\mathring{\upepsilon}$ is sufficiently small
	and $C'$ is sufficiently large,
	then there exists a maximal time $T_{(Max)} \in [0,1)$
	such that the solution $(g,\SecondFund,n)$
	exists classically for $(t,x) \in (T_{(Max)},1] \times \mathbb{T}^{\mydim}$
	and such that the bootstrap assumptions \eqref{E:BOOTSTRAPASSUMPTIONS}
	hold with 
	$\TBoot := T_{(Max)}$
	and
	$\varepsilon := C' \mathring{\upepsilon}$.
	By enlarging $C'$ if necessary, we can assume that
	$C' \geq 2 C_{N,\Blowupexp,\Worstexp,\Room,\mydim}$, where
	$C_{N,\Blowupexp,\Worstexp,\Room,\mydim} > 1$ is the constant from inequality \eqref{E:MAINAPRIORIESTIMATE}.
	Readers can consult \cite{lAvM2003} for the main ideas behind the proof of local well-posedness, 
	in a similar but distinct gauge for Einstein's equations.
	Moreover, in view of Defs.\,\ref{D:LOWNORMS} and~\ref{D:HIGHNORMS},
	it is a standard result (again, see \cite{lAvM2003} for the main ideas)
	that if $\varepsilon$ is sufficiently small,
	then either $T_{(Max)} = 0$
	or the bootstrap assumptions are saturated on
	the time interval $(T_{(Max)},1]$, that is,
	$
	\sup_{t \in (T_{(Max)},1]}
	\left\lbrace
		\MetricLownorm(t)
		+
		\MetricHighnorm(t)
		+
		\LapseLownorm(t)
		+
		\LapseHighnorm(t)
	\right\rbrace
	=
	C' \mathring{\upepsilon}
	$.
	The latter possibility is ruled out by inequality \eqref{E:MAINAPRIORIESTIMATE}.
	Thus, $T_{(Max)} = 0$.
	In particular, the solution exists classically for
	$(t,x) \in (0,1] \times \mathbb{T}^{\mydim}$,
	and the estimate \eqref{E:MAINTHMAPRIORIESTIMATE} holds
	for $t \in (0,1]$.
	
	The remaining aspects of the theorem follow from
	Lemmas~\ref{L:KRETSCHMANNSCALARESTIMATE} and~\ref{L:LENGTHOFCAUSALGEODESICS}.

\end{proof}

\bibliographystyle{amsalpha}
\bibliography{JBib}

\end{document}